\theoremstyle{definition}
\newtheorem{Def}[]{Definition}[section]
\newtheorem{Pro}[Def]{Proposition}
\newtheorem{Rem}[Def]{Remark}
\newtheorem{Lem}[Def]{Lemma}
\newtheorem{Theo}[Def]{Theorem}
\newtheorem{Cor}[Def]{Corollary}
\newtheorem{Con}[Def]{Construction}
\newtheorem{Conv}[Def]{Convention}
\newtheorem*{Ques*}{Question}
\newtheorem*{Theo*}{Theorem}
\DeclareMathOperator{\Ext}{Ext}
\DeclareMathOperator{\Hom}{Hom}
\DeclareMathOperator{\RHom}{\mathbb{R}\strut\kern-.2em\operatorname{Hom}}
\DeclareMathOperator{\SL}{SL}
\DeclareMathOperator{\GL}{GL}
\DeclareMathOperator{\PGL}{PGL}
\DeclareMathOperator{\supp}{supp}
\DeclareMathOperator{\Db}{\mathrm{D}^{\mathrm{b}}}
\DeclareMathOperator{\Irr}{Irr}
\DeclareMathOperator{\PSL}{PSL}
\DeclareMathOperator{\Sym}{Sym}
\DeclareMathOperator{\diag}{diag}
\newcommand{\bbone}{\text{\usefont{U}{bbold}{m}{n}1}}
\renewcommand{\mod}{\operatorname{mod}}
\renewcommand{\Bbbk}{\mathbb{C}}
\renewcommand{\Im}{\operatorname{Im}}
\title[$2$-RI algebras from non-abelian subgroups of $\SL_3$. Part II]{$2$-representation infinite algebras from non-abelian subgroups of $\SL_3$ \\[0.8em]\smaller{}\smaller{} Part II: Central extensions and exceptionals}
\author[Darius Dramburg]{Darius Dramburg}
\address{Darius Dramburg, Department of Mathematics, Uppsala University, Box 480, 75106 Uppsala, Sweden}
\email{darius.dramburg@math.uu.se}
\date{\today}
\begin{document}

\begin{abstract}
    Let $G \leq \SL_3(\mathbb{C})$ be a non-trivial finite group, acting on $R = \mathbb{C}[x_1, x_2, x_3]$. We continue our investigation from \cite{DramburgGasanova3} into when the resulting skew-group algebra $R \ast G$ is a $3$-preprojective algebra of a $2$-representation infinite algebra, defined by a so-called cut. We consider the subgroups arising from $\GL_2(\Bbbk) \hookrightarrow \SL_3(\Bbbk)$, called type (B), as well as the exceptional subgroups, called types (E) -- (L). For groups of type (B), we show that a $3$-preprojective cut exists on $R \ast G$ if and only if $G$ is not isomorphic to a subgroup of $\SL_2(\Bbbk)$ or $\PSL_2(\Bbbk)$. For groups $G$ of the remaining types (E) -- (L), every $R \ast G$ admits a $3$-preprojective cut, except for type (H) and (I). 
    To prove our results for type (B), we explore how the notion of isoclinism interacts with the shape of McKay quivers. We compute the McKay quivers in detail, using a knitting-style heuristic. 
    For the exceptional subgroups, we compute the McKay quivers directly, as well as cuts, and we discuss how this task can be done algorithmically.
    This provides many new examples of $2$-representation infinite algebras, and together with \cite{DramburgGasanova, DramburgGasanova3} completes the classification of finite subgroups of $\SL_3(\Bbbk)$ for which $R \ast G$ is a $3$-preprojective algebra.
\end{abstract}

\maketitle

\section{Introduction}
This article is a continuation of the investigation of $3$-preprojective structures on skewed polynomial rings $\mathbb{C}[x_1, x_2, x_3] \ast G$, where $G \leq \SL_3(\mathbb{C})$ is a (non-trivial) finite subgroup. This was initiated in \cite{DramburgGasanova} for abelian subgroups, and continued in \cite{DramburgGasanova3} for imprimitive subgroups. In this article, we consider the remaining finite subgroups of $\SL_3(\Bbbk)$ and determine whether $\mathbb{C}[x_1, x_2, x_3] \ast G$ admits a $3$-preprojective structure. These structures arise from grading the McKay quiver of $G$ and are called \emph{cuts}. We are interested in this setup due to its significance to higher Auslander-Reiten theory and the McKay correspondence. 

The finite subgroups of $\SL_3(\Bbbk)$ are classified according to types (A) to (L) \cite{YauYu}. In \cite{DramburgGasanova}, we classified the abelian subgroups $G \leq \SL_3(\Bbbk)$ for which $R \ast G$ admits a $3$-preprojective cut. In terms of the classification of subgroups of $\SL_3(\Bbbk)$, the abelian groups form the type (A). The types (C) and (D) were dealt with in the companion to this article \cite{DramburgGasanova3}. Thus, the remaining types are (B) and (E) to (L). They are the subject of this article. 

To understand why we grouped the types (B) and (E) to (L) together, we first briefly recall the classification of finite subgroups of $\SL_3(\Bbbk)$. More details are provided in \Cref{SSec: Finite sub of SL3}. Let $G \leq \SL_3(\Bbbk)$ be finite, and consider its natural representation on $V = \Bbbk^3$. If $V$ decomposes into $3$ irreducible representations, then $G$ is abelian, which gives rise to type (A). If $V$ decomposes, but one of the irreducible summands has dimension $2$, then this summand is itself a faithful representation of $G$, so we obtain $G$ as a finite subgroup of $\GL_2(\Bbbk)$, embedded into $\SL_3(\Bbbk)$ in the obvious way. This is called type (B). Next, we consider the case when $V$ is irreducible. To further distinguish cases, we ask whether $V$ is primitive or imprimitive. In the imprimitive case, the stabilizer of a block in $V$ can be trivial or $C_2$, giving rise to types (C) and (D) respectively. Finally, if $V$ is primitive, there are finitely many possibilities for $G$, giving rise to the exceptional subgroups of $\SL_3(\Bbbk)$. These are given the types (E) to (L). 

One can see, as for example in \cite{DramburgGasanova3}, that $G$ of type (C) or (D) is a semi-direct product $G \simeq N \rtimes H$ where $N$ is of type (A). As such, they are amenable to being studied via $N$, to which methods from \cite{DramburgGasanova} apply. 

A group $G$ of type (B), in contrast, usually does not admit a helpful semi-direct decomposition. Instead, it is a central extension of its image $H \simeq G/Z(G)$ in $\PGL_2(\Bbbk)$, so different techniques are required to compute the McKay quivers and to produce $3$-preprojective cuts. 

It turns out that a helpful notion for type (B) is that of \emph{isoclinism}. Denote by $BH \leq \SL_2(\Bbbk)$ the binary version of the image $H$ of $G$ in $\PGL_2(\Bbbk)$. We show that $G$ and $BH$ are isoclinic, and use this to show that the McKay quiver for $G$ as a subgroup of $\GL_2(\Bbbk)$ looks like a ``repeated version'' of the McKay quiver for $BH \leq \SL_2(\Bbbk)$. Once the McKay quiver $Q_G$ is determined, we use folding arguments to produce a quotient of $Q_G$ that is the McKay quiver of a group of type (A), and use our methods from \cite{DramburgGasanova} to produce a $3$-preprojective cut for the quotient. We then lift this back to $Q_G$. This procedure is possible as long as the defining representation of $G $ is not self-dual, and $G$ is not a binary group. Thus, our first main theorem can be phrased as follows. 

\begin{Theo*}[\Cref{Theo: Type (B) classification}]
   Let $G \leq \SL_3(\Bbbk)$ be a group of type (B). Then $R \ast G \simeq_M \Bbbk Q_G/I$ admits a $3$-preprojective cut if and only if the defining representation $\rho \colon G \hookrightarrow \GL_2(\Bbbk)$ is not self-dual and $G$ is not isomorphic to a subgroup of $\SL_2(\Bbbk)$.
\end{Theo*}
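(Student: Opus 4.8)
The plan is to reformulate the statement in terms of \emph{cuts} and then reduce it to the already-settled abelian (type (A)) case by a folding argument. Write $\chi := \det\rho$ and recall that $R \ast G \simeq_M \Bbbk Q_G/I$ is the Jacobian algebra of the McKay quiver $Q_G$ with its cubic potential $W$, so that a $3$-preprojective grading is exactly a cut: an assignment of degree $0$ or $1$ to each arrow making every term of $W$ homogeneous of degree $1$. Such a cut yields a $3$-preprojective structure on a $2$-representation infinite algebra precisely when its degree-$0$ subalgebra $(\Bbbk Q_G/I)_0$ has finite global dimension; since $R \ast G$ is infinite-dimensional, the $2$-representation \emph{infinite}ness is then automatic. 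Thus the task is to decide when a cut with degree-$0$ part of finite global dimension exists. I would use two inputs established earlier: the description of $Q_G$ as a ``repeated version'' of the McKay quiver of $BH \leq \SL_2(\Bbbk)$ coming from the isoclinism $G \sim BH$, and the fact that tensoring irreducibles by $\chi$ is a tensor auto-equivalence commuting with $-\otimes V$, hence induces an automorphism $\sigma$ of $(Q_G, W)$ of order $\operatorname{ord}(\chi)$.

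\textbf{The fold.} First I would show that the quotient $(Q_G/\langle\sigma\rangle, \bar W)$ is the McKay quiver with potential of an abelian group $G_0 \leq \SL_3(\Bbbk)$ of type (A), read off from the $\sigma$-orbits of the eigenvalue data of $V = \rho \oplus \chi^{-1}$; the ``repeated version'' shape guarantees that $\pi\colon Q_G \to Q_G/\langle\sigma\rangle = Q_{G_0}$ is a genuine Galois covering of quivers with potential, $W = \pi^*\bar W$, rather than a degenerate identification. On degree-$0$ parts this realises $(\Bbbk Q_G/I)_0$ as a $\langle\sigma\rangle$-Galois covering of $(\Bbbk Q_{G_0}/I_0)_0$, so, $\langle\sigma\rangle$ being finite in characteristic $0$, the two algebras have the same global dimension.

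\textbf{Existence.} Assuming $\rho$ is not self-dual and $G$ is not abstractly a subgroup of $\SL_2(\Bbbk)$, I would check that the group $G_0$ produced by the fold is \emph{not} one of the abelian groups excluded in \cite{DramburgGasanova}. This is exactly where both hypotheses enter: self-duality of $\rho$ is equivalent to $\operatorname{ord}(\chi) \leq 2$ together with $\rho \cong \rho \otimes \chi$, and one checks on eigenvalues that this forces $V$ into an inverse-paired pattern collapsing $G_0$ onto an excluded type-(A) group, while abstract binariness of $G$ should be reflected by $G_0$ lying in the complementary excluded family of \cite{DramburgGasanova}. For a non-excluded $G_0$, \cite{DramburgGasanova} supplies a cut $c_0$ of $Q_{G_0}$ whose degree-$0$ part has finite global dimension. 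Pulling back along the covering gives a cut $c := \pi^* c_0$ of $Q_G$ (each term of $W = \pi^*\bar W$ is homogeneous because $\pi$ is a covering), and by the previous paragraph $(\Bbbk Q_G/I)_{0,c}$ has the same finite global dimension as the type-(A) algebra. Hence a $3$-preprojective grading exists.

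\textbf{Non-existence and the main obstacle.} For the converse I would show that if $\rho$ is self-dual or $G$ is abstractly a subgroup of $\SL_2(\Bbbk)$, then no cut of $Q_G$ has degree-$0$ part of finite global dimension. The cleanest cases: if $\chi$ is trivial then $V$ has an invariant summand, $R \ast G \cong (\Bbbk[x_1,x_2]\ast G)[x_3]$, and one checks directly that every cut leaves a self-injective preprojective-type subalgebra of infinite global dimension in degree $0$; the self-dual case with $\operatorname{ord}(\chi)=2$ folds onto an excluded type-(A) group exactly as above. \textbf{The main obstacle is necessity in the mixed case: $G$ abstractly binary but $\rho$ not self-dual}, for instance a twist of the standard representation of $2T$, $2O$ or $2I$ by a non-real linear character, where $\operatorname{ord}(\chi) \geq 3$. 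Here the self-duality criterion says nothing, so ``not self-dual'' alone would wrongly predict a grading; one must instead track the abstract isomorphism type through the isoclinism $G \sim BH$ to certify that the fold still lands in the excluded family. The difficulty is that an arbitrary cut of $Q_G$ need not be $\sigma$-invariant, hence does not obviously descend through $\pi$; I expect to resolve this either by using the equality of global dimensions under the Galois covering to transfer finiteness of \emph{any} degree-$0$ part down to the type-(A) base irrespective of invariance, or by a direct analysis of the degree equations on $Q_G$ afforded by its explicit ``repeated version'' combinatorics, showing they admit no $\{0,1\}$-solution with finite-global-dimension degree-$0$ part.
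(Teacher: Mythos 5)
Your overall strategy -- reduce to cuts, exploit the ``repeated version'' shape coming from isoclinism, fold onto a type (A) quiver and pull a cut back -- is the right one and matches the paper in outline, but two of your concrete steps fail. First, the fold itself: quotienting $Q_G$ by the automorphism $\sigma = -\otimes\det(\rho)$ collapses the $\delta$-orbits, and by standard covering theory this yields (essentially) the McKay quiver of $\ker(\det\rho)=G\cap\SL_2(\Bbbk)$, i.e.\ a doubled affine ADE diagram with loops -- not a type (A) quiver, and one that never admits a cut. The paper's quotient $T(Q_G)$ goes in the \emph{transverse} direction: it keeps the $\delta$-direction intact and instead identifies the forked branches of the underlying affine diagram (e.g.\ the four $1$-dimensional representations $\delta^{-i},\delta^{-i}\lambda,\delta^{-i}\lambda_2,\delta^{-i}\lambda_3$ in each column of the dihedral case, or the three pages glued along the spine in the tetrahedral case), producing a triangular grid on a torus, which is then matched with an explicit lattice quotient $\hat{Q}/L$ from the type (A) theory. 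Moreover this map is a genuine fold with fibres of different sizes (the spine vertices are not identified with anything), so it is not a Galois covering and your transfer of global dimension along it has no foundation; the paper only ever needs the easy direction -- a cut downstairs pulls back to a cut upstairs -- verified by checking that every $3$-cycle with exactly one determinant arrow maps to a cut $3$-cycle and that acyclicity lifts.

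Second, the necessity direction is far simpler than you anticipate, and the ``main obstacle'' you flag dissolves: in \emph{every} excluded case the explicitly computed quiver $Q_G$ has a loop, and a loop obstructs any cut (\Cref{Cor: loops prevent cuts}), so no descent of arbitrary, possibly non-$\sigma$-invariant cuts is ever required. In particular, for your mixed case -- the binary tetrahedral group in its twisted $2$-dimensional representation $H_0\not\subseteq\SL_2(\Bbbk)$ with $\det$ of order $3$ -- the point is that $BT$ has a \emph{unique} $3$-dimensional irreducible, so $\delta\varphi\simeq\varphi$, the $\delta$-orbit of $\varphi$ has size $m=1$, and the determinant arrow at $\varphi$ is a loop. (Also note your claim that non-self-duality forces $\operatorname{ord}(\delta)\le 2$ together with $\rho\simeq\rho\otimes\delta$ is backwards: by \Cref{Cor: self-duality} self-duality is \emph{equivalent} to $\rho\simeq\rho\otimes\delta$, with no constraint on $\operatorname{ord}(\delta)$ needed, and it is exactly the condition $\rho\simeq\delta\rho$, i.e.\ $m=1$, that produces loops in the dihedral case.) So the theorem's two hypotheses are precisely a representation-theoretic reformulation of ``$Q_G$ has no loops'', and the case analysis over the dihedral, tetrahedral, octahedral and icosahedral families is what certifies this.
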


We point out here that in the above theorem, we really mean abstract isomorphism, rather than that the embedding $G \to \SL_3(\Bbbk)$ does not factor through $\SL_2(\Bbbk)$ or $\PSL_2(\Bbbk)$. This is due to the binary tetrahedral group having a faithful representation of dimension $2$ whose image is not in $\SL_2(\Bbbk)$, so not factoring through $\SL_2(\Bbbk)$ is not enough. 

Our computations of the McKay quiver follow a heuristic that strongly resembles the knitting-algorithm in Auslander-Reiten theory. For small subgroups of $\GL_2(\Bbbk)$, the computations were already done by Auslander and Reiten in \cite{AuslanderReitenMcKay}, see also \cite{IyamaWemyssSpecial}, as well as \cite{NolladeCelisDihedral} for the case of small dihedral groups and \cite{NdCS} for finite subgroups of $\operatorname{SO}_3(\Bbbk)$. However, for our purposes we also need to consider the non-small subgroups of $\GL_2(\Bbbk)$, since they become small upon embedding them into $\SL_3(\Bbbk)$. It would be interesting to find a category whose Auslander-Reiten quiver is the McKay quiver of the non-small group $G$ with respect to its embedding into $\GL_2(\Bbbk)$. This could give an interpretation of our main heuristic in terms of the Serre functor. 

For the remaining types (E) to (L), we go through each case by hand. We compute the adjacency matrix of the McKay quiver $Q_G$, draw the quiver, and provide a $3$-preprojective cut on $R \ast G \simeq \Bbbk Q_G/I$ when it exists. We find the following. 

\begin{Theo*}[\Cref{Theo: exceptional class}]
    Let $G \leq \SL_3(\Bbbk)$ be an exceptional subgroup. Then $R \ast G \simeq_M \Bbbk Q_G/I$ admits a $3$-preprojective cut if an only if $G$ is not of type (H) or type (I).  
\end{Theo*}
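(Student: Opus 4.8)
The plan is to reduce the statement to a purely combinatorial question about the quiver with potential $(Q_G, W_G)$ attached to $R \ast G$, and then to organize the two implications around a single dichotomy: whether or not $G$ contains the central subgroup $Z = \langle \omega \cdot \mathrm{Id}\rangle \cong \mathbb{Z}/3$ of $\SL_3(\Bbbk)$, where $\omega$ is a primitive third root of unity. By the correspondence between $3$-preprojective gradings and cuts set up in the preceding sections, a $3$-preprojective grading on $R \ast G \simeq_M \Bbbk Q_G/I$ is the same datum as a \emph{cut} of $(Q_G, W_G)$: a function $g \colon (Q_G)_1 \to \{0,1\}$ with each $3$-cycle occurring in $W_G$ of total degree $1$. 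Since $R \ast G$ is infinite-dimensional, any such cut automatically produces a degree-$0$ subalgebra of global dimension $\leq 2$ that is $2$-representation infinite, because a $2$-representation finite algebra would have a finite-dimensional $3$-preprojective algebra. It therefore suffices to decide, for each exceptional type, whether $(Q_G, W_G)$ admits a cut.

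For the positive direction I would treat all types with $Z \leq G$ at once, namely (E), (F), (G), (J), (K) and (L); one checks from the explicit groups that each of these contains the scalar $\omega \cdot \mathrm{Id}$, whereas the simple groups $A_5$ (type (H)) and $\PSL_2(\mathbb{F}_7)$ (type (I)) admit no central subgroup of order $3$ and so cannot. When $\omega \cdot \mathrm{Id} \in G$, Schur's lemma attaches to each $\rho \in \Irr(G)$ a \emph{central charge} $c(\rho) \in \mathbb{Z}/3$ with $\rho(\omega \cdot \mathrm{Id}) = \omega^{c(\rho)} \cdot \mathrm{Id}$. As $\omega \cdot \mathrm{Id}$ scales $V = \Bbbk^3$ by $\omega$, an arrow $\rho_i \to \rho_j$ forces $c(\rho_j) = c(\rho_i) + 1$; hence the vertices of $Q_G$ split into three layers according to $c$, and every arrow raises the layer by one. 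Every $3$-cycle of $W_G$ then meets each layer exactly once, so setting $g = 1$ on precisely the arrows out of the layer $c \equiv 0$ yields a cut. This disposes of all six types simultaneously and conceptually.

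For the negative direction I would show that for types (H) and (I) no cut exists, and here I expect to proceed by explicit computation. Using the character tables of $A_5$ and $\PSL_2(\mathbb{F}_7)$ I would write down the adjacency matrix (with entry the multiplicity of $\rho_j$ in $\rho_i \otimes V$), draw $Q_G$, and read off the potential $W_G$ from the $\SL_3(\Bbbk)$-structure, equivalently from the invariant volume form $\Lambda^3 V \cong \Bbbk$. Existence of a cut even over $\mathbb{Q}$ is equivalent to solvability of the affine system $\{\, g(c_t) = 1 : c_t \text{ a term of } W_G\,\}$ for $g \in \mathbb{Q}^{(Q_G)_1}$, and this fails exactly when some rational combination $\sum_t \lambda_t c_t$ of the potential cycles vanishes as a $1$-chain on arrows while $\sum_t \lambda_t \neq 0$, since then $\sum_t \lambda_t = \sum_t \lambda_t\, g(c_t) = 0$ is absurd. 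I would exhibit such a relation from the computed quivers; concretely, I expect to find a small signed combination of potential cycles that is zero on every arrow yet nonzero in total, which for the highly homogeneous type (H) should take the clean form that the only rational solution is the non-integral value $g \equiv 1/3$.

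The main obstacle, and the step demanding the most care, is precisely the determination of $W_G$ for types (H) and (I) together with the ensuing inconsistency check: the adjacency matrix alone does not pin down which $3$-cycles carry nonzero coefficients, and an incorrect potential could spuriously admit or forbid a cut. I would therefore cross-check the potential against the known $3$-Calabi--Yau and Jacobian description of $R \ast G$, confirm the Morita equivalence $\Bbbk Q_G/I \simeq_M R \ast G$, and only then run the linear algebra. The positive-direction layering argument is by contrast robust and uniform, and it also explains structurally why it is exactly the centre-free simple groups $A_5$ and $\PSL_2(\mathbb{F}_7)$ that are obstructed.
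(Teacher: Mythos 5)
Your positive direction is correct but takes a genuinely different route from the paper. The paper proves existence of cuts for types (E), (F), (G), (J), (K), (L) case by case: it computes each McKay quiver explicitly and exhibits, in each drawing, a concrete set of arrows that is then verified by hand to be a cut (every $3$-cycle homogeneous of degree $1$, cut quiver acyclic). Your argument is uniform: each of these six groups contains the scalar $\epsilon_3 I_3$ (for (J), (K), (L) it is listed as a generator; for (E), (F), (G) it arises as the commutator of $s$ and $t$ in the subgroup $\langle s,t\rangle$), so $\Irr(G)$ is graded by the central character at $\epsilon_3 I_3$, every arrow of $Q_G$ shifts this $\mathbb{Z}/3$-grading by one step, every $3$-cycle meets each layer exactly once, and declaring the arrows leaving one fixed layer to have degree $1$ gives a grading in which all $3$-cycles have degree $1$ and the degree-$0$ part is spanned by paths of length at most $2$, hence is locally finite. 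This satisfies the hypotheses of \Cref{Pro: All cycles cut and locally finite suffices}, so it is a valid cut. Your argument is cleaner and explains structurally why exactly these six types work, at the cost of not producing the explicit quivers and cuts that the paper records.

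The negative direction, however, has a genuine gap. You propose to determine the superpotential for types (H) and (I) and then exhibit a rational linear relation among its cycles that obstructs a cut, but you do not carry this out, and you yourself flag the determination of the potential as the dangerous step: the adjacency matrix does not determine which $3$-cycles occur in the potential with nonzero coefficient, and your guessed obstruction (that the only rational solution for type (H) is $g \equiv 1/3$) is not substantiated --- nor can it be literally correct, since the $3$-cycles through the trivial representation impose constraints different from a loop cubed. The paper's route is much shorter: the computed McKay quivers of types (H) and (I) contain loops (three and two, respectively), and \Cref{Cor: loops prevent cuts} --- already available from the companion paper --- immediately precludes a cut. Your linear-algebra obstruction is essentially a heavier version of this loop argument; to close the gap you would either need to actually compute the relevant potential coefficients, or simply observe the loops and invoke the loop criterion.
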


Since these results require some computations, we also discuss how the McKay quiver can be computed with standard methods in GAP. We explain that the problem of finding a $3$-preprojective cut on $Q_G$ is an instance of an exact cover problem. This is a classical NP-complete problem, but our instances are small enough to be solved algorithmically. However, we provide explicit quivers and cuts in all cases which can and have been verified by hand. For the readers convenience, we indicate both how to check the cuts by hand, and how one would solve the problem on the computer.     

We conclude with combining the relevant results from \cite{DramburgGasanova, DramburgGasanova3} and the current paper. This provides the classification of finite subgroups $G \leq \SL_3(\Bbbk)$ for which $R \ast G$ admits a $3$-preprojective cut. The classification can be stated as follows, using the types as established in \cite{YauYu}. 

\begin{Theo*}[\Cref{Theo: All classification}]
    Let $G \leq \SL_3(\Bbbk)$ be a finite group. Then $R \ast G \simeq_M \Bbbk Q_G/I$ admits a $3$-preprojective cut if and only if the group $G$ satisfies one of the following conditions.  
    \begin{itemize}
        \item $G$ is of type (A), $G \not \simeq C_2 \times C_2$, and its embedding does not factor through $\SL_2(\Bbbk)$. 
        \item $G$ is of type (B), its defining representation is not self-dual, and it is not isomorphic to a finite subgroup of $\SL_2(\Bbbk)$.
        \item $G$ is of type (C) or (D) and has order divisible by $9$.
        \item $G$ is an exceptional subgroup and not of type (H) or type (I). 
    \end{itemize}
\end{Theo*}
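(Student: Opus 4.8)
The plan is to treat \Cref{Theo: All classification} as an assembly result: all of the genuine analysis already resides in the per-type theorems, so the task reduces to organising a single case distinction along the Yau--Yu classification and checking that the four displayed conditions reproduce, type by type, the criteria established here and in the companion papers. I would make explicit at the outset that the statement concerns subgroups together with their embedding (up to conjugacy), since $R \ast G$ depends on the embedding; as conjugate subgroups yield isomorphic skew-group algebras, and hence give the same answer to the $3$-preprojective question, I may fix a convenient representative in each case.

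First I would invoke the classification of \cite{YauYu}, which assigns to every finite subgroup $G \leq \SL_3(\Bbbk)$ a well-defined type among (A)--(L), determined by the decision tree recalled in \Cref{SSec: Finite sub of SL3}: whether the natural representation $V = \Bbbk^3$ splits into three lines (type (A)), splits off a $2$-dimensional irreducible summand (type (B)), or is irreducible and then either imprimitive (types (C), (D)) or primitive (the exceptional types (E)--(L)). The key structural point is that these alternatives are mutually exclusive and exhaustive, so up to conjugacy every $G$ lies in exactly one type, and the type is a property of the embedding even when an abstract group admits several inequivalent faithful embeddings.

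Next I would dispatch by type. For type (A) I cite the abelian classification of \cite{DramburgGasanova}, giving the criterion that $G \not\simeq C_2 \times C_2$ and the embedding does not factor through $\SL_2(\Bbbk)$, which is exactly the first bullet. For type (B) I cite \Cref{Theo: Type (B) classification}, yielding that the defining representation is not self-dual and $G$ is not isomorphic to a subgroup of $\SL_2(\Bbbk)$, the second bullet. For types (C) and (D) I cite \cite{DramburgGasanova3}, whose criterion is that $9 \mid |G|$, the third bullet. For the exceptional types I cite \Cref{Theo: exceptional class}, giving that $G$ is not of type (H) or (I), the fourth bullet. The global \emph{if and only if} then follows because the supports of the four bullets are precisely the four groups of types, which partition all finite subgroups; no subgroup is double-counted or omitted.

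The main point requiring care --- rather than a genuine obstacle --- is the precise wording of the conditions for types (A) and (B). As emphasised in the remark following \Cref{Theo: Type (B) classification}, for type (B) one must read ``isomorphic to a subgroup of $\SL_2(\Bbbk)$'' as an \emph{abstract} isomorphism rather than as a factorisation of the given embedding, because groups such as the binary tetrahedral group and the dihedral groups admit faithful $2$-dimensional representations whose image is not contained in $\SL_2(\Bbbk)$. I would therefore verify that the self-duality and $\SL_2(\Bbbk)$ conditions in the first two bullets are stated in the exact form actually proved, and I would confirm that the boundary and convention-dependent cases --- in particular the standing assumption that $G$ is non-trivial --- are consistently accounted for, so that the four criteria glue into a single clean dichotomy.
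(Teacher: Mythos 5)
Your proposal is correct and follows essentially the same route as the paper: the theorem is an assembly of the per-type results (\Cref{Theo: SL3 type (A) classification} and its corollary for type (A), \Cref{Theo: Type (B) classification} for type (B), the companion paper for types (C) and (D), and \Cref{Theo: exceptional class} for the exceptional types), glued along the Yau--Yu case distinction. The only nuance worth noting is that types (A) and (B) are not literally disjoint --- the paper's convention is that type (B) refers only to the nonabelian subgroups of $\GL_2(\Bbbk)$ --- but this does not affect the validity of your argument.
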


We mention as well that in almost all cases when $R \ast G \simeq_M \Bbbk Q_G/I$ does not admit a $3$-preprojective cut, this is witnessed by a loop in the McKay quiver. However, there is \emph{one} case where the quiver has no loops, yet $R \ast G \simeq_M \Bbbk Q_G/I$ fails to admit a $3$-preprojective cut. A similar phrasing about lengths of shortest cycles in $Q_G$ is not possible either as witnessed by abelian groups for which $R \ast G \simeq_M \Bbbk Q_G/I$ admits a $3$-preprojective cut but the quiver $Q_G$ contains $2$-cycles. It therefore seems to be an artifact of dimension $3$ that the existence of $3$-preprojective cuts often can be seen from the length of cycles in $Q_G$. 

\subsection{Outline}
We first collect the necessary background material in \Cref{Sec: Preliminaries}. In particular, we give a more detailed summary of the classification of finite subgroups of $\SL_3(\Bbbk)$. 

We summarise our results on abelian groups from \cite{DramburgGasanova} in \Cref{Sec: type (A)}, since they will be needed for type (B).

In \Cref{Sec: Type (B)}, we consider the groups $G$ of type (B). These arise from embedding $\GL_2(\mathbb{C}) \to \SL_3(\mathbb{C})$, so we need to take into account all finite subgroups of $\GL_2(\mathbb{C})$, not just the small ones. We compute the McKay quivers of these groups, and explain their shape by using the isoclinism of $G$ with a finite subgroup of $\SL_2(\mathbb{C})$. In particular, we obtain that $R \ast G$ admits a $3$-preprojective cut if and only if $G$ is not isomorphic to a subgroup of $\SL_2(\mathbb{C})$ or $\PSL_2(\Bbbk)$.

The remaining types, labeled (E) to (L), are finitely many ``exceptional subgroups''. We deal with these in \Cref{Sec: Except}, where we provide for each group a structure description, the adjacency matrix of the McKay quiver, a drawing of the quiver and a $3$-preprojective cut on the quiver if it exists. We also discuss the algorithmic aspects of computing McKay quivers and higher preprojective cuts. 

We conclude this article with \Cref{Sec: Classification}, where we summarise the classification of $3$-preprojective skew-group algebras $R \ast G$ that is completed with this article. 

\section{Preliminaries}\label{Sec: Preliminaries}
We work over the field $\mathbb{C}$ throughout, but the same results hold over any algebraically closed field of characteristic $0$. The reader may find it convenient to consult \cite{DramburgGasanova3} together with this article. Much of the preliminaries is the same, since \cite{DramburgGasanova3} and this paper are two parts of the overarching project of classifying $3$-preproejctive skew-group algebras. We use the following notation and conventions. 

\begin{enumerate}
    \item All undecorated tensors $\otimes$ are taken over $\Bbbk$.
    \item We denote isomorphism by $\simeq$ and Morita equivalence by $\simeq_M$.
    \item A quiver $Q$ consists of vertices $Q_0$, arrows $Q_1$ and source and target maps $s, t \colon Q_1 \to Q_0$. 
    \item We write the difference of sets $A$ and $B$ as $A - B = \{ a \in A \mid a \not \in B\}$. For a quiver $Q$ and a set $C \subseteq Q_1$, we write $Q -C = (Q_0, Q_1 - C)$. 
    \item For a finite group $G$, we denote by $Z(G)$ its center and by $G' = [G,G]$ its derived subgroup.   
    \item We denote by $\epsilon_n = \exp{(\frac{2 \pi i}{n})}$ a primitive $n$-th root of unity.  
\end{enumerate}

\subsection{Higher representation infinite algebras}
We begin by recalling the parts of higher Auslander-Reiten theory (henceforth higher AR-theory) as they pertain to this article. This is the same summary as can be found in \cite{DramburgGasanova3}. Let $\Lambda$ be a finite dimensional $\Bbbk$-algebra of finite global dimension $d \geq 1$, and denote by $D = \operatorname{Hom}_\Bbbk(-,\Bbbk)$ the standard vector space duality. The following functors play a central role in higher AR-theory:
\[ \nu = D\RHom_\Lambda(\--,\Lambda) \colon \Db( \mod \Lambda) \to \Db( \mod \Lambda)  \] 
is the \emph{derived Nakayama functor} on the bounded derived category of finitely generated $\Lambda$-modules, and its quasi-inverse is
\[\nu^{-1} = \RHom_{\Lambda^{\mathrm{op}}}(D(\--), \Lambda) \colon \Db( \mod \Lambda) \to \Db( \mod \Lambda).\]
The \emph{derived higher Auslander-Reiten translation} is the autoequivalence 
\[\nu_d := \nu \circ [-d] \colon \Db(\mod \Lambda) \to \Db( \mod \Lambda).\]

\begin{Def}\cite[Definition 2.7]{HIO} 
The algebra $\Lambda$ is called \emph{$d$-representation infinite} if for any projective module $P$ in $ \mod \Lambda$ and integer $i \geq 0$ we have 
\[ \nu_d^{-i} P \in   \mod \Lambda. \]
\end{Def}

As alluded to in the introduction, we are interested in these algebras because their \emph{higher preprojective algebras} are bimodule Calabi-Yau.

\begin{Def}\cite[Definition 2.11]{IyamaOppermannStable}
Let $\Lambda$ be of global dimension at most $d$. The $(d+1)$-preprojective algebra of $\Lambda$ is 
\[ \Pi_{d+1}(\Lambda) = \operatorname{T}_\Lambda \Ext^{d}_\Lambda(D(\Lambda), \Lambda). \]
\end{Def}

Crucially, $\Pi_{d+1}(\Lambda)$ graded by tensor degrees, and $\Lambda$ is the degree $0$ part of this grading. Therefore, when we write the term ``higher preprojective alegbra'', we always mean a graded algebra. The properties of this grading, together with being Calabi-Yau, essentially determine the preprojective algebras of higher representation infinite algebras. 

\begin{Def}\cite[Definition 3.1]{AIR} \label{Def: n-CY GP a}
Let $\Gamma=\bigoplus_{i \geq 0}\Gamma_i$ be a positively graded $\Bbbk$-algebra. We call $\Gamma$ a \emph{bimodule $(d+1)$-Calabi-Yau algebra of Gorenstein parameter $a$} if there exists a bounded graded projective $\Gamma$-bimodule resolution
$P_\bullet$ of $\Gamma$ and an isomorphism of complexes of graded $\Gamma$-bimodules
\[ P_\bullet \simeq \Hom_{\Gamma^{\mathrm{e}}}(P_\bullet, \Gamma^{\mathrm{e}})[d+1](-a).  \]
\end{Def}

The same definition for ungraded algebras and bimodules produces that of a bimodule $(d+1)$-Calabi-Yau algebra. One can introduce different gradings on $\Gamma$ so that the projective bimodule resolution becomes graded. We are interested in those gradings where the Goenstein parameter becomes $a=1$.

\begin{Theo}\cite[Theorem 4.35]{HIO}\label{Theo: HPG is f.d. GP1}
There is a bijection between $d$-representation infinite algebras $\Lambda$ and graded bimodule $(d+1)$-Calabi-Yau algebras $\Gamma$ of Gorenstein parameter 1 with $\dim_\Bbbk \Gamma_i < \infty $ for all $i \in \mathbb{N}$, both sides taken up to isomorphism. The bijection is given by 
\[ \Lambda \mapsto \Pi_{d+1}(\Lambda) \quad \mathrm{ and } \quad \Gamma \mapsto \Gamma_0.  \]
\end{Theo}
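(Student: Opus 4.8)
The plan is to show that the two assignments $\Lambda \mapsto \Pi_{d+1}(\Lambda)$ and $\Gamma \mapsto \Gamma_0$ are well defined on the indicated classes and are mutually inverse; I would split the work into the forward direction, the backward direction, and the verification that the composites are identities, the last of which I expect to be the crux. For the \emph{forward direction}, let $\Lambda$ be $d$-representation infinite (so in particular of global dimension $d$) and write $\Pi = \Pi_{d+1}(\Lambda)$. The first task is to control the graded pieces. Writing $E = \Ext^d_\Lambda(D\Lambda, \Lambda)$ for the generating bimodule, one uses the comparison of its tensor powers with the derived Nakayama orbit,
\[ \Pi_i = E^{\otimes_\Lambda i} \cong H^0\bigl(\nu_d^{-i} \Lambda\bigr); \]
the defining property $\nu_d^{-i}\Lambda \in \mod \Lambda$ then shows that each $\nu_d^{-i}\Lambda$ is concentrated in cohomological degree $0$, so that $\Pi_i$ is a finitely generated, hence finite dimensional, $\Lambda$-module, with $\Pi_0 = \Lambda$. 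For the Calabi--Yau property I would exploit the tensor-algebra shape of $\Pi$: a bar-type bimodule complex built from a projective $\Lambda$-bimodule resolution together with $E$ gives a bounded graded projective $\Pi$-bimodule resolution, and the duality defining $E = \Ext^d_\Lambda(D\Lambda,\Lambda)$ makes it self-dual under $\Hom_{\Pi^{\mathrm{e}}}(-, \Pi^{\mathrm{e}})[d+1]$. The single internal twist $(-1)$, i.e.\ Gorenstein parameter $1$, is precisely the statement that the generator $E$ sits in tensor degree $1$.

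For the \emph{backward direction}, let $\Gamma$ be a graded bimodule $(d+1)$-Calabi--Yau algebra of Gorenstein parameter $1$ with $\dim_\Bbbk \Gamma_i < \infty$, and set $\Lambda = \Gamma_0$. I would first extract that $\Lambda$ has global dimension $d$: the self-dual graded bimodule resolution $P_\bullet$ of $\Gamma$ has length $d+1$ with internal twist by $(-1)$, and the standard restriction of this resolution to $\Lambda = \Gamma_0$ produces a projective $\Lambda^{\mathrm{e}}$-resolution of $\Lambda$ of length $d$, which bounds the global dimension, while the nonvanishing top of the self-dual complex prevents it from being shorter. To see that $\Lambda$ is $d$-representation infinite, I would use the Calabi--Yau duality to identify $\Ext^d_\Lambda(D\Lambda, \Lambda) \cong \Gamma_1$ as $\Lambda$-bimodules and, iterating, $H^0(\nu_d^{-i}\Lambda) \cong \Gamma_i$; finite dimensionality of every $\Gamma_i$ then forces $\nu_d^{-i}\Lambda \in \mod \Lambda$, which is the defining condition.

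Finally, for \emph{mutual inverseness}, the equality $\Pi_{d+1}(\Lambda)_0 = \Lambda$ is immediate from the construction, so the content is the isomorphism $\Pi_{d+1}(\Gamma_0) \cong \Gamma$ for $\Gamma$ in the target class. There is a canonical graded algebra homomorphism from $\Pi_{d+1}(\Gamma_0)$, the tensor algebra of $\Ext^d_{\Gamma_0}(D\Gamma_0, \Gamma_0) \cong \Gamma_1$, into $\Gamma$ via multiplication, and I would prove it is an isomorphism degree by degree. The heart of the matter, and the step I expect to be hardest, is to show that $\Gamma$ is generated in degree $1$ with exactly the relations read off from $P_\bullet$, equivalently that the multiplication maps $\Gamma_1 \otimes_{\Gamma_0} \Gamma_i \to \Gamma_{i+1}$ are isomorphisms; here the Calabi--Yau self-duality of $P_\bullet$ is indispensable, as Gorenstein parameter $1$ together with $(d+1)$-Calabi--Yau duality simultaneously pin down the generators (degree $1$) and the relations of $\Gamma$. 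A graded Nakayama-type induction, feeding the shape of $P_\bullet$ into each internal degree, should then close the argument.
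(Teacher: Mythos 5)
First, a point of reference: the paper does not prove this statement at all --- it is imported verbatim from [HIO, Theorem 4.35] --- so there is no internal proof to measure your attempt against. Your outline does follow the architecture of the proof in that source: identify $\Pi_i = \Ext^d_\Lambda(D\Lambda,\Lambda)^{\otimes_\Lambda i}$ with $H^0(\nu_d^{-i}\Lambda)$ (correct, since $\nu_d^{-1}\Lambda$ is concentrated in non-positive cohomological degrees, so the top cohomology of iterated derived tensor products is the iterated tensor product of the top cohomologies), invoke Keller's theorem for the Calabi--Yau self-duality of the tensor-algebra bimodule resolution, and reduce the backward direction to showing that $\Gamma$ is generated in degree $1$ over $\Gamma_0$ with relations read off from the self-dual resolution. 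The forward direction and the composite $\Lambda \mapsto \Pi_{d+1}(\Lambda) \mapsto \Pi_{d+1}(\Lambda)_0$ are fine at the level of a sketch.

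There is, however, a genuine gap in the backward direction. You deduce $\nu_d^{-i}\Lambda \in \mod \Lambda$ from ``finite dimensionality of every $\Gamma_i$'' after identifying $H^0(\nu_d^{-i}\Lambda)\cong\Gamma_i$. Finite dimensionality of the zeroth cohomology of a complex does not force the complex to be concentrated in degree $0$; you must separately prove the vanishing of $H^j(\nu_d^{-i}\Lambda)$ for $j\neq 0$. As written, your argument would apply verbatim to a $d$-representation \emph{finite} algebra $\Lambda$: there $\Pi_{d+1}(\Lambda)$ is finite dimensional, hence certainly locally finite dimensional, and $H^0(\nu_d^{-i}\Lambda)=\Pi_i$ is finite dimensional for every $i$, yet $\nu_d^{-i}\Lambda\notin\mod\Lambda$ for large $i$. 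What saves the theorem is that such a $\Pi$ fails to be bimodule $(d+1)$-Calabi--Yau, so the hypothesis on $\Gamma$ excludes this case --- but this means the Calabi--Yau self-duality must be used at precisely this step to kill the off-degree cohomologies (in the source this is done by comparing $\Gamma$ with the derived preprojective algebra of $\Gamma_0$ and showing the latter has cohomology concentrated in degree $0$). Until that vanishing is supplied, the claim that $\Gamma_0$ is $d$-representation infinite is not established. A secondary structural issue: the identification $H^0(\nu_d^{-i}\Lambda)\cong\Gamma_i$ essentially presupposes $\Gamma\cong\Pi_{d+1}(\Gamma_0)$, i.e.\ the generation-in-degree-one statement you defer to the ``mutual inverseness'' step, so the backward direction and the inverseness verification cannot be cleanly separated in the order you propose.
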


Hence we take the perspective that admitting a higher preprojective grading is a property of a Calabi-Yau algebra. The question whether a given Calabi-Yau algebra has this property was raised in \cite{Thibault}. As we shall see, a prototypical example of bimodule Calabi-Yau algebras are skew-group algebras, and our goal is to answer this question completely for skew-group algebras $\Bbbk[x_1, x_2, x_3] \ast G$ where $G \leq\SL_3(\Bbbk)$ is finite. We are not aware of general criterion answering the question for subgroups $G \leq \SL_{d+1}(\Bbbk)$. However, large classes of examples are known, such as those coming from certain cyclic or metacyclic groups \cite{AIR, Giovannini}, and classes of counterexamples coming from direct product decompositions \cite{Thibault}. We also mention the classification of the abelian groups $G \leq \SL_{d+1}(\Bbbk)$ for which $R \ast G$ admits such a grading, which can be found in \cite{DramburgGasanova2}.

\subsection{Skew-group algebras and cuts}
Let $G$ be a finite group acting (from the left) on the $\Bbbk$-algebra $R$ via algebra-automorphisms. Then the \emph{skew-group algebra} of $R$ by $G$ is the vector space 
\[ R \ast G = R \otimes \Bbbk G, \]
with multiplication induced from 
\[ (r \otimes g) (s \otimes h) = r g(s) \otimes gh. \]

We are interested in the case where $G \leq \SL_{d+1}(\Bbbk)$ is finite. Then $G$ acts naturally on $\Bbbk^{d+1}$, and hence on the polynomial ring $R = \Bbbk [x_1, \ldots, x_{d+1}]$. We fix this notation throughout. 

\begin{Pro}\cite[Theorem 3.2]{BSW}
    Let $G \leq \SL_{d+1}(\Bbbk)$ be finite, acting on the polynomial ring $R = \Bbbk [x_1, \ldots, x_{d+1}]$. Then $R \ast G$ is $(d+1)$-Calabi-Yau.  
\end{Pro}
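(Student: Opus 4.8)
The plan is to reduce the statement to the classical fact that the polynomial ring itself is Calabi-Yau, and then to transport that structure across the skew-group construction; the hypothesis $G \leq \SL_{d+1}(\Bbbk)$ enters at exactly one point, to kill the determinant twist that would otherwise appear.

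First I would recall that $R = \Bbbk[x_1, \dots, x_{d+1}]$ is bimodule $(d+1)$-Calabi-Yau. Writing $n = d+1$ and $V = \Bbbk^{n}$ for the span of the variables, the Koszul complex
\[ 0 \to R \otimes \textstyle\bigwedge^{n} V \otimes R \to \cdots \to R \otimes \textstyle\bigwedge^{1} V \otimes R \to R \otimes R \to R \to 0 \]
is a finite resolution of $R$ by projective $R^{\mathrm{e}}$-modules, where $R^{\mathrm{e}} = R \otimes R^{\mathrm{op}} = R \otimes R$ since $R$ is commutative. Applying $\Hom_{R^{\mathrm{e}}}(-, R^{\mathrm{e}})$ identifies this complex with itself, shifted by $[n]$ and twisted by the one-dimensional space $\bigwedge^{n} V$; since the Nakayama automorphism of $R$ is trivial, $R$ is \emph{genuinely} $n$-Calabi-Yau. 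The crucial observation for later is that the entire resolution is $G$-equivariant for the diagonal action of $G$, and that the twisting line $\bigwedge^{n} V$ carries the $G$-action given by the determinant character $\det \rho$ of the defining representation.

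Next I would pass to $R \ast G$. Because $\Bbbk G$ is semisimple in characteristic $0$, one has a standard identification of the enveloping algebra $(R \ast G)^{\mathrm{e}} \simeq R^{\mathrm{e}} \ast (G \times G)$, under which $R \ast G$, viewed as a bimodule over itself, corresponds to the diagonal bimodule $R$ induced along $\Delta G \hookrightarrow G \times G$. Applying the induction functor $\mathrm{Ind}_{\Delta G}^{G \times G}$ — exact, again by semisimplicity — to the equivariant Koszul resolution produces a finite resolution of $R \ast G$ by projective $(R \ast G)^{\mathrm{e}}$-modules of length $n$. In particular $R \ast G$ has bimodule dimension $n$, the same as $R$, since $\Bbbk G$ contributes nothing homologically.

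Finally I would compute the dual. Applying $\Hom_{(R \ast G)^{\mathrm{e}}}(-, (R \ast G)^{\mathrm{e}})$ corresponds, under the identification above, to applying $\Hom_{R^{\mathrm{e}}}(-, R^{\mathrm{e}})$ $G$-equivariantly; by the Koszul self-duality of the first step, the result is the same resolution shifted by $[n]$ and twisted by $\det \rho$. Here the hypothesis $G \leq \SL_{d+1}(\Bbbk)$ is decisive: it forces $\det \rho = 1$, so the twist vanishes and we obtain an isomorphism $\RHom_{(R \ast G)^{\mathrm{e}}}(R \ast G, (R \ast G)^{\mathrm{e}}) \simeq R \ast G\,[-n]$ of $(R \ast G)$-bimodules, which is precisely the statement that $R \ast G$ is bimodule $(d+1)$-Calabi-Yau. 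The main obstacle is the bookkeeping of the $G$-action on the dualizing datum: one must track it carefully enough to recognize that the only obstruction to genuine (rather than twisted) Calabi-Yau symmetry is exactly the determinant character. Once that is pinned down, the $\SL$ hypothesis makes the twist disappear and the conclusion is immediate.
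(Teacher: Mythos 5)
Your proof is correct and is essentially the argument behind the cited result: the paper gives no proof of its own here, deferring entirely to \cite[Theorem 3.2]{BSW}, and the route taken there is exactly yours — the $G$-equivariant Koszul bimodule resolution of $R$, induced up to a projective $(R\ast G)^{\mathrm e}$-resolution, whose self-duality holds up to a twist by the determinant character that the hypothesis $G\leq \SL_{d+1}(\Bbbk)$ kills. Nothing further is needed.
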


The following description of the quiver and superpotential for $R \ast G$ is based on the fact that $R$, and hence $R \ast G$, is Koszul with respect to the standard polynomial grading. For the precise description of the superpotential $\omega$, we refer the reader to \cite{BSW}. Right now, it suffices to recall the McKay quiver and some properties of the relations for $R \ast G$. 

\begin{Def}
    Let $G \leq \SL_{d+1}(\Bbbk)$ be finite. Denote the defining $(d+1)$-dimensional representation of $G$ by $\rho$. The McKay quiver $Q = Q_G$ has as vertices the irreducible representations of $G$, i.e.\ $Q_0 = \Irr(G)$, and for two irreducible representations $\chi_i, \chi_j \in Q_0$, the arrows from $\chi_i$ to $\chi_j$ are a basis of $\Hom_{\Bbbk G}(\chi_i , \chi_j \otimes \rho)$. 
\end{Def}

\begin{Theo}\cite[Lemma 3.1, Theorem 3.2]{BSW} \label{Theo: BSW}
    Let $G \leq \SL_{d+1}(\Bbbk)$ be finite, acting on the polynomial ring $R = \Bbbk [x_1, \ldots, x_{d+1}]$ in $d+1$ variables. Then $R \ast G$ is Morita equivalent to a basic algebra $\Bbbk Q_G/I$. The quiver $Q_G$ is the McKay quiver of $G$, and the ideal of relations $I \subseteq (\Bbbk(Q_G)_1)^2$ is induced from the commutativity relations in $R$. More precisely, the ideal $I$ is generated by $(d-1)$-th derivatives of a superpotential $\omega$ on $\Bbbk Q_G$, which is a linear combination of cycles of length $d+1$. 
\end{Theo}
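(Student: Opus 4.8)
The plan is to reduce $R \ast G$ to its basic algebra by means of a full idempotent coming from the semisimplicity of $\Bbbk G$, and then to read off the quiver and relations by presenting $R \ast G$ as a quotient of a skew tensor algebra.

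First I would record that, since $\Bbbk = \mathbb{C}$ and $G$ is finite, the degree-$0$ part $(R \ast G)_0 = \Bbbk G$ is semisimple, so by Artin--Wedderburn $\Bbbk G \simeq \prod_{\chi \in \Irr(G)} \End_{\Bbbk}(\chi)$. Choosing one primitive idempotent $e_\chi$ per isomorphism class and setting $e = \sum_{\chi \in \Irr(G)} e_\chi$, the element $e$ is a full idempotent: already inside the semisimple subalgebra $\Bbbk G \subseteq R \ast G$ one has $\Bbbk G\, e\, \Bbbk G = \Bbbk G \ni 1$, hence $(R \ast G)\, e\, (R \ast G) = R \ast G$. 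Therefore $R \ast G$ is Morita equivalent to the idempotent subalgebra $A := e (R \ast G) e$, and since $e (\Bbbk G) e \cong \Bbbk^{|\Irr(G)|}$ is a product of copies of $\Bbbk$, the algebra $A$ is basic with the simple summands of $A_0$ indexed by $\Irr(G)$. This already identifies the vertex set $Q_0 = \Irr(G)$.

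For the arrows and relations I would exploit the presentation $R = \Sym(V) = T(V)/(\Lambda^2 V)$, where $V = \Bbbk^{d+1}$ carries the natural representation $\rho$ and $\Lambda^2 V \subseteq V \otimes V$ is spanned by the commutativity tensors $x_a \otimes x_b - x_b \otimes x_a$. This is $G$-equivariant, so it yields $R \ast G \simeq (T(V) \ast G)/(\Lambda^2 V)$. The skew tensor algebra $T(V) \ast G$ is the tensor algebra $T_{\Bbbk G}(V \otimes \Bbbk G)$ over the semisimple ring $\Bbbk G$ of the bimodule $V \otimes \Bbbk G$; being hereditary, its basic algebra is the path algebra $\Bbbk Q$ of the quiver $Q$ whose vertices are $\Irr(G)$ and whose arrows between $\chi_i$ and $\chi_j$ form a basis of the bimodule component $e_{\chi_j}(V \otimes \Bbbk G)e_{\chi_i} \cong \Hom_{\Bbbk G}(\chi_i, \chi_j \otimes \rho)$. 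Conceptually this is the statement $\Ext^1_{R \ast G}(S_{\chi_i}, S_{\chi_j}) \cong \Hom_{\Bbbk G}(\chi_i, \chi_j \otimes \rho)$, which one sees directly from the $G$-equivariant Koszul resolution $\cdots \to R \otimes \Lambda^2 V \to R \otimes V \to R \to \Bbbk \to 0$ after applying the skew-group construction and minimality. By construction the quiver $Q$ is exactly the McKay quiver $Q_G$.

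Passing the relations through the Morita reduction, the ideal $I$ is generated by the image of $\Lambda^2 V$ under the identification of $e(V \otimes V \otimes \Bbbk G)e$ with the length-$2$ paths of $Q_G$; these images are precisely the commutativity relations, which proves the final assertion. I expect the main obstacle to be bookkeeping rather than conceptual. One must fix the variance and direction convention so that the arrows land on $\Hom_{\Bbbk G}(\chi_i, \chi_j \otimes \rho)$ rather than on its dual --- the choice of left versus right modules and of path composition order interchanges $\rho$ with $\rho^{*}$, i.e. passes to the opposite quiver, and here one uses that $\det = \Lambda^{d+1}V$ is trivial for $G \leq \SL_{d+1}(\Bbbk)$ when reconciling the $\Ext$ description with the stated definition. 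One must also check that no relations beyond the quadratic ones survive, equivalently that $R \ast G$ inherits Koszulity (hence quadraticity) from $\Sym(V)$ across the skew-group construction, guaranteeing that $I$ is generated in degree $2$ by the images of the commutativity tensors.
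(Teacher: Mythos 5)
This statement is quoted from \cite[Lemma 3.1, Theorem 3.2]{BSW} and the paper supplies no proof of its own, so there is nothing internal to compare against; your sketch is the standard argument and is essentially the one carried out in the cited source (Morita reduction via a full idempotent summing one primitive idempotent per Wedderburn block, presentation of $T(V)\ast G$ as the tensor algebra of the $\Bbbk G$-bimodule $V\otimes \Bbbk G$, and Koszulity to see that the relations are exactly the images of $\Lambda^2 V$). The convention issues you flag (left/right modules and path composition swapping $\rho$ for $\rho^\ast$, i.e.\ passing to the opposite quiver) are real, though note they are resolved by bookkeeping alone and do not require $G\leq\SL_{d+1}(\Bbbk)$; the determinant condition only enters later, for the Calabi--Yau symmetry of the superpotential.
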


\begin{Rem}
    We will want to refer to ``the quiver'' of $R \ast G$ in this article. However, this terminology is not well-defined. Instead we use this terminology for a locally finite, nonnegatively graded algebra $\Gamma = \bigoplus_{i \geq 0} \Gamma_i$, generated in degrees $0$ and $1$. Recall that locally finite means that $\dim(\Gamma_i)$ is finite for each $i$. We then mean by ``the quiver'' of $\Gamma$ the quiver of the finite-dimensional algebra $\Gamma/\Gamma_{\geq 2}$. We refer the reader to \cite[Section 2.1]{DramburgGasanova3} for a discussion of this convention. In our case, we consider $\Gamma = R \ast G$ with its Koszul grading, and hence ``the quiver'' of $R \ast G$ is the McKay quiver $Q_G$.
\end{Rem}

Finding a correct grading on $R \ast G$ to make it higher preprojective can be simplified by using the notion of a cut. 

\begin{Def}
    Let $G \leq \SL_{d+1}(\Bbbk)$ be finite, and consider $R \ast G \simeq_M \Bbbk Q_G/I$. A higher preprojective grading on $\Bbbk Q_G/I$ is called a \emph{cut} if every arrow in $Q_G$ is homogeneous of degree $0$ or $1$, and every vertex is of degree $0$.
\end{Def}

We discuss the notion of a cut and refer the reader to \cite{DramburgGasanova3} for details. 

\begin{Rem}
    If $\Bbbk Q_G/I$ admits a $(d+1)$-preprojective grading, then so does $R \ast G$. If we assume that the Gabriel quiver of the $d$-representation infinite algebra $\Lambda$ with $\Pi_{d+1}(\Lambda) \simeq \Bbbk Q_G/I \simeq_M R \ast G$ is acyclic, then it follows from \cite[Theorem 5.14]{DramburgSandoy} that an isomorphic subalgebra $\Lambda' \simeq \Lambda$ can be obtained from $\Bbbk Q_G/I$ via a cut. 

    Recently, Tomonaga \cite{TomonagaSilting} discovered an example of a non-acyclic $2$-representation infinite algebra. However, the example can not be obtained from a skew-group algebra, and we believe that all $n$-representation infinite algebras whose higher preprojective algebra is Morita equivalent to $R \ast G$ has acyclic Gabriel quiver. 
\end{Rem}

From now on, we restrict our attention to finding cuts, but warn the reader that it is in general an open problem to find an example of a higher preprojective grading on $R \ast G$ that is not (conjugate to) a cut. Finding cuts is simplified by using the fact that $\Bbbk Q_G/I$ is the derivation quotient of a superpotential as explained in \Cref{Theo: BSW}. In particular, the set $(d+1)$-cycles in the following proposition contains the support of the superpotential. We will later see a slightly strengthened variant of this. 

\begin{Pro}\cite[Corollary 4.7]{Giovannini}\label{Pro: All cycles cut and locally finite suffices}
    Let $R \ast G \simeq_M \Bbbk Q_G/I$ be graded such that each arrow of $Q_G$ is homogeneous of degree $0$ or $1$, and the vertices are of degree $0$. Then this also defines a grading on $\Bbbk Q_G$. If $\Bbbk Q_G/I$ is locally finite dimensional as a graded algebra and all $(d+1)$-cycles in $\Bbbk Q_G$ are homogeneous of degree $1$, then the grading is a cut. 
\end{Pro}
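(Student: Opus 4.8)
The plan is to recognise the given grading as the one cut out by the superpotential and to read off its Gorenstein parameter from the degree of that superpotential. Write $A := R \ast G \simeq_M \Bbbk Q_G / I$. Since $R = \Bbbk[x_1, \dots, x_{d+1}]$ is Koszul, $A$ is by \cite{BSW} the (higher-order) derivation-quotient of a superpotential $\omega \in \Bbbk Q_G$ that is supported on cycles of length $d+1$, with $I$ generated by the cyclic derivatives $\partial_p \omega$ as $p$ ranges over paths of length $d-1$. The first assertion is then pure bookkeeping: the arrows are homogeneous and, together with the degree-$0$ vertices, generate $A$, so the grading on $A$ is the one induced by the arrow-degrees; in particular $I$ is a homogeneous ideal, and assigning to each arrow of $\Bbbk Q_G$ its degree yields a grading on $\Bbbk Q_G$ making $\Bbbk Q_G \twoheadrightarrow A$ graded.

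For the second assertion I would first record that the new grading makes $\omega$ homogeneous of degree $1$. Indeed $\omega$ is a $\Bbbk$-linear combination of $(d+1)$-cycles, and by hypothesis \emph{every} $(d+1)$-cycle is homogeneous of degree $1$; hence so is $\omega$, regardless of what $\supp(\omega)$ actually is. (This is precisely why one tests all $(d+1)$-cycles rather than only those in $\supp(\omega)$.) It follows formally that each relation $\partial_p \omega$ is homogeneous of degree $1 - \deg(p)$, consistent with $I$ being homogeneous.

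The crux is to identify the Gorenstein parameter with $\deg(\omega)$. The self-dual bimodule resolution exhibiting $A$ as $(d+1)$-Calabi-Yau is the superpotential complex determined by $\omega$; with respect to the new grading it becomes a complex of \emph{graded} $A$-bimodules, and its self-duality isomorphism $P_\bullet \simeq \Hom_{A^{\mathrm{e}}}(P_\bullet, A^{\mathrm{e}})[d+1](-a)$ shifts the internal degree by $a = \deg(\omega)$, because the leftmost term of the resolution is a copy of $A \otimes_S A$ (with $S = \Bbbk Q_0$) placed in internal degree $\deg(\omega)$. As $\deg(\omega) = 1$, the algebra $A$ is graded bimodule $(d+1)$-Calabi-Yau of Gorenstein parameter $1$ in the sense of \Cref{Def: n-CY GP a}. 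The local finiteness hypothesis gives $\dim_\Bbbk A_i < \infty$ for all $i$, so \Cref{Theo: HPG is f.d. GP1} applies and identifies $A$ with $\Pi_{d+1}(A_0)$ for the $d$-representation infinite algebra $A_0$; thus the grading is a higher preprojective grading, and since arrows have degree $0$ or $1$ and vertices degree $0$, it is by definition a cut.

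The main obstacle is the Gorenstein-parameter computation in the previous paragraph: one must check that the superpotential complex really is a resolution by graded bimodules and then track the single internal-degree shift through the self-duality, which is where the Calabi-Yau/superpotential machinery of \cite{BSW} (and the preceding results in \cite{Giovannini}) is genuinely used. Once that degree shift is pinned to $\deg(\omega)$, the remaining steps — homogeneity of $\omega$ and of the relations, and the appeal to \Cref{Theo: HPG is f.d. GP1} — are all formal.
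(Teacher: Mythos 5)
Your argument is correct and is essentially the intended one: the paper does not prove this statement itself but imports it from \cite[Corollary 4.7]{Giovannini}, and your route — homogeneity of the superpotential forces homogeneity of $I$, the graded superpotential complex identifies the Gorenstein parameter with $\deg(\omega)=1$, and local finiteness then lets \Cref{Theo: HPG is f.d. GP1} convert this into a higher preprojective (hence cut) grading — is exactly the mechanism behind that corollary. Your parenthetical observation that testing \emph{all} $(d+1)$-cycles (rather than only $\supp(\omega)$) is what makes the criterion checkable without computing $\omega$ is also the point the paper exploits later in \Cref{Cor: deter cycles cut and acyclic suffices}.
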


We will also need some way to exclude the existence of cuts. We provide two criteria for this, both are based on loops. The first one follows directly from the superpotential description. 

\begin{Pro}\cite[Proposition 2.15]{DramburgGasanova3}\label{Pro: loop in potential}
    Let $G \leq \SL_{d+1}(\Bbbk)$ be finite and write $R \ast G \simeq_M \Bbbk Q_G/I$ for the McKay quiver $Q_G$ and $I$ given by derivatives of a superpotential $\omega$. If there exists a loop $l$ in $Q_G$ such that $l^i$ for some $i \geq 2$ appears as a summand in $\omega$, then $\Bbbk Q_G/I$ does not admit a higher preprojective cut. 
\end{Pro}

The second criterion, proved in \Cref{Pro: Determinant loops prevent cuts}, requires some arguments involving the center of $\Bbbk Q_G/I$, so we recall some of its properties here. 

\begin{Rem}\label{Rem: Center of R*G}
The center of $R \ast G$ is $Z(R \ast G) = R^G$ the invariant ring. In particular, $Z(R \ast G)$ is a domain. By Morita equivalence, we have that $Z( \Bbbk Q_G/I) \simeq R^G$ as well. 
\end{Rem}

For the readers convenience, we also recall a general structural observation about centers of quotients of path algebras. 

\begin{Rem}\label{Rem: Center of kQ/I}
    Let $Q$ be a quiver and $I \leq \Bbbk Q$ an ideal contained in the square of the arrow ideal. Furthermore, let $z \in Z(\Bbbk Q/I)$ be a central element. Since $z$ commutes with the idempotents $e_i$ coming from vertices in $Q$, we can express $z$ as a sum $z = \sum_{i \in Q_0} e_i z e_i = \sum_{i \in Q_0} e_i z$, and we denote the summands as $z_i = e_i z$. Each summand $z_i$ consists of cycles starting (and hence ending) at $i$. Furthermore, if $a \colon i \to j$ is an arrow in $Q$, we have $z_i a = a z_j$. It follows that if $z_i a \neq 0$, we have $a z_j \neq 0$ and therefore $z_j \neq 0$. In this way, we can think of the summand $z_i$ propagating along arrows in $Q$. 
\end{Rem}

\subsection{Finite subgroups of \texorpdfstring{$\SL_3$}{SL3}} \label{SSec: Finite sub of SL3}
The classification of finite subgroups of $\SL_3(\mathbb{C})$ has a long history going back at least to works of Blichfeldt \cite{BlichfeldtDisc, BlichfeldtFinColl}. A summary can be found in the book \cite{BDMBook} by Miller, Blichfeldt and Dickson. However, the classification was only completed by Yau and Yu \cite{YauYu} when they found two more groups that were overlooked in previous works. We summarize the classification here to give an overview of why the cases will be treated separately. We mainly follow Yau and Yu, but we also refer the reader to the thesis by Carrasco Serrano \cite{Serrano}. 

The classification leads to several natural classes of groups, arising from the following scheme: Let $V = \mathbb{C}^3$, and $G \leq \SL_3(\mathbb{C}) = \SL(V)$. We regard $V$ as a (faithful) $G$-module. Then we can first distinguish whether $V$ is decomposable or not. 
\begin{itemize}[align=left]
    \item[\underline{Decomposable}] If $V$ is decomposable, there are two subcases to consider. Since $V$ has dimension $3$, it can decompose into at most three indecomposable summands. 
    \begin{itemize}[align = left]
        \item[\underline{Type (A)}] If $V$ decomposes into three summands, all of them are $1$-dimensional and hence the group $G$ is abelian. This is called type (A). 
        \item[\underline{Type (B)}] If $V$ decomposes into two summands $V = V_1 \oplus V_2$, where $\dim(V_i) = i$, it follows from the fact that $G \leq \SL(V)$ that $G$ already embeds into $\operatorname{GL}(V_2)$, and we can view $V_1 = \det(V_2)^{-1}$ as the inverse determinant representation of $V_2$. Hence, $G$ is a finite subgroup of $\operatorname{GL}_2(\mathbb{C})$. This is called type (B). Note that sometimes type (A) and type (B) are not taken to be mutually exclusive, so we emphasize that in this article, type (B) refers only to the \emph{nonabelian} groups in $\operatorname{GL}_2(\mathbb{C})$.  
    \end{itemize}
    \item[\underline{Indecomposable}] If $V$ is indecomposable, there are two subcases to consider based on whether $V$ is a primitive or an imprimitive representation. 
    \begin{itemize}[align=left]
        \item[\underline{Types (C) and (D)}] If $V$ is imprimitive, then depending on the stabilizer of a block of imprimitivity, the group $G$ can be realised as an extension of a group of type (A) by a permutation matrix, leading to type (C), or a permutation matrix and a monomial transposition, leading to type (D). 
        \item[\underline{Types (E) - (L)}] If $V$ is primitive, there remain finitely many possible groups. Each group is given its own letter, giving rise to the remaining types. For the scope of this article, we will refer to these groups as \emph{exceptional subgroups}, while of course being aware that these are not the usual \emph{exceptional algebraic groups}.  
    \end{itemize}
\end{itemize}

Let us make some remarks on the above classification as it pertains to this article: The classification gives rise to four infinite families (A) - (D), and finitely many exceptional cases. For the purpose of constructing cuts on the skew-group algebra $R \ast G$, we can (and do) deal with the exceptional cases separately on the computer. We compute the McKay quiver using standard methods in GAP. In many cases, a cut was first found algorithmically, but we provide cuts that we hope the reader finds more easily verifiable by hand. 

In the remaining cases, we have type (B) standing out as the one relating to $\GL_2(\mathbb{C})$. Giving a concise description of the groups and McKay quivers requires detailed knowledge of \emph{all} finite subgroups of $\GL_2(\mathbb{C})$, not just the small ones. This means that we need to describe McKay quivers for arbitrary finite central extensions of the finite subgroups of $\operatorname{PGL}_2(\Bbbk)$. 

The types (C) and (D), in contrast, can be seen as iterated extensions of a group of type (A). These are dealt with in \cite{DramburgGasanova3}. In particular, we note that all such groups are solvable, and this fact is used in the description of the McKay quivers by computing the McKay quiver of some normal subgroup $N \leq G$ and then skewing by the appropriate group $G/N$

\section{Type (A)}\label{Sec: type (A)}
Type (A) are the abelian subgroups of $\SL_3(\Bbbk)$, and have been studied thoroughly both in geometry and representation theory. A construction and classification of cuts for this case has been performed in \cite{DramburgGasanova}, and generalized to the abelian subgroups of $\SL_d(\Bbbk)$ in \cite{DramburgGasanova2}. We will relate type (B) to type (A), so we summarize the necessary parts of these articles here. The same summary can be found in \cite{DramburgGasanova3}. Let $G \leq \SL_3(\Bbbk)$ be finite abelian and denote its order by $n = |G|$. It is easy to see that $G$ then has at most $2$ invariant factors, see \cite{DramburgGasanova}. It follows that $G$ is the quotient of a free abelian group of rank $2$. To encode not only the isomorphism type of $G$ but the given embedding $ \rho \colon G \to \SL_3(\Bbbk)$, we decompose $\rho = \rho_1 \oplus \rho_2 \oplus \rho_3$ as representations. To encode the specific $\rho$, consider the dual group $\hat{G} = \Hom( G, \mathbb{C}^\ast) \simeq G$, and the lattice $\mathbb{Z}^2$ with the basis $e_1, e_2$, and fix a third vector $e_3 = -(e_1 + e_2)$. We fix the map
\[ \mathbb{Z}^2 \to \hat{G}, e_i \mapsto \rho_i, \]
and denote its Kernel by $L \leq \mathbb{Z}^2$. 
We choose a matrix $B$ for the embedding $L \to \mathbb{Z}^2$, where we fix the basis $(e_1, e_2)$ for $\mathbb{Z}^2$. This matrix is only unique up to $\GL_2(\mathbb{Z})$-right multiplication, corresponding to a choice of basis for $L$. However, we may choose to write $B$ in Hermite normal form so that $B$ is upper triangular and $\det(B) = n$. To incorporate the last summand $\rho_3$, we write 
\[  B' = \left(\begin{array}{ c | c }
    B & \begin{matrix}
           1 \\
           1 \\
         \end{matrix} \\
    \hline
    \begin{matrix}
        0 & 0
    \end{matrix}   & 1
  \end{array}\right).
\]

We then have the following characterization for the existence of a $3$-preprojective cut on $R \ast G$ in terms of $B'$. 

\begin{Theo}\cite[Theorem 7.7]{DramburgGasanova} \label{Theo: SL3 type (A) classification}
    Let $G \leq \SL_3(\Bbbk)$ and $B$ be as above. Then there exists a higher preprojective cut $R \ast G$ if and only if there exists a vector $\gamma = (\gamma_1, \gamma_2 , \gamma_3) \in \mathbb{Z}^{1 \times 3}$ with $\gamma_i > 0$ and $\gamma_1 + \gamma_2 + \gamma_3 = n$, such that 
    \[ \gamma \cdot  B'  \in n \mathbb{Z}^{1 \times 3}. \]
\end{Theo}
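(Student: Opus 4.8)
The plan is to identify $3$-preprojective gradings on $R \ast G$ with certain integral cocharacters of the quotient torus $T/G$, where $T = (\Bbbk^{\ast})^3$ is the diagonal maximal torus. After conjugating, the diagonalizable group $G$ sits inside $T$ as $\{\diag(\rho_1(g), \rho_2(g), \rho_3(g)) : g \in G\}$, with $\rho_1 \rho_2 \rho_3 = 1$ since $G \leq \SL_3(\Bbbk)$. Because $G$ is abelian, conjugation by a unit $1 \otimes g \in R \ast G$ realizes the automorphism $r \otimes h \mapsto g(r) \otimes h$, so $G \leq T$ acts on $R \ast G$ by \emph{inner} automorphisms. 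Hence the $T$-action descends to an action of $T/G$ on the basic algebra $\Lambda = \Bbbk Q_G/I$, making $\Lambda$ graded by the character lattice $X^{\ast}(T/G)$. The first step is to identify this lattice: writing $f_1,f_2,f_3$ for the standard basis of $\mathbb{Z}^3$ and mapping $f_i \mapsto \rho_i$, one checks $X^{\ast}(T/G) = \{c \in \mathbb{Z}^3 : \rho_1^{c_1}\rho_2^{c_2}\rho_3^{c_3} = 1\} = M$, where $M := \ker(\mathbb{Z}^3 \to \hat{G})$, and that the columns of $B'$ form a $\mathbb{Z}$-basis of $M$ with $\det B' = n$, so $[\mathbb{Z}^3 : M] = n$.

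For the ``if'' direction, recall that a $\mathbb{Z}$-grading of the $T/G$-equivariant algebra $\Lambda$ is the same datum as a cocharacter $\mu \in X_{\ast}(T/G) = M^{\vee}$. Given $\gamma$ as in the statement, set $\mu = \tfrac1n \gamma$. The congruence $\gamma B' \in n\mathbb{Z}^{1\times 3}$ says precisely that $\gamma \cdot m \in n\mathbb{Z}$ for every $m \in M$, i.e.\ $\langle m, \mu\rangle \in \mathbb{Z}$, i.e.\ $\mu \in M^{\vee}$; thus $\mu$ is a genuine integral cocharacter of $T/G$ and defines a $\mathbb{Z}$-grading on $\Lambda$. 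The superpotential $\omega$ has $T$-weight $(1,1,1)$, so it is homogeneous of degree $\langle (1,1,1), \mu\rangle = \tfrac1n\sum_i \gamma_i = 1$, making the Gorenstein parameter equal to $1$. The positivity $\gamma_i > 0$ places $\mu$ in the open positive cone, which makes the grading nonnegative with $\dim_{\Bbbk} \Lambda_j < \infty$ for all $j$ (long paths acquire large degree). By \Cref{Theo: HPG is f.d. GP1}, a nonnegatively graded bimodule $3$-Calabi-Yau algebra with finite-dimensional graded pieces and Gorenstein parameter $1$ is the $3$-preprojective algebra of a $2$-representation infinite algebra; hence this is a $3$-preprojective grading, and by the reduction to cuts (\cite[Theorem 5.14]{DramburgSandoy}) a cut on $R \ast G$ exists.

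Conversely, suppose $R \ast G$, equivalently $\Lambda$, admits a $3$-preprojective grading. Since $\Lambda$ is $T/G$-equivariant and the grading is by $\mathbb{Z}$, it is induced by a cocharacter $\mu \in X_{\ast}(T/G) = M^{\vee}$; writing $\mu = \tfrac1n\gamma$ with $\gamma \in \mathbb{Z}^{1\times 3}$, membership $\mu \in M^{\vee}$ is exactly $\gamma B' \in n\mathbb{Z}^{1\times 3}$. The Gorenstein parameter equals the degree of the superpotential, namely $\langle (1,1,1), \mu\rangle = \tfrac1n \sum_i \gamma_i$, and a $3$-preprojective grading has Gorenstein parameter $1$, so $\sum_i \gamma_i = n$. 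Finally each $\gamma_i > 0$: if some $\mu_i = \gamma_i/n \leq 0$, the corresponding variable direction produces arbitrarily long paths of nonpositive degree, contradicting finite-dimensionality of $\Lambda_0$ (concretely, reading $\gamma_i = \sum_{\chi \in \hat{G}} d(a_i^{\chi})$ off an underlying cut $d$ makes the sign clear). This yields the required $\gamma$.

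The main obstacle is the only-if direction, at two points. First, one must show that an arbitrary $3$-preprojective grading is, up to the equivalence that preserves the preprojective structure (rescaling by vertex idempotents), induced by a cocharacter of $T/G$ --- that is, that it is ``toric''; this is exactly where the abelianness of $G$ and the inner-ness of the $G$-action are essential, as they are what make $\Lambda$ itself $T/G$-equivariant. Second, one must establish the congruence $\gamma B' \in n\mathbb{Z}^{1 \times 3}$, which carries the genuine lattice-theoretic content: it is equivalent to integrality of $\mu$ against all of $M$, and combinatorially to the vanishing of the degree-monodromy of the cut around the ``torus loops'' of $Q_G = \tilde{Q}/L$ that do not already close in the universal cover. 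I also flag a subtlety to sidestep: \Cref{Pro: All cycles cut and locally finite suffices} is only a \emph{sufficient} test, and short cycles such as $x_i^3$ (present when $\rho_i$ has order dividing $3$) may land in degree $2$ under $\mu$; rather than verifying every $3$-cycle, the argument should route through the Gorenstein-parameter characterization of \Cref{Theo: HPG is f.d. GP1}, for which only the degree of the superpotential is relevant.
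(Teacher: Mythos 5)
This theorem is imported from \cite{DramburgGasanova} and not reproved here, so I am comparing your argument with the proof strategy that the surrounding text reveals (the repeated phrase ``$(\gamma_1,\gamma_2,\gamma_3)$ is a type of a cut''): there $\gamma_i$ is literally the number of cut arrows of colour $i$ in $Q_G$, the identity $\sum_i\gamma_i=n$ comes from counting the $2n$ triangles of $Q_G=\hat Q/L$ against the fact that each arrow lies on exactly two of them, the congruence $\gamma\cdot B'\in n\mathbb{Z}^{1\times 3}$ from the basepoint-independence of the degree of a closed loop of class $b\in M=\ker(\mathbb{Z}^3\to\hat G)$ (an averaging/monodromy argument), positivity from finite-dimensionality of the degree-$0$ part, and the converse from an explicit construction of a periodic cut of prescribed type. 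Your toric reformulation is a genuinely different route, and your identification of $M$ with the column lattice of $B'$ and of the congruence with integrality of $\mu=\tfrac1n\gamma$ on $M$ is correct; but as written the argument has two essential gaps.

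First, the claim that $\Lambda=\Bbbk Q_G/I\simeq R\ast G$ is $T/G$-equivariant, so that a cocharacter $\mu\in X_*(T/G)=M^{\vee}$ ``defines a $\mathbb{Z}$-grading on $\Lambda$'', is false. The torus $T$ grades $\Lambda$ by $X^*(T)=\mathbb{Z}^3$, and the off-diagonal Peirce components $e_{\chi'}\Lambda e_{\chi}$ sit in nontrivial cosets of $M$; that $G\leq T$ acts by inner automorphisms does not move these weights into $M$, since those inner automorphisms still rescale the arrows. Hence $\mu=\tfrac1n\gamma$ assigns the degree $\gamma_i/n$ to every arrow of colour $i$, which for $\gamma_i>0$ with $\sum_i\gamma_i=n$ is never an integer unless $G$ is trivial: you only get a $\tfrac1n\mathbb{Z}$-grading. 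Repairing this requires choosing vertex shifts $\theta\colon\hat G\to\tfrac1n\mathbb{Z}$ with $\theta(\chi\rho_i)-\theta(\chi)+\gamma_i/n\in\mathbb{Z}$ (possible exactly because $\mu(M)\subseteq\mathbb{Z}$), and then one must still show that the resulting integral grading, whose arrow degrees need be neither in $\{0,1\}$ nor nonnegative, is a higher preprojective grading in the sense needed to invoke \cite{DramburgSandoy} and produce a cut; none of this is in your text. Second, and more seriously, your only-if direction rests entirely on the unproved claim that every $3$-preprojective grading on $\Lambda$ is induced by a cocharacter of $T/G$. You correctly flag this as ``the main obstacle'', but flagging it does not fill it, and it is where the content of that direction lives. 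The counting proof avoids it altogether: one never shows the cut is toric, one only shows that the totals $\gamma_i=\#\{\text{cut arrows of colour }i\}$ satisfy the stated conditions, using that for $b\in M$ the degree of a closed loop of class $b$ does not depend on its basepoint. So both directions of your proposal currently have missing essential steps, even though the dictionary you set up is the right one.
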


We also have the following corollary, which the reader may choose to use in place of \Cref{Theo: SL3 type (A) classification} when applying results in type (B). 

\begin{Cor}\cite[Theorem 6.1]{DramburgGasanova}
    Let $G \leq \SL_3(\Bbbk)$ be of type (A). Then $R \ast G$ admits a $3$-preprojective cut if and only if $G \not \simeq C_2 \times C_2$ and $G \hookrightarrow \SL_3(\Bbbk)$ does not factor through $\SL_2(\Bbbk)$.
\end{Cor}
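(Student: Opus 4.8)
The plan is to convert the arithmetic criterion of \Cref{Theo: SL3 type (A) classification} into an eigenvalue condition on the elements of $G$, and then to identify the two excluded families as exactly the groups lacking the required element. Diagonalising $G$, write $\rho = \rho_1 \oplus \rho_2 \oplus \rho_3$ with $\rho_1 \rho_2 \rho_3 = 1$ in $\widehat{G}$, and let $\psi \colon \mathbb{Z}^3 \to \widehat{G}$ send the standard basis vector $f_i$ to $\rho_i$. I would first observe that the three columns of $B'$ lie in $\ker \psi$ — the first two because they are columns of $B$, hence lie in $L$, and the third because $\rho_1 \rho_2 \rho_3 = 1$ — and since $\det(B') = n = [\mathbb{Z}^3 : \ker\psi]$ they form a $\mathbb{Z}$-basis of $\ker\psi$. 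Hence for $\gamma \in \mathbb{Z}^{1\times 3}$ the condition $\gamma B' \in n\mathbb{Z}^{1\times 3}$ says exactly that the reduction $\overline{\gamma} \colon \mathbb{Z}^3 \to \mathbb{Z}/n$ vanishes on $\ker\psi$, i.e.\ factors through $\widehat{G}$. Under the identification $\Hom(\widehat{G}, \mathbb{Z}/n) \cong G$, this means there is an $h \in G$ with $\gamma_i \equiv a_i \pmod n$ for all $i$, where $\rho_i(h) = \epsilon_n^{a_i}$.

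Next I would impose the normalisation $\gamma_i > 0$ and $\gamma_1 + \gamma_2 + \gamma_3 = n$. If some $a_i \equiv 0 \pmod n$, then $\gamma_i > 0$ forces $\gamma_i \ge n$ and the sum cannot equal $n$; so a solution requires all $a_i \not\equiv 0$, that is, $h$ has no eigenvalue equal to $1$ (and in particular $h \neq e$). Conversely, if $h \neq e$ has no eigenvalue $1$, the representatives $a_i \in \{1, \dots, n-1\}$ satisfy $a_1 + a_2 + a_3 \equiv 0 \pmod n$ and therefore sum to $n$ or $2n$; in the former case $\gamma = (a_1, a_2, a_3)$ is a valid solution, and in the latter case passing to $h^{-1}$ replaces each $a_i$ by $n - a_i$ and brings the sum to $n$. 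This yields the clean reformulation that $R \ast G$ admits a $3$-preprojective cut if and only if $G$ contains a nontrivial element with no eigenvalue equal to $1$.

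It then remains to match this eigenvalue condition with the stated group-theoretic one. For the excluded cases: if the embedding factors through $\SL_2(\Bbbk)$, some $\rho_i$ is trivial, so every element of $G$ fixes a coordinate and hence has eigenvalue $1$; and the only faithful determinant-one representation of $C_2 \times C_2$ is the sum $\rho_1 \oplus \rho_2 \oplus \rho_3$ of its three nontrivial characters, where every nonidentity element then has precisely one eigenvalue equal to $1$. In both cases no admissible $h$ exists, so no cut exists.

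For the converse, suppose that every nonidentity element of $G$ has an eigenvalue $1$, i.e.\ $G = \ker\rho_1 \cup \ker\rho_2 \cup \ker\rho_3$. If some $\rho_i$ is trivial then $G$ factors through $\SL_2(\Bbbk)$ and we are done; otherwise all three kernels are proper. Faithfulness of $\rho$ together with $\rho_1 \rho_2 \rho_3 = 1$ forces $\ker\rho_i \cap \ker\rho_j = \ker\rho = \{e\}$ for $i \neq j$; since a group is never the union of two proper subgroups, each $\ker\rho_i$ is moreover nontrivial, and inclusion–exclusion gives $|\ker\rho_1| + |\ker\rho_2| + |\ker\rho_3| = n + 2$. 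Because $G$ is abelian, $\ker\rho_i \cdot \ker\rho_j$ is a subgroup of order $|\ker\rho_i|\,|\ker\rho_j|$, so $|\ker\rho_i|\,|\ker\rho_j| \le n$. Letting $x$ denote the largest of the three orders, these give $n + 2 \le x + 2n/x$, equivalently $(x-2)(x-n) \ge 0$; since $2 \le x \le n/2$ this forces $x = 2$, whence all three kernels have order $2$, so $n = 4$ and $G \cong C_2 \times C_2$. I expect this covering estimate — ruling out all orders $n \neq 4$ uniformly rather than case by case — to be the main obstacle of the argument.
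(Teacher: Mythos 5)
Your proposal is correct. The paper itself gives no proof of this corollary (it is quoted from \cite{DramburgGasanova}), so the relevant comparison is with \Cref{Theo: SL3 type (A) classification}, from which you derive it in a self-contained way. The key steps all check out: the three columns of $B'$ do lie in $\ker\psi$ and generate it since $\det(B') = n = [\mathbb{Z}^3:\ker\psi]$; the normalisation $\gamma_i>0$, $\sum\gamma_i=n$ correctly forces the $\gamma_i$ to be the minimal positive residues of the eigenvalue exponents of some $h\in G$, giving the clean reformulation that a cut exists iff $G$ contains an element with no eigenvalue $1$ (with the $h\mapsto h^{-1}$ trick handling the sum-equal-to-$2n$ case); and the converse direction is handled by a uniform covering estimate. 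In that last step, the facts you use --- $\ker\rho_i\cap\ker\rho_j=\ker\rho=\{e\}$ by faithfulness, the impossibility of covering a group by two proper subgroups, inclusion--exclusion giving $\sum_i|\ker\rho_i|=n+2$, and $|\ker\rho_i|\,|\ker\rho_j|\le n$ because the product of the kernels is a subgroup --- combine exactly as you claim to give $(x-2)(x-n)\ge 0$ with $2\le x\le n/2$, hence $x=2$ and $G\simeq C_2\times C_2$. This covering argument is a nice way to isolate $C_2\times C_2$ uniformly in $n$ rather than by inspecting lattices case by case, and it is a legitimate alternative to whatever route \cite{DramburgGasanova} takes; no gaps.
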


We need the construction of the quiver $Q_G$ as well, which is built from the same data $L \leq \mathbb{Z}^2$. For this, it suffices to note that $Q_G$ is the Cayley-Graph of $\hat{G}$ with respect to the generating set $\{ \rho_1, \rho_2,\rho_3\}$. Define the infinite quiver $\hat{Q}$ via 
\begin{align*}
    \hat{Q}_0 = \mathbb{Z}^2, \quad \hat{Q}_1 = \{ (x \to x + e_i) \mid x \in \mathbb{Z}^2, 1 \leq i \leq 3 \}.
\end{align*}
Both $\mathbb{Z}^2$ and $L$ act on $\hat{Q}$ in an obvious way, and it is easy to see that we can identify
\[ Q_G = \hat{Q}/L.  \]

\section{Type (B)}\label{Sec: Type (B)}
In this section, we investigate the finite subgroups $G \leq \SL_3(\Bbbk)$ whose embedding factors through $\GL_2(\Bbbk)$. We note that it follows from \cite[Theorem 5.15]{Thibault} that if $G \leq \SL_2(\Bbbk) \leq \SL_3(\Bbbk) $, then $Q_G$ does not admit a cut. Furthermore, if $G$ is abelian, then $G$ already occurred in type (A), so from now on we focus on the case where $G \leq \GL_2(\Bbbk) \leq \SL_3(\Bbbk)$ is a finite, nonabelian group, and keep in mind the special case when the embedding factors through $\SL_2(\Bbbk)$. It is important here to point out that in the study of quotient singularities, one only considers the \emph{small} finite subgroups of $\GL_2(\Bbbk)$. However, we want to consider \emph{all} finite subgroups, since even the non-small ones become small after embedding them into $\SL_3(\Bbbk)$. 

\subsection{The group structure} \label{SSec: (B) structure}
Traditionally, the finite subgroups of $\GL_2(\Bbbk)$ are classified in terms of their images in $\operatorname{PGL}_2(\Bbbk)$. We denote by $\pi \colon \GL_2(\Bbbk) \to \operatorname{PGL}_2(\Bbbk) = \operatorname{PSL}_2(\Bbbk)$ the quotient morphism factoring out the scalar multiples of the identity. The finite subgroups of $\PGL_2(\Bbbk)$ are well known, and they come in the typical ADE classification, where type A corresponds to abelian groups, type D corresponds to dihedral groups, and type E gives the tetrahedral, octahedral and icosahedral group. It is easy to see that a finite subgroup $G \leq \GL_2(\Bbbk)$ is therefore a central extension of its image $ H = \pi(G) \leq \PGL_2(\Bbbk)$, but the precise structure is perhaps not obvious. The well-known central extensions are the so-called binary groups $BH = \pi^{-1}(H) \cap \SL_2(\Bbbk)$.  

We now follow \cite{AlgebraicGL2} to provide explicit matrix generators for all finite subgroups of $\GL_2(\Bbbk)$. However, not all of the structure we provide is strictly necessary to deduce the shape of the McKay quiver, but is rather given for completeness. In order to classify finite subgroups of $\GL_2(\Bbbk)$, the authors of \cite{AlgebraicGL2} proceed as follows. Choose a finite subgroup $H \leq \operatorname{PSL}_2(\Bbbk)$. Then the preimage $\pi^{-1}(H) \leq \GL_2(\Bbbk)$ is a subgroup of $\GL_2(\Bbbk)$, but not finite since it contains all scalar multiples of the identity. However, by notherianity, there exists a \emph{minimal} subgroup $H_{\min} \leq \pi^{-1}(H)$ such that $\pi(H_{\min}) = H$, and no smaller subgroup $H' < H_{\min}$ satisfies $\pi(H') = H$. The authors classify these minimal groups, and every other finite subgroup $G_H \leq \GL_2(\Bbbk)$ with $\pi(G_H) = H$ then arises as $G_H =  \langle \mu_l,  H_{\min} \rangle $, where $\mu_l = \epsilon_l \cdot I_2$ for some primitive $l$-th root of unity $\epsilon_l$. We now list the minimal groups. 

\begin{Theo}\cite[Theorem 4]{AlgebraicGL2} \label{Theo: Minimal dihedral groups}
    Let $H \leq \operatorname{PGL}_2(\Bbbk)$ be a dihedral group, of order $|H| = 2n$ and degree $n$. 
    \begin{enumerate}
        \item If $n$ is odd, then there exists, for each $k \geq 1$, a minimal group 
        \[ H_{2n, k} = \left\langle \left( \begin{smallmatrix}
            \epsilon_n & 0 \\ 0 & \epsilon_n^{-1}
        \end{smallmatrix} \right) , \left( \begin{smallmatrix}
            0 & \epsilon_{2^k} \\ \epsilon_{2^k} & 0
        \end{smallmatrix} \right) \right\rangle \]
        such that $\pi( H_{2n ,k}) = H$. The minimal group $H_{2n, 2} \leq \SL_2(\Bbbk)$ is the binary dihedral group. These are all the minimal groups up to conjugation.
        \item If $n\geq 4$ is even, then there exist, for each $k \geq 1$, two minimal groups
       \begin{align*}
           H_{2n, k, 1} &= \left\langle \epsilon_{2^{k+1}} \cdot \left( \begin{smallmatrix}
            \epsilon_{2n} & 0 \\ 0 & \epsilon_{2n}^{-1}
        \end{smallmatrix} \right) , \left( \begin{smallmatrix}
            0 & \epsilon_4 \\ \epsilon_4 & 0
        \end{smallmatrix} \right) \right\rangle, \\
        H_{2n, k, 2} &= \left\langle \left( \begin{smallmatrix}
            \epsilon_{2n} & 0 \\ 0 & \epsilon_{2n}^{-1}
        \end{smallmatrix} \right) , \epsilon_{2^{k+1}} \cdot \left( \begin{smallmatrix}
            0 & \epsilon_4 \\ \epsilon_4 & 0
        \end{smallmatrix} \right) \right\rangle
       \end{align*} 
        such that $\pi( H_{2n ,k, 1}) = \pi( H_{2n ,k, 2}) =  H$. There is one remaining group for each $n$, which is the binary dihedral group $BD_{2n} \leq \SL_2(\Bbbk)$, which can be obtained as $H_{2n, 0, 2}$. These are all the minimal groups up to conjugation.
        \item If $n=2$, then there exists for each $k \geq 0$ one minimal group 
        \[ H_{4, k} = \left\langle \left( \begin{smallmatrix}
            \epsilon_{2n} & 0 \\ 0 & \epsilon_{2n}^{-1}
        \end{smallmatrix} \right) , \epsilon_{2^{k+1}} \cdot \left( \begin{smallmatrix}
            0 & \epsilon_4 \\ \epsilon_4 & 0
        \end{smallmatrix} \right) \right\rangle \]
        such that $\pi( H_{4 ,k}) =  H$. These are all the minimal groups up to conjugation.
    \end{enumerate}
    
\end{Theo}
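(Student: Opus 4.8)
The plan is to classify the minimal lifts by writing down explicit generators and then translating minimality into arithmetic. Fix the presentation $H = \langle r, s \mid r^n, s^2, (sr)^2\rangle$ and realise $r, s$ in $\PGL_2(\Bbbk)$ as the images of a diagonal torus element $\delta$ and of $J = \left(\begin{smallmatrix} 0 & 1 \\ 1 & 0 \end{smallmatrix}\right)$. The first point I would stress is that choosing $\delta$ so that $\pi(\delta)$ has order exactly $n$ already forces a parity split: one needs $\delta = \diag(\epsilon_n, \epsilon_n^{-1})$ for $n$ odd but $\delta = \diag(\epsilon_{2n}, \epsilon_{2n}^{-1})$ for $n$ even, so that $\delta^n = I$ in the odd case and $\delta^n = -I$ in the even case. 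Any lift $G$ is then generated by $A = \alpha\delta$ and $B = \beta J$ with $\alpha, \beta \in \Bbbk^\ast$, and finiteness of $G$ forces $\alpha, \beta$ to be roots of unity. Using $J\delta J^{-1} = \delta^{-1}$ I would compute the lifted relators $A^n = \alpha^n\delta^n$, $B^2 = \beta^2 I$ and $(BA)^2 = \alpha^2\beta^2 I$. Since these relators present $H$ and become central in $G$, the scalar subgroup $Z = G \cap \Bbbk^\ast I$ is generated by their images, giving $Z = \langle \alpha^n\delta^n, \beta^2, \alpha^2\rangle$. The parity now becomes decisive: for $n$ odd $\langle \alpha^n, \alpha^2\rangle = \langle\alpha\rangle$, so $\alpha I \in Z$, while for $n$ even only $\alpha^2 I \in Z$ but $\delta^n = -I$ forces $-I \in Z$ unconditionally.

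The second step is to turn minimality into a regeneration criterion. Any subgroup $G' \le G$ with $\pi(G') = H$ must contain lifts of $r$ and $s$, and these already generate a subgroup surjecting onto $H$; since all lifts of $r$ and $s$ are the $Z$-translates $zA$ and $wB$, the group $G$ is minimal if and only if $\langle zA, wB\rangle = G$ for every $z, w \in Z$. Applying the relator recipe to the translated generators shows that the scalar subgroup of $\langle zA, wB\rangle$ equals $\langle (z\alpha)^n\delta^n, (w\beta)^2, (z\alpha)^2\rangle$, so minimality demands that this coincide with $Z$ for all $z, w$. Writing the cyclic group $Z = \langle\zeta\rangle$ and all scalars as powers of $\zeta$, the translations $z, w$ shift the relevant exponents by even amounts, and I expect the condition to collapse to the requirement that $|Z|$ be a power of $2$: an odd prime dividing $|Z|$ would always admit a translate supported on it, producing a proper regenerating subgroup.

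With the criterion established I would run the three cases. For $n$ odd, $\alpha I \in Z$ means $\delta = \alpha^{-1} A \in G$, so by minimality we may take $A = \delta$; then $Z = \langle\beta^2\rangle$ and minimality says precisely that $\operatorname{ord}(\beta)$ is a power of $2$, producing the single family $H_{2n,k}$ with $\beta = \epsilon_{2^k}$ (distinct classes indexed by $k \ge 1$, with $\beta$ and $-\beta$ conjugate via the torus element $\diag(\epsilon_4, \epsilon_4^{-1})$). A determinant check then identifies $H_{2n,2}$ as the binary dihedral group. The case $n \ge 4$ even is where the real work lies: the scalar on the rotation is no longer removable, and although minimality again forces $|Z|$ to be a $2$-power, each size admits two genuinely different minimal groups according to whether the residual scalar is carried by the rotation ($H_{2n,k,1}$) or by the reflection ($H_{2n,k,2}$), with the binary dihedral group reappearing as $H_{2n,0,2}$.

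The main obstacle is thus the even case, where I must prove both that the two families exhaust the minimal groups and that they are never conjugate. For exhaustiveness I would conjugate an arbitrary minimal $G$ by the maximal torus and by the Weyl element $J$ to move scalars between the two generators and between the two conjugacy classes of reflections in $D_n$, reducing to one of the listed normal forms; the appearance of exactly two families mirrors the fact that for even $n$ the dihedral group has two classes of reflections. For non-conjugacy I would use that the rotation subgroup $C_n \le D_n$ is characteristic for $n \ge 3$, so its preimage $\tilde C \le G$ is preserved by any conjugation, and computing $\det(\tilde C)$ gives a subgroup of $\Bbbk^\ast$ of order $2^k$ for $H_{2n,k,1}$ but only $2^{k-1}$ for $H_{2n,k,2}$, which separates the families. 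Finally, the degeneracy at $n = 2$, where $H \simeq C_2 \times C_2$ has no distinguished rotation subgroup and its three involutions are interchangeable, is exactly what collapses the two families into the single list $H_{4,k}$; I would dispatch it by hand, noting that the extra symmetry conjugates the analogue of $H_{4,k,1}$ to that of $H_{4,k,2}$.
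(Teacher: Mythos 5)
This statement is not proved in the paper at all: it is imported verbatim as \cite[Theorem 4]{AlgebraicGL2}, so there is no in-paper argument to compare yours against. Judged on its own terms, your outline has the right skeleton. The computation of the scalar subgroup $Z = G \cap \Bbbk^\ast I$ as the subgroup generated by the images $A^n = \alpha^n\delta^n$, $B^2 = \beta^2 I$, $(BA)^2 = \alpha^2\beta^2 I$ of the dihedral relators is correct (the relator images are central, so their normal closure is just the subgroup they generate), the reformulation of minimality as ``$\langle zA, wB\rangle = G$ for all $z,w \in Z$'' is exactly the right reduction, and the odd case does close up as you describe: $\alpha I \in Z$ lets you normalise $A = \delta$, and the condition $\langle (w\beta)^2\rangle = \langle\beta^2\rangle$ for all $w \in \langle\beta^2\rangle$ forces $\operatorname{ord}(\beta^2)$ to be a $2$-power, recovering the single family $H_{2n,k}$. (One small inaccuracy: the exponent shifts coming from $z$ are \emph{not} all even when $n$ is odd --- $(z\alpha)^n$ shifts by $n\equiv 1 \bmod 2$ --- but this works in your favour, since it is precisely what puts $\alpha$ itself into $Z$.) Your determinant-of-$\widetilde{C}_n$ invariant is also a correct and clean way to separate $H_{2n,k,1}$ from $H_{2n,k,2}$.

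The genuine gap is the exhaustiveness claim for even $n\ge 4$, which you acknowledge as ``where the real work lies'' but then only gesture at. Knowing that $|Z|$ must be a $2$-power does not yet tell you that every minimal $G = \langle\alpha\delta,\beta J\rangle$ with $|Z| = 2^k$ is conjugate to $H_{2n,k,1}$ or $H_{2n,k,2}$: one must classify the pairs $(\alpha,\beta)$ modulo (i) replacing a generator by a $Z$-translate of itself (which often changes the pair without changing the group, e.g.\ $\langle\delta,\epsilon_8 J\rangle = \langle\delta,\epsilon_8^3 J\rangle$ because $\epsilon_4\in Z$), (ii) conjugation by the normaliser of the image of $H$ (note that conjugating by a torus element $\operatorname{diag}(t,t^{-1})$ sends $\beta J$ to a scalar multiple of $J$ only when $t^4=1$, so the torus gives you less room than ``moving scalars between the generators'' suggests), and (iii) swapping the two reflection classes. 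Without carrying out this normal-form analysis you have not shown that exactly two conjugacy classes survive for each $k\ge 1$, nor handled the collapse to one class at $n=2$ beyond asserting it. This bookkeeping is the actual content of the cited Theorem 4 of \cite{AlgebraicGL2}, and as written your proposal assumes rather than proves it.
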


In particular, we note the following about the groups lying over $D_{2n}$. The observation about the centers will become an important case distinction later on. 

\begin{Rem}
    Let $G \leq \GL_2(\Bbbk)$ be finite with image $\pi(G) \simeq D_{2n}$ a dihedral group. Then the subgroup of diagonal matrices in $G$ has index $2$, so all irreducible representations of $G$ have dimension $1$ or $2$. Furthermore, every non-abelian group $G$ with a cyclic normal subgroup of index $2$ appears this way. 
    The subgroup of scalar matrices is non-trivial when $n$ is even, since in both minimal groups a power of a generator is $-I_2$. When $n$ is odd, the minimal groups contain $-I_2$ if $k \geq 2$. The center of $H_{2n,k}$ is trivial only if $n$ is odd and $k=1$. 
\end{Rem}

Next, we consider the tetrahedral, octahedral and icosahedral case. 

\begin{Theo}\cite[Theorem 4]{AlgebraicGL2}
    Let $A_4 \simeq  H  \leq \operatorname{PGL}_2(\Bbbk)$ be the group of tetrahedral symmetries. For each $k \geq 0$ there exists a minimal group $H_k = \mu_{3^k}  \{\lambda_k(a) a \mid a \in BH   \}$, where $\lambda_k$ is the composition
    \[ BH \to C_3 \to \{1, \epsilon_{3^{k+1}}, \epsilon_{3^{k+1}}^2 \}.  \]
     Here, $BH \to C_3$ is the quotient homomorphism by the derived subgroup $[BH, BH]$ and $C_3 \to \{1, \epsilon_{3^{k+1}}, \epsilon_{3^{k+1}}^2 \} $ is an arbitrary bijection that takes $1$ to $1$. 
     We have $H_0 \simeq BH$, but $H_0 \not \subseteq \SL_2(\Bbbk)$. There is one more minimal group given by $BH \leq \SL_2(\Bbbk) \leq \GL_2(\Bbbk)$. 
\end{Theo}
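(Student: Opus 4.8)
The plan is to realise every finite $G \leq \GL_2(\Bbbk)$ with $\pi(G) = H \simeq A_4$ inside the group $\Bbbk^\ast \cdot BH$ generated by the scalars and the binary tetrahedral group, and then to classify the minimal ones through the abelianisation $BH \to BH/[BH,BH] \simeq C_3$. First I would note that $BH \to H$ is a surjective two-to-one cover, so every $h \in H$ admits a lift $a \in BH$; given $g \in G$ with $\pi(g) = h$, the element $ga^{-1}$ lies in $\ker \pi = \Bbbk^\ast I_2$, whence $g = z a$ with $z$ a scalar. Thus $G \leq \Bbbk^\ast \cdot BH$, and $G$ is completely determined by its scalar subgroup $\mu_m = G \cap \Bbbk^\ast$ together with the prescription, for each $a \in BH$, of which scalar multiples $za$ belong to $G$.

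The structural heart of the argument is that $[BH,BH] \simeq Q_8$ is contained in \emph{every} such $G$. Since scalars are central we have $[G,G] \leq [\Bbbk^\ast BH, \Bbbk^\ast BH] = [BH,BH] = Q_8$, while $\pi([G,G]) = [A_4,A_4] = V_4$, so $[G,G]$ surjects onto $V_4$ with kernel inside $\{\pm I_2\}$. As $Q_8$ has no subgroup isomorphic to $V_4$ — its unique involution is $-I_2$ — this surjection cannot be an isomorphism, forcing $[G,G] = Q_8$. In particular $-I_2 \in G$ and $Q_8 \leq G$, so $G/Q_8$ is abelian and fits in an extension $1 \to \mu_m/\{\pm I_2\} \to G/Q_8 \to A_4/V_4 \simeq C_3 \to 1$ of $3$-groups, giving $|G| = 12m$.

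Next I would show that $G$ is minimal exactly when $G/Q_8$ is cyclic, i.e. $G = \langle Q_8, g\rangle$ for a single lift $g$ of a $3$-cycle: were $G/Q_8 \simeq C_{3^k} \times C_3$, a proper subgroup would already surject onto $C_3$, hence onto $A_4$. Writing $g = z a$ with $a \in BH$ of order $3$ (so $g^3 = z^3$), minimality additionally forces $z$ to have $3$-power order, since a prime-to-$3$ part of $\langle z\rangle$ could be stripped off $\langle Q_8, g\rangle$ without altering the image $A_4$. Taking $z = \epsilon_{3^{k+1}}$ then yields a group of order $24 \cdot 3^k$ with scalar part $\mu_{2 \cdot 3^k}$; decomposing $BH = \bigsqcup_{j} Q_8 a^j$ and using $q\, g^j = z^j (q a^j)$ for $q \in Q_8$ shows this group equals $\mu_{3^k}\{\lambda_k(a) a \mid a \in BH\}$, where $\lambda_k$ reads the class $j \in BH/Q_8 \simeq C_3$ and returns $z^j = \epsilon_{3^{k+1}}^j$ — precisely the stated composition. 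The excluded value $z = 1$ gives the separate minimal group $BH \leq \SL_2(\Bbbk)$, whereas $k = 0$ gives $H_0 \simeq BH$ but with $\det(\epsilon_3 a) = \epsilon_3^2 \neq 1$, so $H_0 \not\subseteq \SL_2(\Bbbk)$.

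The hard part will be the bookkeeping in this last step. I must verify not only that the groups $\langle Q_8, g\rangle$ with $z$ of $3$-power order are minimal, but that every minimal $G$ actually arises this way, ruling out minimal groups with non-cyclic $G/Q_8$. I also need that the ``arbitrary bijection'' freedom — equivalently the two primitive $3^{k+1}$-th roots available for $z$ — exhausts these groups up to conjugacy, and that $\mu_{3^k}\{\lambda_k(a)a\}$ is genuinely closed under multiplication, which amounts to checking $\lambda_k(a)\lambda_k(b)\lambda_k(ab)^{-1} \in \mu_{3^k}$ from the failure of $\lambda_k$ to be a homomorphism. These are delicate but essentially routine finite-group computations.
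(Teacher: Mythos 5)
This statement is quoted from \cite[Theorem 4]{AlgebraicGL2}; the paper gives no proof of it, so there is nothing internal to compare against. Your proposal is a genuine self-contained proof sketch, and its architecture is sound: the reduction $G \leq \Bbbk^\ast \cdot BH$, the key observation that $[G,G] = Q_8$ for \emph{every} $G$ over $A_4$ (because $Q_8$ contains no Klein four-group, so the surjection $[G,G] \to V_4$ must have kernel $\{\pm I_2\}$), the resulting abelian quotient $G/Q_8$, and the identification of $\langle Q_8, \epsilon_{3^{k+1}} a\rangle$ with $\mu_{3^k}\{\lambda_k(b)b\}$ via the coset decomposition $BH = \bigsqcup_j Q_8 a^j$ all check out, as do the statements about $H_0 \simeq BH$ with $H_0 \not\subseteq \SL_2(\Bbbk)$.

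A few imprecisions are worth fixing. First, the extension $1 \to \mu_m/\{\pm I_2\} \to G/Q_8 \to C_3 \to 1$ is \emph{not} an extension of $3$-groups for general $G$; that only becomes true once minimality is established. Second, your stated criterion ``minimal exactly when $G/Q_8$ is cyclic'' is false as written (e.g.\ $G/Q_8 \simeq C_{15}$ is cyclic but the subgroup over $C_3$ already surjects onto $A_4$); the correct criterion, which your two-step argument eventually reaches, is that $G/Q_8$ is cyclic of $3$-power order, and the cleanest way to get there is to note that minimality forces $\langle Q_8, g\rangle = G$ for \emph{every} $g$ mapping to a $3$-cycle, which for an abelian $G/Q_8$ with index-$3$ subgroup $N$ means every element outside $N$ generates, hence $G/Q_8 \simeq C_{3^{k+1}}$. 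Your explicit justification only rules out $C_{3^k}\times C_3$, not arbitrary non-cyclic abelian quotients such as $C_2 \times C_6$. Third, there are $2\cdot 3^k$ primitive $3^{k+1}$-th roots of unity, not two; the relevant fact is that $(za)^i = z^i a^{i \bmod 3}$ shows all choices of primitive root (and of $a$ versus $a^2$) generate the same subgroup, which is what realises the ``arbitrary bijection'' freedom. None of these affects the viability of the approach.
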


\begin{Theo}\cite[Theorem 4]{AlgebraicGL2}
    Let $ S_4 \simeq H \leq \PGL_2(\Bbbk)$ be the group of octahedral symmetries. For each $k \geq 0$, there exists a minimal group $H_k = \mu_{2^k}  \{\lambda_k(a) a \mid a \in BH   \}$, where $\lambda_k$ is the composition 
    \[ BH \to C_2 \to \{1, \epsilon_{2^{k+1}} \}.  \]
     Here, $BH \to C_2$ is the quotient homomorphism by the derived subgroup $[BH, BH]$ and $C_2 \to \{1, \epsilon_{2^{k+1}}\} $ is the bijection that takes $1$ to $1$. 
\end{Theo}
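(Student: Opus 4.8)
The plan is to classify \emph{all} finite subgroups $G \leq \GL_2(\Bbbk)$ with $\pi(G) = H \simeq S_4$ and then isolate the minimal ones, using the binary octahedral group $BH = \pi^{-1}(H) \cap \SL_2(\Bbbk)$, of order $48$, as a fixed reference lift. The starting observation is that $\pi^{-1}(H) = \Bbbk^\ast \cdot BH$: any $M$ with $\pi(M) \in H$ can be written as $z a$ with $z \in \Bbbk^\ast$ and $a \in BH$ a lift of $\pi(M)$. Hence every finite $G$ over $H$ is a finite subgroup of $\Bbbk^\ast \cdot BH$, the group $C := G \cap \Bbbk^\ast I_2$ is a finite cyclic group of scalar matrices $\mu_m = \langle \epsilon_m I_2\rangle$, and $1 \to C \to G \to H \to 1$ is a central extension.

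First I would set up a bookkeeping device. Fixing $C$, for each $a \in BH$ the coset of $G$ lying over $\pi(a) \in H$ consists of the scalar multiples $c(a)\,a$ with $c(a)$ ranging over a single coset of $C$, which defines a map $\bar c \colon BH \to \Bbbk^\ast/C$. The group law forces $\bar c$ to be a homomorphism, so it factors through $BH^{\mathrm{ab}}$. Here the octahedral case becomes transparent: $BH^{\mathrm{ab}} \simeq C_2$ with $[BH,BH]$ the binary tetrahedral group, and crucially $-I_2 \in [BH,BH]$. This last fact yields $\bar c(-I_2) = 1$, which is exactly the consistency condition for the parametrization and simultaneously forces $-I_2 \in G$, so that $\mu_2 \subseteq C$ always. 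Conversely every homomorphism $\bar c$ reconstructs a group $G = C \cdot \{c(a) a\}$, so the groups over $H$ with fixed scalar part $C$ correspond to $\Hom(BH,\Bbbk^\ast/C) = \Hom(C_2, \Bbbk^\ast/C)$, the $2$-torsion of $\Bbbk^\ast/C$, which is cyclic of order $2$. This gives exactly two groups for each admissible $C$: the untwisted $C \cdot BH$ and a twisted one.

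The heart of the proof is the minimality analysis. A subgroup $G' \leq G$ with $\pi(G') = H$ has scalar part $C' \leq C$ and equals $G$ iff $C' = C$, so $G$ is minimal iff $C$ cannot be shrunk, i.e.\ iff $C$ is generated by the coboundary $c(a)c(b)c(ab)^{-1}$ of an optimally chosen set-theoretic lift $c$ of $\bar c$. For the untwisted class this coboundary is trivial, so the scalar group collapses as far as $-I_2 \in BH$ permits, and the only minimal representative is $BH$ itself. For the nontrivial class over $C = \mu_{2^k}$ I would take the explicit lift $c = \lambda_k$ with values in $\{1, \epsilon_{2^{k+1}}\}$; its coboundary takes the single nontrivial value $\epsilon_{2^{k+1}}^2 = \epsilon_{2^k}$, which \emph{generates} $\mu_{2^k}$. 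Thus $C = \mu_{2^k}$ is forced and $H_k = \mu_{2^k}\{\lambda_k(a)a\}$ is minimal, while any odd part of $m$ can never be generated by such a $2$-power coboundary and is always removable. Finally one checks that $\lambda_k$ is already an honest character exactly when $k=0$ (there $\epsilon_{2^{k+1}} = -1$ lies in $\{\pm1\}$, recovering $H_0 = BH$) and is genuinely non-liftable for $k \geq 1$; this separates the family into distinct minimal groups and identifies each $H_k$.

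The main obstacle I anticipate is precisely this minimality/lifting dichotomy: translating ``no proper subgroup surjects onto $H$'' into the statement that the coboundary of $\bar c$ generates all of $C$, and then carrying out the root-of-unity arithmetic showing that $\mu_{2^k}$ cannot be reduced to $\mu_{2^{k-1}}$ because the $2$-torsion of $\Bbbk^\ast/\mu_{2^{k-1}}$ maps trivially to $\Bbbk^\ast/\mu_{2^k}$. Everything rests on the two structural inputs $BH^{\mathrm{ab}} \simeq C_2$ and $-I_2 \in [BH,BH]$; the second is what makes the octahedral list a single clean family $\{H_k\}_{k \geq 0}$ with no exceptional extra group, in contrast to the tetrahedral case. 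I would verify these two facts by a direct computation in $BH$ from a presentation of the binary octahedral group, since the whole argument hinges on them.
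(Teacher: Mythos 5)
This is one of the results the paper imports verbatim from \cite[Theorem 4]{AlgebraicGL2}; the paper itself offers no proof, so there is nothing internal to compare against, and your argument stands as an independent derivation. As such it is essentially correct and complete. The framework is the standard one for central extensions inside $\Bbbk^\ast\cdot BH$: the fibre of $G$ over $\pi(a)$ is a single coset of $C=G\cap\Bbbk^\ast I_2$, giving a homomorphism $\bar c\colon BH\to\Bbbk^\ast/C$ that factors through $BH^{\mathrm{ab}}\simeq C_2$, and the two structural inputs you flag ($BH^{\mathrm{ab}}\simeq C_2$ and $-I_2\in[BH,BH]$, the latter because $[BH,BH]$ is the binary tetrahedral group, which contains $Q_8$) are both true and do carry the whole argument. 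One step worth writing out explicitly rather than asserting: $\bar c(-I_2)$ is the coset $(-1)C$, so triviality of $\bar c$ on $[BH,BH]$ literally says $-1\in C$; this is what guarantees $\mu_2\subseteq C$ for \emph{every} lift and hence that the untwisted class bottoms out at $BH=H_0$ with no extra exceptional minimal group (the contrast with the tetrahedral case). Your minimality criterion is also right once phrased as ``$\bar c$ admits no factorisation through $\Bbbk^\ast/C'$ for a proper $C'<C$ containing $-1$,'' and the decisive computation — that the $2$-torsion $\mu_{2^{j+1}}/\mu_{2^j}$ of $\Bbbk^\ast/\mu_{2^j}$ maps to zero in $\Bbbk^\ast/\mu_{2^k}$ exactly when $j<k$, while an odd part of $C$ is always removable — is correct and yields both existence and completeness of the list $\{H_k\}_{k\ge 0}$ (the paper's statement only asserts existence, so you prove slightly more). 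The one point of looseness is the sentence about ``the coboundary of an optimally chosen set-theoretic lift generating $C$'': this is equivalent to the non-factorisation criterion but is stated rather than justified; I would simply use the factorisation formulation throughout, since that is the version you actually verify with the root-of-unity arithmetic.
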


\begin{Theo}\cite[Theorem 4]{AlgebraicGL2}
    Let $ A_5  \simeq H \leq \PGL_2(\Bbbk)$ be the group of icosahedral symmetries. Then there is a unique minimal group, given by $BH \leq \SL_2(\Bbbk)$. 
\end{Theo}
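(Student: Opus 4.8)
The plan is to prove that \emph{every} finite $G \leq \GL_2(\Bbbk)$ with $\pi(G) = H \simeq A_5$ already contains $BH$ as a subgroup; minimality then forces $G = BH$, and uniqueness follows immediately. The engine driving the whole argument is that $A_5$ is perfect, which is exactly the feature that fails in the dihedral, tetrahedral and octahedral cases and is responsible for the appearance of the twisted families $\lambda_k$ there.

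First I would pass to the derived subgroup $G' = [G,G]$ and extract two facts about it. On the one hand, every commutator in $\GL_2(\Bbbk)$ has determinant $1$, so $G' \leq \SL_2(\Bbbk)$. On the other hand, $\pi$ is a homomorphism onto $H$, so $\pi(G') = [\pi(G), \pi(G)] = [H, H] = A_5$, the last equality because $A_5$ is perfect. Combining these, $G' \leq \pi^{-1}(A_5) \cap \SL_2(\Bbbk) = BH$, and the restriction $\pi|_{G'} \colon G' \to A_5$ is surjective.

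Next I would pin down $G'$ exactly using the structure of $BH$. The binary icosahedral group $BH \simeq 2.A_5$ has order $120$ and is a central extension of $A_5$ by $\{\pm I_2\} = \ker(\pi|_{\SL_2})$. Since $A_5$ is perfect with Schur multiplier $C_2$, this is the universal central extension, and in particular it does \emph{not} split: $BH$ contains no subgroup isomorphic to $A_5$. Because $\pi|_{G'}$ is onto $A_5$ through the $2$-to-$1$ map $\pi|_{BH}$, we have $|G'| \in \{60, 120\}$. The value $60$ would make $\pi|_{G'}$ an isomorphism, exhibiting an $A_5$-complement inside $BH$, which is impossible; hence $G' = BH$.

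Finally, I would assemble the conclusion. We have shown $BH = G' \leq G$ with $\pi(BH) = H$ for \emph{any} such $G$, so if $G$ is minimal there can be no proper subgroup surjecting onto $H$, forcing $G = BH$. Conversely, $BH$ is itself minimal: any $K \leq BH$ with $\pi(K) = A_5$ has order $60$ or $120$, and $60$ is excluded exactly as above, so $K = BH$. Thus $BH$ is the unique minimal group over $A_5$. I expect the main obstacle to be the non-splitting input — that $2.A_5$ admits no $A_5$-complement — which is precisely what perfectness provides: the abelianization of $A_5$ is trivial, so there is no room for the root-of-unity twists that produce a family of minimal groups in the non-perfect cases, and the family collapses to the single group $BH$.
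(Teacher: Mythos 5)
Your proof is correct. The paper does not prove this statement itself --- it is quoted from \cite[Theorem 4]{AlgebraicGL2} --- so there is no in-paper argument to compare against, but your argument stands on its own: perfectness of $A_5$ gives $\pi(G') = A_5$, determinants of commutators give $G' \leq \SL_2(\Bbbk)$, hence $G' \leq BH$ surjects onto $A_5$, and the non-existence of an order-$60$ subgroup of $BH$ mapping isomorphically onto $A_5$ forces $G' = BH$, so every $G$ over $H$ contains $BH$ and minimality collapses to $G = BH$. The only input you should make sure is airtight is the non-splitting step; rather than invoking the Schur multiplier, you can use the more elementary fact that the unique involution of $\SL_2(\Bbbk)$ is $-I_2$ (eigenvalues of an involution with determinant $1$ are both $-1$), so by Cauchy's theorem any subgroup of $\SL_2(\Bbbk)$ of even order --- in particular one of order $60$ --- contains $\ker(\pi|_{\SL_2}) = \{\pm I_2\}$ and cannot map injectively to $A_5$. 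Your closing remark correctly identifies why this case differs from the dihedral, tetrahedral and octahedral ones: a non-trivial abelianization is exactly what permits the twisted families $\lambda_k$, and $A_5$ has none.
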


We remind the reader that we obtain \emph{every} finite subgroup of $\GL_2(\Bbbk)$ as some $\mu_l \cdot H_{\min}$ for a minimal group $H_{\min}$ as above. To recover or exclude the small groups, one needs to enforce additional conditions on the parameters in $H_{\min}$ and $l$. We refer the reader to \cite{WemyssGL2} for a list of small subgroups of $\GL_2(\Bbbk)$, where the additional conditions can be found.

\subsection{On isoclinism and McKay quivers}
Amongst the central extensions $G \leq \GL_2(\Bbbk)$ of $H = \pi(G) \leq \PGL_2(\Bbbk)$, a special role is played by the binary version $BH \leq \SL_2(\Bbbk)$ of $H$. These are the main objects of the classical $\SL_2(\Bbbk)$-McKay correspondence and have been studied extensively. We will see below that the McKay quivers we obtain for an arbitrary finite $G \leq \GL_2(\Bbbk)$ with image $H \leq \PGL_2(\Bbbk)$ looks like a ``repeated version'' of the quiver for $BH$. This may be expected by noticing that all groups $G \leq \GL_2(\Bbbk)$ with the same image $H \leq \PGL_2(\Bbbk)$ are isoclinic. For the following equivalent definitions of isoclinism, we refer the reader to \cite{ProjRepOfFinGroups, Atlas}, see also the original paper \cite{Hallpgroups} by Hall where the notion was introduced. All other notions from representation- or character theory of finite groups that we use are standard and can be found for example in \cite{isaacsCT}. 

\begin{Def}
    Let $G$ and $H$ be finite groups, and $G'$ and $H'$ their derived subgroups. Then $G$ and $H$ are called \emph{isoclinic} if any of the following equivalent conditions hold. 
    \begin{enumerate}
        \item There exists an isomorphism $\varphi \colon G/Z(G) \to H/Z(H)$ and an isomorphism $\psi \colon G' \to H'$ which commutes with the commutator maps on $G/Z(G)$. More precisely, the following square commutes:
        \[
        \begin{tikzcd}
    G/Z(G) \times G/Z(G) \arrow[rr, "\varphi \times \varphi"] \arrow[dd, "{ [ - ,-]}"] &  & H/Z(H) \times H/Z(H) \arrow[dd, "{[-,-]}"] \\
                                                                                   &  &                                            \\
    G' \arrow[rr, "\psi"]                                                              &  & H'                                        
    \end{tikzcd}
        \]
        \item There exists a group $K$ such that $G, H \leq K$ and 
        \[K = \langle G, Z(K) \rangle = \langle H, Z(K) \rangle.\] 
    \end{enumerate}
\end{Def}

We now fix isoclinic groups $G$ and $H$, as well as a minimal joined supergroup $K$ as above. While the construction of a $K$ from the pair $(\varphi, \psi)$ is not trivial, in all our applications the groups $G$ and $H$ are given as explicit subgroups of $\GL_n(\Bbbk)$, so we can simply choose $K = \langle G, H \rangle \leq \GL_n(\Bbbk)$. The following observations are classical, and we refer the reader in particular to \cite[Section 7]{Atlas}.

\begin{Pro}\label{Pro: irrep extends to K}
    Let $G \leq K$ be finite groups such that $K = \langle G, Z(K) \rangle$, and let $\rho$ be an irreducible representation of $G$. Then $\rho$ extends to $K$, meaning that there exists an irreducible representation $\hat{\rho}$ of $K$ which restricts to $\hat{\rho}_{|G} = \rho$. 
\end{Pro}

\begin{proof}
    Since $K = \langle G, Z(K) \rangle$ and $G$ commutes with $Z(K)$, it follows that $K \simeq (G \times Z(K))/N$ for some normal subgroup $N \leq G \times Z(K)$. More precisely, the group $N$ is given by $N = \{(t, t^{-1}) \mid t \in G \cap Z(K) \} $. Let $\rho \colon G \to \GL_n(\Bbbk)$ be an irreducible representation. Since $\rho$ is irreducible, its restriction to $Z(G)$ is $\rho_{|Z(G)} = \lambda^{\oplus n}$ for a $1$-dimensional representation $\lambda$ of $Z(G)$ by Schur's lemma. Since $Z(K)$ is abelian, the representation $\lambda_{| G \cap Z(K)}$ extends to a representation $\hat{\lambda}$ of $Z(K)$. Then the representation 
    \[ \hat{\rho} = \rho \boxtimes \hat{\lambda}^{-1} \colon G \times Z(K) \to \GL_n(\Bbbk), (g, z) \mapsto \rho(g) \cdot \hat{\lambda}(z)^{-1}  \]
    is an irreducible representation of $G \times Z(K)$. The kernel of $\hat{\rho}$ contains $N$, so we found an irreducible representation of $K$ which satisfies that $\hat{\rho}_{|G} = \rho$. 
\end{proof} 

Note that $\hat{\rho}$ depends on a choice of extension $\hat{\lambda}$, so there are in general several possible extensions $\hat{\rho}$. 
Next, we observe that in our setup, the restriction functor from $K$ to $G$ preserves irreducibility. 

\begin{Pro}\label{Pro: restr of irrep from K to G is irrep}
    Let $G \leq K$ be finite groups such that $K = \langle G, Z(K) \rangle$, and let $\rho$ be an irreducible representation of $K$. Then the restriction $\rho_{|G}$ is an irreducible representation of $G$. 
\end{Pro}

\begin{proof}
    Since $\rho$ is irreducible, it follows that $\rho(Z(K))$ consists of scalar matrices. The representation $\rho_{|G}$ takes $G$ to $\rho(G) \leq \rho(K)$. If $\rho_{|G}$ was decomposable, we could conjugate $\rho(G)$ to simultaneously block-diagonalise the matrices in $\rho(G)$. But that would mean we could simultaneously block-diagonalise all matrices in $\rho(G) \cup \rho(Z(K))$, and this generates $\rho(K)$, hence $\rho$ would be decomposable. 
\end{proof}

This allows us to transfer irreducible representations between isoclinic groups. 

\begin{Con}\label{Con: Transfering irreps between isoclinics}
    Let $G, H \leq K$ be isoclinic finite groups. Let $\rho \colon G \to \GL_n(\Bbbk)$ be an irreducible representation. Then we obtain an irreducible representation $\hat{\rho}$ of $K$ by \Cref{Pro: irrep extends to K}. By \Cref{Pro: restr of irrep from K to G is irrep}, its restriction $\hat{\rho}_{|H}$ to $H$ then is an irreducible representation of $H$ of the same dimension. The matrices in $\Im(\hat{\rho}_{|H})$ differ from those in $\Im(\rho)$ by scalars. This means in particular that we can choose generators $\langle g_1, \ldots, g_l \rangle = G$ such that $\Im(\hat{\rho}_{|H}) = \langle c_1 \rho(g_1), \ldots, c_l \rho(g_l) \rangle$ for non-zero scalars $c_i \in \Bbbk$.
\end{Con}

A well-known statement for isoclinic groups is the following.

\begin{Lem}
    Let $G$ and $H$ be isoclinic groups. Then the dimensions of irreducible representations of $G$ and $H$ are the same. Furthermore, if we denote by $m_1 \neq 0$ the number of irreducible representations of $G$ of dimension $d$, and by $m_2$ the number of irreducible representations of $H$ of dimension $d$, we have that $\frac{m_1}{m_2} = \frac{|G|}{|H|}. $
\end{Lem}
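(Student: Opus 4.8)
The plan is to route the comparison between $G$ and $H$ through a common supergroup. By the second characterisation of isoclinism, I may fix a finite group $K$ with $G, H \leq K$ and $K = \langle G, Z(K) \rangle = \langle H, Z(K) \rangle$; when $G$ and $H$ are given as explicit matrix groups I take $K = \langle G, H \rangle$. The whole argument then reduces to understanding the restriction map $\Irr(K) \to \Irr(G)$, $\sigma \mapsto \sigma_{|G}$, together with its analogue for $H$, and comparing the two fibrations.

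First I would record the structure recalled in the proof of \Cref{Pro: irrep extends to K}: since $G$ centralises $Z(K)$ and together they generate $K$, one has $K \simeq (G \times Z(K))/N$ with $N = \{ (t, t^{-1}) \mid t \in G \cap Z(K) \}$, and $G \cap Z(K) \leq Z(G)$. Consequently $\Irr(K)$ is identified with those irreducibles $\rho \boxtimes \chi$ of $G \times Z(K)$, where $\rho \in \Irr(G)$ and $\chi \in \widehat{Z(K)}$, that are trivial on $N$; since $\rho(t)$ is the scalar $\omega_\rho(t)$ for $t \in G \cap Z(K)$ by Schur's lemma, this triviality amounts to the single condition $\chi_{|G \cap Z(K)} = \omega_\rho$. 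Such a $\rho \boxtimes \chi$ restricts on $G$ to $\rho$ and has dimension $\dim \rho$, so restriction is dimension-preserving; it is surjective by \Cref{Pro: irrep extends to K} and lands in $\Irr(G)$ by \Cref{Pro: restr of irrep from K to G is irrep}.

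The crux is then a counting step. For fixed $\rho$, the irreducibles of $K$ restricting to $\rho$ correspond bijectively to the extensions of the character $\omega_\rho$ from $G \cap Z(K)$ to the abelian group $Z(K)$, of which there are exactly $[Z(K) : G \cap Z(K)]$. Hence every fibre of $\Irr(K) \to \Irr(G)$ has this same size, and because the map preserves dimension this refines to $M_d(K) = [Z(K) : G \cap Z(K)] \cdot m_1$, where $M_d(K)$ denotes the number of $d$-dimensional irreducibles of $K$; the same reasoning gives $M_d(K) = [Z(K) : H \cap Z(K)] \cdot m_2$. In particular a dimension $d$ occurs for $G$ if and only if it occurs for $K$ if and only if it occurs for $H$, which is the first assertion. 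Dividing the two identities yields $\frac{m_1}{m_2} = \frac{|G \cap Z(K)|}{|H \cap Z(K)|}$, and I would finish by comparing the two expressions $|K| = |G|\,|Z(K)|/|G \cap Z(K)| = |H|\,|Z(K)|/|H \cap Z(K)|$ coming from $K \simeq (G \times Z(K))/N$, which rearrange to $\frac{|G|}{|H|} = \frac{|G \cap Z(K)|}{|H \cap Z(K)|}$ and close the proof.

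I expect the main obstacle to be the fibre-counting step, specifically verifying that the fibre size $[Z(K) : G \cap Z(K)]$ is genuinely independent of $\rho$, so that it may be pulled out dimension-by-dimension rather than only in the total count $|\Irr(K)|$, and that distinct extensions $\chi$ really produce non-isomorphic irreducibles of $K$. Everything else is bookkeeping with group orders, with \Cref{Pro: irrep extends to K} and \Cref{Pro: restr of irrep from K to G is irrep} doing the representation-theoretic heavy lifting.
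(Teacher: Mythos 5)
The paper does not actually prove this lemma --- it is stated as ``well-known'' with a pointer to the references on isoclinism --- so there is no in-paper argument to compare against. Your proof is correct and is the natural one given the surrounding machinery: you use the presentation $K \simeq (G \times Z(K))/N$ from the proof of \Cref{Pro: irrep extends to K} to identify $\Irr(K)$ with pairs $(\rho,\chi)$, $\rho \in \Irr(G)$, $\chi \in \widehat{Z(K)}$, subject to $\chi_{|G \cap Z(K)} = \omega_\rho$, and the fibre of the dimension-preserving restriction $\Irr(K) \to \Irr(G)$ over any $\rho$ then has the constant size $[Z(K):G\cap Z(K)]$ because extensions of a character of a subgroup of a finite abelian group form a torsor under $\widehat{Z(K)/(G\cap Z(K))}$. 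The counting identities $M_d(K) = [Z(K):G\cap Z(K)]\,m_1 = [Z(K):H\cap Z(K)]\,m_2$ and $|K| = |G|\,|Z(K)|/|G\cap Z(K)| = |H|\,|Z(K)|/|H\cap Z(K)|$ then combine exactly as you say. The only point deserving an explicit word is your opening ``I may fix a \emph{finite} group $K$'': the paper's condition (2) only asserts the existence of \emph{some} group $K$, and the counting argument genuinely needs $K$ finite. For finite isoclinic groups a finite such $K$ always exists (this is part of the standard theory of isoclinism, and in the paper's applications $K = \langle G, H\rangle$ is visibly finite), but you should either cite that fact or note that you are using it.
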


We will use some terminology that is, to our knowledge, not standard. 

\begin{Def}
    Let $G, H \leq K$ be isoclinic groups, where $K$ is chosen minimal, and let $\rho$ be an irreducible representation of $G$ and let $\rho'$ be an irreducible representation of $H$. Then we call the representations $\rho$ and $\rho'$ \emph{isoclinic representations} with respect to $K$ if there exists a representation $\hat{\rho}$ of $K$ such that $\hat{\rho}_{|H} = \rho'$ and $\hat{\rho}_{|G} = \rho$, and we write 
    \[ \rho \sim_K \rho'. \]
\end{Def}

Isoclinism, in particular \Cref{Con: Transfering irreps between isoclinics} and \Cref{Pro: restr of irrep from K to G is irrep}, has the following consequence, which indicates why the quivers look similar. 

\begin{Cor}\label{Lem: Tensors of isoclinics decompose in the same dimensions}
    Let $G, H \leq K$ be isoclinic, and let $\rho_1$ and $\rho_2$ be irreducible representations of $G$. Let $\rho_1'$ and $\rho_2'$ be irreducible representations of $H$ such that $\rho_i'$ is isoclinic to $\rho_i$. Then the following hold. 
    \begin{enumerate}
        \item There are decompositions $\rho_1 \otimes \rho_2 = \varphi_1 \oplus \ldots \oplus \varphi_n$ and $\rho_1' \otimes \rho_2' = \varphi_1' \oplus \ldots \oplus \varphi_n'$ into irreducible summands such that $\varphi_i \sim_K \varphi_i'$ for $1 \leq i \leq k$. 
        \item The irreducible summands of $\rho_1 \otimes \rho_2$ have the same multiset of dimensions as the irreducible summands of $\rho_1' \otimes \rho_2'$.
    \end{enumerate}
\end{Cor}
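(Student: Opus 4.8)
The plan is to lift the entire computation to the common supergroup $K$, carry out the tensor decomposition there where both representations are defined, and then push the result down to $G$ and $H$ simultaneously by restriction. The reason this is available is that since $G, H \leq K$ are isoclinic with $K$ chosen minimal, the second characterisation of isoclinism gives $K = \langle G, Z(K) \rangle = \langle H, Z(K) \rangle$. Consequently both \Cref{Pro: irrep extends to K} and \Cref{Pro: restr of irrep from K to G is irrep} apply to each of $G$ and $H$, and in particular every irreducible representation of $K$ restricts to an irreducible representation of $G$ and to one of $H$.

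First I would use the hypothesis $\rho_i \sim_K \rho_i'$ to fix, for $i = 1, 2$, an irreducible representation $\hat{\rho}_i$ of $K$ with $\hat{\rho}_{i|G} = \rho_i$ and $\hat{\rho}_{i|H} = \rho_i'$; such $\hat{\rho}_i$ exist by the very definition of $\sim_K$. Next I would form the (in general reducible) representation $\hat{\rho}_1 \otimes \hat{\rho}_2$ of $K$ and decompose it into irreducible $K$-summands,
\[ \hat{\rho}_1 \otimes \hat{\rho}_2 = \hat{\varphi}_1 \oplus \cdots \oplus \hat{\varphi}_n. \]
The key point is that restriction commutes with tensor products, so $(\hat{\rho}_1 \otimes \hat{\rho}_2)_{|G} = \rho_1 \otimes \rho_2$ and likewise $(\hat{\rho}_1 \otimes \hat{\rho}_2)_{|H} = \rho_1' \otimes \rho_2'$. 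Restricting the displayed decomposition to $G$ and setting $\varphi_i := \hat{\varphi}_{i|G}$, \Cref{Pro: restr of irrep from K to G is irrep} guarantees that each $\varphi_i$ is irreducible, so $\rho_1 \otimes \rho_2 = \varphi_1 \oplus \cdots \oplus \varphi_n$ is genuinely a decomposition into irreducibles; restricting instead to $H$ and setting $\varphi_i' := \hat{\varphi}_{i|H}$ does the same for $\rho_1' \otimes \rho_2'$. Since each $\hat{\varphi}_i$ is an irreducible representation of $K$ restricting to $\varphi_i$ over $G$ and to $\varphi_i'$ over $H$, the definition of isoclinic representations yields $\varphi_i \sim_K \varphi_i'$ for each $i$, which is exactly part (1).

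Part (2) would then be immediate: restriction of a representation leaves the underlying vector space unchanged, so $\dim \varphi_i = \dim \hat{\varphi}_i = \dim \varphi_i'$ for every $i$, and the two decompositions have the same number $n$ of summands by construction; hence the multisets of dimensions coincide. I do not expect a genuine obstacle in this argument — its entire substance is that both $G$ and $H$ embed in $K$ as $\langle -, Z(K) \rangle$, which forces the two restriction functors to be controlled by the single $K$-decomposition. The only point requiring mild care is that the extensions $\hat{\rho}_i$ are not unique (as remarked after \Cref{Pro: irrep extends to K}), but since the argument invokes only their existence, any admissible choice produces matched decompositions satisfying both claims.
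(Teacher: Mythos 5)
Your argument is correct and is essentially the paper's own proof: the paper likewise extends $\rho_1\otimes\rho_2$ to $K$ as $\hat{\rho}_1\otimes\hat{\rho}_2$, decomposes there, and uses that restriction commutes with tensor products and preserves irreducibility to push the decomposition down to both $G$ and $H$. Your write-up is just a more detailed spelling-out of the same two-line argument.
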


\begin{proof}
    It suffices to note that an extension of $\rho_1 \otimes \rho_2$ to $K$ is given by $\hat{\rho}_1 \otimes \hat{\rho}_2$ for extensions $\hat{\rho}_i$ to $K$ as in \Cref{Pro: irrep extends to K}. Since restriction commutes with tensor products and preserves indecomposability in our setup, the claims follow.
\end{proof}

We conclude the discussion of isoclinism by pointing out that this is not enough to transfer the knowledge of a higher preprojective cut. 

\begin{Rem}
    The reader familiar with \cite{McKayDecomp, browne2021connectivity} may expect that for two isoclinic subgroups $G_1, G_2 \leq \SL_{n+1}(\Bbbk)$, the McKay quiver of $G_1$ admits a higher preprojective cut if and only if the McKay quiver of $G_2$ does. This is not the case, as one can see by comparing the essentially unique embedding of $C_2 \times C_2$ into $\SL_3(\Bbbk)$ with the embedding of $C_4$ into $\SL_3(\Bbbk)$ taking a generator to the matrix $\frac{1}{4}(1,1,2)$. The quiver for $C_4$ admits a cut, while the one for $C_2 \times C_2$ does not. The groups are abelian, hence isoclinic, and even have the same order. However, the defining representations are decomposable, and not isoclinic in a meaningful way. It would be interesting to find necessary and sufficient criteria to decide when a cuts can be transferred between the McKay quivers of two isoclinic groups with fixed representations.
\end{Rem}

\subsection{The McKay quiver and cuts}
Now we construct the McKay quivers and cuts for groups of type (B). As before, fix a finite subgroup $G \leq \GL_2(\Bbbk)$. Later, we will embed $G$ into $\SL_3(\Bbbk)$ in the obvious way: 
\[ g \mapsto \left(\begin{smallmatrix}
    g & 0 \\ 0 & \det(g)^{-1}
\end{smallmatrix}\right) \]
On the level of representations of $G$, this means that we have the $3$-dimensional representation $\mathbb{C}^3 = V = V' \oplus \det(V')^{-1}$. In particular, the McKay quiver can be computed by first computing the McKay quiver only for $V'$, and then adding the additional arrows coming from $\det(V')^{-1}$. These will be called the \emph{determinant arrows}. We also point out that the reader may recognize many of the computed quivers from \cite{AuslanderReitenMcKay} and \cite{IyamaWemyssSpecial}, as well as \cite{NolladeCelisDihedral} and \cite{NdCS}. However, both sources deal only with small subgroups of $\GL_2(\Bbbk)$, and we do not yet understand the precise relationship between our methods and \cite{AuslanderReitenMcKay}, despite the strong resemblance. 
The following convention will allow us to write the coming computations more succinctly, since we can obtain the quiver $Q_G$ from the irreducible characters of $G$.  

\begin{Conv}
    Let $\rho \colon G \to \GL_n(\Bbbk)$ be a representation. Then we refer to its character by $\rho$ as well. For two representations $\rho_1$ and $\rho_2$, we denote by $\rho_1 \otimes \rho_2$ the tensor product of representations, and by $\rho_1 \cdot \rho_2$ the product of the characters, which we often abbreviate to $\rho_1 \rho_2$. Similarly $\rho_1 \oplus \rho_2$ is the direct sum of representations and $\rho_1 + \rho_2$ the sum of characters. Moreover, $\rho^\ast$ is the dual representation while $\overline{\rho}$ is the dual character. 
    We denote the inner product of class functions $\varphi$ and $\psi$ of $G$ by 
    \[ (\varphi, \psi)_G,    \]
    and we drop the subscript $G$ when no confusion is possible. 
\end{Conv}

\begin{Rem}
    We remind the reader that the McKay quiver of $\rho \colon G \hookrightarrow \GL_n(\Bbbk)$ has as its vertices the irreducible representations of $G$, and for two irreducible representations $\chi_1$ and $\chi_2$ the number of arrows from $\chi_1$ to $\chi_2$ equals 
    \[ \dim \Hom_{kG} ( \chi_1, \chi_2 \otimes \rho) = (\chi_1, \chi_2 \cdot \rho)_G, \]
\end{Rem}

Throughout, we will need the following easy observation which we prove here for the readers convenience. 

\begin{Lem}\label{Lem: Dimension 2 adjoint trick}
    Let $\rho \colon G \rightarrow \GL_2(\Bbbk)$ be a two dimensional representation of an arbitrary finite group. Then
    \[ \rho \simeq \rho^\ast \otimes \det(\rho). \]
\end{Lem}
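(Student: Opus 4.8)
The plan is to verify the isomorphism at the level of characters. Since we work over $\Bbbk = \mathbb{C}$ and $G$ is finite, two representations are isomorphic if and only if their characters agree as class functions, so it suffices to show $\chi_\rho = \chi_{\rho^\ast \otimes \det(\rho)}$ pointwise on $G$.

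First I would fix $g \in G$ and let $\lambda_1, \lambda_2$ denote the eigenvalues of the matrix $\rho(g) \in \GL_2(\Bbbk)$. Because $g$ has finite order, $\rho(g)$ has finite order, so each $\lambda_i$ is a root of unity and hence satisfies $\overline{\lambda_i} = \lambda_i^{-1}$. I would then record the three elementary identities
\[ \chi_\rho(g) = \lambda_1 + \lambda_2, \quad \det(\rho(g)) = \lambda_1 \lambda_2, \quad \chi_{\rho^\ast}(g) = \overline{\chi_\rho(g)} = \lambda_1^{-1} + \lambda_2^{-1}. \]
The key step is the direct computation
\[ \chi_{\rho^\ast \otimes \det(\rho)}(g) = \chi_{\rho^\ast}(g) \cdot \det(\rho(g)) = (\lambda_1^{-1} + \lambda_2^{-1}) \lambda_1 \lambda_2 = \lambda_2 + \lambda_1 = \chi_\rho(g), \]
which holds for every $g$ and therefore yields the claimed isomorphism.

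There is no serious obstacle here; the only point requiring any care is the identity $\overline{\lambda_i} = \lambda_i^{-1}$, which is precisely where the finite order of $\rho(g)$ (equivalently, finiteness of $G$) enters. Were $\rho(g)$ allowed to have eigenvalues off the unit circle, the computation would break, so I would make sure to flag this dependence explicitly.

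Finally, I would remark that a basis-free version of the argument is available and conceptually cleaner: for any $2$-dimensional vector space $V$ the wedge pairing $V \otimes V \to \bigwedge^2 V = \det(V)$ is perfect, producing a natural — hence $G$-equivariant — isomorphism $V \xrightarrow{\sim} V^\ast \otimes \det(V)$. This proof has the advantage of working over an arbitrary field and without any finiteness hypothesis, but the character computation above is the quickest route given the character-theoretic machinery already set up in this section.
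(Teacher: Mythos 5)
Your proof is correct and follows essentially the same route as the paper's: a pointwise character computation using that the eigenvalues of $\rho(g)$ are roots of unity, so $\overline{\lambda_i} = \lambda_i^{-1}$. The closing remark about the basis-free wedge-pairing isomorphism $V \simeq V^\ast \otimes \bigwedge^2 V$ is a nice observation but goes beyond what the paper does; the core argument is identical.
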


\begin{proof}
    For $g \in G$, consider the matrix $\rho(g)$. This is diagonalisable, so denote by $a$ and $b$ the eigenvalues. Then the character of $\rho$ at $g$ takes the value $a+b$, and $\det(\rho)$ takes the value $ab$. The dual representation $\rho^\ast$ takes $g$ to the conjugate transpose of the matrix $\rho(g)$, hence the character of $\rho^\ast$ at $g$ takes the value $\overline{a} + \overline{b}$, and since $a$ and $b$ are roots of unity it follows that 
    \[ (\overline{a} + \overline{b}) \cdot (ab) =  \left( \frac{1}{a} + \frac{1}{b} \right) (ab) = b+a, \]
    so the characters of $\rho$ and $\rho^\ast \otimes \det(\rho)$ agree. 
\end{proof}

We note the following observation, which is relevant because McKay quivers for self-dual representations are symmetric. 

\begin{Cor}\label{Cor: self-duality}
    Let $\rho \colon G \rightarrow \GL_2(\Bbbk)$ be a two dimensional representation of an arbitrary finite group. Then
    \[ \rho \simeq \rho \otimes \det(\rho),  \]
    if and only if $\rho \simeq \rho^\ast$ is self-dual.
\end{Cor}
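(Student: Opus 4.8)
The plan is to deduce both implications directly from \Cref{Lem: Dimension 2 adjoint trick}, which supplies the single isomorphism $\rho \simeq \rho^\ast \otimes \det(\rho)$. The only structural fact I need beyond this is that tensoring by the one-dimensional representation $\det(\rho)$ is an invertible operation on isomorphism classes of representations: its inverse is tensoring by $\det(\rho)^{-1}$, since $\det(\rho) \otimes \det(\rho)^{-1}$ is the trivial representation. This is what makes the equivalence formal rather than computational, so I expect no genuine obstacle — the content is entirely carried by the lemma.

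For the reverse implication, I would assume $\rho \simeq \rho^\ast$ and simply substitute this into the lemma. Then $\rho \simeq \rho^\ast \otimes \det(\rho) \simeq \rho \otimes \det(\rho)$, which is the desired conclusion. For the forward implication, I would assume $\rho \simeq \rho \otimes \det(\rho)$ and tensor the lemma's identity by $\det(\rho)^{-1}$, obtaining $\rho \otimes \det(\rho)^{-1} \simeq \rho^\ast$. On the other hand, tensoring the hypothesis $\rho \simeq \rho \otimes \det(\rho)$ by $\det(\rho)^{-1}$ gives $\rho \otimes \det(\rho)^{-1} \simeq \rho$. Comparing the two yields $\rho^\ast \simeq \rho$.

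Since both directions reduce to the lemma together with cancelling a one-dimensional tensor factor, there is no hard step to isolate; the argument is a short formal manipulation and could even be phrased purely at the level of characters, using that $\det(\rho)$ is a linear character with $\overline{\det(\rho)} = \det(\rho)^{-1}$. I would present it in the two-implication form above for clarity.
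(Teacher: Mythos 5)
Your argument is correct and is exactly the intended one: the paper leaves this Corollary without an explicit proof precisely because it follows from \Cref{Lem: Dimension 2 adjoint trick} by the formal manipulation you describe (substituting $\rho^\ast \simeq \rho$ in one direction, cancelling the invertible tensor factor $\det(\rho)$ in the other). No gap.
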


The fact that we embed $G$ into $\SL_3(\Bbbk)$ also allows for the following variant of \Cref{Pro: All cycles cut and locally finite suffices}, that follows from \cite[Corollary 4.7 \& Proposition 4.8]{Giovannini}

\begin{Cor}\label{Cor: deter cycles cut and acyclic suffices}
    Let $G \leq \GL_n(\Bbbk) \leq \SL_{n+1}(\Bbbk)$ be a finite group, and let $R \ast G \simeq_M \Bbbk Q_G/I$ be graded such that each arrow of $Q_G$ is homogeneous of degree $0$ or $1$, and the vertices are of degree $0$. Then this also defines a grading on $\Bbbk Q_G$. If $\Bbbk Q_G/I$ is locally finite dimensional as a graded algebra and all $(n+1)$-cycles in $\Bbbk Q_G$ which contain exactly one determinant arrow are homogeneous of degree $1$, then the grading is a cut. 
\end{Cor}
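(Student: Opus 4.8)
The plan is to reduce the cut condition to the homogeneity of the superpotential $\omega$ of which $\Bbbk Q_G/I$ is the derivation quotient, and then to exploit the special shape of $\omega$ for a group embedded through $\GL_n(\Bbbk)$. Recall from \cite{BSW} that $R \ast G \simeq_M \Bbbk Q_G/I$ is the derivation quotient of a superpotential $\omega$ whose support is a set of $(n+1)$-cycles in $Q_G$. The hypotheses on the grading here are precisely those of \Cref{Pro: All cycles cut and locally finite suffices}; the difference is only that there one requires \emph{all} $(n+1)$-cycles to be homogeneous of degree $1$, whereas we wish to test only those containing exactly one determinant arrow. By \cite[Proposition 4.8]{Giovannini}, the grading is already a cut once $\omega$ is homogeneous of degree $1$, that is, once every cycle in $\supp(\omega)$ has degree $1$. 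It therefore suffices to prove that $\supp(\omega)$ is contained in the set of $(n+1)$-cycles that traverse exactly one determinant arrow.

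To establish this containment I would unwind the explicit construction of $\omega$ from \cite{BSW}. Under the embedding $g \mapsto \left(\begin{smallmatrix} g & 0 \\ 0 & \det(g)^{-1}\end{smallmatrix}\right)$ the natural representation splits as $V = V' \oplus \det(V')^{-1}$, and accordingly the arrows of $Q_G$ split into the arrows coming from $V'$ and the determinant arrows coming from $\det(V')^{-1}$. Since $G \leq \SL_{n+1}(\Bbbk)$, the top exterior power $\Lambda^{n+1} V$ is the trivial representation and $\omega$ is built from a generator of it; consequently $\omega$ is totally antisymmetric in the $n+1$ coordinate directions $x_1, \dots, x_{n+1}$, so every monomial of $\omega$ is a product in which each coordinate direction occurs exactly once. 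In particular the determinant direction $x_{n+1}$, which indexes the determinant arrows, occurs exactly once in each such monomial. Translated to the quiver, this says precisely that every cycle in $\supp(\omega)$ uses exactly one determinant arrow.

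Combining the two steps finishes the argument: the hypothesis forces every $(n+1)$-cycle with exactly one determinant arrow, and hence every cycle in $\supp(\omega)$, to be homogeneous of degree $1$; thus $\omega$ is homogeneous of degree $1$, and \cite[Proposition 4.8]{Giovannini} identifies the grading as a cut. The formal reduction in the first step is immediate once \cite[Proposition 4.8]{Giovannini} is available, so I expect the main obstacle to be the second step: making the ``exactly one determinant arrow'' claim rigorous requires carefully tracking, through the superpotential construction of \cite{BSW}, how the summand $\det(V')^{-1}$ sits inside $V$ and why antisymmetry of $\omega$ forces each of its terms to pass through the determinant direction once and only once.
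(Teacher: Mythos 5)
Your proposal is correct and follows essentially the same route as the paper: the paper's proof is precisely the reduction to \Cref{Pro: All cycles cut and locally finite suffices} combined with the fact, quoted from \cite[Proposition 4.8]{Giovannini}, that $\supp(\omega)$ consists of cycles containing exactly one determinant arrow. The only differences are that you supply a (correct) sketch of that fact via $\Lambda^{n+1}V \simeq \Lambda^{n}V' \otimes \det(V')^{-1}$ instead of citing it, and that you attribute the ``homogeneous $\omega$ plus local finiteness implies cut'' step to \cite[Proposition 4.8]{Giovannini} when that is really the content of the argument behind \Cref{Pro: All cycles cut and locally finite suffices}, i.e.\ \cite[Corollary 4.7]{Giovannini}.
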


\begin{proof}
    The proof is verbatim that of \Cref{Pro: All cycles cut and locally finite suffices}, with the only change that by \cite[Proposition 4.8]{Giovannini}, the support of $\omega$ consists of cycles containing exactly one determinant arrow. 
\end{proof}

The distinct set of determinant arrows at our disposal also leads to the following criterion to disprove the existence of cuts when a determinant arrow is a loop, which we prove here in any dimension. 

\begin{Pro}\label{Pro: Determinant loops prevent cuts}
    Let $G \leq \GL_n(\Bbbk) \leq \SL_{n+1}(\Bbbk)$ be a finite group, and suppose that a determinant arrow $l \colon i \to i$ in $Q_G$ is a loop. Then $\Bbbk Q_G/I$ does not admit an $(n+1)$-preprojective cut. 
\end{Pro}

\begin{proof}
    Suppose for a contradiction that there exists a cut, which in particular means that the degree $0$ part of this grading is finite-dimensional. Note first that the loop $l$ can not be in degree $0$ for any preprojective cut by the No-Loop theorem \cite{NoLoops}. Since we consider a cut, it follows that $l$ has to have degree $1$. Next, we consider the center $Z(\Bbbk Q_G/I) \simeq R^G$ as discussed in \Cref{Rem: Center of kQ/I}. Since the group $G$ acts by $\delta^{-1}$ on the last variable $x_{n+1}$ of the polynomial ring $R$, it follows that $z = x_{n+1}^m \in R^G$ for $m = |\delta|$, hence $z$ is central. Next, we interpret $z$ as an element in $\Bbbk Q_G/I$. We can view the determinant arrows as $e_j x_{n+1} e_l$ for vertices $j, l \in (Q_G)_0$, and hence when we identify the central element $z = x_{n+1}^m$ with a sum of $m$-cycles in $\Bbbk Q_G/I$, we see that $l^m$ is a summand in $z$, and that when we write $ z = \sum_{j \in {Q_G}_0} e_j z$ each component $z_j = e_j z$ consists only of determinant arrows. More precisely, each summand is just a scalar multiple of one $m$-cycle consisting of determinant arrows, since each vertex only has one outgoing determinant arrow. In particular, the summand $z_i$ is a scalar multiple of $l^m$, and hence homogeneous of degree $m$. Because each summand $z_j$ is a scalar multiple of a cycle, we can conclude that each summand $z_j$ is homogeneous of some degree. Next, we argue that indeed each summand $z_j$ is homogeneous of degree $m$. Suppose that this is not the case. Then we can decompose $z$ into homogeneous components, each of which is of the form $z_J = \sum_{j \in J} z_j$ for some $J \subset (Q_G)_0$. Next, recall that the center of any graded algebra becomes a graded subalgebra. It therefore follows that because each $z_J$ is homogeneous, each $z_J$ is central and hence an invariant polynomial in $\Bbbk[x_{n+1}]$ with respect the the determinant action of $G$. But since the components $z_J$ are supported at disjoint sets of vertices, we have $z_J z_{J'} = 0$ for different sets $J \neq J'$, which contradicts the fact that the invariant ring $\Bbbk[x_{n+1}]^G \subseteq \Bbbk[x_{n+1}]$ is a domain. 
    Thus, we conclude that $z$ is homogeneous of degree $m$, meaning that each determinant arrow has degree $1$, meaning that $x_{n+1}$ is homogeneous of degree $1$. 
    Finally, since the support of the potential contains only $(n+1)$-cycles with a determinant arrow and each arrow in $Q_G$ appears in the potential, it follows that the cut consists only of determinant arrows. But this means that the degree $0$ part of $\Bbbk Q_G/I$ contains the infinite-dimensional subalgebra $\Bbbk Q_G'/I' \simeq_M \Bbbk[x_1, \ldots, x_n] \ast G$ for $G \leq \GL_n(\Bbbk)$, contradicting the assumption that we started with a cut.  
\end{proof}

Finally, since we use inductive reasoning in this section, we remind the reader of the fact that the quivers we consider are connected. Recall that being strongly connected means that for every pair of vertices $(v_1,v_2)$, there is a directed path from $v_1$ to $v_2$. 

\begin{Pro}\cite[Proposition 3.3 \& Proposition 3.10]{browne2021connectivity} \label{Pro: Quiver is connected}
    Let $\rho \colon G \rightarrow \GL_n(\Bbbk)$ be a representation. Then $Q_G$ is strongly connected if and only if $\rho$ is faithful. 
\end{Pro}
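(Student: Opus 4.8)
The plan is to translate strong connectivity of $Q_G$ into a statement about which irreducible characters occur in tensor powers of $\rho$, and then to invoke the classical Brauer--Burnside theorem for one implication and an elementary kernel argument for the other. First I would set up a reachability dictionary. Let $A$ denote the adjacency matrix of $Q_G$, so that $A_{ij} = (\chi_i, \chi_j \rho)_G$ is the multiplicity of $\chi_i$ in $\chi_j \otimes \rho$. By induction on $k$, using that tensoring is additive and commutes with taking constituents, I would show that the number of directed paths of length $k$ from $\chi_i$ to $\chi_j$ equals $(A^k)_{ij} = (\chi_i, \chi_j \otimes \rho^{\otimes k})_G$, i.e.\ the multiplicity of $\chi_i$ in $\chi_j \otimes \rho^{\otimes k}$. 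Hence there is a directed path $\chi_i \to \chi_j$ if and only if $\chi_i$ is a constituent of $\chi_j \otimes \rho^{\otimes k}$ for some $k \geq 0$. Writing $\chi_0$ for the trivial representation and specializing, there is a path $\chi_i \to \chi_0$ iff $\chi_i$ occurs in some $\rho^{\otimes k}$, and a path $\chi_0 \to \chi_j$ iff the dual $\chi_j^\ast$ occurs in some $\rho^{\otimes k}$ (using that the trivial character occurs in $\chi_j \otimes \psi$ iff $\chi_j^\ast$ occurs in $\psi$).

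For the implication that faithfulness implies strong connectivity, I would invoke the Brauer--Burnside theorem: if $\rho$ is faithful then every irreducible character of $G$ occurs as a constituent of some tensor power $\rho^{\otimes k}$. Applying this to $\chi_i$ yields a path $\chi_i \to \chi_0$, and applying it to the irreducible $\chi_j^\ast$ yields a path $\chi_0 \to \chi_j$. Concatenating through $\chi_0$ produces a directed path $\chi_i \to \chi_j$ for every ordered pair $(i,j)$, so $Q_G$ is strongly connected.

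For the converse I would argue by contraposition. Suppose $N = \ker\rho \neq 1$, and let $S = \{ \chi \in \Irr(G) \mid N \leq \ker \chi \}$ be the set of vertices that factor through $G/N$. Then $\chi_0 \in S$, and $S$ is a proper subset of $\Irr(G)$ because $\sum_{\chi \in S} (\dim \chi)^2 = |G/N| < |G|$, so $S^c \neq \emptyset$. If $\chi_i \to \chi_j$ is an arrow with $\chi_j \in S$, then $\chi_i$ is a constituent of $\chi_j \otimes \rho$, which is trivial on $N$ since both tensor factors are; hence $\chi_i \in S$ as well. Thus no arrow runs from $S^c$ into $S$, so no directed path can leave $S^c$ and reach $\chi_0 \in S$. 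Since $S^c \neq \emptyset$, the quiver $Q_G$ is not strongly connected, which gives the contrapositive.

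The hard part will be the forward direction, as it rests on the Brauer--Burnside theorem, the only genuinely nontrivial external input; the subtlety there is to apply it to both $\chi_i$ and its dual $\chi_j^\ast$ so that paths are available in both directions through the trivial vertex $\chi_0$. By contrast, the reachability lemma is a routine induction and the converse is entirely elementary once the tensor-power dictionary is in place, so I expect no real difficulty beyond correctly bookkeeping the arrow directions in the adjacency identity.
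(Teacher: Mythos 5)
Your proof is correct. The paper itself gives no proof of this statement — it is quoted directly from \cite[Proposition 3.3 \& Proposition 3.10]{browne2021connectivity} — so there is nothing internal to compare against; your argument (the adjacency-matrix/tensor-power dictionary $(A^k)_{ij} = (\chi_i, \chi_j\otimes\rho^{\otimes k})$, Burnside--Brauer for the forward direction, and the kernel/closure argument for the converse) is the standard route and is essentially the one taken in the cited reference. All the steps check out, including the duality bookkeeping $(\mathbf{1},\chi_j\psi)=(\chi_j^\ast,\psi)$ needed to get paths out of the trivial vertex and the counting $\sum_{\chi\in S}(\dim\chi)^2=|G/N|<|G|$ that makes $S^c$ nonempty.
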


\subsubsection{The dihedral case}
Let now $G \leq \GL_2(\Bbbk)$ be a central extension of a dihedral group $H = D_{2n}$ of order $|H| = 2n$ and degree $n \geq 2$. We denote by $\mathbf{1}$ the trivial representation, by $\rho \colon G \to \GL_2(\Bbbk)$ the defining representation, by $\delta = \det(\rho)$ the determinant representation, and by $\gamma$ the central character of $\rho$. Since $G$ has a faithful irreducible representation, it follows that the center is cyclic of order $|\gamma|$. We subdivide cases based on whether $Z(G)$ has order divisible by $2$. This is akin to the usual distinction of dihedral groups into odd and even degrees. Note that the cases can be seen as asking whether $G$ contains $Z(\SL_2(\mathbb{C}))$ or not, so we refer to this as the central respectively non-central cases. The non-central case requires some more discussion of the group structure, while the central case requires a longer computation of McKay quivers.

\medskip

\paragraph{\textit{The non-central case}}
Let $G$ be a central extension of $D_{2n}$ such that $-I_2 \not \in G$. Then we have from \Cref{Theo: Minimal dihedral groups} that $G$ contains a minimal group $H_{2n,k}$ for odd $n$, and furthermore $k=1$. Note that $|H_{2n,1}| = 2n$. We write $m = \frac{|G|}{2n}$, and can conclude that $G \simeq H_{2n,1} \times C_m$, where $C_m$ is embedded into $\GL_2(\Bbbk)$ as diagonal matrices. In particular this means that $m$ is odd, since otherwise $C_m$ would give rise to an element $-I_2 \in G$. 

\begin{Rem}
    If $m=1$, these groups were considered in \cite[Section 3.3]{NdCS}, because $H_{2n, 1} \simeq D_{2n}$. In particular, the quiver for $G \leq \GL_2(\Bbbk)$ contains loops. 
\end{Rem}

Now, we embed $G \leq \GL_2(\Bbbk)$ into $\SL_3(\Bbbk)$. This means that $C_m \leq \SL_3(\Bbbk)$ is generated by a diagonal matrix $\diag(\epsilon_m, \epsilon_m, \epsilon_m^{-2}) $. If $m \geq 3$, this is enough to produce a cut on $R \ast C_m$, which we then can extend to one on $R \ast G \simeq (R \ast C_m) \ast D_{2n} $. 

\begin{Pro}
    Let $G \leq \SL_3(\Bbbk)$ be a group of type (B) that is a central extension of a dihedral group. If $-I_2 \not \in G $, then $R \ast G \simeq \Bbbk Q_G/I$ admits a $3$-preprojective cut if and only if $G \not \simeq D_{2n}$. 
\end{Pro}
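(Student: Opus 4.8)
The plan is to treat the two implications separately, using the decomposition $G \simeq H_{2n,1} \times C_m \simeq D_{2n} \times C_m$ with $m = |G|/2n$ odd, obtained above. Since $|G| = 2nm$, we have $G \simeq D_{2n}$ exactly when $m = 1$, so the two cases are $m = 1$ and $m \geq 3$, and the claim becomes: a cut exists if and only if $m \geq 3$.

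For the direction ``$G \simeq D_{2n} \Rightarrow$ no cut'' (that is, $m = 1$), I would use that $G = H_{2n,1}$ already has a loop in its McKay quiver as a subgroup of $\GL_2(\Bbbk)$, as recorded in the preceding remark and \cite[Section 3.3]{NdCS}; concretely the loop occurs at the two-dimensional vertex $\rho_{(n-1)/2}$, because $\rho_{(n-1)/2} \otimes \rho$ contains $\rho_{(n-1)/2}$ as a summand. Embedding $G$ into $\SL_3(\Bbbk)$ only adjoins the determinant arrows and hence preserves this loop, whence \Cref{Cor: loops prevent cuts} shows that $\Bbbk Q_G/I$ admits no $3$-preprojective cut.

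For the direction $m \geq 3 \Rightarrow$ cut, I would first produce a cut on $R \ast C_m$ and then transport it along $R \ast G \simeq (R \ast C_m) \ast D_{2n}$. The group $C_m$ sits in $\SL_3(\Bbbk)$ as $\langle \diag(\epsilon_m, \epsilon_m, \epsilon_m^{-2}) \rangle$, a type (A) group with weights $(1,1,-2)$; as $m \geq 3$ is odd this does not factor through $\SL_2(\Bbbk)$ and $C_m \not\simeq C_2 \times C_2$, so \Cref{Theo: SL3 type (A) classification} yields a cut. The essential refinement is to choose this cut \emph{symmetric in $x_1$ and $x_2$}: assign a common degree $a_j$ to the two parallel $(+1)$-arrows leaving each vertex $j \in \mathbb{Z}/m$ and solve the triangle equations $a_j + a_{j+1} + c_{j+2} = 1$ for the degrees $c_j$ of the $(-2)$-arrows. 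Taking $a_0 = 1$ and $a_j = 0$ otherwise, \Cref{Pro: All cycles cut and locally finite suffices} confirms that every $3$-cycle then has degree $1$ and that the degree-$0$ subquiver is acyclic with a global sink at the vertex $0$, so the grading is a locally finite cut. Because $C_m$ is central, $D_{2n}$ acts on $R \ast C_m$ through its (irreducible) action on $\langle x_1, x_2 \rangle \simeq \rho^\ast$ while fixing each idempotent $e_\chi$, so a grading is $D_{2n}$-invariant precisely when it is constant on each parallel pair — which is exactly the symmetry just arranged. Putting $D_{2n}$ in degree $0$ then extends the cut to $R \ast G$, whose degree-$0$ part is the finite-dimensional algebra $(R \ast C_m)_0 \ast D_{2n}$; since $|D_{2n}| = 2n$ is invertible, one invokes the compatibility of higher preprojective gradings with skewing by a finite group (cf.\ \cite{DramburgSandoy}) to conclude that the extension is again a cut.

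The main obstacle is precisely this last compatibility. A generic type-(A) cut on $R \ast C_m$ is \emph{not} $D_{2n}$-invariant, and the obvious symmetric choice (putting $x_3$ in degree $1$ and $x_1, x_2$ in degree $0$) has infinite-dimensional degree-$0$ part. The crux is therefore to exhibit a cut that is at once symmetric in $x_1, x_2$ (so that it descends to $R \ast G$) and locally finite, and then to verify carefully that $D_{2n}$-invariance is enough to guarantee that the extended grading is a genuine $3$-preprojective grading (Gorenstein parameter $1$) rather than merely an algebra grading.
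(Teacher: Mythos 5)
Your argument is correct and follows essentially the same route as the paper: the loop recorded in \cite{NdCS} together with \Cref{Cor: loops prevent cuts} disposes of the case $G \simeq D_{2n}$, and for $m \geq 3$ the paper likewise takes the type-$(1,1,m-2)$ cut on $R \ast C_m$ (exactly the symmetric cut you construct) and extends it along $R \ast G \simeq (R \ast C_m) \ast D_{2n}$. The compatibility step you single out as the main obstacle is precisely what the paper delegates to \cite[Corollary 5.3]{DramburgGasanova}, and the symmetry of $(1,1,m-2)$ in the first two coordinates is what makes that corollary applicable, so no gap remains.
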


\begin{proof}
    If $G \simeq D_{2n}$, then it follows from \cite{NdCS} that $Q_G$ has a loop. It is easy to check that the loops are given by determinant arrows, so by \Cref{Pro: Determinant loops prevent cuts}, $R \ast G$ can not be $3$-preprojective. Otherwise, we have that $G \simeq C_m \times D_{2n}$ for $m \geq 3$. We first note that $R \ast C_m$ has a cut by \Cref{Theo: SL3 type (A) classification}, since a type vector for $C_m$ is given by $(1,1,m-2)$. Then, by \cite[Corollary 5.3]{DramburgGasanova}, this gives rise to a cut on $R \ast G \simeq (R \ast C_m) \ast D_{2n}$. 
\end{proof}

We finish by noting that the value $m$ can also be seen as the order of $\det(\rho)$, since $m$ is odd. 

\medskip

\paragraph{\textit{The central case}}
Now, we consider the case where $-I_2 \in G$. As alluded to, we need to compute the McKay quiver explicitly. We denote by $\mathbf{1}$ the trivial representation, by $\rho \colon G \to \GL_2(\Bbbk)$ the defining representation, by $\delta = \det(\rho)$ the determinant representation, and by $\gamma$ the central character of $\rho$. In particular, we now know that $\gamma$ has order divisible by $2$. 

We first work out the part of the quiver surrounding the representation $\rho$. To find the arrows \emph{into} the vertex $\rho$, we need to decompose $\rho \otimes \rho$. 

\begin{Rem}
    Let $\rho$ be the defining representation. Then $\rho \otimes \rho$ has dimension $4$, and decomposes into the alternating and symmetric square 
    \[ \rho \otimes \rho \simeq \wedge^2 (\rho) \oplus \operatorname{Sym}^2(\rho). \]
    Since $\rho$ has dimension $2$, it follows that $\wedge^2(\rho) \simeq \det(\rho) = \delta$. Hence, $\operatorname{Sym}^2(\rho)$ has dimension $3$, and since any irreducible representation of $G$ has at most dimension $2$, it follows that $\operatorname{Sym}^2(\rho) \simeq \lambda' \oplus \varphi'$ for some $1$-dimensional representation $\lambda'$.  
    Next, we show that all the summands are different. First, we note that $\rho \not \simeq \varphi'$, since $\rho$ has non-trivial central character $\gamma$, and hence $\varphi'$ has central character $\gamma^2 \neq \gamma$. Furthermore, we have 
    \begin{align*}
        1 &= (\rho, \rho) = (\rho \cdot \overline{\rho} , \mathbf{1}) = (\rho \cdot \rho \cdot \delta^{-1} , \mathbf{1}) =  (\rho \cdot \rho \cdot \overline{\delta}, \mathbf{1}) =  (\rho \cdot \rho, \delta) \\
        &= (\wedge^2( \rho) + \operatorname{Sym}^2(\rho), \delta) = (\delta, \delta) + (\operatorname{Sym}^2(\rho), \delta) = 1 + (\operatorname{Sym}^2(\rho), \delta),
    \end{align*}
    so $\delta$ is not a summand in $\operatorname{Sym}^2(\rho)$. We will later see that $(\operatorname{Sym}^2(\rho), \lambda') = 1$. 
    To summarise, we now know that $\rho \otimes \rho$ decomposes into at least three distinct representations of $G$, none of which is isomorphic to $\rho$. The representation $\varphi'$ can be reducible, so this is a case distinction we will make later on.
\end{Rem}

So far, we have found the following small part of the McKay quiver. 
\[ 
\begin{tikzcd}[column sep = {between origins, 8ex}, row sep = {between origins, 8ex}]
\delta \arrow[rdd]  &      \\
\lambda'  \arrow[rd] &      \\
                    & \rho
\end{tikzcd}
\]
Next, note that this part can be translated by $\delta^{-1}$, which acts on $\Irr(G)$. For symmetry reasons, we now write $\lambda = \delta^{-1} \cdot \lambda'$ and $\varphi = \delta^{-1} \cdot \varphi'$. The same computations then produce the following part of the McKay quiver. 

\[ 
\begin{tikzcd}[column sep = {between origins, 8ex}, row sep = {between origins, 8ex}]
\delta \arrow[rdd]  &      & \mathbf{1} \arrow[rdd]         &                  & \delta^{-1} \arrow[rdd]        &                  &     \cdots    &  \delta^{-(m-1)} \arrow[rdd,  dash pattern=on 20pt off 19pt ]        &                  \\
\delta \lambda  \arrow[rd] &      &  \lambda \arrow[rd] &                  & \delta^{-1} \lambda \arrow[rd] &                  & \cdots  &  \delta^{-(m-1)} \lambda \arrow[rd] &                  \\
                    & \rho &                                & \delta^{-1} \rho &                                & \delta^{-2} \rho &    \cdots     &                                & \delta^{-m} \rho
\end{tikzcd}
\]
Next, we connect these pieces. 

\begin{Rem}
    Clearly, there is an arrow $\rho \to \mathbf{1}$, since $\rho \simeq \rho \otimes \mathbf{1}$. Furthermore, we have that 
    \begin{align*}
        (\rho, \delta^{-1} \cdot \lambda' \cdot \rho) &= (\rho, \overline{\rho} \cdot \lambda') = (\rho \cdot \rho, \lambda') = (\delta, \lambda') + (\operatorname{Sym}^2(\rho), \lambda') \\
        &= 0 + (\operatorname{Sym}^2(\rho), \lambda') \geq 1.
    \end{align*}
    Together with the fact that $\rho$ is irreducible and $\dim(\rho) = \dim(\delta^{-1} \otimes \lambda' \otimes \rho )$, we also have 
    \[ 1 \geq (\rho, \delta^{-1} \cdot \lambda' \cdot \rho) = (\operatorname{Sym}^2(\rho), \lambda') \geq 1, \]
    so equality holds.  
\end{Rem}

Thus, we can now fill in the part of the quiver that we know as follows. 

\[
\begin{tikzcd}[column sep = {between origins, 8ex}, row sep = {between origins, 8ex}]
\delta \arrow[rdd]  &                             & \mathbf{1} \arrow[rdd]         &                                         & \delta^{-1} \arrow[rdd]        &                  &   \cdots      &  \delta^{-(m-1)} \arrow[rdd,  dash pattern=on 20pt off 19pt]        &                  \\
\delta \lambda  \arrow[rd] &                             &  \lambda \arrow[rd] &                                         & \delta^{-1} \lambda \arrow[rd] &                  & \cdots  & \delta^{-(m-1)} \lambda \arrow[rd] &                  \\
                    & \rho \arrow[ruu] \arrow[ru] &                                & \delta^{-1} \rho \arrow[ruu] \arrow[ru] &                                & \delta^{-2} \rho \arrow[ru] \arrow[ruu] &    \cdots     &                                & \delta^{-m} \rho 
\end{tikzcd}
\]

Now, we need to make a case distinction. If the remaining summand $\varphi'$ of $\operatorname{Sym}^2(\rho)$ is decomposable, we obtain two further non-isomorphic representations $\lambda'_2$ and $\lambda'_3$. We write as before $\lambda_i = \delta^{-1} \cdot \lambda'_i$ and $\varphi = \delta^{-1} \cdot \varphi'$. The same argument as above shows that $(\lambda_i', \operatorname{Sym}^2(\rho)) = 1$, so we have that all summands of $\operatorname{Sym}^2(\rho)$ are different. Thus, adding in the vertices for $\lambda_2$ and $\lambda_3$, as well as the products with $\delta^{-i}$, produces the following quiver. Note that $\lambda_2 \otimes \rho$ has dimension $2$, so there are no other arrows pointing to a vertex $\lambda_2$ or $\lambda_3$. 

\[
\begin{tikzcd}[column sep = {between origins, 8ex}, row sep = {between origins, 8ex}]
\delta \arrow[rdd]           &                                                    & \mathbf{1} \arrow[rdd] &                                                                & \delta^{-1} \arrow[rdd]           &                  &    \cdots     &  \delta^{-(m-1)} \arrow[rdd, dash pattern=on 20pt off 19pt]           &                  \\
\delta \lambda  \arrow[rd]   &                                                    & \lambda \arrow[rd]     &                                                                & \delta^{-1} \lambda \arrow[rd]    &                  & \cdots  &  \delta^{-(m-1)} \lambda \arrow[rd]    &                  \\
                             & \rho \arrow[ruu] \arrow[ru] \arrow[rd] \arrow[rdd] &                        & \delta^{-1} \rho \arrow[ruu] \arrow[ru] \arrow[rd] \arrow[rdd] &                                   & \delta^{-2} \rho \arrow[ru] \arrow[ruu] \arrow[rd] \arrow[rdd]&  \cdots       &                                   & \delta^{-m} \rho \\
\delta \lambda_2 \arrow[ru]  &                                                    &  \lambda_2 \arrow[ru]  &                                                                & \delta^{-1} \lambda_2 \arrow[ru]  &                  &     \cdots    &  \delta^{-(m-1)} \lambda_2 \arrow[ru]  &                  \\
\delta \lambda_3 \arrow[ruu] &                                                    &  \lambda_3 \arrow[ruu] &                                                                & \delta^{-1} \lambda_3 \arrow[ruu] &                  &      \cdots   &  \delta^{-(m-1)} \lambda_3 \arrow[ruu, dash pattern=on 18pt off 17pt] &                 
\end{tikzcd}
\]

Note that $\delta$ has finite order, so we choose $m \geq 1$ to be minimal so that the vertex $\delta^{-m} \rho$ is equal to the vertex $\rho$. Indeed it is not difficult to see that $\delta$ has order $m$ or $2m$. 

\begin{Rem}
    By assumption, we have that $\delta^{-m} \rho \simeq \rho$. Taking determinants, we find that 
    \[  \delta = \det(\rho) = \det(\delta^{-m} \rho) = \delta^{-2m} \delta. \]
    This yields that $\delta^{-2m} = (\delta^{-m})^2 = 1$, showing that $\delta$ has order $m$ or $2m$. 
    Next, we compute that 
    \begin{align*}
        1 = (\rho, \delta^{-m} \rho) = (\rho \rho, \delta^{-m} \delta ) = (\delta + \delta \lambda + \delta \lambda_2 + \delta \lambda_3, \delta^{-m} \delta).
    \end{align*}
    From this it follows that $\delta^{-m} \delta$ is isomorphic to $\delta$ or to some $\delta \lambda_i$. In the former case, we have that $\delta$ has order $m$. In the latter case, we relabel so that $\delta \lambda \simeq \delta^{-m} \delta$.    
\end{Rem}

We have found the whole McKay quiver, since by \Cref{Pro: Quiver is connected} the quiver is strongly connected and in every step we added all incoming arrows to all new vertices. 

Now, we embed $G$ into $\SL_3(\Bbbk)$, meaning we add additional arrows which are opposite to the action of $\delta^{-1}$ on the quiver, that is according to the action of $\delta$. We note that $\delta$ can act on $\Irr(G)$ with orbits of different size, so it is not obvious how to connect the vertices corresponding to the $1$-dimensional representations. Depending on whether $\delta$ has order $m$ or $2m$, we have $\delta^{-(m-1)} \simeq \delta$ or $\delta^{-(m-1)} \simeq \delta \lambda$, so two quivers can appear. Thus, in the drawing below one needs to choose either the blue or the green arrows. 

\[
\begin{tikzcd}[column sep = {between origins, 8ex}, row sep = {between origins, 8ex}]
\delta \arrow[rdd]           &                                                    & \mathbf{1} \arrow[rdd] \arrow[ll] &                                                                           & \delta^{-1} \arrow[rdd] \arrow[ll]           &                                                                           & \cdots  \arrow[ll]                                             &  \delta^{-(m-1)} \arrow[rdd, dash pattern=on 20pt off 19pt] \arrow[l]           &                                                                           & \mathbf{1}  \arrow[ll, color=blue]  \arrow[lld, color=green]         &                 \\
\delta \lambda  \arrow[rd]   &                                                    & \lambda \arrow[rd] \arrow[ll]     &                                                                           & \delta^{-1} \lambda \arrow[rd] \arrow[ll]    &                                                                           & \cdots  \arrow[ll]                                             &  \delta^{-(m-1)} \lambda \arrow[rd] \arrow[l]    &                                                                           &  \lambda  \arrow[ll, color=blue] \arrow[llu, color=green]   &                 \\
                             & \rho \arrow[ruu] \arrow[ru] \arrow[rd] \arrow[rdd] &                                   & \delta^{-1} \rho \arrow[ruu] \arrow[ru] \arrow[rd] \arrow[rdd] \arrow[ll] &                                              & \delta^{-2} \rho \arrow[ll] \arrow[ru] \arrow[ruu] \arrow[rd] \arrow[rdd] & \cdots \arrow[l] \arrow[ru] \arrow[ruu, dash pattern=on 25pt off 19pt] \arrow[rd] \arrow[rdd, dash pattern=on 20pt off 19pt] &                                             & \delta^{-m} \rho \arrow[ll] \arrow[rd] \arrow[rdd] \arrow[ru] \arrow[ruu] &                                         & \\
\delta \lambda_2 \arrow[ru]  &                                                    &  \lambda_2 \arrow[ru] \arrow[ll]  &                                                                           & \delta^{-1} \lambda_2 \arrow[ru] \arrow[ll]  &                                                                           & \cdots  \arrow[ll]                                             &  \delta^{-(m-1)} \lambda_2 \arrow[ru] \arrow[l]  &                                                                           &  \lambda_2  \arrow[ll, color=blue] \arrow[lld, color=green] &                 \\
\delta \lambda_3 \arrow[ruu] &                                                    &  \lambda_3 \arrow[ruu] \arrow[ll] &                                                                           & \delta^{-1} \lambda_3 \arrow[ruu] \arrow[ll] &                                                                           & \cdots \arrow[ll]                                              &  \delta^{-(m-1)} \lambda_3 \arrow[ruu,dash pattern=on 20pt off 17pt] \arrow[l] &                                                                           &  \lambda_3  \arrow[ll, color=blue] \arrow[llu, color=green] &                
\end{tikzcd}
\]

We return to the other case, namely when the summand $\varphi'$ of $\operatorname{Sym}^2(\rho)$ is irreducible. Recall that we write $\varphi = \delta^{-1} \varphi'$. We immediately add in all the irreducible representations $\delta^{-i} \otimes \varphi$ to obtain the following partial McKay quiver.

\[
\begin{tikzcd}[column sep = {between origins, 8ex}, row sep = {between origins, 8ex}]
\delta \arrow[rdd]        &                             & \mathbf{1} \arrow[rdd] &                                         & \delta^{-1} \arrow[rdd]        &                                         & \cdots                        &  \delta^{-(m-1)} \arrow[rdd, dash pattern=on 20pt off 19pt]        &                  \\
\delta \lambda \arrow[rd] &                             & \lambda \arrow[rd]     &                                         & \delta^{-1} \lambda \arrow[rd] &                                         & \cdots                        &  \delta^{-(m-1)} \lambda \arrow[rd] &                  \\
                          & \rho \arrow[ruu] \arrow[ru] &                        & \delta^{-1} \rho \arrow[ruu] \arrow[ru] &                                & \delta^{-2} \rho \arrow[ru] \arrow[ruu] & \cdots \arrow[ru] \arrow[ruu, dash pattern=on 25pt off 18pt] &                                & \delta^{-m} \rho \\
\delta \varphi \arrow[ru] &                             & \varphi \arrow[ru]     &                                         & \delta^{-1} \varphi \arrow[ru] &                                         & \cdots                        &  \delta^{-(m-1)} \varphi \arrow[ru] &                 
\end{tikzcd}
\]

We note that the newly added vertices are all new. 

\begin{Rem}
    We already noted from the central characters that $\rho \not \simeq \varphi$. Recall that since $- I_2 \in G$, the central character $\gamma$ of $\rho$ has even order. Furthermore, since $\delta \leq \rho \otimes \rho$, it follows that $\delta$ has central character $\gamma^2$. Therefore, $\delta^{i} \rho$ has central character $\gamma^{2i+1}$ for all $i$. Similarly, since $\varphi \leq \delta^{-1} \rho \otimes \rho$, we see that $\varphi$ has central character $\gamma^0$, and hence $\delta^i \varphi$ has central character $\gamma^{2i}$ for all $i$. Therefore, $\delta^i \rho \not \simeq \delta^j \varphi$ for all $i$ and $j$. 
\end{Rem}

Next, we argue that there are arrows $\delta^{-i} \rho \to \delta^{-i} \varphi $. 

\begin{Rem}
With the same notation as before, we compute 
\[(\rho, \varphi \rho) = (\rho \overline{\rho},  \varphi) = (\rho^2, \delta \varphi ) = (\delta, \delta \varphi) + (\delta \lambda, \delta \varphi) + (\delta \varphi, \delta \varphi) = 1. \]
Scaling this by $\delta^{-i}$ shows that we have arrows $\delta^{-i} \rho \to \delta^{-i} \varphi $ in the quiver. 
\end{Rem}

We have obtained the following quiver. 

\[
\begin{tikzcd}[column sep = {between origins, 8ex}, row sep = {between origins, 8ex}]
\delta \arrow[rdd]        &                                        & \mathbf{1} \arrow[rdd] &                                                    & \delta^{-1} \arrow[rdd]        &                                                    & \cdots                                   & \delta^{-(m-1)} \arrow[rdd, dash pattern=on 20pt off 19pt]        &                  \\
\delta \lambda \arrow[rd] &                                        & \lambda \arrow[rd]     &                                                    & \delta^{-1} \lambda \arrow[rd] &                                                    & \cdots                                   & \delta^{-(m-1)} \lambda \arrow[rd] &                  \\
                          & \rho \arrow[ruu] \arrow[ru] \arrow[rd] &                        & \delta^{-1} \rho \arrow[ruu] \arrow[ru] \arrow[rd] &                                & \delta^{-2} \rho \arrow[ru] \arrow[ruu] \arrow[rd] & \cdots \arrow[ru] \arrow[ruu, dash pattern=on 25pt off 18pt] \arrow[rd] &                                & \delta^{-m} \rho \\
\delta \varphi \arrow[ru] &                                        & \varphi \arrow[ru]     &                                                    & \delta^{-1} \varphi \arrow[ru] &                                                    & \cdots                                   & \delta^{-(m-1)} \varphi \arrow[ru] &                 
\end{tikzcd}
\]

From here on, the computation is inductive in nature. We have an irreducible $2$-dimensional representation $\varphi$. We know that $\varphi \otimes \rho$ has dimension $4$, and we already know that $\rho$ is a summand in $\varphi \otimes \rho$. Hence, there is another summand $\varphi_2$ of dimension $2$ left. If it is reducible, we obtain two $1$-dimensional characters $\lambda_2$ and $\lambda_3$, and we complete the picture to the following quiver. Again, the quiver is complete since the newly added vertices have no other incoming arrows for dimension reasons, and by \Cref{Pro: Quiver is connected} we know that we are done. 

\[
\begin{tikzcd}[column sep = {between origins, 8ex}, row sep = {between origins, 8ex}]
\delta \arrow[rdd]                               &                                        & \mathbf{1} \arrow[rdd]                    &                                                    & \delta^{-1} \arrow[rdd]                               &                                                    & \cdots                                   & \delta^{-(m-1)} \arrow[rdd, dash pattern=on 20pt off 19pt]                               &                       \\
\delta \lambda \arrow[rd]                        &                                        & \lambda \arrow[rd]                        &                                                    & \delta^{-1} \lambda \arrow[rd]                        &                                                    & \cdots                                   & \delta^{-(m-1)} \lambda \arrow[rd]                        &                       \\
                                                 & \rho \arrow[ruu] \arrow[ru] \arrow[rd] &                                           & \delta^{-1} \rho \arrow[ruu] \arrow[ru] \arrow[rd] &                                                       & \delta^{-2} \rho \arrow[ru] \arrow[ruu] \arrow[rd] & \cdots \arrow[ru] \arrow[ruu, dash pattern=on 25pt off 18pt] \arrow[rd] &                                                       & \delta^{-m} \rho      \\
\delta \varphi \arrow[ru] \arrow[rd] \arrow[rdd] &                                        & \varphi \arrow[ru] \arrow[rd] \arrow[rdd] &                                                    & \delta^{-1} \varphi \arrow[ru] \arrow[rd] \arrow[rdd] &                                                    & \cdots                                   & \delta^{-(m-1)} \varphi \arrow[ru] \arrow[rd] \arrow[rdd] &                       \\
                                                 & \lambda_2 \arrow[ru]                   &                                           & \delta^{-1} \lambda_2 \arrow[ru]                   &                                                       & \delta^{-2} \lambda_2 \arrow[ru]                   & \cdots  \arrow[ru]                       &                                                       & \delta^{-m} \lambda_2 \\
                                                 & \lambda_3 \arrow[ruu]                  &                                           & \delta^{-1} \lambda_3 \arrow[ruu]                  &                                                       & \delta^{-2} \lambda_3 \arrow[ruu]                  & \dots  \arrow[ruu]                       &                                                       & \delta^{-m} \lambda_3
\end{tikzcd}
\]

If instead $\varphi_2$ is irreducible, we obtain another row in a square shaped quiver. 

\[ 
\begin{tikzcd}[column sep = {between origins, 8ex}, row sep = {between origins, 8ex}]
\delta \arrow[rdd]                   &                                        & \mathbf{1} \arrow[rdd]        &                                                    & \delta^{-1} \arrow[rdd]                   &                                                    & \cdots                                   & \delta^{-(m-1)} \arrow[rdd, dash pattern=on 20pt off 19pt]                   &                       \\
\delta \lambda \arrow[rd]            &                                        & \lambda \arrow[rd]            &                                                    & \delta^{-1} \lambda \arrow[rd]            &                                                    & \cdots                                   & \delta^{-(m-1)} \lambda \arrow[rd]            &                       \\
                                     & \rho \arrow[ruu] \arrow[ru] \arrow[rd] &                               & \delta^{-1} \rho \arrow[ruu] \arrow[ru] \arrow[rd] &                                           & \delta^{-2} \rho \arrow[ru] \arrow[ruu] \arrow[rd] & \cdots \arrow[ru] \arrow[ruu, dash pattern=on 25pt off 18pt] \arrow[rd] &                                           & \delta^{-m} \rho      \\
\delta \varphi \arrow[ru] \arrow[rd] &                                        & \varphi \arrow[ru] \arrow[rd] &                                                    & \delta^{-1} \varphi \arrow[ru] \arrow[rd] &                                                    & \cdots                                   & \delta^{-(m-1)} \varphi \arrow[ru] \arrow[rd] &                       \\
                                     & \varphi_2 \arrow[ru]                   &                               & \delta^{-1} \varphi_2 \arrow[ru]                   &                                           & \delta^{-2} \varphi_2 \arrow[ru]                   & \cdots \arrow[ru]                        &                                           & \delta^{-m} \varphi_2
\end{tikzcd}
\]

It is important here to point out that $\varphi_2$ can not be isomorphic to any $\delta^i \rho$, so that we indeed obtain a \emph{new} row. This is the base case of an induction showing that each row is new. We first show that $\varphi_2 \not \simeq \rho$. 

\begin{Rem}
    With the above setup, decompose $\varphi \otimes  \rho \simeq \rho \oplus \varphi_2$, and assume that $\varphi_2$ is irreducible. Then we compute that 
    \begin{align*}
        ( \rho, \varphi_2) + ( \rho, \rho) &= ( \rho, \varphi_2 + \rho) = ( \rho, \varphi \cdot \rho) =  (\rho \cdot \overline{\rho}, \varphi) \\
        &= ( \rho \cdot \rho , \delta \varphi) =(\delta + \delta \lambda + \delta \varphi, \delta \varphi) \\
        &= (\delta, \delta \varphi) + ( \delta \lambda , \delta \varphi) + ( \delta \varphi, \delta \varphi) = 0 + 0 + ( \delta \varphi, \delta \varphi) = 1.
    \end{align*}
    Since $(\rho, \rho) = 1$, it follows that $( \rho, \varphi_2) = 0$. 
\end{Rem}

Next, to complete the base case, we show that $\varphi_2$ can not be isomorphic to any $\delta^{-i} \rho$. This is more involved, and we make use \Cref{Theo: Minimal dihedral groups}. 

\begin{Rem}
    With the above setup, decompose $\varphi \otimes  \rho \simeq \rho \oplus \varphi_2$, and assume that $\varphi_2$ is irreducible. Furthermore, assume that $n \geq 3$. Then, we perform a direct computation with the minimal groups from \Cref{Theo: Minimal dihedral groups}. First, let $n$ be even, and let $j$ be $0$ or $1$, so that we can cover both kinds of minimal groups, and suppose that $G$ is a minimal group. Let $g \in G$ be the diagonal generator. The character $\chi_\rho$ of $\rho$ takes value $(\epsilon_{2^{k+1}})^{j} (\epsilon_{2n} + \epsilon_{2n}^{-1})$ on $g$. Taking the square gives us the character of $\rho \otimes \rho$, and with the knowledge of the character of $\delta$ and of $\delta \lambda$, it is easy to see that 
    \begin{align*}
        \chi_{ \rho \otimes \rho}(g) &= (\epsilon_{2^{k+1}}^2)^{j} (\epsilon_{2n} + \epsilon_{2n}^{-1})^2 \\
        &= (\epsilon_{2^{k+1}}^2)^{j} + (\epsilon_{2^{k+1}}^2)^{j} + (\epsilon_{2^{k+1}}^2)^{j} (\epsilon_{2n}^2 + \epsilon_{2n}^{-2})\\
        &= \chi_{\delta}(g) + \chi_{\delta \lambda}(g) + \chi_{\delta \varphi}
    \end{align*} 
    Tensoring $\varphi$ again with $\rho$ gives us the character value 
    \begin{align*}
        \chi_{ \varphi \otimes \rho}(g) &=  (\epsilon_{2^{k+1}}^2)^{j} (\epsilon_{2n}^2 + \epsilon_{2n}^{-2}) (\epsilon_{2n} + \epsilon_{2n}^{-1}) \\
        &=  (\epsilon_{2^{k+1}}^2)^{j} (\epsilon_{2n}^3 + \epsilon_{2n}^{-3}) + (\epsilon_{2^{k+1}}^2)^{j} (\epsilon_{2n} + \epsilon_{2n}^{-1})
    \end{align*}
    and hence $\chi_{\varphi_2}(g) = (\epsilon_{2^{k+1}}^2)^{j} (\epsilon_{2n}^3 + \epsilon_{2n}^{-3})$. It now is easy to check that this value is different from the values of all characters of all $ \delta^{-i} \rho $. 
    
    If $n$ is odd, the same computation works by replacing $\epsilon_{2n}$ with $\epsilon_n$. Finally, if $G$ is not minimal, it contains a minimal group, so the fact that the characters for $\delta^{ -i}\rho$ and $\varphi_2$ are different on the subgroup shows our claim. 

    Finally, let $n=2$. Then one can show using \Cref{Theo: Minimal dihedral groups} that $\operatorname{Sym}^2(\rho)$ already decomposes into $3$ irreducible summands, so this case does not actually occur here. 
    
    We also point out here that with more computations, one can reduce $n$ to be $4$ under the assumption $\delta^{ -i}\rho \simeq \varphi_2$, so even fewer cases need to be checked by hand.  
\end{Rem}

We continue the inductive computation of the quiver, now decomposing $\rho \otimes \delta^{-1} \varphi_2 \simeq \varphi \oplus \varphi_3$ and adding another row to the quiver.

\[ 
\begin{tikzcd}[column sep = {between origins, 8ex}, row sep = {between origins, 8ex}]
\delta \arrow[rdd]                   &                                        & \mathbf{1} \arrow[rdd]        &                                                    & \delta^{-1} \arrow[rdd]                   &                                                    & \cdots                                   & \delta^{-(m-1)} \arrow[rdd, dash pattern=on 20pt off 19pt]                   &                       \\
\delta \lambda \arrow[rd]            &                                        & \lambda \arrow[rd]            &                                                    & \delta^{-1} \lambda \arrow[rd]            &                                                    & \cdots                                   & \delta^{-(m-1)} \lambda \arrow[rd]            &                       \\
                                     & \rho \arrow[ruu] \arrow[ru] \arrow[rd] &                               & \delta^{-1} \rho \arrow[ruu] \arrow[ru] \arrow[rd] &                                           & \delta^{-2} \rho \arrow[ru] \arrow[ruu] \arrow[rd] & \cdots \arrow[ru] \arrow[ruu, dash pattern=on 25pt off 18pt] \arrow[rd] &                                           & \delta^{-m} \rho      \\
\delta \varphi \arrow[ru] \arrow[rd] &                                        & \varphi \arrow[ru] \arrow[rd] &                                                    & \delta^{-1} \varphi \arrow[ru] \arrow[rd] &                                                    & \cdots                                   & \delta^{-(m-1)} \varphi \arrow[ru] \arrow[rd] &                       \\
                                     & \varphi_2 \arrow[ru] \arrow[rd]        &                               & \delta^{-1} \varphi_2 \arrow[ru] \arrow[rd]        &                                           & \delta^{-2} \varphi_2 \arrow[ru]                   & \cdots \arrow[ru] \arrow[rd]             &                                           & \delta^{-m} \varphi_2 \\
\delta \varphi_3 \arrow[ru]          &                                        & \varphi_3 \arrow[ru]          &                                                    & \delta^{-1} \varphi_3 \arrow[ru]          &                                                    &                                          & \delta^{-(m-1)} \varphi_3 \arrow[ru]          &                      
\end{tikzcd}
\]

\begin{Rem}
    To see that in each step a new row is added, we make two observations. Suppose that $\varphi_i$ has been newly added, and write $\varphi_0 = \rho$ and $\varphi_1 = \varphi$. First of all, note that each representation in the quiver has a central character which is a power of $\gamma$, the central character of $\rho$. Furthermore, representations ``in the same column'' have the same central characters. More precisely, the column containing $\rho$ has central character $\gamma$, and $\delta$ has central character $\gamma^{2}$. One checks easily that $\varphi_2$, if it exists, has the same central character as $\rho$, and so on. The column containing $\mathbf{1}$ has trivial central character, the column containing $\delta^{-1} \rho$ has central character $\gamma^{-1}$ and so on. This means that all representations $\delta^{j} \varphi_i$ for even $i$ have central character an odd power of $\gamma$, while the representations $\delta^{j} \varphi_i$ for odd $i$ have central characters which are even powers of $\gamma$. In particular, this means that a newly added $\varphi_i$ can not be isomorphic to any $ \delta^{-k} \varphi_j$ if $j$ has different parity than $i$. 
    The second observation proceeds by induction. The base case was already dealt with when we showed that $\varphi_0 = \rho$ is different from $\delta^{-k} \varphi_2$ for all $k$. We spell out the case when the newly added row containing $\varphi_i$ has even index $i$. In the odd case, all computations need to be performed with an extra factor $\delta^{-1}$. We consider whether the newly added $\varphi_i$ can be isomorphic to some $\delta^{-k} \varphi_j$ for some $k$ and $0 < j < i-1$ even. We will later see that it suffices to check this for $k=0$, but for now we proceed more generally. We compute 
    \begin{align*}
        (\delta^{-k} \varphi_j, \varphi_i)  + (\delta^{-k} \varphi_j, \varphi_{i-2}) &= (\delta^{-k} \varphi_j, \varphi_i + \varphi_{i-2}) \\
        &= (\delta^{-k} \varphi_j, \varphi_{i-1} \rho) = (\delta^{-k} \varphi_j \rho,  \delta \varphi_{i-1}) \\
        &= (\delta^{-k} \delta( \varphi_{j-1} + \varphi_{j+1}), \delta \varphi_{i-1}) \\
        &= (\delta^{-k} \delta \varphi_{j-1}, \delta \varphi_{i-1})  + (\delta^{-k} \delta \varphi_{j+1}, \delta \varphi_{i-1}) \\
        &= (\delta^{-k}  \varphi_{j-1},  \varphi_{i-1})  + (\delta^{-k}  \varphi_{j+1},  \varphi_{i-1}) .
    \end{align*}
    By induction, we have that $(\delta^{-k} \varphi_{j-1},  \varphi_{i-1}) = 0$. If also $(\delta^{-k}  \varphi_{j+1},  \varphi_{i-1}) = 0$, we are done. Otherwise, we have $(\delta^{-k}  \varphi_{j+1},  \varphi_{i-1}) = 1$, which by induction can only happen if $j+1 = i-1$. But then, mimicking the computation where we showed that $\delta^{-l} \rho \not \simeq \varphi_2$ for any $l$, we find that 
    \[ (\delta^{-k} \varphi_{j+1},  \varphi_{i-1}) =  (\delta^{-k} \varphi_j, \varphi_{i-2}),  \]
    and our claim follows from 
    \[ (\delta^{-k} \varphi_j, \varphi_i) = 0. \]
\end{Rem} 

Eventually, the newly obtained $\varphi_l$ is reducible, decomposing into $\lambda_2$ and $\lambda_3$. The same computation as before shows that $\lambda_2$ and $\lambda_3$ are two new $1$-dimensional representations, leading to the final row of the quiver. Again by \Cref{Pro: Quiver is connected}, using that we have added all incoming arrows in each step, we know that we have found the complete quiver. In the picture below, $l$ is odd. 

\[
\begin{tikzcd}[column sep = {between origins, 8ex}, row sep = {between origins, 8ex}]
\delta \arrow[rdd]                   &                                                 & \mathbf{1} \arrow[rdd]        &                                                              & \delta^{-1} \arrow[rdd]                   &                                                              & \cdots                                   & \delta^{-(m-1)} \arrow[rdd, dash pattern=on 20pt off 19pt]                 &                            \\
\delta \lambda \arrow[rd]            &                                                 & \lambda \arrow[rd]            &                                                              & \delta^{-1} \lambda \arrow[rd]            &                                                              & \cdots                                   & \delta^{-(m-1)} \lambda \arrow[rd]            &                            \\
                                     & \rho \arrow[ruu] \arrow[ru] \arrow[rd]          &                               & \delta^{-1} \rho \arrow[ruu] \arrow[ru] \arrow[rd]           &                                           & \delta^{-2} \rho \arrow[ru] \arrow[ruu] \arrow[rd]           & \cdots \arrow[ru] \arrow[ruu, dash pattern=on 25pt off 18pt] \arrow[rd] &                                           & \delta^{-m} \rho           \\
\delta \varphi \arrow[ru] \arrow[rd] &                                                 & \varphi \arrow[ru] \arrow[rd] &                                                              & \delta^{-1} \varphi \arrow[ru] \arrow[rd] &                                                              & \cdots                                   & \delta^{-(m-1)} \varphi \arrow[ru] \arrow[rd] &                            \\
                                     & \varphi_2 \arrow[ru] \arrow[rd]                 &                               & \delta^{-1} \varphi_2 \arrow[ru] \arrow[rd]                  &                                           & \delta^{-2} \varphi_2 \arrow[ru] \arrow[rd]                  & \cdots \arrow[ru] \arrow[rd]             &                                           & \delta^{-m} \varphi_2      \\
\vdots \arrow[ru] \arrow[rd]         &                                                 & \vdots \arrow[ru] \arrow[rd]  &                                                              & \vdots \arrow[ru] \arrow[rd]              &                                                              & \cdots                                   & \vdots \arrow[ru] \arrow[rd]              &                            \\
                                     & \varphi_{l-1} \arrow[rd] \arrow[rdd] \arrow[ru] &                               & \delta^{-1}  \varphi_{l-1} \arrow[rd] \arrow[rdd] \arrow[ru] &                                           & \delta^{-2}  \varphi_{l-1} \arrow[ru] \arrow[rd] \arrow[rdd] & \cdots \arrow[ru] \arrow[rd] \arrow[rdd, dash pattern=on 20pt off 19pt]&                                           & \delta^{-m} \varphi_{l-1}  \\
\delta \lambda_2 \arrow[ru]          &                                                 & \lambda_2 \arrow[ru]          &                                                              & \delta^{-1} \lambda_2 \arrow[ru]          &                                                              & \cdots                                   & \delta^{-(m-1)} \lambda_2 \arrow[ru]          &                            \\
\delta \lambda_3 \arrow[ruu]         &                                                 & \lambda_3 \arrow[ruu]         &                                                              & \delta^{-1} \lambda_3 \arrow[ruu]         &                                                              & \cdots                                   & \delta^{-(m-1)} \lambda_3 \arrow[ruu,dash pattern=on 20pt off 17pt]         &                           
\end{tikzcd}
\]

It remains to show that all rows have ``the same size''. However, this is somewhat ill defined since one can read the rows of $1$-dimensional representations either as $\delta$-orbits, in which case there might be only $2$, or as the rows as we drew them, in which case there are $4$. This is governed by the order of $\delta$. For our purposes of finding cuts, it suffices that $\delta$ has the same size of orbits on all the $2$-dimensional irreducible representations. 

\begin{Rem}
    We show that all $\delta$-orbits have the same size on the $2$-dimensional irreducible representations. Again, we set $\varphi_0 = \rho$, and for convenience we choose the index $i$ to be even.
    Denote by $m \geq 1$ the smallest value such that $ \delta^m \varphi_i = \varphi_i$, and by $k \geq 1$ the smallest value such that $ \delta^k \varphi_{i+1} = \varphi_{i+1}$. We compute that
    \begin{align*}
        1 &= (\varphi_i , \varphi_{i+1} \cdot \rho) = (\varphi_i , \delta^k \varphi_{i+1} \cdot \rho) \\
        &= (\varphi_i , \delta^k (\varphi_{i} + \varphi_{i+2})) = (\varphi_i , \delta^k \varphi_{i}) + (\varphi_i , \delta^k  \varphi_{i+2}) \\
        &= (\varphi_i , \delta^k \varphi_{i}) + 0
    \end{align*}
    and hence $m \mid k$. Similarly, computing 
    \begin{align*}
        1 &= (\varphi_{i+1} , \delta^{-1} \varphi_{i} \cdot \rho) = (\varphi_{i+1} , \delta^{-1} \delta^m \varphi_{i} \cdot \rho) \\
        &= (\varphi_{i+1} ,  \delta^m (\varphi_{i+1} + \varphi_{i-1}) ) = (\varphi_{i+1} ,  \delta^m \varphi_{i+1}) + (\varphi_{i+1} + \delta^m \varphi_{i-1} ) \\
        &= (\varphi_{i+1} ,  \delta^m \varphi_{i+1}) + 0,
    \end{align*}
    shows that $k \mid m$, so we find $m=k$. Note that in the above computation, the possible $\varphi_{-1}$ needs to be interpreted as the sum of two $1$-dimensional representations. Finally, note that the same computation for $\delta$ instead of $\varphi$ also shows that $\delta^m \simeq \mathbf{1}$ or $\delta^m \simeq \lambda$, so there are $2$ or $4$ $\delta$-orbits on the set of $1$-dimensional representations. 
\end{Rem}

Thus, adding in the arrows from embedding $G $ into $\SL_3(\Bbbk)$, we obtain two possible quivers, depending on the order of $\delta$ as before. We draw both of them in the picture below, where one needs to choose either the blue or the green arrows. 

We now prove that a cut exists on this quiver as long as $\delta$ is not trivial. To see this, we note that the quiver looks ``almost'' like a triangular grid on a cylinder, except for the forking at the top and bottom rows. We therefore can quotient this quiver onto another quiver that looks like a triangular grid on the torus, which is precisely what we treated in type (A).

\[
\begin{tikzcd}[column sep = {between origins, 8ex}, row sep = {between origins, 8ex}]
\delta \arrow[rdd]                   &                                                 & \mathbf{1} \arrow[rdd] \arrow[ll]        &                                                                         & \delta^{-1} \arrow[rdd] \arrow[ll]                   &                                                                         & \cdots \arrow[ll]                                  & \delta^{-(m-1)} \arrow[rdd, dash pattern=on 20pt off 19pt]                  &                                                                         & \mathbf{1} \arrow[ll, color=blue] \arrow[lld, color=green] \\
\delta \lambda \arrow[rd]            &                                                 & \lambda \arrow[rd] \arrow[ll]            &                                                                         & \delta^{-1} \lambda \arrow[rd] \arrow[ll]            &                                                                         & \cdots \arrow[ll]                                  & \delta^{-(m-1)} \lambda \arrow[rd]         &                                                                         & \lambda \arrow[ll, color=blue] \arrow[llu, color=green]   \\
                                     & \rho \arrow[ruu] \arrow[ru] \arrow[rd]          &                                          & \delta^{-1} \rho \arrow[ruu] \arrow[ru] \arrow[rd] \arrow[ll]           &                                                      & \delta^{-2} \rho \arrow[ru] \arrow[ruu] \arrow[rd] \arrow[ll]           & \cdots \arrow[ru] \arrow[ruu, dash pattern=on 25pt off 18pt] \arrow[rd] \arrow[l] &                                                         & \delta^{-m} \rho \arrow[ll] \arrow[ruu] \arrow[ru] \arrow[rd]           &                       \\
\delta \varphi \arrow[ru] \arrow[rd] &                                                 & \varphi \arrow[ru] \arrow[rd] \arrow[ll] &                                                                         & \delta^{-1} \varphi \arrow[ru] \arrow[rd] \arrow[ll] &                                                                         & \cdots \arrow[ll]                                  & \delta^{-(m-1)} \varphi \arrow[ru] \arrow[rd] &                                                                         & \varphi \arrow[ll]    \\
                                     & \varphi_2 \arrow[ru] \arrow[rd]                 &                                          & \delta^{-1} \varphi_2 \arrow[ru] \arrow[rd] \arrow[ll]                  &                                                      & \delta^{-2} \varphi_2 \arrow[ru] \arrow[rd] \arrow[ll]                  & \cdots \arrow[ru] \arrow[rd] \arrow[l]             &                                                         & \delta^{-m} \varphi_2 \arrow[ll] \arrow[ru] \arrow[rd]                  &                       \\
\vdots \arrow[ru] \arrow[rd]         &                                                 & \vdots \arrow[ru] \arrow[rd] \arrow[ll]  &                                                                         & \vdots \arrow[ru] \arrow[rd] \arrow[ll]              &                                                                         & \cdots \arrow[ll]                                  & \vdots \arrow[ru] \arrow[rd] \arrow[l]                  &                                                                         & \vdots  \arrow[ll]    \\
                                     & \varphi_{l-1} \arrow[rd] \arrow[rdd] \arrow[ru] &                                          & \delta^{-1}  \varphi_{l-1} \arrow[rd] \arrow[rdd] \arrow[ru] \arrow[ll] &                                                      & \delta^{-2}  \varphi_{l-1} \arrow[ru] \arrow[rd] \arrow[rdd] \arrow[ll] & \cdots \arrow[ru] \arrow[rd] \arrow[rdd, dash pattern=on 20pt off 19pt] \arrow[l] &                                                         & \delta^{-m} \varphi_{l-1}  \arrow[ll] \arrow[rd] \arrow[rdd] \arrow[ru] &                       \\
\delta \lambda_2 \arrow[ru]          &                                                 & \lambda_2 \arrow[ru] \arrow[ll]          &                                                                         & \delta^{-1} \lambda_2 \arrow[ru] \arrow[ll]          &                                                                         & \cdots \arrow[ll]                                  & \delta^{-(m-1)} \lambda_2 \arrow[ru]           &                                                                         & \lambda_2 \arrow[ll, color=blue] \arrow[lld, color=green]  \\
\delta \lambda_3 \arrow[ruu]         &                                                 & \lambda_3 \arrow[ruu] \arrow[ll]         &                                                                         & \delta^{-1} \lambda_3 \arrow[ruu] \arrow[ll]         &                                                                         & \cdots \arrow[ll]                                  & \delta^{-(m-1)} \lambda_3 \arrow[ruu,dash pattern=on 20pt off 17pt]         &                                                                         & \lambda_3 \arrow[ll, color=blue] \arrow[llu, color=green]
\end{tikzcd}
\]

\begin{Con}
    Let $Q_G$ be the McKay quiver of a central extension $G \leq \GL_2(\Bbbk)$ of a dihedral group $D_{2n}$. Then we denote by $T(Q_G)$ the quiver obtained by identifying the $1$-dimensional representations in equivalence classes of $4$ representations each. More precisely, we identify $\delta^{-i}$ with $\delta^{-i} \lambda$ and with $\delta^{-i} \lambda_2$ and with $\delta^{-i} \lambda_3$ for $0 \leq i < m$. The arrows between pairs of identified vertices are identified as well. We denote by $q \colon Q_G \to T(Q_G)$ the quotient map. The quiver $T(Q_G)$ naturally embeds on the torus. 
\end{Con}

Next, we note that this quiver arises as a quotient of the infinite triangular quiver $\hat{Q}$ from \Cref{Sec: type (A)}. 

\begin{Con}
    Let $Q_G$ be as above. Then $T(Q_G)$ is isomorphic to a quotient of $\hat{Q}$ by the action of a cofinite sublattice $L \leq \mathbb{Z}^2$ as in the type (A) case in \Cref{Sec: type (A)}. More precisely, we fix $l' = l+1$ and $m$ as before, and we consider the lattice $L = \langle  m e_1, \lceil \frac{l'}{2} \rceil e_1 + l' e_2 \rangle \leq \mathbb{Z}^2$. We claim that $T(Q_G)$ is isomorphic to $\hat{Q}/L$. For ease of notation we write $\varphi_0 = \rho$, $\varphi_1 = \varphi$ and $\varphi_{-1} = \mathbf{1}$, so that every vertex in $T(Q_G)$ is uniquely identified as $q(\delta^i \varphi_j) $ for the pair $(i,j) \in \{ 0 , \ldots, m-1\} \times \{ -1, \ldots l-1  \}$. In particular, we note that in order to move from $\varphi_{-1}$ to $\varphi_j$, we can follow a ``zig-zag'' path by taking the reverse arrow $\varphi_{-1} \leftarrow \varphi_0$, and the normal arrow $\varphi_0 \to \varphi_1$, followed by a reverse arrow $\varphi_1 \leftarrow \varphi_2$, etc. This means we take $\lceil \frac{j+1}{2} \rceil$ many reverse arrows and $\lfloor \frac{j+1}{2} \rfloor$ many ordinary arrows. This gives rise to the following map
    \begin{align*}
      \iota \colon  T(Q_G)_0 &\to (\hat{Q}/L)_0, \\
        q(\delta^i \varphi_{j}) &\mapsto i \cdot e_1 + \lceil \frac{j+1}{2} \rceil \cdot (-e_3) + \lfloor \frac{j+1}{2} \rfloor \cdot e_2 +  L, 
    \end{align*}
    where we chose coordinates so that the action of $e_1$ on $\hat{Q}/L$ corresponds to the action of $\delta$ on $T(Q_G)$, and the ``zig-zag'' is recovered by the reverse arrows corresponding to $-e_3$ and the ordinary arrows corresponding to $e_2$. Since $-e_3 = e_1 + e_2$, the map simplifies to 
    \[ q(\delta^i \varphi_{j}) \mapsto (i + \lceil \frac{j+1}{2} \rceil) \cdot e_1 + (j+1) \cdot e_2 +  L. \]
    To check that the map is a well-defined isomorphism, we consider when the difference 
    \begin{align*}
        d &=  \iota(q(\delta^i \varphi_j)) -  \iota(q(\delta^{i'} \varphi_{j'})) \\
        &= (i + \lceil \frac{j+1}{2} \rceil) \cdot e_1 + (j+1) \cdot e_2 - (i' + \lceil \frac{j'+1}{2} \rceil) \cdot e_1 - (j'+1) \cdot e_2 +  L \\
        &= (i - i' +  \lceil \frac{j+1}{2} \rceil - \lceil \frac{j'+1}{2} \rceil) \cdot e_1 + ((j+1) - (j'+1)) \cdot e_2 + L \\
        &= (i - i' +  \lceil \frac{j+1}{2} \rceil - \lceil \frac{j'+1}{2} \rceil) \cdot e_1 + (j -j') \cdot e_2 + L
    \end{align*}
    is $0 + L$. Since $j, j' \in \{ -1, \ldots, l-1\}$, it follows that $|j-j'| < l+1 = l' $. Hence $d = 0+ L$ only if $j = j'$. Assume that $j=j'$, then 
    \[ i - i' +  \lceil \frac{j+1}{2} \rceil - \lceil \frac{j'+1}{2} \rceil = i-i'. \]
    Since $i , i' \in \{0, \ldots, m-1\} $, we have $|i-i'| < m$, and therefore $m \mid i - i'$ only if $i-i' = 0$. Thus, we have $d= 0$ if and only if $i=i'$ and $j=j'$. 
\end{Con}

Along this identification, we can transport cuts. 

\begin{Pro}\label{Pro: (B) dihedral has cut if abelian has}
    Let $G \leq \SL_3(\Bbbk)$ be a group of type (B) that is a central extension of a dihedral group. Then $Q_G$ admits a cut if $T(Q_G) \simeq \hat{Q}/L$ admits a cut. 
\end{Pro}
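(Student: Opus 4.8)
The plan is to pull the given cut on $T(Q_G)$ back along the quotient map $q \colon Q_G \to T(Q_G)$ and to verify that the resulting grading on $Q_G$ is again a cut. First I would set, for every arrow $a$ of $Q_G$, the degree $\deg(a) := \deg(q(a))$, and declare every vertex to have degree $0$. This is well-defined and $\{0,1\}$-valued because the construction of $T(Q_G)$ only identifies vertices (together with the arrows between identified pairs), so $q$ carries arrows to arrows and no arrow collapses to a loop. Moreover $G$ is finite, hence $Q_G$ is a finite quiver and $\Bbbk Q_G/I$ is finite dimensional, in particular locally finite as a graded algebra.

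Next I would reduce the cut condition to a statement about $3$-cycles. By \Cref{Pro: All cycles cut and locally finite suffices}, since $\Bbbk Q_G/I$ is locally finite, it suffices to show that every $3$-cycle of $Q_G$ is homogeneous of degree $1$ for the pulled-back grading. (If one prefers, \Cref{Cor: deter cycles cut and acyclic suffices} allows one to restrict to the $3$-cycles containing exactly one determinant arrow, but the stronger statement for all $3$-cycles is what the transfer produces directly.)

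The key step is then to show that $q$ sends each $3$-cycle of $Q_G$ to a genuine $3$-cycle of $T(Q_G)$. The only vertices identified by $q$ are the $1$-dimensional representations, grouped into the classes $\{\delta^{-i}, \delta^{-i}\lambda, \delta^{-i}\lambda_2, \delta^{-i}\lambda_3\}$. I would observe that there is no arrow of $Q_G$ between two vertices of the same class: an arrow out of a $1$-dimensional character $\chi$ either lands on the $2$-dimensional vertex $\chi\rho$, or is a determinant arrow to $\chi\delta$, which lies in the neighbouring class $[\delta^{-(i\mp 1)}]$. Consequently, in a $3$-cycle $v_0 \to v_1 \to v_2 \to v_0$ no two vertices lie in the same class, since each pair is joined by an arrow; hence $q(v_0), q(v_1), q(v_2)$ are pairwise distinct, and $q$ maps the cycle to a closed length-$3$ path on three distinct vertices, i.e.\ an honest $3$-cycle of $T(Q_G)$.

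Finally, since $T(Q_G) \simeq \hat{Q}/L$ is (isomorphic to) a type (A) McKay quiver and the given grading is a cut, the Calabi--Yau superpotential of $T(Q_G)$ — which in the abelian case is supported on \emph{all} of its $3$-cycles — is homogeneous of degree $1$, so every $3$-cycle of $T(Q_G)$ has degree $1$. Hence $\deg(q(c)) = 1$ for each $3$-cycle $c$ of $Q_G$, and by the pullback definition $\deg(c) = 1$ as well, so the grading is a cut. The main obstacle is the middle step: it rests on the precise arrow-combinatorics of the central dihedral quiver, namely that identified vertices are never adjacent, which is exactly the feature distinguishing the ``forked'' quiver $Q_G$ from its torus quotient $T(Q_G)$. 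A minor point to dispatch is that a $3$-cycle of $Q_G$ has three distinct vertices, which holds because $Q_G$ has no loops in the central case, in accordance with \Cref{Cor: loops prevent cuts}.
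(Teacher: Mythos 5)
Your overall strategy coincides with the paper's (pull the cut back along $q$, respectively $\iota\circ q$), and your combinatorial observation that identified vertices are never adjacent, so that $3$-cycles of $Q_G$ map to honest $3$-cycles of $T(Q_G)$, is correct and in the same spirit as the paper's analysis of which $3$-cycles get identified. However, the verification that the pulled-back grading is a cut has two genuine gaps. First, the claim that ``$\Bbbk Q_G/I$ is finite dimensional, in particular locally finite as a graded algebra'' is false: $\Bbbk Q_G/I$ is Morita equivalent to $R\ast G$, which contains the polynomial ring $R$ and is infinite dimensional. Local finiteness in \Cref{Pro: All cycles cut and locally finite suffices} refers to the candidate preprojective grading and amounts to the degree-zero part being finite dimensional; it is precisely the condition that the paper verifies separately by checking that $Q_G - C$ is acyclic (a cycle in $Q_G-C$ would map to a cycle in $\hat{Q}/L - C'$). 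You omit this check on the basis of an incorrect finiteness claim.

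Second, your final step asserts that a cut on the type (A) quiver $\hat{Q}/L$ makes \emph{every} $3$-cycle homogeneous of degree $1$. This is not what a cut guarantees: only the cycles in the support of the superpotential must have degree $1$, and for type (A) these are the cycles using each of $e_1,e_2,e_3$ exactly once. Degenerate $3$-cycles such as $v\to v+e_1\to v+2e_1\to v+3e_1=v$ exist whenever $3e_1\in L$ (e.g.\ $m=3$, which occurs when $\delta$ has order $3$), and a cut need not make them degree $1$. Correspondingly, $Q_G$ can contain $3$-cycles consisting of three determinant arrows, and the pulled-back grading will in general fail to make these degree $1$. Hence the reduction via \Cref{Pro: All cycles cut and locally finite suffices} to ``all $3$-cycles'' does not go through; one must instead use \Cref{Cor: deter cycles cut and acyclic suffices} and verify the degree condition only for $3$-cycles containing exactly one determinant arrow (together with acyclicity of $Q_G - C$), which is exactly how the paper argues.
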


\begin{proof}
    Let $C' \subseteq (\hat{Q}/L)_1$ be a cut. We claim that $C = \{ a \in (Q_G)_1 \mid \iota(q(a)) \in C'  \}$ is a cut for $Q_G$. By \Cref{Cor: deter cycles cut and acyclic suffices} it is enough to show that each $3$-cycle in $Q_G$ containing exactly one arrow coming from the action of $\delta$ is cut exactly once, and that $Q_G - C'$ is acylic. First, consider which $3$-cycles in $Q_G$ are identified in $\hat{Q}/L$. The only identifications involve vertices corresponding to $1$-dimensional representations, so we immediately see that every $3$-cycle containing exactly one $\delta$-arrow and not involving any of those vertices is cut exactly once. The remaining $3$-cycles arise from the following part of the McKay quiver, as well as the mirrored version containing $\lambda_2$ and $\lambda_3$. 
    \[
\begin{tikzcd}[column sep = {between origins, 8ex}, row sep = {between origins, 8ex}]
\delta \arrow[rdd]                   &                                                 & \mathbf{1} \arrow[rdd] \arrow[ll]        &                                                                         & \delta^{-1} \arrow[rdd] \arrow[ll]                   &                                                                         & \cdots \arrow[ll]                                  & \delta^{-(m-1)} \arrow[rdd, dash pattern=on 20pt off 19pt]                  &                                                                         & \mathbf{1} \arrow[ll, color=blue] \arrow[lld, color=green] \\
\delta \lambda \arrow[rd]            &                                                 & \lambda \arrow[rd] \arrow[ll]            &                                                                         & \delta^{-1} \lambda \arrow[rd] \arrow[ll]            &                                                                         & \cdots \arrow[ll]                                  & \delta^{-(m-1)} \lambda \arrow[rd]         &                                                                         & \lambda \arrow[ll, color=blue] \arrow[llu, color=green]   \\
                                     & \rho \arrow[ruu] \arrow[ru]          &                                          & \delta^{-1} \rho \arrow[ruu] \arrow[ru] \arrow[ll]           &                                                      & \delta^{-2} \rho \arrow[ru] \arrow[ruu] \arrow[ll]           & \cdots \arrow[ru] \arrow[ruu, dash pattern=on 25pt off 18pt]  \arrow[l] &                                                         & \delta^{-m} \rho \arrow[ll] \arrow[ruu] \arrow[ru]         &                      
\end{tikzcd}
\]
    We see that if the image of a $3$-cycle has been cut exactly once, then so have the $4$ cycles that got mapped to it. Finally, we need to show that $Q_G - C$ is acyclic. But this is immediate, since if it wasn't, then the image of a cycle in $\hat{Q}/L - C'$ would be a cycle.  
\end{proof}

We arrive at the following theorem, where the case when $m=1$ is precisely when loops occur in the quiver.   

\begin{Theo}\label{Theo: Central Dihedral class}
    Let $G \leq \SL_3(\Bbbk)$ be a group of type (B) that is a central extension of a dihedral group, and denote by $\rho \colon G \to \GL_2(\Bbbk)$ the defining representation. Then $R \ast G \simeq \Bbbk Q_G/I$ admits a $3$-preprojective cut if and only if $\rho \not \simeq \det(\rho) \otimes \rho$. 
\end{Theo}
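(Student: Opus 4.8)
The plan is to translate the representation-theoretic condition $\rho \simeq \det(\rho)\rho$ into a combinatorial statement about the McKay quiver and then invoke the folding construction developed above. Writing $\delta = \det(\rho)$ and recalling that $m \geq 1$ is the minimal integer with $\delta^{-m}\rho \simeq \rho$, the first step is the observation that $\rho \simeq \delta\rho$ holds if and only if $\delta^{-1}\rho \simeq \rho$, i.e.\ if and only if $m = 1$. Thus the theorem is equivalent to the assertion that $Q_G$ admits a cut precisely when $m \geq 2$. By \Cref{Cor: self-duality}, the excluded case $m=1$ is exactly the self-dual case, which includes the binary dihedral groups $BD_{2n} \leq \SL_2(\Bbbk)$.

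For the non-central case $-I_2 \notin G$, the statement is already contained in the proposition established above for that case, which gives a cut if and only if $G \not\simeq D_{2n}$. It then remains to match the two conditions: a short character computation with the minimal group $H_{2n,1}$ from \Cref{Theo: Minimal dihedral groups} shows that when $G \simeq D_{2n}$ the character of $\rho$ vanishes on all reflections while $\delta$ is the corresponding sign character, so $\rho \simeq \delta\rho$; conversely, for $G \simeq C_m \times D_{2n}$ with $m \geq 3$ the central scalar $\epsilon_m I_2$ separates the characters of $\rho$ and $\delta\rho$, so $\rho \not\simeq \delta\rho$.

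In the central case I would argue as follows. If $m = 1$, then the determinant arrow attached to $\rho$ runs $\rho \to \rho\delta \simeq \rho$ and is therefore a loop, so by \Cref{Cor: loops prevent cuts} no cut can exist, matching $\rho \simeq \delta\rho$. If instead $m \geq 2$, I would pass to the folded quiver $T(Q_G) \simeq \hat{Q}/L$ with $L = \langle m e_1,\ \lceil \tfrac{l'}{2}\rceil e_1 + l' e_2 \rangle$ and $l' = l+1 \geq 2$, which is the McKay quiver of the type (A) group $\mathbb{Z}^2/L$ of order $m l'$. Using \Cref{Pro: (B) dihedral has cut if abelian has}, it suffices to produce a cut on this type (A) quiver, and here I would apply the corollary to \Cref{Theo: SL3 type (A) classification}: one checks that $\mathbb{Z}^2/L$ is never isomorphic to $C_2 \times C_2$ (the off-diagonal entry $\lceil l'/2 \rceil$ obstructs the Smith normal form $\diag(2,2)$) and that the embedding never factors through $\SL_2(\Bbbk)$, since neither $e_1$, nor $e_2$, nor $e_3 = -(e_1+e_2)$ lies in $L$ once $m, l' \geq 2$. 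Hence the type (A) quiver admits a cut, which lifts to the desired cut on $Q_G$.

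The step I expect to require the most care is the central case with $m \geq 2$: one must ensure that the folding isomorphism $T(Q_G) \simeq \hat{Q}/L$ genuinely produces a type (A) datum to which the corollary applies, and the verification that the associated lattice never yields the single exceptional abelian group nor an $\SL_2$-factorisation is the crux. Equivalently, one can bypass the corollary and exhibit an explicit type vector $\gamma = (\gamma_1, \gamma_2, \gamma_3)$ satisfying the congruence of \Cref{Theo: SL3 type (A) classification}; taking $\gamma_1 = l'$ forces $\gamma_2 \equiv -\lceil l'/2 \rceil \pmod{m}$, and the only subtlety is to confirm that a positive solution with $\gamma_3 > 0$ exists for all $m \geq 2$ and $l' \geq 2$, including the boundary value $m = l' = 2$, where $\gamma = (2,1,1)$ does the job.
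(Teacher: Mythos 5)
Your proposal is correct and follows essentially the same route as the paper: $m=1$ forces a loop (ruling out a cut via \Cref{Cor: loops prevent cuts}), and for $m\geq 2$ one folds to $T(Q_G)\simeq \hat{Q}/L$ with $B=\left(\begin{smallmatrix} m & \lceil l'/2\rceil \\ 0 & l'\end{smallmatrix}\right)$ and transfers a type (A) cut via \Cref{Pro: (B) dihedral has cut if abelian has}; the paper just writes down the explicit type vector $(l',\lceil\tfrac{l'}{2}\rceil(m-1),\lfloor\tfrac{l'}{2}\rfloor(m-1))$, of which your boundary check $(2,1,1)$ is the special case $m=l'=2$. Your additional treatment of the non-central case and the verification that $\mathbb{Z}^2/L$ avoids $C_2\times C_2$ and $\SL_2$-factorisation are correct but not needed beyond what the paper's cited results already give.
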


\begin{proof}
    If $\rho \simeq \det(\rho) \otimes  \rho$, we have that $m=1$. This means that $Q_G$ has loop given by determinant arrows, so $\Bbbk Q_G/I$ does not admit a cut by \Cref{Pro: Determinant loops prevent cuts}. Now suppose that $\rho \not \simeq \det(\rho) \otimes  \rho$, meaning that $m> 1$. The matrix $B$ defining the lattice $L$ with $T(Q_G) \simeq \hat{Q}/L$ is given by  $B=\begin{pmatrix}
        m&\lceil\frac{l'}{2}\rceil\\
        0&l'
    \end{pmatrix}$. Using \Cref{Theo: SL3 type (A) classification}, one can easily verify that $(\gamma_1,\gamma_2,\gamma_3)=(l',\lceil\frac{l'}{2}\rceil(m-1),\lfloor\frac{l'}{2}\rfloor(m-1))$ is a type of a cut. Hence $\hat{Q}/L$ admits a cut and by \Cref{Pro: (B) dihedral has cut if abelian has} so does $Q_G$.  
\end{proof}

Combining this with the non-central case, we arrive at the following theorem.

\begin{Theo}\label{Theo: Type (B) dihedral classification}
    Let $G \leq \SL_3(\Bbbk)$ be a group of type (B) that is a central extension of a dihedral group. Then $R \ast G \simeq \Bbbk Q_G/I$ admits a $3$-preprojective cut if and only if the defining representation $\rho \colon G \hookrightarrow \GL_2(\Bbbk)$ is not self-dual.
\end{Theo}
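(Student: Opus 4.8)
The plan is to split on whether $-I_2 \in G$, which is exactly the central/non-central dichotomy already treated, and in each branch to convert the cut criterion established there into the self-duality condition via \Cref{Cor: self-duality}. Since every central extension of $D_{2n}$ either contains $-I_2$ or not, these two branches exhaust all cases, so the theorem will follow by combining them.

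First I would handle the central case $-I_2 \in G$. Here \Cref{Theo: Central Dihedral class} already gives that $R \ast G \simeq \Bbbk Q_G/I$ admits a cut if and only if $\rho \not\simeq \det(\rho)\,\rho$. Invoking \Cref{Cor: self-duality}, which identifies the condition $\rho \simeq \det(\rho)\,\rho$ with self-duality of $\rho$, immediately rephrases this as: a cut exists precisely when $\rho$ is not self-dual. This branch therefore needs no additional argument.

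Next I would treat the non-central case $-I_2 \notin G$. From the structural discussion, $G \simeq H_{2n,1} \times C_m \simeq D_{2n} \times C_m$ with $n, m$ odd, and the proposition covering this case states that a cut exists if and only if $G \not\simeq D_{2n}$, equivalently $m > 1$. It then remains only to show that $\rho$ is self-dual if and only if $m = 1$. Writing $\rho \simeq \rho_0 \boxtimes \chi$, with $\rho_0$ the faithful $2$-dimensional irreducible representation of $D_{2n}$ and $\chi$ the faithful character of $C_m$, I would use that $\rho_0$ is real and hence self-dual to get $\rho^\ast \simeq \rho_0 \boxtimes \chi^{-1}$. As irreducibles of a direct product are external tensor products of irreducibles of the factors, $\rho \simeq \rho^\ast$ forces $\chi \simeq \chi^{-1}$, i.e.\ $\chi^2 = \mathbf{1}$; since $\chi$ is faithful of odd order $m$ this gives $m = 1$, and conversely $m = 1$ makes $\rho \simeq \rho_0$ self-dual. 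Combined with the proposition, a cut exists iff $m > 1$ iff $\rho$ is not self-dual.

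The only genuine computation is this non-central equivalence, and the hard part will be pinning down the self-dual locus: the key observation is that self-duality of $\rho$ is controlled entirely by the cyclic factor $\chi$, because the dihedral factor $\rho_0$ is automatically self-dual (equivalently $\rho_0 \otimes \det(\rho_0) \simeq \rho_0$, as one reads off from \Cref{Cor: self-duality} or the dihedral character table). Once this is in place, the two cases assemble into the stated equivalence.
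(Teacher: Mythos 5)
Your proposal is correct and follows essentially the same route as the paper, which obtains this theorem simply by combining the central case (\Cref{Theo: Central Dihedral class} together with \Cref{Cor: self-duality}) with the non-central proposition. In fact you supply a detail the paper leaves implicit, namely the verification that in the non-central case $G \simeq D_{2n} \times C_m$ the representation $\rho = \rho_0 \boxtimes \chi$ is self-dual exactly when $m=1$; your argument for this (reality of $\rho_0$ plus faithfulness of $\chi$ of odd order) is sound.
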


To conclude the dihedral case, we discuss what the self-dual case $m=1$ means for the involved groups. While we recover the dihedral and binary dihedral groups, we are not aware of a conceptual reason for this that does not involve case analysis.  

\begin{Pro}
    Let $G \leq \SL_3(\Bbbk)$ be a group of type (B) that is a central extension of a dihedral group $D_{2n}$. Denote by $\rho \colon G \to \GL_2(\Bbbk)$ the defining representation. If $\delta \otimes  \rho \simeq \rho$, then $G$ is a dihedral group, or a binary dihedral group. 
\end{Pro}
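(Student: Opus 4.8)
The plan is to read the hypothesis $\delta\rho \simeq \rho$ as self-duality of the defining representation and then extract the group structure from an invariant bilinear form, so that the dichotomy ``symmetric vs.\ antisymmetric form'' produces exactly the dichotomy ``dihedral vs.\ binary dihedral''. First I would record that $\rho$ is irreducible: since $G \leq \GL_2(\Bbbk)$ is nonabelian of type (B) and $\rho$ is faithful, a decomposition of $\rho$ into two one-dimensional summands would make $\rho(G)$ simultaneously diagonalisable, hence abelian, a contradiction. By \Cref{Cor: self-duality}, the hypothesis $\delta\rho \simeq \rho$ is equivalent to $\rho \simeq \rho^\ast$, so $\rho$ is a self-dual irreducible $2$-dimensional representation.

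Next I would produce the invariant form. Since $\rho \simeq \rho^\ast$ with $\rho$ irreducible, Schur's lemma gives $\dim \Hom_{\Bbbk G}(\rho, \rho^\ast) = 1$; equivalently, the space of $G$-invariant bilinear forms on the representation space is one-dimensional, and every nonzero such form is nondegenerate since it corresponds to the isomorphism $\rho \xrightarrow{\sim} \rho^\ast$. If $B$ spans this space, then the transpose $B^{\mathsf{T}}$ is again $G$-invariant, so $B^{\mathsf{T}} = \lambda B$; transposing once more forces $\lambda^2 = 1$, hence $\lambda = \pm 1$. Thus $\rho$ preserves a nondegenerate bilinear form that is either symmetric or antisymmetric, and these two cases will produce the two conclusions.

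In the antisymmetric case the form is a symplectic form on $\Bbbk^2$, so after a change of basis $\rho(G) \leq \operatorname{Sp}_2(\Bbbk) = \SL_2(\Bbbk)$; in particular $\delta = \det(\rho) = \mathbf{1}$. Hence $G$ is isomorphic to a finite subgroup of $\SL_2(\Bbbk)$ whose image in $\PGL_2(\Bbbk)$ is $D_{2n}$, and the only such subgroups are the binary dihedral groups. In the symmetric case, after a change of basis $\rho(G) \leq \operatorname{O}_2(\Bbbk)$, so $\delta$ takes values in $\{\pm 1\}$ and $\delta^2 = \mathbf{1}$. Since $\operatorname{SO}_2(\Bbbk) \cong \Bbbk^\ast$ has only cyclic finite subgroups, any finite subgroup of $\operatorname{O}_2(\Bbbk)$ lying inside $\operatorname{SO}_2(\Bbbk)$ is abelian; as $G$ is nonabelian it must contain an element of determinant $-1$, and a finite subgroup of $\operatorname{O}_2(\Bbbk)$ meeting both components is dihedral. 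Thus $G$ is a dihedral group, which completes the two cases.

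I expect the main obstacle to be bookkeeping over $\Bbbk = \mathbb{C}$ rather than $\mathbb{R}$: one must verify that $\operatorname{SO}_2(\mathbb{C}) \cong \mathbb{C}^\ast$ genuinely has only cyclic finite subgroups and that the determinant $-1$ elements invert the rotations, so that the classical classification of finite subgroups of $\operatorname{O}_2$, and of finite subgroups of $\SL_2$ lying over $D_{2n}$, applies verbatim. If one prefers an argument internal to the paper's framework, the quick determinant identity $\det(\delta\rho) = \delta^3$ together with $\det(\rho) = \delta$ already yields $\delta^2 = \mathbf{1}$, separating the case $\delta = \mathbf{1}$ (where $\rho(G) \leq \SL_2(\Bbbk)$ and $G$ is binary dihedral) from the case $\delta$ of order $2$; in the latter case one may instead appeal to the explicit matrix generators of \Cref{Theo: Minimal dihedral groups} to identify $G$ with a dihedral group directly, at the cost of the case analysis the authors allude to in the surrounding remark.
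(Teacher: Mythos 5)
Your proof is correct, but it takes a genuinely different route from the paper's. The paper also starts from \Cref{Cor: self-duality} and the observation that $\delta^2 = \mathbf{1}$, but then it proceeds by brute force: it runs through the explicit minimal groups $H_{2n,k}$, $H_{2n,k,1}$, $H_{2n,k,2}$ of \Cref{Theo: Minimal dihedral groups}, computes determinants and character values of their generators to constrain the parameters $k$ and $l$, and identifies the resulting groups case by case (obtaining $D_{2n}$, $BD_{2n}$, $D_{4n}$, $D_8$ in the various branches). You instead invoke the Frobenius--Schur dichotomy: a self-dual irreducible $2$-dimensional representation preserves a nondegenerate bilinear form that is either alternating, forcing $\rho(G) \leq \operatorname{Sp}_2(\Bbbk) = \SL_2(\Bbbk)$ and hence $G$ binary dihedral, or symmetric, forcing $\rho(G) \leq \operatorname{O}_2(\Bbbk)$, whose nonabelian finite subgroups are dihedral since $\operatorname{SO}_2(\Bbbk) \simeq \Bbbk^\ast$ has only cyclic finite subgroups and the reflections invert them. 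All the steps check out (irreducibility of $\rho$ from nonabelianness, nondegeneracy of the form via Schur, the splitting of $1 \to C_k \to G \to C_2 \to 1$ because every element of $\operatorname{O}_2(\Bbbk) \smallsetminus \operatorname{SO}_2(\Bbbk)$ is an involution). Notably, the paper remarks just before this proposition that it is ``not aware of a conceptual reason for this that does not involve case analysis''; your argument supplies exactly such a reason, at the price of losing the finer information (the precise degree of the dihedral group obtained) that the paper's case analysis yields as a byproduct.
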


\begin{proof}
    Recall that $\delta \otimes \rho \simeq \rho$ is, by \Cref{Cor: self-duality}, equivalent to $\rho \simeq \rho^\ast$ being self-dual. Furthermore, we have that $\delta$ has order $1$ or $2$. If $\delta$ has order $1$, we know that the embedding $G \hookrightarrow \SL_3(\Bbbk)$ factors through $\SL_2(\Bbbk)$, and hence $G$ is a binary dihedral group. Assume now that $\delta$ has order $2$. Let $\epsilon_l I_2 \in G$ be a central element. Then we have that the determinant of this element satisfies $\epsilon_l^4 = 1$, and hence $l \in \{1, 2, 4\} $. However, by self-duality we immediately obtain that $l \in \{1,2\} $, and hence $\delta(\epsilon_l I_2) = 1$. In particular, writing $G = \mu_l \cdot H_{\min}$ shows that $l \in \{1,2\}$, so it almost suffices to check the minimal groups.  
    
    We now go through the minimal groups from \Cref{Theo: Minimal dihedral groups}. 
    If the degree $n$ is odd, then $G$ contains a minimal group $H_{2n, k}$. The determinant needs to be non-trivial on some element of $G$, and by inspection it is clear that this needs to happen for the second generator of $H_{2n, k}$. The determinant of the second generator of $H_{2n,k}$ is $-(\epsilon_{2^k}^2)$, which must be $1$ or $-1$. Hence we have that $\epsilon_{2^k}^2$ is $-1$ or $1$, meaning $k = 2$ or $k=1$. However, if $k = 2$, we have that $\delta = \mathbf{1}$, contrary to our assumption. Thus, we have $k=1$ and $l \in \{1,2\}. $ If $l=1$ and $k=1$, then $G \simeq D_{2n}$. If $l=2$ and $k = 1$, then $ G \simeq BD_{2n}$. If $l = 2$ and $k=1$, then $G \simeq D_{2(2n)}$. 
    
    If the degree $n$ is even, we have two options from \Cref{Theo: Minimal dihedral groups} to consider. Again, we know that the determinant has to be non-trivial on some generator of the minimal group, and precisely one of the generators has determinant $1$ by inspection. If the minimal group in $G$ is $H_{2n, k,1}$, consider the determinant of the first generator. This is $\epsilon_{2^{k+1}}^2$, and it needs to be $-1$, giving us that $k = 1$. If the minimal group in $G$ is $H_{2n, k,2}$, the determinant of the second generator is $\epsilon_{2^{k+1}}^2$, so we are in the same situation that $k=1$. No additional central element can have determinant $-1$. If the minimal group is $H_{2n, k,2}$, we therefore get $G \simeq H_{2n, k,2} \simeq D_{2n}$. If the minimal group is $H_{2n, k,1}$, we consider the first generator. The character of $\rho$ takes the value $\epsilon_{4} \cdot (\epsilon_{2n} + \epsilon_{2n}^{-1})$. Taking the complex conjugate for the dual representation, we see $\epsilon_{4} \cdot (\epsilon_{2n} + \epsilon_{2n}^{-1}) = -\epsilon_{4} \cdot ( \epsilon_{2n}^{-1} + \epsilon_{2n}) $, which only holds if $(\epsilon_{2n} + \epsilon_{2n}^{-1}) = 0$. This can only happen if $n=2$, which leaves us with the group $D_8$. 
\end{proof}

\subsubsection{The tetrahedral case}
We now go through the tetrahedral case in the same fashion as the dihedral case. Before we do this, we caution the reader familiar with \cite{NdCS} that the tetrahedral, octahedral and icosahedral cases considered in \cite{NdCS} fall into type (C) according to the classification of finite subgroups of $\SL_3(\Bbbk)$, contrary to the dihedral case that indeed was type (B). We will see that the quiver consists of many copies of the one for the binary tetrahedral group in $\SL_2(\Bbbk)$, arranged on a square grid, with another grid branching off at the central vertex of the quiver of the binary version. We remind the reader of the fact that the McKay quiver for the binary tetrahedral group looks as follows, where we labeled the vertices with the dimensions of the irreducible representations.

\[ 
\begin{tikzcd}
                         &                                                 & 1 \arrow[d, shift right]                                               &                                                 &                          \\
                         &                                                 & 2 \arrow[d, shift right] \arrow[u, shift right]                        &                                                 &                          \\
1 \arrow[r, shift right] & 2 \arrow[l, shift right] \arrow[r, shift right] & 3 \arrow[l, shift right] \arrow[r, shift right] \arrow[u, shift right] & 2 \arrow[r, shift right] \arrow[l, shift right] & 1 \arrow[l, shift right]
\end{tikzcd}
\]

As before, we denote by $G \leq \GL_2(\Bbbk)$ a finite subgroup with image $A_4 \leq \PGL_2(\Bbbk)$. We denote by $\mathbf{1}$ the trivial representation, by $\rho \colon G \to \GL_2(\Bbbk)$ the defining representation, by $\delta = \det(\rho)$ the determinant representation, and by $\gamma$ the central character of $\rho$. 

\begin{Rem}
    Let $\rho$ be the defining representation of $G$. Then $\rho \otimes \rho$ has dimension $4$, and decomposes as 
    \[ \rho \otimes \rho \simeq \wedge^2 (\rho)  \oplus \Sym^2(\rho) = \delta \oplus \Sym^2(\rho).  \]
    By \Cref{Lem: Tensors of isoclinics decompose in the same dimensions}, for any irreducible $2$-dimensional representation $\psi$, the product $\rho \otimes \psi$ decomposes into a $3$-dimensional and a $1$-dimensional irreducible summand. In particular, the summand $\Sym^2(\rho)$ of $\rho \otimes \rho$ is irreducible and will be denoted by $\delta \varphi$, where $\varphi = \delta^{-1} \Sym^2(\rho)$. As in the dihedral case, computing $(\rho, \mathbf{1} \cdot \rho) = 1$ and 
    \begin{align*}
        (\rho, \rho \varphi) = (\rho \rho, \delta \varphi) = (\delta, \delta \varphi) + (\delta \varphi, \delta \varphi) = 1
    \end{align*}
    produces the connecting arrows. 
\end{Rem}

Thus, we have found the following beginning of the McKay quiver. 
\[
\begin{tikzcd}[column sep = {between origins, 8ex}, row sep = {between origins, 8ex}]
\delta \arrow[rd]         &                            & \mathbf{1} \arrow[rd] &                                        & \cdots  & \delta^{-(m-1)} \arrow[rd]         &                    \\
                          & \rho \arrow[ru] \arrow[rd] &                       & \delta^{-1} \rho \arrow[ru] \arrow[rd] & \cdots  &                                    & \delta^{-m} = \rho \\
\delta \varphi \arrow[ru] &                            & \varphi \arrow[ru]    &                                        & \cdots  & \delta^{-(m-1)} \varphi \arrow[ru] &                   
\end{tikzcd}
\]

In the next step, we immediately add all remaining vertices of the quiver. 

\begin{Rem}
    We decompose $\varphi \otimes \rho$. This representation has dimension $6$, and we already know a $2$-dimensional summand. By \Cref{Lem: Tensors of isoclinics decompose in the same dimensions}, we can see that $\varphi \otimes \rho$ decomposes into $3$ irreducible summands, each of dimension $2$. We write $\varphi \otimes \rho = \rho \oplus \psi_2 \oplus \psi_3$. Next, decomposing $ \psi_i \otimes \rho $ via \Cref{Lem: Tensors of isoclinics decompose in the same dimensions} then shows that both $\psi_i \otimes \rho$ decompose into a sum of a $3$-dimensional irreducible representation and a $1$-dimensional representation, which we denote by $\delta \lambda_i$. Knowing this, we show that $\rho$, $\psi_2$ and $\psi_3$ are different. To see this, check that 
    \[ 1 \leq (\psi_i, \varphi \rho) = (\psi_i \rho, \delta \varphi) \leq 1,  \]
    where the last inequality follows from the fact that $\delta \varphi$ has dimension $3$ and $\psi_i \rho$ has only one $3$-dimensional summand. This also provides the arrow $\delta \varphi \to \psi_i$. Finally, we show that there exists an arrow $\psi_i \to \lambda_i$. This follows immediately from the construction since 
    \[ 1 = (\psi_i \rho, \delta \lambda_i) = (\psi_i, \lambda_i \rho). \]
\end{Rem}

By \Cref{Pro: Quiver is connected}, we have found the complete McKay quiver since in each step, we added all incoming arrows to all vertices. Before we draw the quiver, we argue that all the vertices we found are actually different. 

\begin{Rem}
    Recall that $G$ is isoclinic to the binary tetrahedral group $BT$. The latter has irreducible representations of dimensions $1, 1, 1, 2, 2, 2, 3$. Let $m$ be minimal such that $\delta^{-m} \varphi = \varphi$. Then the number $d$ of irreducible representations of $G$ of dimension $3$ satisfies $m = \frac{d}{1} = \frac{|G|}{|BT|} $. Hence, it follows that $G$ has $3m$ irreducible representations of dimension $1$, and $3m$ irreducible representations of dimension $2$. That means that in our computation above, the representations $\lambda_2$, $\lambda_3$ and $\mathbf{1}$ are all different.  
\end{Rem}

Next, we embed $G$ into $\SL_3(\Bbbk)$, meaning we add the arrows for the action of $\delta$. For this, we fix the smallest $m \geq 1$ such that $\delta^{-m} \varphi = \varphi$. For convenience, we write $\psi_1 = \rho$ and $\lambda_1 = \mathbf{1}$. 

\begin{Rem}
    Let $m \geq 1$ be minimal with $\delta^{-m} \varphi = \varphi$, and let $k_i \geq 1$ be minimal such that $\delta^{-k_i} \psi_i = \psi_i$ for $1 \leq i \leq 3$. We compute that
    \begin{align*}
        1 &= (  \varphi , \delta^{-1}\psi_i \rho) = ( \varphi, \delta^{k_i - 1} \psi \rho)  \\
        &= (\delta \varphi, \delta^{k_i -1 }(\varphi + \lambda_i)) \\
        &= (\varphi, \delta^{k_i} \varphi) + (\varphi, \delta^{k_i} \lambda_i) = (\varphi, \delta^{k_i} \varphi).
    \end{align*}
    Hence, we have that $m \mid k_i$. As in the dihedral case, it can happen that $k_i > m$, and in this case we get ``crossing arrows'' as in the dihedral case.
\end{Rem}

We now draw the McKay quiver. Since it becomes complicated, we draw it with two parts, glued along a subquiver. We invite the reader to think of the quiver as branching off at $\varphi$ into two sheets, each consisting of a triangular grid. The branching point is precisely the same as in the usual $\Tilde{E}_6$ diagram. Alternatively, and more in line with coming arguments, one may think of the quiver as consisting of three pages of a book, one for $\rho$, one for $\psi_2$, and one for $\psi_3$, all connected at a central spine containing $\varphi$. We also note that the identification of the left and right side is non-trivial. 

\[ 
\begin{tikzcd}[column sep = {between origins, 8ex}, row sep = {between origins, 8ex}]
\delta \arrow[rd]                    &                              & \mathbf{1} \arrow[rd] \arrow[ll]         &                                                     & \cdots \arrow[ll]             & \delta^{-(m-1)} \arrow[rd]                    &                                                      & \delta^{-m} \arrow[ll]           \\
                                     & \rho \arrow[ru] \arrow[rd]   &                                          & \delta^{-1} \rho \arrow[ru] \arrow[rd] \arrow[ll]   & \cdots \arrow[ru] \arrow[rd]  &                                               & \delta^{-m} \rho \arrow[ll] \arrow[ru] \arrow[rd]    &                                  \\
\delta \varphi \arrow[ru] \arrow[rd] &                              & \varphi \arrow[ru] \arrow[rd] \arrow[ll] &                                                     & \cdots \arrow[ll]             & \delta^{-(m-1)} \varphi \arrow[ru] \arrow[rd] &                                                      & \delta^{-m} \varphi = \varphi  \arrow[ll]  \\
                                     & \psi_2 \arrow[ru] \arrow[rd] &                                          & \delta^{-1} \psi_2 \arrow[ru] \arrow[rd] \arrow[ll] & \cdots  \arrow[ru] \arrow[rd] &                                               & \delta^{-m} \psi_2  \arrow[ll] \arrow[ru] \arrow[rd] &                                  \\
\delta \lambda_2  \arrow[ru]         &                              & \lambda_2 \arrow[ru] \arrow[ll]          &                                                     & \cdots  \arrow[ll]            & \delta^{-(m-1)} \lambda_2 \arrow[ru]          &                                                      & \delta^{-m} \lambda_2 \arrow[ll] \\
\delta \varphi \arrow[rd]            &                              & \varphi \arrow[rd] \arrow[ll]            &                                                     & \cdots \arrow[ll]             & \delta^{-(m-1)} \varphi \arrow[rd]            &                                                      & \varphi \arrow[ll]               \\
                                     & \psi_3 \arrow[ru] \arrow[rd] &                                          & \delta^{-1} \psi_3 \arrow[ru] \arrow[rd] \arrow[ll] & \cdots  \arrow[ru] \arrow[rd] &                                               & \delta^{-m} \psi_3 \arrow[ll] \arrow[ru] \arrow[rd]  &                                  \\
\delta \lambda_3 \arrow[ru]          &                              & \lambda_3 \arrow[ru] \arrow[ll]          &                                                     & \cdots \arrow[ll]             & \delta^{-(m-1)} \lambda_3 \arrow[ru]          &                                                      & \delta^{-m} \lambda_3 \arrow[ll]
\end{tikzcd}
\]

\begin{Rem}
    As seen above, the $\delta$-orbits may have different sizes. To see that this is indeed the case, consider the case of the binary tetrahedral group in one of its faithful $2$-dimensional representations not contained in $\SL_2(\Bbbk)$. This was the minimal group $BH \simeq H_0$ from \Cref{SSec: (B) structure}. Because the groups whose quivers we consider are isoclinic, it follows that the size $m$ of the orbit $\delta^i \varphi$ is at least $2$ for any group different from $BH$, since then there are at least $2$ irreducible $3$-dimensional representations.
    When the $\delta$-orbits have different size, we have $\delta^{-m} \simeq \lambda_i $ for some $i \in \{ 2,3\}$, and similarly $\delta^{-m} \rho \simeq \psi_i$. 
\end{Rem}

As seen, we can have ``crossing arrows'' arising when $k_i > m$. However, this is not an obstacle to finding cuts. We use the same construction as in the dihedral case, where we now first ``fold'' the quiver along the spine consisting of $\delta^i \varphi$ to obtain a triangular grind on the cylinder, and then mapping this to a torus by identifying $1$-dimensional representations. To visualise the folding, one may first think of the quiver for the binary tetrahedral group in $\SL_2(\Bbbk)$, and fold it according to its order $3$ automorphism. The resulting quiver is a double type $A$ diagram, which is mapped to a type $\Tilde{A}$ diagram by identifying the two vertices at the end. 

\begin{Con}
    Let $Q_G$ be the quiver computed above. Then we define the quiver $T'(Q_G)$ by identifying the $1$- and the $2$-dimensional representations in equivalence classes of size $3$ each. More precisely, we identify $\delta^i$ with $\delta^i \lambda_2$ and with $\delta^i \lambda_3$ for $0 \leq i < m$. We also identify $\delta^i \rho$ with $\delta^i \psi_2$ and with $\delta^i \psi_3$ for $0 \leq i < m$. Arrows between pairs of identified vertices are identified as well. We denote by $q' \colon Q_G \to T'(Q_G)$ the quotient map.
    Next, we define $T(Q_G)$ as the quotient of $T'(Q_G)$ that arises from identifying $q'(\delta^i )$ with $q'(\delta^i \varphi)$ for $0 \leq i < m$. As before, we identify arrows between identified vertices. We denote by $q \colon Q_G \to T(Q_G)$ the quotient map. The quiver $T(Q_G)$ embeds on the torus.
\end{Con}

Next, we show that this quiver arises as a quotient $\hat{Q}/L$ as in \Cref{Sec: type (A)}. 

\begin{Con}
    Let $Q_G$ be as above. Then $T(Q_G)$ is isomorphic to a quotient of $\hat{Q}$ by the action of a cofinite sublattice $L \leq \mathbb{Z}^2$ as in the type (A) case in \Cref{Sec: type (A)}. More precisely, let $m$ be as before, and we consider the lattice $L = \langle  m e_1, e_1 + 2 e_2 \rangle \leq \mathbb{Z}^2$. We claim that $T(Q_G)$ is isomorphic to $\hat{Q}/L$. Recall that we write $\psi_1 = \rho$ and $\lambda_1 =\mathbf{1}$. It is easy to see that there is a cycle in $T(Q_G)$ from $\mathbf{1}$ to itself, arising from the path $\mathbf{1} \to \delta \to \rho \to \varphi $ in $Q_G$. We construct an isomorphism $\iota \colon T(Q_G) \to \hat{Q}/L$ so that the action of $e_1$ on $\hat{Q}/L$ corresponds to the action of $\delta$, and so that the identified $3$-cycle gives rise to the second generator $e_1 + 2e_2$ of $L$. Recall that every vertex in $T(Q_G)$ is uniquely determined as $q(\delta^i)$ or $q(\delta^i \rho)$ for a unique $0 \leq i < m$, and we consequently write $q(\delta^i \rho^j)$ for $0 \leq i < m$ and $0 \leq j \leq 1$. This gives rise to the map
    \begin{align*}
        \iota \colon T(Q_G) &\to \hat{Q}/L \\
            q(\delta^i \rho^j) &\mapsto i e_1 + j e_2 + L.
    \end{align*}
    To check that the map is a well-defined isomorphism, we consider when the difference 
    \begin{align*}
        d &=  \iota(q(\delta^i) \rho^{j} ) - \iota(q(\delta^{i'} \rho^{j'})) \\
        &= i e_1 + j e_2 - i' e_1 - j' e_2 + L
    \end{align*}
    is $0 + L$. Since $j, j' \in \{ 0, 1\}$, it follows that $|j-j'| < 2 $. Hence $d = 0+ L$ only if $j = j'$. Assume that $j=j'$, then 
    \[ i e_1 + j e_2 - i' e_1 - j' e_2 + L = i e_1 - i' e_1 + L. \]
    Since $i , i' \in \{0, \cdots, m-1\} $, we have $|i-i'| < m$, and therefore $m \mid i - i'$ only if $i-i' = 0$. Thus, we have $d= 0$ if and only if $i=i'$ and $j=j'$. 
\end{Con}

As before, we show that we can transfer higher preprojective cuts along the identification $Q_G \to T(Q_G)$. 

\begin{Pro}\label{Pro: (B) tetrahedral has cut if abelian has}
    Let $G \leq \SL_3(\Bbbk)$ be a group of type (B) that is a central extension of the tetrahedral group. Then $Q_G$ admits a cut if $T(Q_G) \simeq \hat{Q}/L$ admits a cut. 
\end{Pro}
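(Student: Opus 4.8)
The plan is to mirror the proof of \Cref{Pro: (B) dihedral has cut if abelian has} in spirit, using the quotient map $q \colon Q_G \to T(Q_G) \simeq \hat{Q}/L$ constructed above. Given a cut $C' \subseteq (\hat{Q}/L)_1$, I would set $C = \{ a \in (Q_G)_1 \mid \iota(q(a)) \in C' \}$ and claim that $C$ is a cut for $Q_G$. By \Cref{Cor: deter cycles cut and acyclic suffices}, since $G \leq \GL_2(\Bbbk) \leq \SL_3(\Bbbk)$, it suffices to check two things: that every $3$-cycle in $Q_G$ containing exactly one determinant arrow is cut exactly once, and that $Q_G - C$ is acyclic. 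The latter also supplies the local finiteness demanded by the corollary, since the degree-$0$ part $\Bbbk(Q_G - C)$ is then finite dimensional.

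For the first point, the crucial observation is that $C$ is defined by pulling $C'$ back along $\iota \circ q$, so an arrow of $Q_G$ lies in $C$ precisely when its image does. Hence, as soon as a $3$-cycle $c$ of $Q_G$ is mapped by $q$ injectively on arrows onto a genuine $3$-cycle of $\hat{Q}/L$, the number of arrows of $c$ lying in $C$ equals the number of arrows of $q(c)$ lying in $C'$; and since $C'$ is a cut on the type (A) torus quiver $\hat{Q}/L$, where every triangle is cut exactly once, the claim follows. What must be verified is therefore that no $3$-cycle with a single determinant arrow degenerates under $q$, i.e.\ that two of its arrows are never identified. The vertex identifications defining $q$ occur in two stages — first folding the three ``pages'' $\delta^i\rho/\delta^i\psi_2/\delta^i\psi_3$ and $\delta^i/\delta^i\lambda_2/\delta^i\lambda_3$ onto one another along the spine of $3$-dimensional representations $\delta^i\varphi$, and then gluing the $1$-dimensional row onto that spine — but each of these merges only vertices lying in the same ``layer''. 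A single triangle with one determinant arrow uses three vertices in three consecutive layers, so its three vertices, and hence its three arrows, remain distinct after applying $q$. Thus each such triangle maps faithfully onto a triangle of $\hat{Q}/L$ and is cut exactly once; equivalently, every triangle of $\hat{Q}/L$ lifts to a collection of triangles in $Q_G$ all of which are cut exactly once.

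For acyclicity, the argument is identical to the dihedral case: the quotient $q$ sends $Q_G - C$ into $\hat{Q}/L - C'$, and the latter is acyclic because $C'$ is a cut. Any oriented cycle in $Q_G - C$ would map to a closed oriented walk in $\hat{Q}/L - C'$ of the same length, which is nontrivial and hence an oriented cycle, a contradiction. Therefore $Q_G - C$ is acyclic, and $C$ is a cut.

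The main obstacle I anticipate is bookkeeping rather than conceptual: it lies in handling the $3$-cycles living near the spine, where the three book-pages meet and where the $\delta^i$ and $\delta^i\varphi$ vertices are glued together. This is the analogue of the ``remaining $3$-cycles'' surrounding $\delta, \mathbf{1}, \rho$ in the dihedral proof, now made more intricate by the three-fold branching. One must enumerate these triangles carefully and confirm that the fibres of $q$ over a triangle of $\hat{Q}/L$ consist entirely of honest triangles, each inheriting the ``cut exactly once'' property, so that no determinant-arrow triangle is miscounted.
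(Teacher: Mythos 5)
Your proposal is correct and follows essentially the same route as the paper: the same pullback cut $C = \{ a \in (Q_G)_1 \mid \iota(q(a)) \in C' \}$, the same appeal to \Cref{Cor: deter cycles cut and acyclic suffices}, and the same two checks (determinant-triangles cut once because they map bijectively onto triangles of $\hat{Q}/L$, and acyclicity pulled back from $\hat{Q}/L - C'$). One small inaccuracy in your justification: a determinant-triangle has \emph{two} vertices in one row (namely $X$ and $\delta X$) and one in an adjacent row, rather than three vertices in three consecutive rows, but non-degeneracy under $q$ still holds because $q$ never identifies $\delta^i Y$ with $\delta^{i+1} Y$ for $m \geq 2$ and no triangle joins the $1$-dimensional row directly to the $\varphi$-row.
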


\begin{proof}
    Let $C' \subseteq (\hat{Q}/L)_1$ be a cut. We claim that $C = \{ a \in (Q_G)_1 \mid \iota(q(a)) \in C'  \}$ is a cut for $Q_G$. By \Cref{Cor: deter cycles cut and acyclic suffices} it is enough to show that each $3$-cycle in $Q_G$ containing exactly one arrow coming from the action of $\delta$ is cut exactly once, and that $Q_G - C'$ is acylic. To see this, note that every such cycle gets mapped to $3$ such cycles in $T(Q_G)$. Finally, to see that $Q_G - C'$ is acylic, note that if it was not, then neither would be $\hat{Q}/L - C$.  
\end{proof}

We arrive at the following theorem, where the case when $m=1$ is precisely when $G \simeq H_0$. 

\begin{Theo}\label{Theo: Type (B) tetrahedral classification}
    Let $G \leq \SL_3(\Bbbk)$ be a group of type (B) that is a central extension of the tetrahedral group. Then $R \ast G \simeq \Bbbk Q_G/I$ admits a $3$-preprojective cut if and only if $G \not \simeq BH$ where $BH$ is the binary tetrahedral group. 
\end{Theo}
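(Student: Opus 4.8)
The plan is to mirror the two-step argument used for the dihedral case in the proof of \Cref{Theo: Central Dihedral class}, organised around the dichotomy $m = 1$ versus $m \geq 2$, where $m$ denotes the least integer with $\delta^{-m}\varphi \simeq \varphi$. The isoclinism count recorded above identifies $m$ with the number of $3$-dimensional irreducibles of $G$, which equals $|G|/|BT|$ by \Cref{Lem: Tensors of isoclinics decompose in the same dimensions}; hence $m = 1$ holds exactly when $|G| = |BT|$, that is, exactly when $G \simeq BH$. So the two directions of the biconditional correspond precisely to the cases $m = 1$ and $m \geq 2$.

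For the forward implication I would assume $G \simeq BH$, so that $m = 1$ and therefore $\delta^{-1}\varphi \simeq \varphi$, i.e. $\delta$ fixes the unique $3$-dimensional irreducible $\varphi$. Upon embedding $G$ into $\SL_3(\Bbbk)$ one adds the determinant arrows according to the action of $\delta$, and at $\varphi$ this arrow runs $\varphi \to \delta\varphi = \varphi$, a loop. \Cref{Cor: loops prevent cuts} then excludes any cut. I would stress that this one argument handles both groups with $m = 1$: the group $H_0$, where $\delta$ has order $3$ but still fixes $\varphi$, and the subgroup $BH \leq \SL_2(\Bbbk)$, where $\delta = \mathbf{1}$ and every vertex acquires a loop.

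For the converse I would assume $G \not\simeq BH$, so $m \geq 2$, and use the identification $T(Q_G) \simeq \hat{Q}/L$ constructed above, with $L = \langle m e_1, e_1 + 2e_2 \rangle$ and defining matrix $B = \left(\begin{smallmatrix} m & 1 \\ 0 & 2 \end{smallmatrix}\right)$, so that $\det(B) = 2m$. Applying \Cref{Theo: SL3 type (A) classification}, I claim that $\gamma = (2, m-1, m-1)$ is the type of a cut: its entries are strictly positive because $m \geq 2$, they sum to $2m$, and a direct computation gives $\gamma B' = (2m, 2m, 2m) \in 2m\mathbb{Z}^{1 \times 3}$. Thus $\hat{Q}/L$ carries a cut, which \Cref{Pro: (B) tetrahedral has cut if abelian has} transports back to $Q_G$.

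The verifications with $\gamma$ and the loop detection are routine; the step I expect to require the most care is the equivalence $m = 1 \Leftrightarrow G \simeq BH$ as \emph{abstract} groups. One must ensure that no group abstractly isomorphic to $BH$ can arise with $m \geq 2$, which is exactly what the identity $m = |G|/|BT|$ guarantees, since it forces $|G| = |BT|$ in the case $m = 1$ and $|G| > |BT|$ otherwise.
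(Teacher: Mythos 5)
Your proposal is correct and follows essentially the same route as the paper's proof: the dichotomy $m=1$ versus $m\geq 2$ via the isoclinism count of $3$-dimensional irreducibles, a loop at $\varphi$ ruling out cuts when $m=1$ by \Cref{Cor: loops prevent cuts}, and for $m\geq 2$ the transport of the type-$(2,m-1,m-1)$ cut from $\hat{Q}/L$ with $B=\left(\begin{smallmatrix} m&1\\0&2\end{smallmatrix}\right)$ back to $Q_G$ via \Cref{Pro: (B) tetrahedral has cut if abelian has}. Your explicit verification of $\gamma B'=(2m,2m,2m)$ and of the equivalence $m=1\Leftrightarrow G\simeq BH$ (covering both $H_0$ and $BH\leq\SL_2(\Bbbk)$) only spells out what the paper leaves to the reader.
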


\begin{proof}
    If $G \simeq BH$ is the binary tetrahedral group, then $G$ has a unique irreducible $3$-dimensional representation and hence $m=1$. This means that $Q_G$ has a loop given by a determinant arrow, so $\Bbbk Q_G/I$ does not admit a cut by \Cref{Pro: Determinant loops prevent cuts}. Now suppose that $G$ is a different group. This means we have $m \geq 2$. The matrix $B$ defining the lattice $L$ with $T(Q_G) \simeq \hat{Q}/L$ is given by  $B=\begin{pmatrix}
        m &1\\
        0&2
    \end{pmatrix}$. 
    Using \Cref{Theo: SL3 type (A) classification}, one can easily verify that $(\gamma_1,\gamma_2,\gamma_3)=(2,m-1, m-1)$ is a type of a cut. Hence $\hat{Q}/L$ admits a cut and by \Cref{Pro: (B) tetrahedral has cut if abelian has} so does $Q_G$.
\end{proof}

\subsubsection{The octahedral case}
We now go through the octahedral case in the same fashion as the dihedral and tetrahedral case. By now the strategy is quite clear: We know the McKay quiver for the binary octahedral group $BH \leq \SL_2(\Bbbk)$, and we can use this together with \Cref{Lem: Dimension 2 adjoint trick} to deduce the shape of the the McKay quiver for every group $G \leq \GL_2(\Bbbk)$ with image $H\simeq S_4$ in $\PGL_2(\Bbbk)$. We then infer the existence of a cut using methods for type (A). 

We begin by reminding the reader of the McKay quiver for the binary octahedral group $BH$. We label the vertices by the dimensions of the irreducible representations of $BH$. 
\[
\begin{tikzcd}
                         &                                                 &                                                 & 2 \arrow[d, shift right]                                               &                                                 &                                                 &                          \\
1 \arrow[r, shift right] & 2 \arrow[l, shift right] \arrow[r, shift right] & 3 \arrow[r, shift right] \arrow[l, shift right] & 4 \arrow[r, shift right] \arrow[l, shift right] \arrow[u, shift right] & 3 \arrow[r, shift right] \arrow[l, shift right] & 2 \arrow[r, shift right] \arrow[l, shift right] & 1 \arrow[l, shift right]
\end{tikzcd}
\]

As before, we denote by $G \leq \GL_2(\Bbbk)$ a finite subgroup with image $S_4 \leq \PGL_2(\Bbbk)$. We denote by $\mathbf{1}$ the trivial representation, by $\rho \colon G \to \GL_2(\Bbbk)$ the defining representation, by $\delta = \det(\rho)$ the determinant representation, and by $\gamma$ the central character of $\rho$. 

\begin{Rem}
    The tensor square $\rho \otimes \rho = \delta \oplus \varphi_1'$ decomposes into a $1$ and a $3$-dimensional representation. We write as before $\psi_1 = \delta^{-1} \psi_1'$. To fill in the arrow $\rho \to \psi_1$, note that 
    \[ (\rho, \psi_1 \rho) = (\rho \rho, \delta \psi_1) = (\delta, \delta \psi_1) + (\delta \psi_1 , \delta \psi_1).  \]
    Next, we consider $\psi_1 \otimes \rho$, which has dimension $6$ and decomposes as $\rho \oplus \varphi$ for a representation $\varphi$ of dimension $4$. The dimensions can be found from \Cref{Lem: Tensors of isoclinics decompose in the same dimensions}. Using 
    \[ ( \delta \psi_1, \varphi \rho) = (\psi_1 \rho, \varphi) = 1 \]
    shows that there exists an arrow $\delta \psi_1 \to \varphi$.
\end{Rem}

Thus, we have reached a ``central'' vertex at which we expect the quiver to split into several sheets. 

\[\begin{tikzcd}[column sep = {between origins, 8ex}, row sep = {between origins, 8ex}]
\delta \arrow[rd]                   &                            & \mathbf{1} \arrow[rd]        &                                        & \cdots  \\
                                    & \rho \arrow[rd] \arrow[ru] &                              & \delta^{-1} \rho \arrow[ru] \arrow[rd] & \cdots  \\
\delta \psi_1 \arrow[ru] \arrow[rd] &                            & \psi_1 \arrow[ru] \arrow[rd] &                                        & \cdots  \\
                                    & \varphi \arrow[ru]         &                              & \delta^{-1} \varphi \arrow[ru]         & \cdots 
\end{tikzcd}
\]

\begin{Rem}
    Now we consider $\varphi \otimes \rho$. This has dimension $8$, and by \Cref{Lem: Tensors of isoclinics decompose in the same dimensions} it decomposes into $3$ irreducible summands, two of which have dimension $3$ and one of which has dimension $2$. We denote the $2$-dimensional summand by $\rho_3$. We saw that one of the $3$-dimensional summands is $\delta \psi_1$, so we denote the other by $\delta \psi_2$. We furthermore note that $\psi_2 \otimes \rho$ decomposes into a $4$ and a $2$-dimensional representation by \Cref{Lem: Tensors of isoclinics decompose in the same dimensions}, and we denote the $2$-dimensional summand by $\rho_2$. This allows us to show that the two $3$-dimensional summands $\delta \psi_1, \delta \psi_2$ of $\varphi \otimes \rho$ are different by computing $ (\delta \psi_i, \varphi \rho) = (\psi_i \rho,  \varphi) = (\varphi + \rho_i, \varphi) = 1, $
    where we used that $(\rho_i, \varphi) = 0$ for dimension reasons. 
    To complete the computation of the new rows, note that $\rho_2 \otimes \rho$ has dimension $4$ and decomposes into $\delta \psi_2$ of dimension $3$, as well as a $1$-dimensional representation, which we denote by $\delta \lambda_2$. Finally, it is easy to check that $(\psi_2, \lambda_2 \rho) = 1$.
\end{Rem}

We have thus found the complete McKay quiver by adding all the $\delta$-translates of the newly computed vertices. Indeed, for dimension reasons the $\delta^{-i} \lambda_2$ do not have any other incoming arrows, so by \Cref{Pro: Quiver is connected} we are done. Before we draw the quiver, we show that all vertices are different, and we add the determinant arrows. 

\begin{Rem}
    Recall that the dimensions of irreducible representations of the binary octahedral group are $1,1,2,2,2,3,3,4$. Let $m \geq 1$ be minimal with $\delta^m \varphi = \varphi$, then we know that $G$ has $m$ irreducible representations of dimension $4$, and $2m$ irreducible representations of dimensions $3$ and $1$ each, and $3m$ irreducible representations of dimension $2$. 
\end{Rem}

We add in the determinant arrows. We fix $m\geq 1$ minimal with $\delta^{m} \varphi = \varphi$. As in the tetrahedral case, the $\delta$-orbits can have different sizes. However, the only information that we need for the coming construction of cuts is that the $\delta$-orbit of $\rho_3$ is of size $m$. 

\begin{Rem}
    Consider the representations $\delta^{i} \rho_3$. We have $(\delta^{-1} \rho_3 , \varphi \rho) = 1$ by assumption, and hence $1 = ( \varphi, \rho_3 \rho) $. For dimension reasons, we therefore have $\varphi \simeq \rho_3 \otimes \rho$. Let $k\geq 1$ be minimal with $\delta^k \rho_3 = \rho_3$. Then we have 
    \begin{align*}
        1 &= (\varphi, \rho_3 \rho) = (\varphi , \delta^m \rho_3 \rho) \\
        &= (\varphi \rho \delta^{-1}, \delta^m \rho_3) \\
        &= (\psi_1 + \psi_2 + \rho_3, \delta^m \rho_3) = (\rho_3, \delta^m, \rho_3),
    \end{align*}
    so $k \mid m$. Similarly, we find that 
    \begin{align*}
        1 &= (\varphi, \rho_3 \rho) = (\varphi , (\delta^k \rho_3) \rho) = (\varphi, \delta^k (\rho_3 \rho)) = (\varphi, \delta^k \varphi),
    \end{align*}
    so we have $m \mid k$, and hence $k = m$.
\end{Rem}

We now draw the quiver, and again we draw it with two pieces, glued at a subquiver containing $\varphi$.

\[
\begin{tikzcd}[column sep = {between origins, 8ex}, row sep = {between origins, 8ex}]
\delta \arrow[rd]                   &                               & \mathbf{1} \arrow[rd] \arrow[ll]        &                                                      & \cdots  \arrow[ll]            & \delta^{-(m-1)} \arrow[rd]                   &                                          \\
                                    & \rho \arrow[rd] \arrow[ru]    &                                         & \delta^{-1} \rho \arrow[ru] \arrow[rd] \arrow[ll]    & \cdots  \arrow[ru] \arrow[rd] &                                              & \delta^{-m} \rho \arrow[ll]              \\
\delta \psi_1 \arrow[ru] \arrow[rd] &                               & \psi_1 \arrow[ru] \arrow[rd] \arrow[ll] &                                                      & \cdots \arrow[ll]             & \delta^{-(m-1)} \psi_1 \arrow[ru] \arrow[rd] &                                          \\
                                    & \varphi \arrow[ru] \arrow[rd] &                                         & \delta^{-1} \varphi \arrow[ru] \arrow[rd] \arrow[ll] & \cdots  \arrow[ru] \arrow[rd] &                                              & \delta^{-m} \varphi = \varphi \arrow[ll] \\
\delta \psi_2 \arrow[ru] \arrow[rd] &                               & \psi_2 \arrow[rd] \arrow[ru] \arrow[ll] &                                                      & \cdots \arrow[ll]             & \delta^{-(m-1)} \psi_2 \arrow[ru] \arrow[rd] &                                          \\
                                    & \rho_2 \arrow[ru] \arrow[rd]  &                                         & \delta^{-1} \rho_2 \arrow[ru] \arrow[ll] \arrow[rd]  & \cdots \arrow[ru] \arrow[rd]  &                                              & \delta^{-m} \rho_2 \arrow[ll]            \\
\delta \lambda_2 \arrow[ru]         &                               & \lambda_2 \arrow[ru] \arrow[ll]         &                                                      & \cdots \arrow[ll]             & \delta^{-(m-1)} \lambda_2 \arrow[ru]         &                                          \\
                                    & \varphi \arrow[rd]            &                                         & \delta^{-1} \varphi \arrow[rd] \arrow[ll]            & \cdots  \arrow[rd]            &                                              & \delta^{-m} \varphi = \varphi \arrow[ll] \\
\delta \rho_3 \arrow[ru]            &                               & \rho_3 \arrow[ru] \arrow[ll]            &                                                      & \cdots \arrow[ll]             & \delta^{-(m-1)} \rho_3 \arrow[ru]            &                                         
\end{tikzcd}
\]

Now we fold the quiver onto a torus as before. As noted, the $\delta$-orbit of $\rho$ can contain $\rho_2$, just as the $\delta$-orbit of $\mathbf{1}$ can contain $\lambda_2$. However, we first identify these parts of the quiver, so this will not be an obstruction to finding cuts. To visualise the folding, one can first think of the quiver of the binary ocatehdral group, and fold it according to its automorphism of order $2$. The resulting quiver is a double type $A$ diagram, which is mapped to a type $\Tilde{A}$ diagram by identifying the end vertices. 

\begin{Con}
    Let $Q_G$ be the quiver computed above. Then we define the quiver $T'(Q_G)$ by identifying the $1$- and $3$-dimensional representations in equivalence classes of size $2$ each, and identifying certain $2$-dimensional representations in classes of size $2$. More precisely, we identify $\delta^i$ with $\delta^i \lambda_2$ for $0 \leq i < m$. We also identify $\delta^i \rho$ with $\delta^i \rho_2$ for $0 \leq i < m$. We also identify $\delta^i \psi_1 $ with $\delta^i \psi_2 $ for $0 \leq i < m$. Arrows between pairs of identified vertices are identified as well. We denote by $q' \colon Q_G \to T'(Q_G)$ the quotient map. Note that we have not made any identification of the vertices $\delta^i \rho_2$. 
    Next, we define $T(Q_G)$ as the quotient of $T'(Q_G)$ that arises from identifying $q'(\delta^i )$ with $q'(\delta^i \rho_2)$ for $0 \leq i < m$. As before, we identify arrows between identified vertices. We denote by $q \colon Q_G \to T(Q_G)$ the quotient map. The quiver $T(Q_G)$ embeds on the torus.
\end{Con}

Next, we show that this quiver arises as a quotient $\hat{Q}/L$ as in \Cref{Sec: type (A)}. 

\begin{Con}
    Let $Q_G$ be as above. Then $T(Q_G)$ is isomorphic to a quotient of $\hat{Q}$ by the action of a cofinite sublattice $L \leq \mathbb{Z}^2$ as in the type (A) case in \Cref{Sec: type (A)}. More precisely, let $m$ be as before, and we consider the lattice $L = \langle  m e_1, 2e_1 + 4 e_2 \rangle \leq \mathbb{Z}^2$. We claim that $T(Q_G)$ is isomorphic to $\hat{Q}/L$. We construct an isomorphism $\iota \colon T(Q_G) \to \hat{Q}/L$ so that the action of $e_1$ on $\hat{Q}/L$ corresponds to the action of $\delta$. Note that there are cycles in $T(Q_G)$ arising from paths in $Q_G$ from $\mathbf{1}$ to $\rho_2$. We therefore construct $\iota$ so that the $6$-cycle gives rise to the second generator $2e_1 + 4e_2$ of $L$. Recall that every vertex in $T(Q_G)$ is uniquely determined as $q(\delta^i)$ or $q(\delta^i \rho)$ or $q(\delta^i \psi_1)$ or $q(\delta^i \varphi)$ for a unique $0 \leq i < m$. This gives rise to the map
    \begin{align*}
        \iota \colon T(Q_G) &\to \hat{Q}/L \\
            q(\delta^i ) &\mapsto i e_1 + L \\
            q(\delta^i \rho) &\mapsto (i+1) e_1 + e_2 + L \\
            q(\delta^i \psi_1) &\mapsto (i+1) e_1 + 2e_2 + L \\
            q(\delta^i \varphi) &\mapsto (i+2) e_1 + 3e_2 + L
    \end{align*}
    As before, one checks easily that this is a well-defined isomorphism of quivers.
\end{Con}

\begin{Pro}\label{Pro: (B) octahedral has cut if abelian has}
    Let $G \leq \SL_3(\Bbbk)$ be a group of type (B) that is a central extension of the octahedral group. Then $Q_G$ admits a cut if $T(Q_G) \simeq \hat{Q}/L$ admits a cut. 
\end{Pro}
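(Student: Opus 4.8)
The plan is to follow the proofs of \Cref{Pro: (B) dihedral has cut if abelian has} and \Cref{Pro: (B) tetrahedral has cut if abelian has} essentially verbatim, pulling a cut back along the composite $\iota \circ q \colon Q_G \to \hat{Q}/L$. Concretely, given a cut $C' \subseteq (\hat{Q}/L)_1$, I would set
\[ C = \{ a \in (Q_G)_1 \mid \iota(q(a)) \in C' \}, \]
and claim that $C$ is a cut for $Q_G$. Since $G \leq \GL_2(\Bbbk) \leq \SL_3(\Bbbk)$, \Cref{Cor: deter cycles cut and acyclic suffices} reduces the verification to two statements: that every $3$-cycle in $Q_G$ containing exactly one determinant arrow is homogeneous of degree $1$ (i.e.\ cut exactly once), and that $Q_G - C$ is acyclic.

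For the first statement, I would use that the folding map $q \colon Q_G \to T(Q_G)$ carries each determinant $3$-cycle of $Q_G$ to a determinant $3$-cycle of $T(Q_G) \simeq \hat{Q}/L$, and that, conversely, each determinant $3$-cycle downstairs is the image of the corresponding fibre of parallel $3$-cycles upstairs. Because an arrow belongs to $C$ exactly when its image belongs to $C'$, and $C'$ is a cut, every cycle in such a fibre inherits the property of being cut exactly once from its image. The identifications defining $q$ only merge the rows of $1$-, $2$- and $3$-dimensional representations, while the spine of $4$-dimensional representations $\delta^i \varphi$ is left untouched, so this correspondence of determinant cycles is the direct analogue of the tetrahedral situation.

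The second statement is formal: a cycle in $Q_G - C$ would be sent by $\iota \circ q$ to a cycle in $\hat{Q}/L - C'$, which is impossible since $C'$ is a cut.

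The main obstacle will be the bookkeeping in the first step, namely confirming that the two-stage folding $Q_G \to T'(Q_G) \to T(Q_G)$ induces a clean correspondence between determinant $3$-cycles upstairs and downstairs. This is where the octahedral case is genuinely more involved than the tetrahedral one: the quiver branches into two sheets (one running through $\psi_2, \rho_2, \lambda_2$ and one through $\rho_3$), and the $\delta$-orbits of the various representations need not all have the same length, so one must check that none of the identifications of the $1$-, $3$- and the two families of $2$-dimensional representations either splits a determinant $3$-cycle or fuses two of them. Once this combinatorial check is carried out, exactly as the analogous checks in the dihedral and tetrahedral cases, the argument closes.
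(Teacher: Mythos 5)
Your proposal is correct and matches the paper's argument, which likewise pulls the cut back along $\iota \circ q$ and defers the verification to the same two checks (determinant $3$-cycles cut exactly once, acyclicity of $Q_G - C$) carried out as in the tetrahedral case. The extra bookkeeping you flag about the two sheets and unequal $\delta$-orbit lengths is exactly the point the paper leaves implicit, and it is handled by the fact that the folding only merges whole rows while fixing the spine of the $\delta^i\varphi$.
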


\begin{proof}
    Let $C' \subseteq (\hat{Q}/L)_1$ be a cut. As in \Cref{Pro: (B) tetrahedral has cut if abelian has}, one checks that $C = \{ a \in (Q_G)_1 \mid \iota(q(a)) \in C'  \}$ is a cut for $Q_G$.   
\end{proof}

We arrive at the following theorem, where the case when $m=1$ is precisely when $G \leq \SL_2(\Bbbk)$. 

\begin{Theo}\label{Theo: Type (B) octahedral classification}
    Let $G \leq \SL_3(\Bbbk)$ be a group of type (B) that is a central extension of the octahedral group. Then $R \ast G \simeq \Bbbk Q_G/I$ admits a $3$-preprojective cut if and only if $\rho \colon G \hookrightarrow \SL_3(\Bbbk)$ does not factor through an embedding $\SL_2(\Bbbk) \hookrightarrow \SL_3(\Bbbk)$.
\end{Theo}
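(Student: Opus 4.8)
The plan is to follow the template already established for the dihedral and tetrahedral central cases (\Cref{Theo: Central Dihedral class}, \Cref{Theo: Type (B) tetrahedral classification}), splitting on the integer $m$, the common size of the $\delta$-orbits on the $2$-dimensional irreducibles. By the remark preceding the statement, $m = 1$ holds exactly when $\rho$ factors through $\SL_2(\Bbbk)$ (this is read off from the fact that $G$ has $m$ irreducible representations of dimension $4$, so $m = |G|/|BH|$, which equals $1$ precisely when $G \simeq BH$). Thus the two halves of the equivalence correspond to $m = 1$ and $m \geq 2$.

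First I would dispose of the case where $\rho$ factors through $\SL_2(\Bbbk)$. Then $\delta = \det(\rho)$ is trivial, so the determinant arrows added upon embedding $G$ into $\SL_3(\Bbbk)$ run from each vertex $\chi$ to $\chi \otimes \delta = \chi$. Concretely, the number of arrows from $\chi_i$ to $\chi_i$ picks up a contribution $\dim \Hom_{\Bbbk G}(\chi_i, \chi_i) = 1$ from the trivial summand $\det(\rho)^{-1} = \mathbf{1}$ of the defining $3$-dimensional representation, so every vertex carries a loop (equivalently, $\delta^{-1}\varphi = \varphi$ here, and the determinant arrow at $\varphi$ is a loop). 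By \Cref{Cor: loops prevent cuts}, $\Bbbk Q_G/I$ admits no cut, which settles the ``only if'' direction.

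For the converse, assume $\rho$ does not factor through $\SL_2(\Bbbk)$, so $m \geq 2$. The preceding construction identifies $T(Q_G)$ with $\hat{Q}/L$ for $L = \langle m e_1, 2e_1 + 4 e_2 \rangle$, i.e.\ with the type (A) datum $B = \begin{pmatrix} m & 2 \\ 0 & 4 \end{pmatrix}$ of determinant $n = 4m$, whence $B' = \begin{pmatrix} m & 2 & 1 \\ 0 & 4 & 1 \\ 0 & 0 & 1 \end{pmatrix}$. I would then exhibit an explicit type vector and invoke \Cref{Theo: SL3 type (A) classification}: taking $(\gamma_1, \gamma_2, \gamma_3) = (4, 2(m-1), 2(m-1))$, all entries are positive for $m \geq 2$ and sum to $4m = n$, while one computes $\gamma \cdot B' = (4m, 8m, 4m) \in 4m\,\mathbb{Z}^{1\times 3}$. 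Hence $\hat{Q}/L$ admits a cut, and by \Cref{Pro: (B) octahedral has cut if abelian has} this cut lifts to a cut on $Q_G$, completing the ``if'' direction.

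The routine content is the congruence check $\gamma \cdot B' \in n\,\mathbb{Z}^{1\times 3}$ together with the transfer of the cut, both handled by results already in place. The only genuine subtlety I anticipate is the bookkeeping around $m = 1$: one must pin down both that $m = 1$ is equivalent to $\rho$ factoring through $\SL_2(\Bbbk)$ (from the count of $4$-dimensional irreducibles of the binary octahedral group) and that in this degenerate case the determinant arrows really produce loops rather than a loopless quiver. Once this equivalence and the loop phenomenon are secured, the remainder reduces to the same folding-and-transfer argument already carried out in the tetrahedral case.
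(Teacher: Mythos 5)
Your proposal is correct and follows essentially the same route as the paper's proof: loops from the trivial determinant character rule out a cut when $\rho$ factors through $\SL_2(\Bbbk)$, and for $m \geq 2$ the same matrix $B=\left(\begin{smallmatrix} m & 2\\ 0 & 4\end{smallmatrix}\right)$ and the same type vector $(4,2m-2,2m-2)$ are fed into \Cref{Theo: SL3 type (A) classification} and transferred back via \Cref{Pro: (B) octahedral has cut if abelian has}. The only difference is that you write out the congruence $\gamma\cdot B'=(4m,8m,4m)$ explicitly, which the paper leaves as an easy verification.
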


\begin{proof}
    If $G \leq \SL_2(\Bbbk)$, then $\delta = \mathbf{1}$ and hence $Q_G$ has loops given by determinant arrows, so $\Bbbk Q_G/I$ does not admit a cut by \Cref{Pro: Determinant loops prevent cuts}. Now suppose that $G$ does not embed in $\SL_2(\Bbbk)$. One can check that $G$ is then not isomorphic to the binary octahedral group. This means we have $m \geq 2$. The matrix $B$ defining the lattice $L$ with $T(Q_G) \simeq \hat{Q}/L$ is given by  $B=\begin{pmatrix}
        m &2\\
        0&4
    \end{pmatrix}$. 
    Using \Cref{Theo: SL3 type (A) classification}, one can easily verify that $(\gamma_1,\gamma_2,\gamma_3)=(4,2m-2, 2m-2)$ is a type of a cut for $m \geq 2$. Hence $\hat{Q}/L$ admits a cut and by \Cref{Pro: (B) octahedral has cut if abelian has} so does $Q_G$.
\end{proof}

\subsubsection{The icosahedral case}
We now sketch the remaining case, where $A_5 \simeq H \leq \PGL_2(\Bbbk) $ is the group of icosahedral symmetries. We consider a finite group $G \leq \GL_2(\Bbbk)$ with image $H$ in $\PGL_2(\Bbbk)$. We use the same strategy and notation as before. 

We begin by reminding the reader of the McKay quiver for the binary icosahedral group $BH$. We label the vertices by the dimensions of the irreducible representations of $BH$. 

\[
\begin{tikzcd}
                         &                                                 &                                                 &                                                 &                                                 & 3 \arrow[d, shift right]                                               &                                                 &                          \\
1 \arrow[r, shift right] & 2 \arrow[r, shift right] \arrow[l, shift right] & 3 \arrow[r, shift right] \arrow[l, shift right] & 4 \arrow[r, shift right] \arrow[l, shift right] & 5 \arrow[r, shift right] \arrow[l, shift right] & 6 \arrow[r, shift right] \arrow[l, shift right] \arrow[u, shift right] & 4 \arrow[r, shift right] \arrow[l, shift right] & 2 \arrow[l, shift right]
\end{tikzcd}
\]

Note that this quiver has no symmetries, so we can already except that the $\delta$-orbits will all be of the same size. However, we begin as always by decomposing $\rho \otimes \rho$. We obtain a summand of dimension $3$, then consider its tensor product with $\rho$ that has a summand of dimension $4$, and this procedure continues until we reach an irreducible representations of dimension $6$.

\begin{Rem}
    We repeatedly apply \Cref{Lem: Tensors of isoclinics decompose in the same dimensions}. Decompose $\rho \otimes \rho \simeq \delta \oplus \delta \psi_{3,1} $, where $\psi_{3,1}$ is irreducible of dimension $3$. Next, decompose $\psi_{3,1} \otimes \rho \simeq \rho \oplus \psi_{4,1}$ where $\psi_{4,1}$ is irreducible of dimension $4$. Then, decompose $\delta^{-1} \psi_{4,1} \otimes \rho \simeq \psi_{3,1} \oplus \psi_5$, where $\psi_5$ is irreducible of dimension $5$. Finally, decompose $\psi_5 \otimes \rho \simeq \psi_{4,1} \oplus \psi_6$ for $\psi_6$ irreducible of dimension $6$.    
\end{Rem}

Using the same action of $\delta$ and the same dimension arguments as before, we obtain the following quiver. We write $\psi_{2,1} = \rho$, so that the first index of $\psi_{i,j}$ indicates its dimension $i$. 

\[
\begin{tikzcd}[column sep = {between origins, 8ex}, row sep = {between origins, 8ex}]
\delta \arrow[rd]                         &                                    & \mathbf{1} \arrow[rd]              &                                                & \cdots \\
                                          & \rho \arrow[ru] \arrow[rd]         &                                    & \delta^{-1} \rho \arrow[ru] \arrow[rd]         & \cdots \\
{\delta \psi_{3,1}} \arrow[ru] \arrow[rd] &                                    & {\psi_{3,1}} \arrow[rd] \arrow[ru] &                                                & \cdots \\
                                          & {\psi_{4,1}} \arrow[ru] \arrow[rd] &                                    & {\delta^{-1} \psi_{4,1}} \arrow[ru] \arrow[rd] & \cdots \\
\delta \psi_5 \arrow[ru] \arrow[rd]       &                                    & \psi_5 \arrow[ru] \arrow[rd]       &                                                & \cdots \\
                                          & \psi_6 \arrow[ru]                  &                                    & \delta^{-1} \psi_6 \arrow[ru]                  & \cdots
\end{tikzcd}
\]

As before, we have reached the branching point. 

\begin{Rem}
    Consider $\delta^{-1} \psi_6 \otimes \rho$. Since the binary icosahedral group has a unique irreducible representation of dimension $6$, we can infer from \Cref{Lem: Tensors of isoclinics decompose in the same dimensions} that $\delta^{-1} \psi_6 \otimes \rho$ decomposes into irreducible summands of dimension $3$, $4$ and $5$. We denote the $4$-dimensional summand by $\psi_{4,2}$ and the $3$-dimensional summand by $\psi_{3,2}$. Again using \Cref{Lem: Tensors of isoclinics decompose in the same dimensions}, we see for $i \in \{3,4\}$ that $\psi_{i,2}$ is different from $\delta^{-j} \psi_{i,1}$ for all $j$ since their tensor products with $\rho$ decompose into summands of different dimensions. 
    We decompose $\psi_{4,2} \otimes \rho$, again using \Cref{Lem: Tensors of isoclinics decompose in the same dimensions}, so find one last irreducible representation $\psi_{2,2}$ of dimension $2$. We have that $\psi_{2,2} \otimes \rho$ has dimension $4$, and it is easy to see that $(\delta \psi_{4,2}, \psi_{2,2} \rho) = 1$, so we have found the whole McKay quiver. 
\end{Rem}

We let $m \geq 1$ be minimal with $\delta^m \psi_6 = \psi_6$. 

\begin{Rem}
    As in the octahedral case, one can compute that $m$ is also minimal with $\delta^m \psi_{3,2} = \psi_{3,2}$. Thus, using the isoclinism to the binary icosahedral group, it follows that the $\delta$-orbit of $\psi_{3,1}$ also has size $m$. Similarly, using that $\psi_{4,1} \otimes \rho $ and $\psi_{4,2} \otimes \rho$ decompose into irreducible summands of different dimensions, both of their $\delta$-orbits are of size $m$. The same argument shows that $\psi_{2,1} = \rho$ and $\psi_{2,2}$ each have a $\delta$-orbit of size $m$, and the fact that $\psi_5$ and $\mathbf{1}$ have $\delta$-orbits of size $m$ follows again from the isoclinism to the binary icosahedral group, since it has unique irreducible representations of dimensions $1$ and $5$. 
\end{Rem}

\[ 
\begin{tikzcd}[column sep = {between origins, 8ex}, row sep = {between origins, 8ex}]
\delta \arrow[rd]                         &                                    & \mathbf{1} \arrow[rd] \arrow[ll]              &                                                           & \cdots \arrow[ll]            & \delta^{-(m-1)} \arrow[rd]                         &                                                       & \mathbf{1} \arrow[ll]   \\
                                          & \rho \arrow[ru] \arrow[rd]         &                                               & \delta^{-1} \rho \arrow[ru] \arrow[rd] \arrow[ll]         & \cdots \arrow[ru] \arrow[rd] &                                                    & \delta^m \rho = \rho \arrow[ru] \arrow[rd] \arrow[ll] &                         \\
{\delta \psi_{3,1}} \arrow[ru] \arrow[rd] &                                    & {\psi_{3,1}} \arrow[rd] \arrow[ru] \arrow[ll] &                                                           & \cdots \arrow[ll]            & {\delta^{-(m-1)} \psi_{3,1}} \arrow[ru] \arrow[rd] &                                                       & {\psi_{3,1}} \arrow[ll] \\
                                          & {\psi_{4,1}} \arrow[ru] \arrow[rd] &                                               & {\delta^{-1} \psi_{4,1}} \arrow[ru] \arrow[rd] \arrow[ll] & \cdots \arrow[ru] \arrow[rd] &                                                    & {\psi_{4,1}} \arrow[ru] \arrow[ll] \arrow[rd]         &                         \\
\delta \psi_5 \arrow[ru] \arrow[rd]       &                                    & \psi_5 \arrow[ru] \arrow[rd] \arrow[ll]       &                                                           & \cdots \arrow[ll]            & \delta^{-(m-1)} \psi_5 \arrow[ru] \arrow[rd]       &                                                       & \psi_5 \arrow[ll]       \\
                                          & \psi_6 \arrow[ru] \arrow[rd]       &                                               & \delta^{-1} \psi_6 \arrow[ru] \arrow[rd] \arrow[ll]       & \cdots \arrow[ru] \arrow[rd] &                                                    & \psi_6 \arrow[rd] \arrow[ru] \arrow[ll]               &                         \\
{\delta \psi_{4,2}} \arrow[rd] \arrow[ru] &                                    & {\psi_{4,2}} \arrow[ru] \arrow[rd] \arrow[ll] &                                                           & \cdots \arrow[ll]            & {\delta^{-(m-1)} \psi_{4,2}} \arrow[ru] \arrow[rd] &                                                       & {\psi_{4,2}} \arrow[ll] \\
                                          & {\psi_{2,2}} \arrow[ru]            &                                               & {\delta^{-1} \psi_{2,2}} \arrow[ru] \arrow[ll]            & \cdots \arrow[ru]            &                                                    & {\psi_{2,2}} \arrow[ru] \arrow[ll]                    &                         \\
                                          & \psi_6 \arrow[rd]                  &                                               & \delta^{-1} \psi_6 \arrow[rd] \arrow[ll]                  & \cdots  \arrow[rd]           &                                                    & \psi_6 \arrow[rd] \arrow[ll]                          &                         \\
{\delta \psi_{3,2}} \arrow[ru]            &                                    & {\psi_{3, 2}} \arrow[ru] \arrow[ll]           &                                                           & \cdots \arrow[ll]            & {\delta^{-(m-1)} \psi_{3,2}} \arrow[ru]            &                                                       & {\psi_{3,2}} \arrow[ll]
\end{tikzcd}
\]

We have drawn the full quiver above, as before with two parts. Vertices of the same labels are identified, as well as the obvious arrows. 

Since all rows have the same size, we can fold this quiver easily. To visualise the folding we now apply, one can think of the quiver for the binary icosahedral group. It is folded to a doubled type $A$ diagram by choosing the $3$-dimensional representation which is adjacent to the $6$-dimensional representation, and folding it onto the $5$-dimensional representation. Then the type $A$ diagram is mapped to a type $\Tilde{A}$ diagram by identifying the end points.  

\begin{Con}
    Let $Q_G$ be the McKay quiver as computed above. Then we define the quiver $T'(Q_G)$ by identifying the vertices $\delta^i \psi_{3,2}$ with the vertices $\delta^i \psi_5$ for $0 \leq i < m$. Arrows between pairs of identified vertices are identified as well. We denote by $q' \colon Q_G \to T'(Q_G)$ the quotient map. The resulting quiver can be seen as a triangular grid on a cylinder. 
    Then we define $T(Q_G)$ by identifying $q'(\delta^i)$ with $q'(\delta^i \psi_{2,2})$ for $0 \leq i < m$, and again we identify arrows between pairs of identified vertices. The resulting quotient map is denoted by $q \colon Q_G \to T(Q_G)$. The quiver $T(Q_G)$ embeds on the torus
\end{Con}

As before, we now show that $T(Q_G)$ can be obtained as a quotient $\hat{Q}/L$ as in \Cref{Sec: type (A)}. 

\begin{Con}
    Let $Q_G$ be as above. Then $T(Q_G)$ is isomorphic to a quotient of $\hat{Q}$ by the action of a cofinite sublattice $L \leq \mathbb{Z}^2$ as in the type (A) case in \Cref{Sec: type (A)}. More precisely, let $m$ be as before, and consider the lattice $L = \langle  m e_1, 4e_1 + 7 e_2 \rangle \leq \mathbb{Z}^2$. We claim that $T(Q_G)$ is isomorphic to $\hat{Q}/L$. We construct an isomorphism $\iota \colon T(Q_G) \to \hat{Q}/L$ so that the action of $e_1$ on $\hat{Q}/L$ corresponds to the action of $\delta$. Note that there are cycles in $T(Q_G)$ arising from paths in $Q_G$ from $\mathbf{1}$ to $\psi_{2,2}$. We therefore construct $\iota$ so that the $11$-cycle gives rise to the second generator $4e_1 + 3e_2$ of $L$. We write $\psi_{1,1} = \mathbf{1}$ and $\psi_{5,1} = \psi_5$. Then every vertex in $T(Q_G)$ is uniquely determined as $q(\delta^i \psi_{j,1})$ for a unique $0 \leq i < m$ and $j \in \{ 1,\ldots ,6 \}$ or as $q(\delta^i \psi_{4, 2}) $. This gives rise to the map
    \begin{align*}
        \iota \colon T(Q_G) &\to \hat{Q}/L \\
            q(\delta^i \psi_{j, 1} ) &\mapsto (i + \lceil \frac{j-1}{2} \rceil )e_1 + (j-1)e_2 + L \\
            q(\delta^i \psi_{4,2}) &\mapsto (i+3) e_1 + 6e_2 + L 
    \end{align*}
    As before, one checks easily that this is a well-defined isomorphism of quivers.
\end{Con}

\begin{Pro}\label{Pro: (B) icosahedral has cut if abelian has}
    Let $G \leq \SL_3(\Bbbk)$ be a group of type (B) that is a central extension of the icosahedral group. Then $Q_G$ admits a cut if $T(Q_G) \simeq \hat{Q}/L$ admits a cut. 
\end{Pro}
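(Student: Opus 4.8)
The plan is to follow the same pull-back strategy used in \Cref{Pro: (B) tetrahedral has cut if abelian has} and \Cref{Pro: (B) octahedral has cut if abelian has}. Given a cut $C' \subseteq (\hat{Q}/L)_1$, I would set $C = \{ a \in (Q_G)_1 \mid \iota(q(a)) \in C' \}$ and claim that this is a cut for $Q_G$. Since $G$ is embedded as a finite subgroup of $\GL_2(\Bbbk) \leq \SL_3(\Bbbk)$, \Cref{Cor: deter cycles cut and acyclic suffices} applies, so it suffices to verify two things: that every $3$-cycle in $Q_G$ containing exactly one determinant arrow is cut exactly once by $C$, and that $Q_G - C$ is acyclic.

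For the first point, the crucial simplification here is that, as noted above, all $\delta$-orbits on the irreducible representations have the same size $m$, so no crossing arrows occur and the quotient map $q \colon Q_G \to T(Q_G)$ is particularly well-behaved. I would argue that $q$ is injective on each elementary triangle of the grid, so that every $3$-cycle in $Q_G$ with exactly one determinant arrow maps bijectively onto a $3$-cycle in $T(Q_G)$ that again contains exactly one determinant arrow. Because $C'$ is a cut, \Cref{Cor: deter cycles cut and acyclic suffices} guarantees that this image $3$-cycle is cut exactly once; since $\iota \circ q$ restricts to a bijection on the triangle, and $C$ is defined as the full preimage of $C'$, the original $3$-cycle is then cut exactly once as well.

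The main obstacle is controlling the effect of the two identifications defining $q$, namely $\delta^i \psi_{3,2} \sim \delta^i \psi_5$ and $q'(\delta^i) \sim q'(\delta^i \psi_{2,2})$, which first fold the two pages of the book-shaped quiver onto a cylinder and then quotient onto the torus. I would have to check, using the explicit description of the quiver around the spine $\delta^i \psi_6$ and around the folded rows, that no elementary triangle has two of its three vertices identified under $q$; otherwise such a triangle would degenerate and the counting of cuts could fail. Since $\psi_5$ and $\psi_{3,2}$ (respectively $\mathbf{1}$ and $\psi_{2,2}$) lie in non-adjacent rows, the triangles meeting these rows map to honest $3$-cycles, and this is the precise statement requiring verification.

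Finally, acyclicity of $Q_G - C$ is immediate: any oriented cycle in $Q_G - C$ would be carried by $\iota \circ q$ to a closed oriented walk in $(\hat{Q}/L) - C'$, which necessarily contains an oriented cycle, contradicting that $C'$ is a cut. Assembling these observations gives that $C$ is a cut for $Q_G$, as claimed.
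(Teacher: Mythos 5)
Your proposal is correct and follows exactly the paper's strategy: the paper's own proof is a one-line reference back to the tetrahedral case, pulling back the cut along $\iota \circ q$, checking that each relevant $3$-cycle maps to a $3$-cycle of $T(Q_G)$ that is cut exactly once, and deducing acyclicity from the quotient. You merely spell out the details (injectivity of $q$ on each elementary triangle, non-adjacency of the identified rows) that the paper leaves implicit.
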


\begin{proof}
    Let $C' \subseteq (\hat{Q}/L)_1$ be a cut. As in \Cref{Pro: (B) tetrahedral has cut if abelian has}, one checks that $C = \{ a \in (Q_G)_1 \mid \iota(q(a)) \in C'  \}$ is a cut for $Q_G$.   
\end{proof}

We arrive at the following theorem, where the case when $m=1$ is precisely when $G \leq \SL_2(\Bbbk)$. 

\begin{Theo}\label{Theo: Type (B) icosahedral classification}
    Let $G \leq \SL_3(\Bbbk)$ be a group of type (B) that is a central extension of the icosahedral group. Then $R \ast G \simeq \Bbbk Q_G/I$ admits a $3$-preprojective cut if and only if $\rho \colon G \hookrightarrow \SL_3(\Bbbk)$ does not factor through an embedding $\SL_2(\Bbbk) \hookrightarrow \SL_3(\Bbbk)$.
\end{Theo}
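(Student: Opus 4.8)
The plan is to follow the same three-step pattern as in the proof of \Cref{Theo: Type (B) octahedral classification}: reduce the existence of a cut on $Q_G$ to that of a cut on the torus quiver $\hat{Q}/L$ via \Cref{Pro: (B) icosahedral has cut if abelian has}, dispose of the degenerate direction by producing loops, and settle the generic direction by exhibiting one explicit type vector and invoking \Cref{Theo: SL3 type (A) classification}. Since the statement is a biconditional, I would prove the two implications separately.

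For the ``only if'' direction I would argue the contrapositive. If $\rho \colon G \hookrightarrow \SL_3(\Bbbk)$ factors through $\SL_2(\Bbbk)$, then $\delta = \det(\rho) = \mathbf{1}$. The determinant arrows record tensoring by $\delta^{-1} = \mathbf{1}$, so each of them is a loop; equivalently $\delta$ fixes every vertex of $Q_G$ and in particular $m = 1$. By \Cref{Cor: loops prevent cuts}, a quiver with a loop admits no higher preprojective cut, so $\Bbbk Q_G/I$ admits none. Here one uses that $BH \leq \SL_2(\Bbbk)$ is the unique minimal group over $A_5$ recorded above, and that $BH$ is perfect, so that $\delta = \mathbf{1}$ occurs precisely when $G \simeq BH \leq \SL_2(\Bbbk)$.

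For the ``if'' direction, suppose $\rho$ does not factor through $\SL_2(\Bbbk)$, i.e.\ $\delta \neq \mathbf{1}$. Since $BH$ is perfect, the existence of a nontrivial one-dimensional representation $\delta$ forces $G \neq BH$, hence $BH < G$ and $m = |G|/|BH| \geq 2$; here I use that $BH$ has a unique $6$-dimensional irreducible representation together with the isoclinism count, so that the $\delta$-orbit of $\psi_6$ has size $m = |G|/|BH|$. From the preceding construction the relevant lattice is $L = \langle m e_1, 4 e_1 + 7 e_2 \rangle$, so $B = \begin{pmatrix} m & 4 \\ 0 & 7 \end{pmatrix}$ with $n = \det(B) = 7m$. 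I would then propose the type vector $(\gamma_1,\gamma_2,\gamma_3) = (7,\, 4(m-1),\, 3(m-1))$: its entries are positive for $m \geq 2$, it sums to $7 + 7(m-1) = 7m = n$, and the three coordinates of $\gamma \cdot B'$ equal $7m$, then $28 + 28(m-1) = 28m$, and $7m$, each divisible by $n = 7m$. By \Cref{Theo: SL3 type (A) classification} this is a type of a cut on $\hat{Q}/L$, and by \Cref{Pro: (B) icosahedral has cut if abelian has} it lifts to a cut on $Q_G$.

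Because the genuinely difficult work---computing $Q_G$, folding it to $T(Q_G)$, and identifying $T(Q_G) \simeq \hat{Q}/L$---has already been carried out, I do not expect a serious obstacle in the proof itself; it is essentially a verification. The one step meriting care is the implication $\delta \neq \mathbf{1} \Rightarrow m \geq 2$, which rests on the uniqueness of the minimal group over $A_5$, the perfectness of $BH$, and the isoclinism-based count of irreducibles; once that is in place, confirming that the single type vector $(7, 4(m-1), 3(m-1))$ meets all three divisibility conditions at once is routine.
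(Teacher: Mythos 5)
Your proposal is correct and follows essentially the same route as the paper: loops from $\delta=\mathbf{1}$ for the degenerate direction, and the type vector $(7,4(m-1),3(m-1))$ on $\hat{Q}/L$ with $B=\left(\begin{smallmatrix} m & 4\\ 0 & 7\end{smallmatrix}\right)$ lifted via \Cref{Pro: (B) icosahedral has cut if abelian has} for the other. Your added justification that $\delta\neq\mathbf{1}$ forces $m\geq 2$ (perfectness of $BH$ plus the isoclinism count) is a welcome elaboration of a step the paper leaves as ``one can check.''
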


\begin{proof}
    If $G \leq \SL_2(\Bbbk)$, then $\delta = \mathbf{1}$, so $m=1$, which means that $Q_G$ has loops given by determinant arrows, so $\Bbbk Q_G/I$ does not admit a cut by \Cref{Pro: Determinant loops prevent cuts}. Now suppose that $G$ does not embed in $\SL_2(\Bbbk)$. One can check that $G$ is then not isomorphic to the binary icosahedral groupp. This means we have $m \geq 2$. The matrix $B$ defining the lattice $L$ with $T(Q_G) \simeq \hat{Q}/L$ is given by  $B=\begin{pmatrix}
        m &4\\
        0&7
    \end{pmatrix}$. 
    Using \Cref{Theo: SL3 type (A) classification}, one can easily verify that $(\gamma_1,\gamma_2,\gamma_3)=(7,4m-4, 3m-3)$ is a type of a cut for $m \geq 2$. Hence $\hat{Q}/L$ admits a cut and by \Cref{Pro: (B) icosahedral has cut if abelian has} so does $Q_G$.
\end{proof}

Combining the results from this section, we have the following result for groups of type (B). One could rephrase this theorem in terms of isomorphism types of groups, but we prefer noting that it is essentially self-duality of the defining representation and some small cases that prevent the existence of a cut.

\begin{Theo}\label{Theo: Type (B) classification}
    Let $G \leq \GL_2(\Bbbk) \leq \SL_3(\Bbbk)$ be a non-abelian group of type (B). Then $R \ast G \simeq \Bbbk Q_G/I$ admits a $3$-preprojective cut if and only if the defining representation $\rho \colon G \to \GL_2(\Bbbk)$ is not self-dual and $G$ is not isomorphic to a subgroup of $\SL_2(\Bbbk)$. 
\end{Theo}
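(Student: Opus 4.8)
The plan is to deduce the theorem by assembling the four case-specific classifications already established, namely \Cref{Theo: Type (B) dihedral classification}, \Cref{Theo: Type (B) tetrahedral classification}, \Cref{Theo: Type (B) octahedral classification} and \Cref{Theo: Type (B) icosahedral classification}, and checking in each case that the criterion found there coincides with the uniform criterion ``$\rho$ not self-dual and $G$ not isomorphic to a subgroup of $\SL_2(\Bbbk)$''. First I would reduce to these four cases: since $G$ is nonabelian its image $H=\pi(G)\leq\PGL_2(\Bbbk)$ cannot be cyclic, because a group with cyclic central quotient is abelian, so $H$ is dihedral, $A_4$, $S_4$ or $A_5$, which is exactly the case division of \Cref{SSec: (B) structure}.

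The organizing observation is that the two clauses play complementary roles. The clause ``$\rho$ not self-dual'' already excludes all the no-cut cases but one. By \Cref{Cor: self-duality} self-duality means $\delta\rho\simeq\rho$; tensoring this relation by $\rho$ gives $\delta\otimes(\rho\otimes\rho)\simeq\rho\otimes\rho$, and since $\rho\otimes\rho\simeq\delta\oplus\Sym^2(\rho)$ with $\Sym^2(\rho)$ irreducible of dimension $3$ for the primitive images $A_4,S_4,A_5$, comparing the one-dimensional summands forces $\delta\otimes\delta\simeq\delta$, i.e.\ $\delta\simeq\mathbf 1$. Thus for these three images self-duality coincides with $G\hookrightarrow\SL_2(\Bbbk)$, which matches the octahedral and icosahedral theorems verbatim and matches the tetrahedral theorem on the embedding $BH\leq\SL_2(\Bbbk)$. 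In the dihedral case $\Sym^2(\rho)$ can be reducible and self-duality is strictly weaker than $\delta\simeq\mathbf 1$ (the ordinary group $D_{2n}$ is self-dual with $\delta$ of order $2$), but \Cref{Theo: Type (B) dihedral classification} already phrases its criterion in terms of self-duality, so nothing further is needed there for the forward direction.

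To finish I would verify that the extra clause ``$G$ not isomorphic to a subgroup of $\SL_2(\Bbbk)$'' neither deletes a quiver admitting a cut nor fails to delete one that does not. For the first direction, if $G$ admits a cut then in the tetrahedral, octahedral and icosahedral cases $m\geq 2$, so $|G|$ strictly exceeds the order of the relevant binary group while $G/Z(G)$ still equals the given image; since the finite subgroups of $\SL_2(\Bbbk)$ are determined up to isomorphism by their image $G/Z(G)$ in $\PGL_2(\Bbbk)$ (the cyclic, binary dihedral, $BT$, $BO$, $BI$ groups with quotients cyclic, dihedral, $A_4$, $S_4$, $A_5$), no such $G$ is abstractly a subgroup of $\SL_2(\Bbbk)$. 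The dihedral case is handled by the final proposition of the dihedral subsection together with the fact that dicyclic groups have only self-dual two-dimensional irreducibles. For the second direction I run the same classification backwards: a nonabelian $G$ that is abstractly a subgroup of $\SL_2(\Bbbk)$ with image $H$ must be the binary group $BH$, and then I check that every faithful two-dimensional representation of $BH$ is self-dual except when $H\simeq A_4$.

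The hard part is precisely this last exception, the binary tetrahedral group. Because $BT^{\mathrm{ab}}\simeq C_3$, one can twist the standard symplectic representation by an order-three determinant character to obtain the embedding $H_0$ of \Cref{SSec: (B) structure}: here $\rho$ is not self-dual (as $\delta$ has order $3$, so $\delta^2\simeq\mathbf 1$ fails) yet $G\simeq BT$ is abstractly a subgroup of $\SL_2(\Bbbk)$, and its quiver has loops by the $m=1$ computation. This is the unique group for which non-self-duality alone returns the wrong answer, and it is exactly what the abstract-isomorphism clause is designed to catch; for $S_4$ and $A_5$ the analogous phenomenon cannot arise, since $BO^{\mathrm{ab}}\simeq C_2$ and $BI^{\mathrm{ab}}$ trivial force every faithful two-dimensional representation of $BO$ and $BI$ to have trivial determinant and hence be self-dual. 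Assembling the four reconciled cases then yields the stated equivalence.
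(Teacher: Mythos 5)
Your proposal is correct and follows the same route as the paper, which states this theorem simply by combining Theorems \ref{Theo: Type (B) dihedral classification}, \ref{Theo: Type (B) tetrahedral classification}, \ref{Theo: Type (B) octahedral classification} and \ref{Theo: Type (B) icosahedral classification}. In fact you supply more than the paper does: the reconciliation of the four case-specific criteria with the uniform statement (self-duality forcing $\delta\simeq\mathbf{1}$ when $\Sym^2(\rho)$ is irreducible, the reality of the $2$-dimensional characters of the binary dihedral, octahedral and icosahedral groups, and the exceptional role of the twisted embedding $H_0$ of the binary tetrahedral group) is left implicit in the paper, and your verification of these points is accurate.
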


\section{The exceptional cases}\label{Sec: Except}
In the classification of finite subgroups of $\SL_3(\Bbbk)$, there remain finitely many groups which were not covered in the cases (A) - (D). For each, we now present its structure and its McKay quiver, as well as a $3$-preprojective cut if it exists. If no cut exists, this can be verified by applying \Cref{Pro: loop in potential}. The results in this section can be computed and verified by hand, but we made extensive use of GAP \cite{GAP4}, the package OSCAR \cite{OSCAR} and the Graphs library \cite{Graphs2021} in Julia \cite{Julia-2017}, so we end by giving a brief description of the involved computations. 

In this section, we use the following convention when drawing McKay quivers. 

\begin{Conv}\label{Conv: Quiver drawings}
    We draw many McKay quivers below. The vertices are labeled according to the numbering of the irreducible characters as computed by GAP. If a vertex with the same label appears twice, the vertices are identified. We draw the usual arrows as $i \to j $, and two parallel arrows as $i \Rightarrow j$. 
    If an arrow $i \to j$ appears twice in the drawing, be it together with $i \to j'$ or together with $i' \to j'$ or together $i' \to j$, for $i=i'$ and $j=j'$, the arrows are identified. Similarly, pairs of parallel arrows are identified to yield one copy of the two parallel arrows. 
\end{Conv}

\subsection{Types (E) - (G)}
There are three exceptional subgroups of $\SL_3(\Bbbk)$ which arise as iterated extensions of a group of type (C), and which were found as the primitive groups with imprimitive normal subgroups. 
We fix notation, following Yau and Yu \cite{YauYu}. We need the following matrices. 
\begin{alignat*}{3}
    s &= \left( \begin{smallmatrix}
        1 & 0 & 0 \\
        0 & \epsilon_3 &0 \\
        0 & 0 & \epsilon_3^2 
    \end{smallmatrix} \right)  \quad 
    &&t =  \left( \begin{smallmatrix}
        0 & 1 &0 \\
        0 &0 &1 \\
        1 & 0 &0 
    \end{smallmatrix} \right) \quad 
    &&v = \frac{1}{i \sqrt{3}} \cdot  \left( \begin{smallmatrix}
        1 & 1 & 1 \\
        1 & \epsilon_3 & \epsilon_3^2 \\
        1 & \epsilon_3^2 & \epsilon_3
    \end{smallmatrix} \right) \\
    u &=  \left( \begin{smallmatrix}
        \epsilon_9^2 & 0 & 0 \\
        0 & \epsilon_9^2 & 0 \\
        0 & 0 & \epsilon_9^2 \epsilon_3
    \end{smallmatrix} \right) \quad 
    &&p  = \frac{1}{i \sqrt{3}} \cdot \left( \begin{smallmatrix}
        1 & 1 & \epsilon_3^2 \\
        1 & \epsilon_3 & \epsilon_3 \\
        \epsilon_3 & 1 & \epsilon_3
    \end{smallmatrix} \right)
    &&
\end{alignat*}
Note that $|v| = 4$, and that $v^2 = \left( \begin{smallmatrix} -1 & 0 & 0 \\ 0 & 0 & -1 \\ 0 & -1 & 0  \end{smallmatrix}  \right) $, and that $p = u v u^{-1}$.

\subsubsection{Type (E)}
Let $G = \langle s,t,v\rangle$. It is easy to see that 
\[ G \simeq \langle s, t \rangle \rtimes \langle v \rangle \simeq   ((C_3 \times C_3) \rtimes C_3 )\rtimes C_4. \]
Hence $|G| = 108$. Furthermore, this can be seen as an extension of a group of type (C), or of a group of type (D). Indeed, $H_1 = \langle s,t \rangle \simeq (C_3 \times C_3) \rtimes C_3$ is a group of type (C), and $H_2 =\langle s,t,v^2\rangle \simeq H_1 \rtimes C_2 $ is of type (D). 

One can compute the McKay quiver directly, and we draw it below together with the adjacency matrix, according to \Cref{Conv: Quiver drawings}. It is easy to see that all shortest cycles in $Q_G$ have length $3$, and we display in red a cut such that every $3$-cycle is homogeneous of degree $1$. 

\[  \left(
\begin{smallmatrix}
  0& 0& 0& 0& 0& 0& 0& 1& 0& 0& 0& 0& 0& 0 \\
  0& 0& 0& 0& 1& 0& 0& 0& 0& 0& 0& 0& 0& 0 \\ 
  0& 0& 0& 0& 0& 0& 0& 0& 1& 0& 0& 0& 0& 0 \\
  0& 0& 0& 0& 0& 0& 0& 0& 0& 0& 0& 1& 0& 0 \\ 
  0& 0& 0& 0& 0& 1& 0& 0& 0& 1& 1& 0& 0& 0 \\
  0& 1& 0& 0& 0& 0& 0& 0& 0& 0& 0& 0& 1& 1 \\ 
  1& 0& 0& 0& 0& 0& 0& 0& 0& 0& 0& 0& 1& 1 \\
  0& 0& 0& 0& 0& 0& 1& 0& 0& 1& 1& 0& 0& 0 \\ 
  0& 0& 0& 0& 0& 1& 1& 0& 0& 1& 0& 0& 0& 0 \\
  0& 0& 1& 0& 0& 0& 0& 0& 0& 0& 0& 0& 1& 1 \\ 
  0& 0& 0& 1& 0& 0& 0& 0& 0& 0& 0& 0& 1& 1 \\
  0& 0& 0& 0& 0& 1& 1& 0& 0& 0& 1& 0& 0& 0 \\ 
  0& 0& 0& 0& 1& 0& 0& 1& 1& 0& 0& 1& 0& 0 \\
  0& 0& 0& 0& 1& 0& 0& 1& 1& 0& 0& 1& 0& 0 
\end{smallmatrix} \right)
\]

\begin{tikzcd}[column sep={2em,between origins},row sep={2em,between origins}]
9 \arrow[dddd, red] \arrow[rrrr, red] &               &                              &  & 7 \arrow[rdd] \arrow[lldd] \arrow[rrdd]  &               &                              &  & 12 \arrow[llll, red] \arrow[rrrr, red] \arrow[dddd, red] &               &                              &  & 6 \arrow[rdd] \arrow[lldd] \arrow[rrdd]  &               &                              &  & 9 \arrow[llll, red]  &               \\
                            &               &                              &  &                                          &               &                              &  &                                           &               &                              &  &                                          &               &                              &  &                             &               \\
                            & 3 \arrow[luu] & 13 \arrow[lluu] \arrow[rrdd] &  &                                          & 1 \arrow[ldd] & 14 \arrow[lldd] \arrow[rruu] &  &                                           & 4 \arrow[luu] & 13 \arrow[lluu] \arrow[rrdd] &  &                                          & 2 \arrow[ldd] & 14 \arrow[lldd] \arrow[rruu] &  &                             &  \\
                            &               &                              &  &                                          &               &                              &  &                                           &               &                              &  &                                          &               &                              &  &                             &               \\
10 \arrow[ruu] \arrow[rruu] &               &                              &  & 8 \arrow[llll, red] \arrow[rrrr, red] \arrow[uuuu, red] &               &                              &  & 11 \arrow[ruu] \arrow[lluu] \arrow[rruu]  &               &                              &  & 5 \arrow[llll, red] \arrow[rrrr, red] \arrow[uuuu, red] &               &                              &  & 10  \arrow[lluu] &              
\end{tikzcd}

\begin{Pro}
    Let $G \leq \SL_3(\Bbbk)$ be the group of type (E). Then $R \ast G$ has a $3$-preprojective cut.  
\end{Pro}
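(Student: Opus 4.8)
The plan is to promote the grading displayed in the figure to a cut: declare each red arrow to have degree $1$, every other arrow degree $0$, and every vertex degree $0$, and then invoke \Cref{Pro: All cycles cut and locally finite suffices}. That reduces the claim to two verifications, namely that $\Bbbk Q_G/I$ is locally finite dimensional in this grading, and that every $3$-cycle of $Q_G$ is homogeneous of degree $1$. Since the adjacency matrix has entries in $\{0,1\}$, there are no multiple arrows, so each $3$-cycle is an honest triangle; moreover the diagonal is zero, so $Q_G$ has no loops and \Cref{Cor: loops prevent cuts} poses no obstruction. Because the shortest cycles all have length $3$, these triangles are precisely the cycles we must control.

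For local finiteness, I would first check that deleting the red arrows from $Q_G$ leaves an \emph{acyclic} quiver; this can be read directly off the drawing. Consequently the degree-$0$ part $(\Bbbk Q_G/I)_0$ is spanned by the finitely many paths of a finite acyclic quiver, hence is finite dimensional. A path representing a nonzero element of the degree-$i$ component is a concatenation of exactly $i$ red arrows separated by paths in the acyclic black subquiver; as there are finitely many red arrows and finitely many black paths, each graded component is finite dimensional, which is exactly the required local finiteness.

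For the degree condition, each red arrow contributes $1$ and each black arrow $0$, so a $3$-cycle is homogeneous of degree $1$ exactly when it contains precisely one red arrow. I would verify this triangle by triangle, organizing the bookkeeping by the symmetry visible in the figure: $Q_G$ has the combinatorics of a triangulated torus, the columns repeating under a horizontal translation that also fixes the cut, while the high-valence vertices $13$ and $14$ are shared by many triangles. After treating the triangles incident to the repeated boundary vertices $9$ and $10$ and those meeting the hubs $13,14$ and the outer vertices $5,\dots,12$, the remaining triangles follow by translational symmetry, so that one confirms each triangle meets the red set exactly once.

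The main obstacle is exactly this last, finite-but-fiddly enumeration: one must be certain that no triangle is left uncut and that none is cut twice, paying particular attention to the identifications at the two ends of the drawing and to the branching at the hub vertices $13,14$, where a careless reading of the figure could either miss a $3$-cycle or double-count a cut arrow. Once every triangle is confirmed to meet the cut exactly once, \Cref{Pro: All cycles cut and locally finite suffices} produces the cut, and hence the desired $3$-preprojective grading on $R \ast G$.
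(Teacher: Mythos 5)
Your proposal is correct and follows essentially the same route as the paper: verify that the degree-$0$ subquiver is acyclic (hence local finiteness) and that every $3$-cycle, including those through the hub vertices $13$ and $14$ and across the boundary identifications, contains exactly one red arrow, then invoke \Cref{Pro: All cycles cut and locally finite suffices}. The paper's proof performs exactly this check, identifying $5,8,9,12$ as sinks and $6,7,10,11$ as sources of the cut quiver to see acyclicity.
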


\begin{proof}
    The above is indeed a cut. First, note that the cut quiver is acyclic: The vertices $5$, $8$, $9$ and $12$ are sinks in the cut quiver, while $6$, $7$, $10$ and $11$ are sources, and it is easy to check that there are no cycles involving the remaining vertices. Next, note that every $3$-cycle is homogeneous of degree $1$: The obvious $3$-cycles in the quiver are visible as triangles. The remaining $3$-cycles all contain a vertex $13$ or $14$ and pass through the top or bottom row, so it is again easy to see also these cycles are of degree $1$.  
\end{proof}

\subsubsection{Type (F)}
We continue using the same notation as before. Let $G = \langle s,v,t,p \rangle$. Again, it is easy to compute that $H_1 = \langle s,t \rangle \simeq (C_3 \times C_3) \rtimes C_3$ is of type (C), and that 
\[ G = \langle s,t\rangle \rtimes \langle v,p \rangle \simeq H_1 \rtimes Q_8.  \]
Hence, $|G| = 216$. Also note that the group of type (E) is normal of index $2$ in the group of type (F). 

We compute the McKay quiver directly, and draw it below with its adjacency matrix, according to \Cref{Conv: Quiver drawings}. We see that the quiver can be obtained from four isomorphic copies, identified along the full subquiver with vertices $5$, $14$, $15$, $16$. As for type (E), all shortest cycles have length $3$, and we display a cut such that each $3$-cycle is homogeneous of degree $3$. 

\[ \left(
\begin{smallmatrix}
 0& 0& 0& 0& 0& 0& 0& 0& 0& 0& 0& 1& 0& 0& 0& 0 \\ 0& 0& 0& 0& 0& 1& 0& 0& 0& 0& 0& 0& 0& 0& 0& 0 \\ 
   0& 0& 0& 0& 0& 0& 0& 1& 0& 0& 0& 0& 0& 0& 0& 0 \\ 0& 0& 0& 0& 0& 0& 0& 0& 0& 1& 0& 0& 0& 0& 0& 0 \\ 
   0& 0& 0& 0& 0& 0& 0& 0& 0& 0& 0& 0& 0& 1& 0& 0 \\ 0& 0& 0& 0& 0& 0& 1& 0& 0& 0& 0& 0& 0& 0& 1& 0 \\ 
   0& 1& 0& 0& 0& 0& 0& 0& 0& 0& 0& 0& 0& 0& 0& 1 \\ 0& 0& 0& 0& 0& 0& 0& 0& 1& 0& 0& 0& 0& 0& 1& 0 \\ 
   0& 0& 1& 0& 0& 0& 0& 0& 0& 0& 0& 0& 0& 0& 0& 1 \\ 0& 0& 0& 0& 0& 0& 0& 0& 0& 0& 1& 0& 0& 0& 1& 0 \\ 
   0& 0& 0& 1& 0& 0& 0& 0& 0& 0& 0& 0& 0& 0& 0& 1 \\ 0& 0& 0& 0& 0& 0& 0& 0& 0& 0& 0& 0& 1& 0& 1& 0 \\ 
   1& 0& 0& 0& 0& 0& 0& 0& 0& 0& 0& 0& 0& 0& 0& 1 \\ 0& 0& 0& 0& 0& 0& 1& 0& 1& 0& 1& 0& 1& 0& 1& 0 \\ 
   0& 0& 0& 0& 1& 0& 0& 0& 0& 0& 0& 0& 0& 0& 0& 2 \\ 0& 0& 0& 0& 0& 1& 0& 1& 0& 1& 0& 1& 0& 2& 0& 0 
\end{smallmatrix} \right)
\]

\[
\begin{tikzcd}[sep=2ex]
                                     & 1 \arrow[ld]                         &                          &  &                                      & 2 \arrow[ld]                         &                          &  &                                      & 3 \arrow[ld]                         &                          &  &                                      & 4 \arrow[ld]                         &                          \\
12 \arrow[rr] \arrow[dd, red]             &                                      & 13 \arrow[lu, red] \arrow[ld, red] &  & 6 \arrow[rr] \arrow[dd, red]              &                                      & 7 \arrow[lu, red] \arrow[ld, red]  &  & 8 \arrow[rr] \arrow[dd, red]              &                                      & 9 \arrow[lu, red] \arrow[ld, red]  &  & 10 \arrow[rr] \arrow[dd, red]             &                                      & 11 \arrow[lu, red] \arrow[ld, red] \\
                                     & 16 \arrow[lu] \arrow[rd, Rightarrow] &                          &  &                                      & 16 \arrow[lu] \arrow[rd, Rightarrow] &                          &  &                                      & 16 \arrow[lu] \arrow[rd, Rightarrow] &                          &  &                                      & 16 \arrow[lu] \arrow[rd, Rightarrow] &                          \\
15 \arrow[ru, Rightarrow] \arrow[rd] &                                      & 14 \arrow[uu] \arrow[ll, red] &  & 15 \arrow[rd] \arrow[ru, Rightarrow] &                                      & 14 \arrow[ll, red] \arrow[uu] &  & 15 \arrow[rd] \arrow[ru, Rightarrow] &                                      & 14 \arrow[ll, , red] \arrow[uu] &  & 15 \arrow[rd] \arrow[ru, Rightarrow] &                                      & 14 \arrow[uu] \arrow[ll, red] \\
                                     & 5 \arrow[ru]                         &                          &  &                                      & 5 \arrow[ru]                         &                          &  &                                      & 5 \arrow[ru]                         &                          &  &                                      & 5 \arrow[ru]                        &                         
\end{tikzcd}
\]

\begin{Pro}
    Let $G \leq \SL_3(\Bbbk)$ be the group of type (F). Then $R \ast G$ has a $3$-preprojective cut.  
\end{Pro}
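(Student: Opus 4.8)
The plan is to verify directly that the grading indicated by the red arrows is a cut, following the same template as the proof given for type (E). By \Cref{Pro: All cycles cut and locally finite suffices}, and since all shortest cycles in $Q_G$ have length $3$, it suffices to check two things: that the degree-$0$ subquiver $Q_G - C$ is acyclic, and that every $3$-cycle of $Q_G$ is homogeneous of degree $1$, i.e.\ contains exactly one red arrow. Acyclicity of $Q_G - C$ also secures the local finiteness hypothesis of \Cref{Pro: All cycles cut and locally finite suffices}, since in a finite acyclic quiver the length of any degree-$0$ path between two consecutive cut arrows is bounded, so each graded piece is finite-dimensional.

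For acyclicity, I would exploit the four-fold symmetry noted in the text: the quiver is assembled from four isomorphic copies supported on the triples $\{1,12,13\}$, $\{2,6,7\}$, $\{3,8,9\}$, $\{4,10,11\}$, glued along the central subquiver on $\{5,14,15,16\}$. Reading off the red arrows, the vertices $1,2,3,4$ and $15$ become sources of $Q_G - C$ (their only incoming arrows are red), while $7,9,11,13$ become sinks (their only outgoing arrows are red). The remaining vertices $5,6,8,10,12,14,16$ then admit a topological order, for instance with $16$ and $5$ preceding $6,8,10,12,14$, which in turn precede the sinks; hence $Q_G - C$ contains no directed cycle.

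For the $3$-cycles, I would first record that the red arrows are precisely those of the form $j \to 15$ for $j \in \{6,8,10,12\}$, the arrows from $\{7,9,11,13\}$ into the degree-one vertex of their respective copy, the arrows $\{7,9,11,13\} \to 16$, and the single central arrow $14 \to 15$. By the symmetry it then suffices to inspect the $3$-cycles meeting one copy together with the purely central ones. The cycles internal to a copy, such as $1 \to 12 \to 13 \to 1$ and $16 \to 12 \to 13 \to 16$, each contain exactly one red arrow; the cycles running through the spine, namely $5 \to 14 \to 15 \to 5$, the variants $16 \to 14 \to 15 \to 16$, and the cycles $15 \to 16 \to j \to 15$, likewise each use precisely one red arrow.

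The main obstacle will be the bookkeeping around the central vertices $14$, $15$ and $16$, whose double arrows $15 \Rightarrow 16$ and $16 \Rightarrow 14$ spawn several short cycles that are easy to overlook; confirming that every such cycle, together with each of its parallel variants, passes through exactly one red arrow is where care is required. The four-fold symmetry and the fact that every red arrow lands in $\{15,16\}$ or in a source vertex keep the remaining casework small.
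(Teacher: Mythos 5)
Your proposal is correct and follows essentially the same route as the paper's proof: verify directly that the red arrows form a cut by checking that every $3$-cycle carries exactly one red arrow and that the degree-$0$ subquiver is acyclic, using the four-fold symmetry and the central subquiver on $\{5,14,15,16\}$. Your acyclicity argument via an explicit source/sink analysis and topological order is a slightly more explicit variant of the paper's observation that all arrows entering the shared subquiver are cut, and your reading of which arrows are red matches the adjacency data.
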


\begin{proof}
    The above is indeed a cut. First, note that the cut quiver is acyclic: All of the four cut subquivers in the picture above are acyclic. In order to pass from one of the subquivers to another, one needs to pass through the smaller subquiver shared by all of them. However, note that all the arrows going into the shared subquiver are cut, hence the whole cut quiver is indeed acyclic. Next, note that every $3$-cycle is homogeneous of degree $1$: The obvious $3$-cycles in the quiver are visible as triangles, and no extra $3$-cycles arise from the identification. 
\end{proof}

\subsubsection{Type (G)}
Let $G = \langle s,v,t,u \rangle$, and note that $p = uvu^{-1}$. From this, we immediately see that $G$ of type (G) has the group of type (F) as a normal subgroup of index $3$, and we can read off the structure as $ G =  (H_1 \rtimes Q_8) \rtimes C_3$. Hence $|G| = 648$. 

We compute the McKay quiver directly, and draw it below with its adjacency matrix, according to \Cref{Conv: Quiver drawings}. As for types (E) and (F), all shortest cycles have length $3$, and we display a cut such that each $3$-cycle is homogeneous of degree $1$. 

\[ \left(
\begin{smallmatrix}
     0& 0& 0& 0& 0& 0& 0& 0& 0& 0& 1& 0& 0& 0& 0& 0& 0& 0& 0& 0& 0& 0& 0& 0 \\ 
   0& 0& 0& 0& 0& 0& 0& 0& 0& 0& 0& 1& 0& 0& 0& 0& 0& 0& 0& 0& 0& 0& 0& 0 \\
   0& 0& 0& 0& 0& 0& 0& 0& 0& 0& 0& 0& 1& 0& 0& 0& 0& 0& 0& 0& 0& 0& 0& 0 \\ 
   0& 0& 0& 0& 0& 0& 0& 0& 0& 0& 0& 0& 0& 0& 0& 0& 0& 1& 0& 0& 0& 0& 0& 0 \\ 
   0& 0& 0& 0& 0& 0& 0& 0& 0& 0& 0& 0& 0& 0& 0& 0& 1& 0& 0& 0& 0& 0& 0& 0 \\ 
   0& 0& 0& 0& 0& 0& 0& 0& 0& 0& 0& 0& 0& 0& 0& 0& 0& 0& 1& 0& 0& 0& 0& 0 \\ 
   0& 0& 0& 0& 0& 0& 0& 0& 0& 0& 0& 0& 0& 0& 0& 0& 0& 0& 0& 0& 0& 0& 0& 1 \\ 
   0& 0& 1& 0& 0& 0& 0& 0& 0& 0& 0& 0& 0& 0& 0& 0& 0& 0& 0& 0& 0& 1& 0& 0 \\ 
   1& 0& 0& 0& 0& 0& 0& 0& 0& 0& 0& 0& 0& 0& 0& 0& 0& 0& 0& 1& 0& 0& 0& 0 \\ 
   0& 1& 0& 0& 0& 0& 0& 0& 0& 0& 0& 0& 0& 0& 0& 0& 0& 0& 0& 0& 1& 0& 0& 0 \\ 
   0& 0& 0& 0& 0& 0& 0& 0& 1& 0& 0& 0& 0& 1& 0& 0& 0& 0& 0& 0& 0& 0& 0& 0 \\ 
   0& 0& 0& 0& 0& 0& 0& 0& 0& 1& 0& 0& 0& 0& 1& 0& 0& 0& 0& 0& 0& 0& 0& 0 \\ 
   0& 0& 0& 0& 0& 0& 0& 1& 0& 0& 0& 0& 0& 0& 0& 1& 0& 0& 0& 0& 0& 0& 0& 0 \\ 
   0& 0& 0& 0& 0& 1& 0& 0& 0& 0& 0& 0& 0& 0& 0& 0& 0& 0& 0& 1& 0& 1& 0& 0 \\ 
   0& 0& 0& 0& 1& 0& 0& 0& 0& 0& 0& 0& 0& 0& 0& 0& 0& 0& 0& 1& 1& 0& 0& 0 \\ 
   0& 0& 0& 1& 0& 0& 0& 0& 0& 0& 0& 0& 0& 0& 0& 0& 0& 0& 0& 0& 1& 1& 0& 0 \\ 
   0& 0& 0& 0& 0& 0& 0& 0& 1& 0& 0& 0& 0& 0& 1& 0& 0& 0& 0& 0& 0& 0& 1& 0 \\ 
   0& 0& 0& 0& 0& 0& 0& 0& 0& 1& 0& 0& 0& 0& 0& 1& 0& 0& 0& 0& 0& 0& 1& 0 \\ 
   0& 0& 0& 0& 0& 0& 0& 1& 0& 0& 0& 0& 0& 1& 0& 0& 0& 0& 0& 0& 0& 0& 1& 0 \\ 
   0& 0& 0& 0& 0& 0& 0& 0& 0& 0& 1& 0& 0& 0& 0& 0& 1& 0& 1& 0& 0& 0& 0& 1 \\ 
   0& 0& 0& 0& 0& 0& 0& 0& 0& 0& 0& 1& 0& 0& 0& 0& 1& 1& 0& 0& 0& 0& 0& 1 \\ 
   0& 0& 0& 0& 0& 0& 0& 0& 0& 0& 0& 0& 1& 0& 0& 0& 0& 1& 1& 0& 0& 0& 0& 1 \\ 
   0& 0& 0& 0& 0& 0& 1& 0& 0& 0& 0& 0& 0& 0& 0& 0& 0& 0& 0& 1& 1& 1& 0& 0 \\ 
   0& 0& 0& 0& 0& 0& 0& 0& 0& 0& 0& 0& 0& 1& 1& 1& 0& 0& 0& 0& 0& 0& 1& 0 
\end{smallmatrix} \right)
\]

\[
\begin{tikzcd}[column sep = 3ex, row sep = 6ex, nodes in empty cells=true]
\arrow[phantom, from=3-4, to=4-4, "23"{yshift = -4pt, name=TTL}] \arrow[phantom, from=3-6, to=4-6, "23"{yshift = -4pt, name=TTR}]    \arrow[phantom, from=4-5, to=5-5, "23"{yshift = -4pt, name=TTD}]  \arrow[phantom, from=4-5, to=3-5, "7"{yshift = 4pt, name=ST}]        &                          &                                     & 1 \arrow[rd]                                                &                                     &                                     &                                                             &                          &                         &              \\
             &                          & 9 \arrow[ru, red] \arrow[rd, red]             &                                                             & 11 \arrow[ll] \arrow[rd]            &                                     & 6 \arrow[rd]                                                &                          &                         &              \\
             & 17 \arrow[ru] \arrow[rd] &                                     & 20  \arrow[ll] \arrow[rd] \arrow[ru] \arrow[rrrr, bend left] &                                     & 14 \arrow[ll, red] \arrow[ru, red] \arrow[rd, red] &                                                             & 19 \arrow[ll] \arrow[rd] &                         &              \\
5 \arrow[ru] &                          & 15 \arrow[ll, red] \arrow[ru, red] \arrow[rd, red] &                                                             & 24 \arrow[ll] \arrow[ru] \arrow[rd] &                                     & 22 \arrow[ru] \arrow[ll] \arrow[rd] \arrow[lldd, bend left] &                          & 8 \arrow[ll, red] \arrow[rd, red] &              \\
             & 12 \arrow[ru] \arrow[rd] &                                     & 21 \arrow[ru] \arrow[ll] \arrow[rd] \arrow[lluu, bend left] &                                     & 16 \arrow[ru, red] \arrow[ll, red] \arrow[rd, red] &                                                             & 13 \arrow[ru] \arrow[ll] &                         & 3 \arrow[ll] \\
2 \arrow[ru] &                          & 10 \arrow[ll, red] \arrow[ru, red]            &                                                             & 18 \arrow[ll] \arrow[ru]            &                                     & 4 \arrow[ll]                                                &                          &                         &    \arrow[from= 3-2, to = TTL] \arrow[from= 3-8, to = TTR] \arrow[from= 3-2, to = TTL] \arrow[from= 6-5, to = TTD]  \arrow[from= 4-5, to = TTL] \arrow[from= 4-5, to = TTR] \arrow[from= 4-5, to = TTD] \arrow[from= TTL, to = 3-4, red] \arrow[from= TTR, to = 4-7, red]   \arrow[from= TTD, to = 5-4, red] \arrow[from = TTL, to = ST, red] \arrow[from=ST, to=4-5]       
\end{tikzcd}
\]

\begin{Pro}
    Let $G \leq \SL_3(\Bbbk)$ be the group of type (G). Then $R \ast G$ has a $3$-preprojective cut.  
\end{Pro}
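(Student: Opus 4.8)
The plan is to verify directly that the red arrows displayed above form a cut, following the same two-step scheme used for types (E) and (F). By \Cref{Pro: All cycles cut and locally finite suffices}, it suffices to show that the displayed grading makes $\Bbbk Q_G/I$ locally finite dimensional and that every $3$-cycle in $\Bbbk Q_G$ is homogeneous of degree $1$. Since we already observed that all shortest cycles in $Q_G$ have length $3$, these are precisely the conditions defining a cut in this situation.

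First I would establish local finiteness by checking that the degree-$0$ subquiver $Q_G - C$, obtained by deleting all red arrows, is acyclic. Acyclicity of $Q_G - C$ forces the degree-$0$ part of $\Bbbk Q_G/I$ to be finite dimensional, and since every arrow carries degree $0$ or $1$, each graded piece is then finite dimensional as well. To keep the bookkeeping manageable I would exploit the structure $G \simeq (H_1 \rtimes Q_8) \rtimes C_3$: I expect the outer $C_3$ to be reflected in a threefold symmetry of the drawing, permuting three isomorphic ``arms'' attached to a common central region, with the red arrows chosen compatibly with this symmetry. As in type (F), the crucial point is that every arrow running from an arm into the shared central region is cut, so no directed path in $Q_G - C$ can leave one arm, cross the centre, and return; this confines any hypothetical cycle to a single arm or to the centre, where acyclicity is immediate.

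Next I would confirm that every $3$-cycle is homogeneous of degree $1$, i.e.\ meets the red set exactly once. The triangles visible in the drawing are dispatched by inspection, and the threefold symmetry again reduces the check to one arm together with the cycles passing through the central region. Particular care is needed for the extra $3$-cycles produced by the long bent arrows and the doubled arrows in the drawing, each of which must be enumerated and confirmed to contain precisely one red arrow.

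The hard part will be the acyclicity verification. With $24$ vertices, several multiple arrows, and the bent arrows that connect the central vertices back across the quiver, there are many candidate directed cycles to exclude, and the drawing conventions of \Cref{Conv: Quiver drawings}---where repeated vertex and arrow labels are silently identified---make it easy to miss a cycle that closes up through an identification. The most reliable way to settle this is to pass to the adjacency matrix, delete the entries corresponding to the arrows in $C$, order the vertices compatibly with the arm decomposition, and verify that the resulting matrix is nilpotent; the count of $3$-cycles is then a comparatively routine finite check against the same matrix.
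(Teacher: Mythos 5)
Your overall scheme is the right one and matches the paper's: exhibit the red arrows as a cut by checking that every $3$-cycle is homogeneous of degree $1$ and that the degree-$0$ subquiver is acyclic (the paper invokes exactly this, and your adjacency-matrix/nilpotency fallback is a perfectly valid way to finish). The one place your plan would go wrong if followed literally is the structural heuristic you import from type (F): the type (G) quiver is \emph{not} presented as three isomorphic arms glued along a central subquiver with every inbound arrow cut, so the argument ``no path can leave an arm, cross the centre, and return'' has no footing here. The paper instead organizes the verification around the fact that $Q_G$ is an underlying triangular grid (for which the red arrows visibly form a cut, as in type (A)) together with a few extra vertices, and disposes of acyclicity by observing that $20$, $21$, $22$ are sources and $23$ is a sink in the cut quiver; the new triangles through these vertices are then checked to have degree $1$ by inspection. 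So: same approach and a correct endgame, but you should replace the $C_3$-symmetry/arm picture with either the grid-plus-exceptional-vertices bookkeeping or the brute-force matrix check you already propose.
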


\begin{proof}
    The above is indeed a cut. To see this, one can first restrict attention to the underlying triangular grid, for which the picture clearly is a cut. Then, adding the remaining part of the quiver, note that all new triangles are of degree $1$ and that the cut quiver is acyclic, since the vertices $20$, $21$ and $22$ are sources in the cut quiver, and $23$ is a sink.  
\end{proof}

\subsection{Types (H)-(I)}
There are two simple groups in $\SL_3(\Bbbk)$, which we list now. We need the following matrices. We warn the reader that our matrix $i_2$ does not precisely match the matrix called $R$ by Yau and Yu in \cite{YauYu}. To be precise, we mean by $\sqrt{7}$ a positive real number, and we specify the root of $-1$ explicitly. This is important because the term $\frac{1}{\sqrt{-7}}$, when used in GAP, picks up an extra sign $-1$ that leads to $i_2$ having determinant $-1$. 
\begin{align*}
    &h_1 = \left( \begin{smallmatrix}
        1 & 0 & 0 \\
        0 & \epsilon_5^4 & 0 \\
        0 & 0 & \epsilon_5
    \end{smallmatrix} \right), h_2 = \left( \begin{smallmatrix}
        -1 & 0 & 0 \\
        0 & 0 & -1 \\
        0 & -1 & 0
    \end{smallmatrix} \right), 
    h_3 = \frac{1}{\sqrt{5}} \left( \begin{smallmatrix}
        1 & 1 & 1 \\
        2 & \epsilon_5^2 + \epsilon_5^3 & \epsilon_5 + \epsilon_5^4 \\
        2 & \epsilon_5 + \epsilon_5^4 & \epsilon_5^2 + \epsilon_5^3
    \end{smallmatrix} \right) \\
    &i_1 = \left( \begin{smallmatrix} \epsilon_7 & 0 & 0 \\ 0 & \epsilon_7^2 & 0 \\ 0 & 0 & \epsilon_7^4    \end{smallmatrix}  \right), i_2 = \frac{\epsilon_4}{\sqrt{7}} \left( \begin{smallmatrix} a & b & c \\ b & c & a \\ c & a & b    \end{smallmatrix}  \right)
\end{align*}
where $a = \epsilon_7^4 - \epsilon_7^3$, $b= \epsilon_7^2 - \epsilon_7^5$, $c = \epsilon_7 - \epsilon_7^6$, and $\epsilon_4 = e^{\pi i}$.  

In both cases, we will see that the quivers contain loops. To show that no cut exists, we show that some loop to its third power is a summand in the potential and use \Cref{Pro: loop in potential}. 

\subsubsection{Type (H)}
Let $G = \langle h_1, h_2, h_3 \rangle $. One can compute that $G \simeq A_5$. We draw the McKay quiver and the adjacency matrix below. Note that the quiver has three loops.
\[ \left(
\begin{smallmatrix}
    0& 0& 1& 0& 0 \\ 0& 0& 0& 1& 1 \\ 1& 0& 1& 0& 1 \\ 0& 1& 0& 1& 1 \\ 0& 1& 1& 1& 1 
\end{smallmatrix} \right)
\]

\[ \begin{tikzcd}
    1 \arrow[r, shift left=1] & 3 \arrow[r, shift left=1] \arrow[l, shift left=1] \arrow[loop, distance=2em, in=125, out=55]& 5 \arrow[l, shift left=1] \arrow[r, shift left = 1] \arrow[dr] \arrow[loop, distance=2em, in=125, out=55] & 4 \arrow[l, shift left= 1] \arrow[d, shift left = 1] \arrow[loop, distance=2em, in=125, out=55] \\
     & & & 2 \arrow[u, shift left = 1] \arrow[ul, shift left  = 2] 
\end{tikzcd} \]

\begin{Pro}
    Let $G \leq \SL_3(\Bbbk)$ be the group of type (H). Then $R \ast G$ does not admit a $3$-preprojective cut. 
\end{Pro}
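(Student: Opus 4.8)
The plan is to reduce everything to the loop criterion already established in \Cref{Cor: loops prevent cuts}, so that the proof is essentially a one-line invocation once the relevant feature of $Q_G$ is identified. The substantive work, namely the explicit computation of the McKay quiver of $G \simeq A_5$, has already been carried out above: the adjacency matrix is displayed, and it has nonzero diagonal entries in the rows/columns corresponding to the irreducible representations labelled $3$, $4$ and $5$. These diagonal entries are precisely the three loops drawn in the quiver. Thus the first and only real step is to observe that $Q_G$ contains a loop.

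Concretely, I would argue as follows. By the computation above, the McKay quiver $Q_G$ of the type (H) group $G \simeq A_5 \leq \SL_3(\Bbbk)$ has loops, as witnessed by the three $1$'s on the diagonal of the displayed adjacency matrix (equivalently, $(\chi_i, \chi_i \cdot \rho)_G \neq 0$ for $i \in \{3,4,5\}$). Since $G \leq \SL_3(\Bbbk)$ with $d = 2 \geq 2$, \Cref{Cor: loops prevent cuts} applies directly and shows that $\Bbbk Q_G/I$ does not admit a higher preprojective cut. As $R \ast G \simeq_M \Bbbk Q_G/I$, this means $R \ast G$ does not admit a $3$-preprojective cut, as claimed.

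There is no genuine obstacle in the argument itself; the only thing requiring care is confirming that the entries on the diagonal of the adjacency matrix really do record loops of $Q_G$ rather than some drawing artifact of \Cref{Conv: Quiver drawings}. This is immediate from the definition of the McKay quiver, since an arrow from $\chi_i$ to $\chi_i$ is by definition counted by $\dim \Hom_{\Bbbk G}(\chi_i, \chi_i \otimes \rho) = (\chi_i, \chi_i \cdot \rho)_G$, which is exactly the $(i,i)$ entry of the adjacency matrix. Hence the verification is purely a reading-off of the already computed data, and no further case analysis or construction is needed.
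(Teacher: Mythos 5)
Your proof is correct and matches the paper's (implicit) argument exactly: the paper notes that $Q_G$ has three loops and relies on \Cref{Cor: loops prevent cuts}, precisely as you do. The identification of the loops with the nonzero diagonal entries of the adjacency matrix is also accurate, so no further comment is needed.
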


\begin{proof}
    We show that a power of the loop $l$ at vertex $3$ appears in the potential. Note that in our labeling convention, vertex $3$ corresponds to the defining representation $ \rho \colon G \hookrightarrow \SL_3(\Bbbk)$. The irreducible representation labeled $2$ is Galois-conjugate to the one labeled $3$, so these may be interchanged. 
    We follow \cite[Section 3]{BSW} to compute the coefficient of $l^3$ in the potential. We interpret $l$ as a homomorphism $\rho \to \rho \otimes \rho$, and write it as a matrix $L$. We denote by $I_3$ the identity matrix of size $3$, and by $\alpha$ the antisymmetrizer map $V^{\otimes 3} \to \bigwedge^3 V $ for $V = \Bbbk^3$. We need to compute 
    \[ ( I_3 \otimes \alpha ) \cdot (L \otimes I_3 \otimes I_3) \cdot (L \otimes I_3) \cdot L  \]
    to obtain an endomorphism of the representation $\rho$. This is then a scalar multiple of $\operatorname{id}_\rho$, and this scalar is the one in front of $l^3$ in the potential. In particular, it suffices to check that this is non-zero. Furthermore, since $\dim_\Bbbk \Hom_G(\rho, \rho \otimes \rho) = 1$, the matrix $L$ is uniquely determined up to base-change and scaling. For the basis we have chosen such that $G$ acts by the matrices $h_i$ and the corresponding basis of $\rho \otimes \rho$, a matrix $L$ is given by 
    \[ L = \left( \begin{smallmatrix}
        0 & 0 & 0 \\ 0 & 1 & 0 \\ 0 & 0 & -1 \\ 0 & -1 & 0 \\ 0 & 0 & 0 \\ \frac{1}{2} & 0 & 0 \\ 0 & 0 & 1 \\ -\frac{1}{2} & 0 & 0 \\ 0 & 0 & 0
    \end{smallmatrix} \right), \]
    and it is easy to check that the relevant scalar is $1$. Thus it follows from \Cref{Pro: loop in potential} that no cut exists. 
\end{proof}

\subsubsection{Type (I)}
Let $G = \langle i_1, i_2, t \rangle$. One can compute that $G \simeq \operatorname{PSL}_3(\mathbb{F}_2) = \SL_3(\mathbb{F}_2) = \GL_3(\mathbb{F}_2) \simeq \operatorname{PSL}_2(\mathbb{F}_7)$. We draw the McKay quiver and the adjacency matrix below. Note that the quiver has two loops. 

\[ \left(
\begin{smallmatrix}
     0& 0& 1& 0& 0& 0 \\ 1& 0& 0& 0& 0& 1 \\ 0& 1& 0& 1& 0& 0 \\ 0& 1& 0& 0& 1& 1 \\ 0& 0& 0& 1& 1& 1 \\ 0& 0& 1& 1& 1& 1
\end{smallmatrix} \right)
\]

\[ \begin{tikzcd}
1 \arrow[rd] & 2 \arrow[r] \arrow[l] & 6 \arrow[ld] \arrow[d, shift left = 1] \arrow[r, shift left = 1] \arrow[loop, distance=2em, in=125, out=55] & 5 \arrow[l, shift left = 1] \arrow[ld, shift left = 1] \arrow[loop, distance=2em, in=125, out=55] \\
             & 3 \arrow[u] \arrow[r] & 4 \arrow[lu] \arrow[u, shift left = 1] \arrow[ru, shift left = 1]                                           &                                                                  
\end{tikzcd} \]

\begin{Pro}
    Let $G \leq \SL_3(\Bbbk)$ be the group of type (I). Then $R \ast G$ does not admit a $3$-preprojective cut. 
\end{Pro}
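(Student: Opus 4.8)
The plan is to deduce the claim directly from the loop criterion \Cref{Cor: loops prevent cuts}, since all the substantive work has already been carried out in computing the McKay quiver $Q_G$ and its adjacency matrix. Once the quiver is known, the argument reduces to a single observation.

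First I would read off the loops from the displayed adjacency matrix. A loop at the vertex $\chi_i$ is the same thing as a nonzero diagonal entry, i.e.\ $(\chi_i, \chi_i \cdot \rho)_G \neq 0$, which by the McKay description means that $\rho$ occurs as a constituent of $\chi_i \otimes \chi_i^\ast$. Inspecting the matrix, the $(5,5)$ and $(6,6)$ entries are both equal to $1$, so $Q_G$ carries a loop at each of the vertices labelled $5$ and $6$. (Solving $A^{\mathrm T}\mathbf{d} = 3\mathbf{d}$ for the dimension vector, or consulting the character table of $G \simeq \PSL_2(\mathbb{F}_7)$, whose irreducible degrees are $1,3,3,6,7,8$, identifies these two vertices as the $7$- and $8$-dimensional irreducibles; since $G$ is simple there are no nontrivial linear characters, and loops cannot occur at the small vertices for dimension reasons.)

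With a loop exhibited, the conclusion is immediate. We have $G \leq \SL_3(\Bbbk) = \SL_{d+1}(\Bbbk)$ with $d = 2 \geq 2$ and $R \ast G \simeq_M \Bbbk Q_G/I$, so \Cref{Cor: loops prevent cuts} applies and shows that $\Bbbk Q_G/I$ admits no higher preprojective cut. As a $3$-preprojective grading on $R \ast G$ would descend to a cut on the Morita-equivalent basic algebra $\Bbbk Q_G/I$, it follows that $R \ast G$ admits no $3$-preprojective cut, as claimed. This is exactly the same one-line argument as in the type (H) case.

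The only genuine labour is in justifying the adjacency matrix itself, and in particular its two nonzero diagonal entries; this I regard as the main obstacle. It amounts to computing the inner products $(\chi_i, \chi_i \cdot \rho)_G$ from the character table of $\PSL_2(\mathbb{F}_7)$---equivalently, to verifying that $\rho$ is a summand of $\chi_5 \otimes \chi_5^\ast$ and of $\chi_6 \otimes \chi_6^\ast$---or, as the paper indicates, to performing the McKay quiver computation in GAP. Once the quiver is trusted, no further input is needed.
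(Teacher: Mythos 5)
Your proposal is correct and is exactly the argument the paper intends: the displayed adjacency matrix has nonzero diagonal entries at positions $(5,5)$ and $(6,6)$, so $Q_G$ has loops, and \Cref{Cor: loops prevent cuts} immediately rules out a higher preprojective cut on $\Bbbk Q_G/I$ and hence on $R \ast G$. The identification of those vertices with the $7$- and $8$-dimensional irreducibles via the eigenvector equation is a nice sanity check but not needed for the conclusion.
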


\begin{proof}
    As in type (H), one can write a matrix for the loop $l$ at vertex $6$ and then compute the coefficient at $l^3$ in the potential to see that it is non-zero. Thus it follows from \Cref{Pro: loop in potential} that no cut exists.   
\end{proof}

\subsection{Types (J)-(L)}
There are three exceptional subgroups of $\SL_3(\Bbbk)$ which were found as the primitive groups with a normal subgroup so that the defining representation restricts to a decomposable one of the normal subgroup. The quotient by this normal subgroup is related to the groups of type (H) and (I). We need one more generator, following again Yau and Yu \cite{YauYu}, denoted by 
\begin{align*}
    w = \frac{1}{\sqrt{5}} \left( \begin{smallmatrix}
        1 & a_1 & a_1 \\ 2a_2  & \epsilon_5^2+\epsilon_5^3 & \epsilon_5+\epsilon_5^4 \\ 2a_2 & \epsilon_5+\epsilon_5^4 & \epsilon_5^2+\epsilon_5^3 
    \end{smallmatrix}  \right),
\end{align*}
where $a_1 = \frac{1}{4}(-1 + \sqrt{-15})$ and $a_2 = \overline{a_1}$ the complex conjugate. 

\subsubsection{Type (J)}
Let $G = \langle h_1, h_2, h_3, \epsilon_3 I_3 \rangle$. One can compute that $G \simeq \GL_2(\mathbb{F}_4)$, and that $\langle \epsilon_3 I_3 \rangle$ is normal in $G$ with quotient $G/\langle \epsilon_3 I_3 \rangle \simeq A_5 $, which is the group of type (H). It follows that $|G| = 180$. We draw the McKay quiver and its adjacency matrix below, according to \Cref{Conv: Quiver drawings}. 

\[ \left(
\begin{smallmatrix}
        0& 0& 0& 0& 0& 0& 0& 1& 0& 0& 0& 0& 0& 0& 0 \\ 0& 0& 0& 0& 0& 0& 0& 0& 1& 0& 0& 0& 0& 0& 0 \\ 
    0& 0& 0& 0& 1& 0& 0& 0& 0& 0& 0& 0& 0& 0& 0 \\ 0& 0& 0& 0& 0& 0& 0& 0& 0& 0& 1& 0& 0& 1& 0 \\ 
    0& 1& 0& 0& 0& 0& 0& 1& 0& 0& 0& 0& 0& 1& 0 \\ 0& 0& 0& 0& 0& 0& 0& 0& 0& 0& 0& 1& 0& 0& 1 \\ 
    0& 0& 0& 0& 0& 0& 0& 0& 0& 1& 0& 0& 1& 0& 0 \\ 0& 0& 1& 0& 0& 0& 0& 0& 1& 0& 0& 0& 0& 0& 1 \\ 
    1& 0& 0& 0& 1& 0& 0& 0& 0& 0& 0& 0& 1& 0& 0 \\ 0& 0& 0& 0& 0& 1& 0& 0& 0& 0& 1& 0& 0& 1& 0 \\ 
    0& 0& 0& 0& 0& 0& 1& 0& 0& 0& 0& 1& 0& 0& 1 \\ 0& 0& 0& 1& 0& 0& 0& 0& 0& 1& 0& 0& 1& 0& 0 \\ 
    0& 0& 0& 0& 0& 1& 0& 1& 0& 0& 1& 0& 0& 1& 0 \\ 0& 0& 0& 0& 0& 0& 1& 0& 1& 0& 0& 1& 0& 0& 1 \\ 
    0& 0& 0& 1& 1& 0& 0& 0& 0& 1& 0& 0& 1& 0& 0 
\end{smallmatrix} \right)
\]

\[ 
\begin{tikzcd}[column sep=1ex]
2 \arrow[rd, red]                         &                         & 1 \arrow[rd]                                    &                                      & 3 \arrow[rd]                                    &                                      & 2 \arrow[rd, red]                                    &                           \\
                                     & 9 \arrow[ru] \arrow[rd] &                                                 & 8 \arrow[ru, red] \arrow[rd, red] \arrow[ll, red]   &                                                 & 5 \arrow[ru] \arrow[rd] \arrow[ll]   &                                                 & 9 \arrow[ll]              \\
14 \arrow[ru, red] \arrow[rd, red] \arrow[rdd, red] &                         & 13 \arrow[ru] \arrow[ll] \arrow[rd] \arrow[rdd] &                                      & 15 \arrow[ru] \arrow[ll] \arrow[rd] \arrow[rdd] &                                      & 14 \arrow[ru, red] \arrow[ll, red] \arrow[rd, red] \arrow[rdd, red] &                           \\
                                     & 12 \arrow[ru]           &                                                 & 11 \arrow[ru, red] \arrow[ll, red] \arrow[lld, red] &                                                 & 10 \arrow[ru] \arrow[ll] \arrow[lld] &                                                 & 12 \arrow[ll] \arrow[lld] \\
                                     & 7 \arrow[ruu]           &                                                 & 6 \arrow[ruu, red] \arrow[llu, red]            &                                                 & 4 \arrow[ruu] \arrow[llu]            &                                                 & 7 \arrow[llu]            
\end{tikzcd}
\]
\begin{Pro}
    Let $G \leq \SL_3(\Bbbk)$ be the group of type (J). Then $R \ast G$ has a $3$-preprojective cut.  
\end{Pro}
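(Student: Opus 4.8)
The plan is to verify directly that the red arrows drawn above constitute a valid cut, exactly as was done for types (E)–(G). Since $G \simeq \GL_2(\mathbb{F}_4) \leq \SL_3(\Bbbk)$ and the displayed grading assigns every arrow degree $0$ or $1$ while keeping all vertices in degree $0$, I would appeal to \Cref{Pro: All cycles cut and locally finite suffices}. That reduces the claim to two checks: first, that $\Bbbk Q_G/I$ is locally finite dimensional as a graded algebra, and second, that every $3$-cycle in $\Bbbk Q_G$ is homogeneous of degree $1$, i.e.\ contains exactly one red arrow. The first of these follows once I show that the cut quiver $Q_G - C$, obtained by deleting the red arrows, is acyclic, since then the degree-$0$ subalgebra is finite dimensional and hence each graded piece is finite dimensional.

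First I would establish acyclicity of $Q_G - C$. Here I would exploit the layered shape of the drawing together with the identifications of the repeated boundary vertices $2,7,9,12,14$. After removing the red arrows, I expect a collection of vertices to become sources or sinks (in the same spirit as the sources $20,21,22$ and the sink $23$ in type (G)), and I would argue that the remaining vertices cannot lie on a directed cycle because every path is forced through one of these extremal vertices. The underlying intuition is that $G$ is a central extension of the type (H) group $A_5$ by $C_3 = \langle \epsilon_3 I_3\rangle$, so that $Q_G$ is a threefold cover of the type (H) quiver in which the three loops of that quiver unfold; the cut is chosen compatibly with this $C_3$-structure so that deleting the red arrows severs every lifted cycle.

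Second I would verify that every $3$-cycle contains exactly one red arrow. The "obvious" triangles visible in the picture satisfy this by inspection, and the central triangle $13 \to 14 \to 15 \to 13$ must be treated as well. I expect the \emph{main obstacle} to be ensuring that no $3$-cycle is overlooked: the vertices $13,14,15$ each have high valency, and the left–right identification of $2,7,9,12,14$ can produce triangles that wrap around the drawing. I would therefore read off the in- and out-neighbours of each of $13,14,15$ from the adjacency matrix, enumerate the resulting $3$-cycles through these spines and through the boundary identifications, and confirm in each case that precisely one of the three arrows is red. Once both conditions are checked, \Cref{Pro: All cycles cut and locally finite suffices} shows that the grading is a cut, and so $R \ast G$ admits a $3$-preprojective cut.
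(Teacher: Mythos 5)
Your proposal is correct and follows essentially the same route as the paper: the paper's proof also verifies that the displayed red arrows form a cut by checking that every $3$-cycle (both the visible triangles and those arising from the left--right gluing) contains exactly one cut arrow, and that the cut quiver is acyclic because every non-extremal vertex is connected only to sources and sinks. Your additional remark that $Q_G$ is a threefold cover of the type (H) quiver is a nice piece of intuition but is not needed for the verification.
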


\begin{proof}
    The above is indeed a cut. Note that every $3$-cycle is homogeneous of degree $1$: The obvious $3$-cycles in the quiver are visible as triangles, and the remaining $3$-cycles arise from the gluing along the vertical sides, i.e. by following the horizontal arrows, and clearly those are of degree $1$ as well. Next, note that the cut quiver is acyclic: A cycle can not involve a source or a sink, but any vertex that is not a source or sink is connected only to sources and sinks. 
\end{proof}

\subsubsection{Type (K)}
Let $G = \langle i_1, i_2, t, \epsilon_3 I_3 \rangle $. One can compute that $G \simeq \operatorname{PSL}_3(\mathbb{F}_2) \times C_3$, from which it follows that $|G| = 504$. Clearly, the group of type (I) is a subgroup. We draw the McKay quiver and its adjacency matrix below, according to \Cref{Conv: Quiver drawings}. 

\[ 
\left(
\begin{smallmatrix}
        0&0&0&0&0&0&1&0&0&0&0&0&0&0&0&0&0&0 \\  0&0&0&0&0&0&0&0&1&0&0&0&0&0&0&0&0&0 \\
    0&0&0&0&1&0&0&0&0&0&0&0&0&0&0&0&0&0 \\  0&1&0&0&0&0&0&0&0&0&0&0&0&0&0&0&1&0 \\
    0&0&0&0&0&1&0&0&0&0&1&0&0&0&0&0&0&0 \\  0&0&1&0&0&0&0&0&0&0&0&0&0&0&0&0&0&1 \\
    0&0&0&0&0&0&0&1&0&0&0&1&0&0&0&0&0&0 \\  1&0&0&0&0&0&0&0&0&0&0&0&0&0&0&1&0&0 \\
    0&0&0&1&0&0&0&0&0&1&0&0&0&0&0&0&0&0 \\  0&0&0&0&0&1&0&0&0&0&0&0&0&1&0&0&1&0 \\
    0&0&0&0&0&0&0&1&0&0&0&0&0&0&1&0&0&1 \\  0&0&0&1&0&0&0&0&0&0&0&0&1&0&0&1&0&0 \\
    0&0&0&0&0&0&0&0&0&0&1&0&0&1&0&0&1&0 \\  0&0&0&0&0&0&0&0&0&0&0&1&0&0&1&0&0&1 \\
    0&0&0&0&0&0&0&0&0&1&0&0&1&0&0&1&0&0 \\  0&0&0&0&0&0&1&0&0&0&1&0&0&1&0&0&1&0 \\
    0&0&0&0&0&0&0&0&1&0&0&1&0&0&1&0&0&1 \\  0&0&0&0&1&0&0&0&0&1&0&0&1&0&0&1&0&0
\end{smallmatrix}\right)
\]

\[
\begin{tikzcd}[row sep = 8ex]
2 \arrow[rr,red] &                         & 9 \arrow[rr] \arrow[ld]  &                                     & |[alias = Ten]|10 \arrow[dl, bend right] \arrow[rr] \arrow[ld, phantom, "15"{name = A}] \arrow[<-, to path={-- ([xshift=2ex, yshift=2ex]A)}]           &                                     & 6 \arrow[rr,red] \arrow[ld,red]  &                         & 3 \arrow[ld] \\
             & 4 \arrow[rr] \arrow[lu] &                          & 17 \arrow[to path={-- ([xshift=-2ex, yshift=-2ex]A)},red] \arrow[rr, bend right,red]  \arrow[lu,red] \arrow[ld, bend right,red]  & |[alias = T]| 13 \arrow[l] \arrow[ to path={(A) -- (\tikztotarget)}]                                       &|[alias = E]| 18 \arrow[l] \arrow[lu, phantom, "14"{name = B}] \arrow[to path={ (B) -- (E) },red] \arrow[to path={(Ten) -- (B)}] \arrow[rr] \arrow[lu, bend right] \arrow[ld, bend right, crossing over, crossing over clearance = 0.5ex] \arrow[->, to path={ ([xshift=-2ex]B) -- ([xshift = 2ex] A) },red] \arrow[<-,  to path={ (B) -- (T) } ] &                          & 5 \arrow[lu] \arrow[ld] &              \\
             &                         & 12 \arrow[rr, bend right] \arrow[lu] \arrow[ur, phantom, "13"{name = ThirteenL}] \arrow[ur, to path={ (ThirteenL) -- (\tikztotarget)}]  \arrow[to path={ -- (ThirteenL)}] &   |[alias = FourteenL]| 14 \arrow[l,red] \arrow[<-, r]               & 16 \arrow[ul, phantom, "15"{name = FifteenL}] \arrow[<-,  to path={ -- (FifteenL)}] \arrow[ul, <-, to path={(FifteenL) -- (\tikztotarget)},red] \arrow[rr, bend right] \arrow[lu, bend right, crossing over, crossing over clearance=0.5ex] \arrow[ld] \arrow[to path={(ThirteenL) -- (FourteenL)}] \arrow[to path={(FourteenL) -- (FifteenL)},red] \arrow[to path={(FifteenL) -- (ThirteenL)}]  \arrow[ur, phantom, "14"{name = FourteenR}] \arrow[to path= {-- (FourteenR) } ] \arrow[ur, to path={(FourteenR) -- (\tikztotarget)},red]    &  |[alias = FifteenR]|  15 \arrow[l] \arrow[<-,r,red]  \arrow[to path={(FourteenR) -- (FifteenR)},red]                           & 11 \arrow[ul, phantom, "13"{name=ThirteenR}] \arrow[lu, bend right,red] \arrow[ld,red] \arrow[<-, to path={ -- (ThirteenR)}]  \arrow[ul, to path={ (\tikztotarget) -- (ThirteenR)}] &  \arrow[to path={(FifteenR) -- (ThirteenR)}] \arrow[to path={(ThirteenR) -- (FourteenR)}]                      &              \\
             &                         &                          & 7 \arrow[rr,red] \arrow[lu,red]             &                                     & 8 \arrow[lu] \arrow[ld]             &                          &                         &              \\
             &                         &                          &                                     & 1 \arrow[lu]                        &                                     &                          &                         &             
\end{tikzcd}
\]
We also note that the group $C_3$ acts on this quiver by rotation, and taking the quotient quiver we obtain the quiver for the group of type (I). In fact, taking the skew-group algebra we obtain $\Bbbk Q/I \ast C_3 \simeq_M R \ast H$, where $H$ is the group of type (I), and this is the same ``unskewing'' as we used in \cite{DramburgGasanova3}, see also \cite{ReitenRiedtmann}. This also explains why there exists no cut that is invariant under this symmetry, since otherwise it would give rise to a cut for type (I).  

\begin{Pro}
    Let $G \leq \SL_3(\Bbbk)$ be the group of type (K). Then $R \ast G$ has a $3$-preprojective cut.  
\end{Pro}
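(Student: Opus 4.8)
The plan is to verify directly that the red subset $C$ of arrows displayed above is a cut, using $\Cref{Pro: All cycles cut and locally finite suffices}$. Since the type (K) group is a primitive subgroup of $\SL_3(\Bbbk)$, its defining representation is a genuine $3$-dimensional irreducible, so there are no determinant arrows and the relevant criterion is that every $3$-cycle in $\Bbbk Q_G$ be homogeneous of degree $1$, together with local finiteness of $\Bbbk Q_G/I$. Local finiteness will follow once I show that the cut quiver $Q_G - C$ is acyclic, as this forces the degree-$0$ part $\Gamma_0$ to be finite-dimensional. Thus the proof reduces to two claims: (a) $Q_G - C$ is acyclic, and (b) every $3$-cycle is cut exactly once.

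To organise the bookkeeping for this $18$-vertex quiver, I would exploit the structural description given just above: $G \simeq \operatorname{PSL}_3(\mathbb{F}_2) \times C_3$ with the $C_3$-factor generated by $\epsilon_3 I_3$, so that tensoring with the defining representation shifts the $C_3$-grading by one. Consequently $Q_G$ is a threefold $C_3$-cover of the type (I) quiver, and its $3$-cycles split into two families. The first family consists of the triangular faces that lift the honest $3$-cycles of the type (I) quiver; these are visible as triangles in the drawing, and one checks each is incident to exactly one red arrow. The second family is more interesting: each of the two loops of the type (I) quiver unwinds, in the $C_3$-cover, into a genuine length-$3$ cycle passing once through each $C_3$-level. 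It is precisely these unwound cycles that make a cut possible here, whereas the loops themselves forbid one for type (I) via $\Cref{Cor: loops prevent cuts}$. I would then check that each such unwound $3$-cycle also meets $C$ exactly once, which completes claim (b).

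For claim (a), I would identify the sources and sinks of $Q_G - C$ and argue that no directed cycle survives. The natural approach is to pin down a small set of vertices that become sources or sinks after deleting $C$ — the analogue of the vertices $20,21,22,23$ in the type (G) argument — and then observe that every remaining vertex is connected in the cut quiver only to such sources and sinks, so no cycle can close up. Here the fact, noted in the remark, that $C$ is \emph{not} invariant under the $C_3$-rotation is essential: a symmetric cut would descend to a cut on type (I), which is impossible, so the cut must break the symmetry, and this is reflected in the asymmetric placement of sources and sinks across the three sheets.

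The main obstacle I anticipate is the sheer combinatorial complexity of the drawing: with $18$ vertices, many parallel and crossing arrows, and a nontrivial identification along the vertical sides, simply enumerating all $3$-cycles and confirming acyclicity by inspection is error-prone. The $C_3$-cover picture is what tames this — it lets me transfer the enumeration of faces to the much smaller type (I) quiver and reduces the verification to checking that the chosen representatives of each $C_3$-orbit of $3$-cycles are cut once — but executing this carefully, in particular tracking the two unwound loop-cycles and confirming that deleting $C$ leaves no cycle, is where the real work lies.
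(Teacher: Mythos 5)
Your overall strategy is the same as the paper's: exhibit the red arrows as the cut and verify directly that every $3$-cycle is homogeneous of degree $1$ and that the cut quiver is acyclic via a source/sink analysis (the paper's proof is exactly this two-line verification, and your use of \Cref{Pro: All cycles cut and locally finite suffices} in place of the determinant-arrow variant is correct here, since the defining representation of a primitive group is irreducible). However, the $C_3$-cover bookkeeping you propose to enumerate the $3$-cycles has a concrete gap. A $3$-cycle of $Q_G$ projects to a closed directed \emph{walk} of length $3$ in the type (I) quiver, and conversely every such walk lifts to closed $3$-cycles in the cover; but the type (I) quiver contains, besides its honest $3$-cycles and the two loops, a family of $2$-cycles (between the vertices $4$, $5$, $6$ in the paper's labelling). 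Hence there are closed length-$3$ walks of the form ``loop followed by a $2$-cycle'' (e.g.\ $5 \to 5 \to 4 \to 5$ or $6 \to 6 \to 5 \to 6$), and these lift to genuine $3$-cycles of $Q_G$ that belong to neither of your two families. If you only check the lifts of honest $3$-cycles and the unwound loops, you will miss these cycles and claim (b) is not established. The fix is routine -- enumerate all closed length-$3$ walks in the type (I) quiver (including the mixed loop-plus-$2$-cycle ones) and check each lift against the cut -- but as stated your case analysis is incomplete.
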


\begin{proof}
    The above is indeed a cut. Note that every $3$-cycle is homogeneous of degree $1$: All $3$-cycles are clearly visible as triangles, with possibly bent arrows, and all are of degree $1$. Note that the cut quiver is acyclic: A cycle can not involve a source or a sink, but any vertex that is not a source or sink is connected only to sources and sinks. 
\end{proof}

\subsubsection{Type (L)}
Let $G = \langle h_1, h_2, h_3, \epsilon_3 I_3, w \rangle $. One can compute that $\langle \epsilon_3 I_3 \rangle$ is normal in $G$ and that $G/ \langle \epsilon_3 I_3 \rangle \simeq A_6 $. It follows that $|G| = 1080$. We draw the McKay quiver and its adjacency matrix below, according to \Cref{Conv: Quiver drawings}.

\[ 
\left(
\begin{smallmatrix}
       0& 0& 1& 0& 0& 0& 0& 0& 0& 0& 0& 0& 0& 0& 0& 0& 0 \\    0& 0& 0& 0& 0& 0& 0& 0& 0& 0& 0& 0& 0& 1& 0& 0& 0 \\  
    0& 0& 0& 0& 1& 0& 0& 0& 1& 0& 0& 0& 0& 0& 0& 0& 0 \\    0& 0& 0& 0& 0& 0& 0& 0& 0& 0& 0& 1& 0& 0& 0& 0& 0 \\  
    1& 0& 0& 0& 0& 0& 0& 0& 0& 0& 1& 0& 0& 0& 0& 0& 0 \\    0& 0& 0& 0& 0& 0& 0& 0& 0& 0& 0& 0& 0& 0& 0& 1& 0 \\  
    0& 0& 0& 0& 0& 0& 0& 0& 0& 0& 0& 0& 0& 0& 0& 1& 0 \\    0& 0& 0& 0& 1& 0& 0& 0& 0& 0& 0& 0& 0& 0& 0& 0& 1 \\  
    0& 0& 0& 0& 0& 0& 0& 0& 0& 0& 1& 0& 0& 0& 1& 0& 0 \\    0& 0& 0& 0& 0& 0& 0& 0& 0& 0& 0& 0& 1& 0& 0& 1& 0 \\  
    0& 0& 1& 0& 0& 0& 0& 1& 0& 0& 0& 0& 0& 0& 0& 1& 0 \\    0& 1& 0& 0& 0& 0& 0& 0& 0& 0& 0& 0& 1& 0& 0& 1& 0 \\  
    0& 0& 0& 1& 0& 0& 0& 0& 0& 0& 0& 0& 0& 1& 0& 0& 1 \\    0& 0& 0& 0& 0& 0& 0& 0& 0& 1& 0& 1& 0& 0& 1& 0& 0 \\  
    0& 0& 0& 0& 0& 0& 0& 1& 0& 0& 0& 0& 1& 0& 0& 1& 0 \\    0& 0& 0& 0& 0& 0& 0& 0& 1& 0& 0& 0& 0& 1& 0& 0& 2 \\  
    0& 0& 0& 0& 0& 1& 1& 0& 0& 1& 1& 1& 0& 0& 1& 0& 0 
\end{smallmatrix}\right)
\]

\[
\begin{tikzcd}[column sep=1ex, execute at end picture={ \draw[commutative diagrams/Rightarrow, red] plot [smooth, tension=1] coordinates { ([xshift=10pt] S) (NR) (T) (EL) ([xshift=-10pt] E) }; }]
                                                                                              &                         & 1 \arrow[rd] \arrow[loop, phantom, distance=2em, in=125, out=55, ""{coordinate, name=T}]                                                      &                         &                                                                  \\
                                                                                              & 5 \arrow[rd] \arrow[ru] &                                                                     & 3 \arrow[rd, red] \arrow[ll, red] &                                                                  \\
8 \arrow[dd, red] \arrow[ru, red] \arrow[loop, phantom, distance=2em, in=215, out=145, ""{coordinate, name=EL}]                                                                      &                         & 11 \arrow[rrdd] \arrow[ll] \arrow[ru]                               &                         & 9 \arrow[lldd] \arrow[ll] \arrow[loop, phantom, distance=2em, in=35, out=325, ""{coordinate, name=NR}]                                       \\
                                                                                              &                         & 6 \arrow[rrd]  \arrow[d, phantom, ""{coordinate, name=Z}]                                                     &                         &                                                                  \\
|[alias=E]| 17 \arrow[rr] \arrow[rruu] \arrow[rrdd] \arrow[rrdddd, bend right=60] \arrow[rru] \arrow[rrd] &                         & 15 \arrow[rr] \arrow[lluu] \arrow[lldd]                             &                         & |[alias=S]| 16 \arrow[uu, red] \arrow[dd, red]  \\
                                                                                              &                         & 7 \arrow[rru]                                                       &                         &                                                                  \\
13 \arrow[uu, red] \arrow[rrrr, bend right, red] \arrow[rd, red]                                             &                         & 10 \arrow[rruu] \arrow[ll]                                          &                         & 14 \arrow[lluu] \arrow[ll] \arrow[lldd, bend left]               \\
                                                                                              & 4 \arrow[rd]            &                                                                     & 2 \arrow[ru, red]            &                                                                  \\
                                                                                              &                         & 12 \arrow[ru] \arrow[lluu, bend left] \arrow[rruuuu, bend right=60] &                         &                                                                 
\end{tikzcd} \]

\begin{Pro}
    Let $G \leq \SL_3(\Bbbk)$ be the group of type (L). Then $R \ast G$ has a $3$-preprojective cut.  
\end{Pro}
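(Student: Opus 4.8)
The plan is to verify that the red arrows displayed in the drawing constitute a valid cut, following the same two-step recipe used for the preceding exceptional subgroups. I assign degree $1$ to every red arrow, degree $0$ to every other arrow, and degree $0$ to every vertex; by \Cref{Pro: All cycles cut and locally finite suffices} it then suffices to check that $\Bbbk Q_G/I$ is locally finite in this grading and that every $3$-cycle in $\Bbbk Q_G$ is homogeneous of degree $1$. Local finiteness will follow from acyclicity of the cut quiver $Q_G - C$, since the degree-$0$ part is then a quotient of the path algebra of a finite acyclic quiver and hence finite-dimensional.

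For the homogeneity of $3$-cycles, I would use that---as the adjacency matrix records---all shortest cycles in $Q_G$ have length $3$. Each such $3$-cycle appears in the drawing as a triangle, once one accounts for the curved and bent arrows and the doubled arrow $16 \Rightarrow 17$. The verification then amounts to inspecting each triangle and confirming that exactly one of its three edges is red. Special attention is required for the triangles closing through the long red arc that sweeps past the vertices near $8$, $16$ and $3$, and for those passing through the parallel pair into vertex $17$, since these are the places where the planar drawing most easily hides or duplicates a cycle.

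For acyclicity of $Q_G - C$, I would argue as in the proofs for types (J) and (K): identify the sources and sinks of the cut quiver, and observe that a directed cycle can contain neither, while every remaining vertex is adjacent in $Q_G - C$ only to such sources and sinks. Reading the non-red arrows off the drawing, I would list the vertices all of whose incoming arrows are cut (sources) and all of whose outgoing arrows are cut (sinks), then confirm that the induced subquiver on the rest carries no directed cycle.

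The main obstacle will be bookkeeping rather than ideas: type (L) has the largest and most entangled quiver in this section, with $17$ vertices, a doubled arrow, and several arrows drawn as long curves, so both the enumeration of $3$-cycles and the source/sink analysis are error-prone on paper. I would therefore cross-check the hand argument against the adjacency matrix---verifying that zeroing out the red entries leaves a nilpotent matrix, which is equivalent to acyclicity of $Q_G - C$, and that the red arrows meet every directed triangle exactly once---and, as the authors indicate for the exceptional cases, confirm the result computationally in GAP.
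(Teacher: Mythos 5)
Your proposal follows essentially the same route as the paper: the paper's proof also verifies the displayed red arrows form a cut by (i) exhibiting sources ($4,5,9,14,17$) and sinks ($2,3,8,13,16$) of the cut quiver to get acyclicity, and (ii) checking all $3$-cycles are homogeneous of degree $1$, handling the cycles through $16$ and $17$ as the special cases, exactly as you flag. The only difference is that the paper carries out the bookkeeping you defer to inspection and a computer check, so the approaches coincide.
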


\begin{proof}
    The above is indeed a cut. To see that the cut quiver is acyclic, note that the vertices $4,5,9,14,17$ are sources in the cut quiver and $2,3,8,13,16$ are sinks. The remaining vertices are connected only to sources or sinks. To see that all $3$-cycles are of degree $1$, first note that this is clear for any cycle not containing $16$ or $17$. For the remaining $3$-cycles, it suffices to note that $16$ is a sink and $17$ is a source in the cut quiver, and that any cycle involving exactly one of them passes through one of the vertices $6$, $7$, $10$, $11$ or $15$, all of which are not adjacent to a cut arrow. Finally, the cycles containing both $16$ and $17$ can be checked by hand.
\end{proof}

We summarise our computations in the following theorem. 

\begin{Theo}\label{Theo: exceptional class}
    Let $G \leq \SL_3(\Bbbk)$ be an exceptional subgroup. Then $R \ast G \simeq \Bbbk Q_G/I$ admits a $3$-preprojective cut if and only if $G$ is not of type (H) or (I).  
\end{Theo}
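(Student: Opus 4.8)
The plan is to assemble the case-by-case analysis already carried out in the individual propositions of this section. Recall that the exceptional subgroups are precisely those of types (E) through (L), so it suffices to verify the claim separately for each of these eight types and observe that types (H) and (I) are exactly the ones for which no cut exists.

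First I would treat the \emph{if} direction. For types (E), (F), (G), (J), (K), and (L), explicit higher preprojective cuts on $\Bbbk Q_G/I$ have been exhibited and verified in the corresponding propositions above, where in each case one checks that every shortest cycle has length $3$, that every $3$-cycle is homogeneous of degree $1$, and that the cut quiver is acyclic. By \Cref{Cor: deter cycles cut and acyclic suffices} (or \Cref{Pro: All cycles cut and locally finite suffices}), this data indeed produces a cut. Hence for each of these six types, $R \ast G$ admits a $3$-preprojective cut, so no further argument is needed beyond citing the respective propositions.

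Next I would treat the \emph{only if} direction. For types (H) and (I), the McKay quivers computed above contain loops: three loops in the quiver for type (H) and two loops in the quiver for type (I), visible directly from the adjacency matrices and drawings. By \Cref{Cor: loops prevent cuts}, the presence of a loop in $Q_G$ obstructs any higher preprojective grading on $\Bbbk Q_G/I$. Consequently, $R \ast G$ does not admit a $3$-preprojective cut for these two types.

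The main obstacle here is not conceptual, since all computations have been performed case by case; rather, the only point requiring care is confirming that the enumeration of types (E)--(L) is exhaustive for the exceptional subgroups (which follows from the classification of Yau and Yu recalled in \Cref{SSec: Finite sub of SL3}) and that the loop-detection in types (H) and (I) is exactly the feature distinguishing them from the remaining six types. Combining the two directions completes the proof.
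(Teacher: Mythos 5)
Your proposal is correct and matches the paper's argument exactly: the theorem is a summary of the case-by-case propositions for types (E)--(L), with the positive cases handled by the explicit cuts verified there and the negative cases (H) and (I) ruled out via the loops in their McKay quivers together with \Cref{Cor: loops prevent cuts}. No further comment is needed.
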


\subsection{Computing McKay quivers and cuts}
We give a brief account how to compute a McKay quiver and cuts by hand or with a standard computer algebra system. Note that this discussion is not restricted to dimension $3$.  

\begin{Rem}
    Recall that the McKay quiver $Q = Q_G$ is defined by having as vertices the irreducible representations $\Irr(G)$. The number of arrows from vertex $i$ to $j$ is given by 
    \[ \dim \Hom_{\Bbbk G}(S_i, S_j \otimes V), \]
    where $V$ is the defining representation of $G$, and $S_i$ and $S_j$ are the irreducible representations corresponding to $i$ and $j$. 
    Since the above does not need knowledge of the explicit homomorphisms, we can instead rephrase it in terms of characters. We label the vertices by the irreducible characters of $G$, and compute the number of arrows from the character $\chi_i$ to $\chi_j$ by the standard inner product of characters
    \[ (\chi_i, \chi_j \cdot \chi_V). \]
    In GAP, all of the necessary functions are implemented. In particular, our labeling of all the vertices in all the quivers is consistent with the labeling of the irreducible characters of $G$ as computed by GAP. 
\end{Rem}

Next, we explain how to compute a cut. It was pointed out to us by Stephan Wagner that this is an instance of an exact cover problem, which is known to be NP-complete in general. Note that when $d=3$, the restricted problem is an exact cover by $3$-sets, which is still NP-complete \cite{CompAndIntr}. In the case where the group $G$ is abelian, our results from \cite{DramburgGasanova2} can be used to compute cuts in a different way. However, we are not aware of any additional structure underlying the case of cutting McKay quivers for arbitrary $G \leq \SL_d(\Bbbk)$ that reduces the complexity of this task. 

\begin{Rem}
    Recall that a cut needs to satisfy the condition that every cycle in the support of the potential $\omega$ is homogeneous of degree $1$. Denote by $S = \supp(\omega)$ the set of cycles appearing in $\omega$, and for each arrow $a \in Q_1$ consider the set
    \[ t_a = \{ c \in S \mid a \text{ appears in } c\}. \]
    Let $ T = \{t_a \mid a \in Q_1\}$. Then $T$ is a set of subsets of $S$. Cutting an arrow $a$ means cutting all cycles in $t_a$. Now in order to obtain a cut, all cycles in $S$ need to be cut exactly once, which means our cut $C$ needs to produce a partition 
    \[ \{ t_a \mid a \in C \} \]
    of the set $S$. This is called an exact cover problem: Given a set of subsets $T$ of a set $S$, decide whether there exists a selection of elements of $T$ that form a partition of $S$. 
\end{Rem}

Solving an exact cover problem can be done by standard algorithms like Knuth's Algorithm X. After finding a solution, it remains to check that the cut quiver defines a finite dimensional algebra. Let us note two simplifications of the above approach, that essentially underlies \Cref{Pro: All cycles cut and locally finite suffices}.

\begin{Rem}
    Computing the potential $\omega$ or its support is a nontrivial task since it requires a computation of bases for $\Hom_{\Bbbk G}(S_i , S_j \otimes V)$ and compositions of the basis vectors. This can be done in GAP, but is computationally more expensive than computing inner products of characters. 
    Furthermore, computing the algebra defined by the cut quiver and computing its dimension can be done in GAP (using QPA) but again is computationally expensive. 
    However, it was asked in \cite[Question 5.9]{HIO} when the quivers of $(d-1)$-hereditary algebras are acyclic. This is not always the case, see \cite{TomonagaSilting}, but we expect it to hold in the case of $(d-1)$-hereditary algebras arising from skew-group algebras. Thus, we restrict our attention to cuts such that the resulting quiver is acyclic. This simplifies both problems above. Instead of computing $S = \supp(\omega)$, simply take $S = \{ c \mid c \text{ is a cycle of length } d \} \supseteq \supp(\omega)$, which can be done purely combinatorially by standard graph algorithms. Finding an exact cover for this set as before then assures us that $\omega$ will be homogeneous of degree $1$. If $d=3$ and $Q_G$ has no loops or $2$-cycles, then the cut quiver is acyclic and hence generates a finite dimensional algebra. We note that the existence of loops or $2$-cycles is a phenomenon of ``small'' groups and that they do not exist ``generically''. For higher values of $d$, the resulting cut quiver could contain cycles of a length strictly smaller than $d$, but again finding cycles is a standard task for existing graph algorithms.   
\end{Rem} 

The resulting workflow for a given group looks as follows. 
\begin{enumerate}
    \item Compute $G$ from explicit matrix generators. 
    \item Compute $\Irr(G)$ and the defining character $\chi_V$. 
    \item Compute $(\chi_i, \chi_j \cdot \chi_V)$ for all $i, j \in Q_0$ to obtain the quiver $Q_G$. 
    \item Compute the set $S$ of all $3$-cycles in $Q_G$ and the set $T = \{ t_a \mid a \in Q_1\}$. 
    \item Compute one (or all) solutions to the exact cover problem $(S, T)$. 
\end{enumerate}

In order to check any of the results in this section, the following steps suffice. 
\begin{enumerate}
    \item Compute $G$ from explicit matrix generators. 
    \item Compute $\Irr(G)$ and the defining character $\chi_V$. 
    \item Compute $(\chi_i, \chi_j \cdot \chi_V)$ for all $i, j \in Q_0$ to obtain the quiver $Q_G$. 
    \item Compute the set $S$ of all $3$-cycles in $Q_G$. 
    \item For the given cut, check that every element in $S$ has exactly one cut arrow. 
    \item For the given cut, check that the cut quiver is acyclic.
\end{enumerate}

\section{The classification of $3$-preprojective algebras \texorpdfstring{$R \ast G$}{R * G}} \label{Sec: Classification}
We now summarise how the results of this article, together with \cite{DramburgGasanova, DramburgGasanova3}, give rise to a classification of $3$-preprojective gradings on skew-group algebras given by cuts. We provide several different perspectives, and explain why a uniform description seems difficult. Recall that the types of finite subgroups of $\SL_3(\Bbbk)$ are labeled (A) to (L), where (E) to (L) are the exceptional subgroups. 

If $G \leq \SL_3(\Bbbk)$ is of type (A), then \Cref{Theo: SL3 type (A) classification} tells us that $R \ast G$ admits a $3$-preprojective structure if and only if the divisibility condition from the matrix expression in \Cref{Theo: SL3 type (A) classification} admits a strictly positive solution. Furthermore, we know from \cite{DramburgGasanova} that such a solution exists if and only if $G \not \simeq C_2 \times C_2$ and if $G \hookrightarrow \SL_3(\Bbbk)$ does not factor through $\SL_2(\Bbbk)$. In particular, all quivers $Q_G$ for abelian $G$ where $R \ast G$ is not higher preprojective have a loop, \emph{except for} $G = C_2 \times C_2$. 

In type (B), we saw in this article that the only obstruction to $R \ast G$ admitting a $3$-preprojective cut is the existence of determinant loops in $Q_G$. This is equivalent to the defining representation being self-dual, and $G$ not being a binary group. 

In types (C) and (D), it was proven in \cite{DramburgGasanova3} that $R \ast G$ admits a $3$-preprojective structure if and only if $9 \mid |G|$. Interestingly, in all negative cases this is again witnessed by a loop in $Q_G$. 

For the exceptional groups, we saw in this article that all types except type (H) and (I) admit a $3$-preprojective cut. These two groups are the two simple exceptional groups, and again their quivers $Q_G$ contain a loop. 

We want to briefly comment on the prevalence of loops in the negative cases we found. The nature of the loops in type (B) is quite different from those in type (C) and (D) and the exceptional cases. In type (B), it is easy to see that the determinant loops to their third powers do not appear in the potential for $R \ast G$. In contrast, for types (C), (D), (H) and (I) some loop does appear to its third power in the potential, which can also be used to show the non-existence of cuts. It would be interesting to find a general proof that shows that loops for skew-group algebras mean that no preprojective structure can exists, as this would give a better understanding of when cycles in $n$-representation infinite algebras can or can not appear and help put the example of Tomonaga \cite{TomonagaSilting} into perspective. One might for example expect that it is the Koszul property for skew-group algebras that makes loops impossible for higher preprojective structures.  

We return our classification and summarise it as follows. 

\begin{Theo}
    Let $G \leq \SL_3(\Bbbk)$ be a finite subgroup. Then $R \ast G$ admits a $3$-preprojective structure if and only if $Q_G$ has no loops, and $G \not \simeq C_2 \times C_2$. 
\end{Theo}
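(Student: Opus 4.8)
The plan is to prove the statement as a synthesis of the type-by-type classification established in this article and its companions \cite{DramburgGasanova, DramburgGasanova3}. Every finite $G \leq \SL_3(\Bbbk)$ is of exactly one of the types (A)--(L), and for each type we already possess a precise criterion for when $R \ast G$ admits a $3$-preprojective structure; recall also that admitting such a structure is equivalent to admitting a cut. The strategy is therefore to show that, across all types, the locus where \emph{no} structure exists coincides with the locus where $Q_G$ has a loop, together with the single exceptional abelian group $C_2 \times C_2$.

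The forward implication is immediate. If $R \ast G$ admits a $3$-preprojective structure, then by \Cref{Cor: loops prevent cuts} the quiver $Q_G$ cannot contain a loop. Moreover $G \not\simeq C_2 \times C_2$, since the corollary to \Cref{Theo: SL3 type (A) classification} shows that $R \ast (C_2 \times C_2)$ admits no cut and hence no $3$-preprojective structure.

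For the converse I would assume $Q_G$ has no loop and $G \not\simeq C_2 \times C_2$, and argue by type. For type (A), since $G$ is abelian the arrows of $Q_G$ are $\chi \to \chi \rho_i$ for $1 \leq i \leq 3$, so a loop occurs precisely when some $\rho_i = \mathbf{1}$; by the determinant condition $\rho_1 \rho_2 \rho_3 = \mathbf{1}$ this happens if and only if the embedding factors through $\SL_2(\Bbbk)$. Thus absence of loops forces the embedding not to factor through $\SL_2(\Bbbk)$, and combined with $G \not\simeq C_2 \times C_2$ the corollary to \Cref{Theo: SL3 type (A) classification} produces a cut. For type (B), which is nonabelian (so $C_2 \times C_2$ never occurs), \Cref{Theo: Type (B) classification} together with the loop analysis of the dihedral, tetrahedral, octahedral and icosahedral cases shows that a cut fails exactly when $m = 1$, which is exactly when $Q_G$ has a loop; hence no loop gives a cut. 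For types (C) and (D), \cite{DramburgGasanova3} gives that $R \ast G$ is $3$-preprojective if and only if $9 \mid |G|$, and that every failure is witnessed by a loop, so again the loop-free hypothesis yields a structure; these groups are nonabelian of order divisible by $3$, so $C_2 \times C_2$ does not arise. Finally, for the exceptional types (E)--(L), \Cref{Theo: exceptional class} reduces the failures to types (H) and (I), whose explicit quivers contain loops, so the loop-free hypothesis excludes them, and none of the exceptional groups is $C_2 \times C_2$.

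The main obstacle is bookkeeping rather than conceptual: one must verify that $C_2 \times C_2$ is genuinely the \emph{only} loop-free group for which $R \ast G$ fails to be $3$-preprojective. This amounts to confirming, in each negative case of types (B)--(L), that the obstruction is an actual loop in $Q_G$ (the $m=1$ cases in type (B), the divisibility failures in (C) and (D), and the two simple groups of types (H) and (I)), and that $C_2 \times C_2$, being abelian, can only appear in type (A), where it is exactly the exceptional loop-free failure flagged already in the type (A) classification.
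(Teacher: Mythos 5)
Your proposal is correct and follows essentially the same route as the paper: the paper derives this theorem by exactly the same type-by-type synthesis, observing that loops obstruct cuts in general and that in every negative case of types (B)--(L) (and in type (A) apart from $C_2 \times C_2$) the failure is witnessed by a loop. The only difference is presentational — the paper states this synthesis as the discussion preceding the theorem rather than as a formal proof — so there is nothing to add.
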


However, the above phrasing seems unsatisfying since the case $G \simeq C_2 \times C_2$ seems to have no clear relationship to loops. Furthermore, we note that the superpotential $\omega$ for $\Bbbk Q_G/I$ is supported on cycles of length $3$, so if the cycles factor into smaller cycles, one of them is necessarily a loop. We therefore do not expect the perspective on loops to generalise to higher dimensions in a straightforward way, since for example finite subgroups of $\SL_2(\Bbbk) \times \SL_2(\Bbbk)$ give rise to $4$-Calabi-Yau algebras which are not $4$-preprojective and still contain no loops. 

Instead, one might take a more geometric perspective. The invariant ring $R^G$ is the center of $R \ast G$, so a grading on $R \ast G$ induces one on $R^G$. Such a grading in turn defines a $\Bbbk^\ast$-action on $X = \operatorname{Spec}(R^G) = \Bbbk^3 /G$. One can therefore ask how to characterise those $\Bbbk^\ast$-actions that necessarily come from higher preprojective cuts. In type (A), we showed in \cite{DramburgGasanova2} that these $\Bbbk^\ast$-actions can be identified with certain crepant divisors for $X$. Furthermore, in types (C) and (D), we produce in \cite{DramburgGasanova3} $3$-preprojective gradings on $R \ast G$ from those on $R \ast N$, where $N \leq G$ is an abelian normal subgroup and $G \simeq N \rtimes H$. As a byproduct, we obtain that the existence of a $3$-preprojective grading on $R \ast G$ is witnessed by a crepant divisor for $\operatorname{Spec}(R^N)$, on which $H$ acts such that $X = \operatorname{Spec}(R^N)/H$. 
We therefore believe that the existence of certain divisors in (a resolution of) $X$ or a closely related variety is equivalent to the existence of a $3$-preprojective structure on $R \ast G$. However, we have no apriori argument for why this should be expected. 

Since neither perspective is fully satisfactory, we finish by embracing the diversity of the subgroups of $\SL_3(\Bbbk)$, and simply state the classification according to the different types. 

\begin{Theo}\label{Theo: All classification}
    Let $G \leq \SL_3(\Bbbk)$ be a finite group. Then $R \ast G \simeq_M \Bbbk Q_G/I$ admits a $3$-preprojective structure if and only if the group $G$ satisfies one of the following conditions:  
    \begin{itemize}
        \item $G$ is of type (A), $G \not \simeq C_2 \times C_2$, and its embedding does not factor through $\SL_2(\Bbbk)$. 
        \item $G$ is of type (B), its defining representation is not self-dual, and it is not isomorphic to a finite subgroup of $\SL_2(\Bbbk)$.
        \item $G$ is of type (C) or (D) and has order divisible by $9$.
        \item $G$ is an exceptional subgroup and not of type (H) or type (I). 
    \end{itemize}
\end{Theo}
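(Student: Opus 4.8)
The plan is to prove this by reducing to the case-by-case classification of finite subgroups of $\SL_3(\Bbbk)$ due to Yau and Yu \cite{YauYu} and then invoking, for each type, the corresponding result already established. First I would recall that every finite $G \leq \SL_3(\Bbbk)$ is, via its action on $V = \Bbbk^3$, of exactly one of the types (A)--(L), according to whether $V$ decomposes into three, two, or one indecomposable summand, and in the indecomposable case whether $V$ is primitive or imprimitive. This scheme (reviewed in \Cref{SSec: Finite sub of SL3}) is exhaustive, so it suffices to verify the claimed criterion type by type. Before doing so I would make explicit the standing reduction used throughout: by the Remark following \Cref{Theo: HPG is f.d. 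GP1}, $R \ast G$ admits a $3$-preprojective structure if and only if $\Bbbk Q_G/I \simeq_M R \ast G$ admits a $3$-preprojective grading, and by \cite[Theorem 5.14]{DramburgSandoy} the latter holds if and only if $\Bbbk Q_G/I$ admits a \emph{cut}. Hence each component theorem, phrased in terms of cuts, feeds directly into the statement about structures.

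With this reduction in place, the argument becomes a matching of the four bullet points against the types. For type (A) I would cite the Corollary following \Cref{Theo: SL3 type (A) classification} (from \cite{DramburgGasanova}), which gives the condition $G \not\simeq C_2 \times C_2$ together with the embedding not factoring through $\SL_2(\Bbbk)$. For the nonabelian type (B) groups I would invoke \Cref{Theo: Type (B) classification}: a cut exists if and only if the defining representation $\rho$ is not self-dual and $G$ is not abstractly isomorphic to a finite subgroup of $\SL_2(\Bbbk)$. For types (C) and (D) I would cite the main classification of \cite{DramburgGasanova3}, namely that a cut exists precisely when $9 \mid |G|$. Finally, for the exceptional types (E)--(L), \Cref{Theo: exceptional class} yields that a cut exists in all cases except (H) and (I). Matching these four statements against the four bullet points completes the equivalence.

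The main obstacle is not any single computation but ensuring the case division is genuinely clean at the boundaries, where several conventions must be reconciled. The overlap between types (A) and (B) must be resolved by the convention recorded in \Cref{SSec: Finite sub of SL3} that type (B) refers only to the nonabelian subgroups of $\GL_2(\Bbbk)$, so that an abelian $G$ is counted once, under type (A). The most delicate point is the precise meaning of the exclusions in type (B): as emphasised after the first main theorem in the introduction, ``isomorphic to a subgroup of $\SL_2(\Bbbk)$'' must be read as \emph{abstract} isomorphism, since the binary tetrahedral and dihedral groups admit faithful $2$-dimensional representations whose image lies outside $\SL_2(\Bbbk)$ yet whose quivers still carry loops. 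One must therefore check that the self-duality and binary conditions appearing separately across \Cref{Theo: Central Dihedral class}, \Cref{Theo: Type (B) tetrahedral classification}, \Cref{Theo: Type (B) octahedral classification} and \Cref{Theo: Type (B) icosahedral classification} are correctly subsumed by the single uniform criterion of \Cref{Theo: Type (B) classification}. Once these identifications are verified, no further work is needed: the theorem is simply the disjoint union of the four established criteria.
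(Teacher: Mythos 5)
Your proposal is correct and follows essentially the same route as the paper: the theorem is established by matching the exhaustive type decomposition of \cite{YauYu} against the four component results (the type (A) corollary from \cite{DramburgGasanova}, \Cref{Theo: Type (B) classification}, the type (C)/(D) result of \cite{DramburgGasanova3}, and \Cref{Theo: exceptional class}), with the reduction from $3$-preprojective structures to cuts handled via \cite[Theorem 5.14]{DramburgSandoy}. Your attention to the type (A)/(B) overlap convention and to the abstract-isomorphism reading of the $\SL_2(\Bbbk)$ exclusion matches the care the paper takes on these same boundary points.
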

 
\section*{Acknowledgements}
We thank Martin Herschend for many helpful discussions and comments. We thank Oleksandra Gasanova for many helpful discussions, and in particular for suggesting using type (A) to produce cuts for type (B), and for help with drawing the quivers for the exceptional subgroups. We also thank Robert Moscrop and Shani Meynet for explaining their work in \cite{McKayDecomp} to us, as well as pointing out the notion of isoclinism. We are grateful to Álvaro Nolla de Celis for answering a question regarding his PhD thesis. We thank to Stephan Wagner for pointing us to the term ``exact cover problem'' that otherwise would have been impossible to find. The work in \Cref{Sec: Except} was greatly aided by computations made in GAP \cite{GAP4}, and to a lesser extent in Julia \cite{Julia-2017}, using the package OSCAR \cite{OSCAR}. 

\printbibliography

\end{document}